\newtheorem{Proposition}{Proposition}
\newtheorem{Lemma}{Lemma}
\newtheorem{Theorem}{Theorem}
\newtheorem{Corollary}{Corollary}
\newtheorem{Conjecture}{Conjecture}
\newcommand{\XX}{{\mathsf{X}}}
\newcommand{\proj}{\mathbb{P}}
\newcommand{\Z}{\mathbb{Z}}
\newcommand{\ZZ}{{\mathsf{Z}}}
\newcommand{\rarr}{\rightarrow}
\newcommand{\OO}{\mathcal{O}}
\newcommand{\com}{\mathbb{C}}
\newcommand{\dd}{{\mathbf{d}}}
\newcommand{\ch}{\text{ch}}
\DeclareFontFamily{OT1}{rsfs}{}
\DeclareFontShape{OT1}{rsfs}{n}{it}{<-> rsfs10}{}
\DeclareMathAlphabet{\curly}{OT1}{rsfs}{n}{it}
\newcommand\II{\mathbb I^{\scriptscriptstyle\bullet}}
\newcommand\B{\curly B}
\renewcommand\O{\mathcal O}
\newcommand\F{\mathcal F}
\newcommand\FF{\mathbb F}
\newcommand\LL{\mathbb L}
\newcommand\PP{\mathbb P}
\newcommand\E{\mathcal E}
\renewcommand\S{\mathcal S}
\newcommand\X{\mathcal X}
\newcommand\C{\mathbb C}
\renewcommand\t{\mathfrak t}
\newcommand{\rt}[1]{\stackrel{#1\,}{\rightarrow}}
\newcommand{\Rt}[1]{\stackrel{#1\,}{\longrightarrow}}
\newcommand\To{\longrightarrow}
\newcommand\ito{\ar@{^{ (}->}[r]}
\newcommand{\Into}{\ensuremath{\lhook\joinrel\relbar\joinrel\rightarrow}}
\newcommand\ip{\lrcorner\,}
\renewcommand\_{^{}_}
\newcommand\take{\backslash}
\newcommand\udot{^{\scriptscriptstyle\bullet}}
\newfont{\bigtimesfont}{cmsy10 scaled \magstep5}
\newcommand{\bigtimes}{\mathop{\lower0.9ex\hbox{\bigtimesfont\symbol2}}}
\newcommand\Bl{\operatorname{Bl}}
\newcommand\At{\operatorname{At}}
\newcommand\Ob{\operatorname{Ob}}
\newcommand\fix{\operatorname{fix}}
\newcommand\red{\operatorname{red}}
\newcommand\vir{\operatorname{vir}}
\newcommand\rk{\operatorname{rank}}
\newcommand\tr{\operatorname{tr}}
\newcommand\id{\operatorname{id}}
\newcommand\Hom{\operatorname{Hom}}
\renewcommand\hom{\curly H\!om}
\newcommand\Ext{\operatorname{Ext}}
\newcommand\ext{\curly Ext}
\newcommand\Pic{\operatorname{Pic}}
\newcommand\Proj{\operatorname{Proj}}
\newcommand\Spec{\operatorname{Spec}}
\newcommand\Hilb{\operatorname{Hilb}}
\newcommand\beq[1]{\begin{equation}\label{#1}}
\newcommand\eeq{\end{equation}}
\newcommand\beqa{\begin{eqnarray*}}
\newcommand\eeqa{\end{eqnarray*}}
\makeatletter \@addtoreset{equation}{section} \makeatother
\begin{document}

\title{The Katz-Klemm-Vafa conjecture for $K3$ surfaces}
\author{R. Pandharipande and R.P. Thomas}
\date{December 2015}
\maketitle

\begin{abstract}
We prove the KKV conjecture expressing Gromov-Witten invariants of $K3$
 surfaces in terms of modular forms. 
Our results apply in every genus and for every curve class.
The proof uses the Gromov-Witten/Pairs correspondence for  
$K3$-fibered hypersurfaces of dimension 3
to reduce the KKV conjecture to statements about stable pairs on (thickenings of) $K3$ surfaces.
Using degeneration arguments and new multiple cover results for stable pairs,
 we reduce the KKV conjecture further to the known primitive cases.

Our results yield a new proof of the full Yau-Zaslow formula, establish new Gromov-Witten multiple cover
formulas, and
express the fiberwise
Gromov-Witten partition functions of $K3$-fibered 3-folds in terms of explicit modular forms.

\end{abstract}

\setcounter{tocdepth}{1} 
\tableofcontents

\setcounter{section}{-1}

\baselineskip=18pt

\pagebreak
\section{Introduction}

\subsection{Reduced Gromov-Witten theory} \label{yzc}
Let $S$ be a nonsingular projective $K3$ surface, and let 
 $$\beta \in \text{Pic}(S) = H^2(S,\mathbb{Z}) \cap H^{1,1}(S,\com)$$
be a nonzero effective curve class.
The moduli space $\overline{M}_{g}(S,\beta)$
of genus $g$ stable maps (with no marked points)
has expected dimension
$$\text{dim}^{\vir}_\com\ \overline{M}_{g}(S,\beta) 
= \int_\beta c_1(S) + (\text{dim}_\com(S) -3)(1-g) = g-1\,.$$
However, as the obstruction theory admits a 1-dimensional trivial quotient,
the virtual class $[\overline{M}_{g}(S,\beta)]^{\vir}$  vanishes.
The standard Gromov-Witten theory is trivial.

Curve counting on $K3$ surfaces
 is captured instead by the {\em reduced} Gromov-Witten
theory  constructed first via the twistor family in \cite{brl}.
An algebraic construction following \cite{Beh,BehFan} is given in 
\cite{gwnl}. The reduced class 
$$[\overline{M}_{g}(S,\beta)]^{\red} \in A_g(\overline{M}_{g}(S,\beta), \mathbb{Q})$$
has dimension $g$. Let 
$\lambda_g$ be the top Chern class of the rank $g$ Hodge bundle{\footnote{The
Hodge bundle is pulled-back from $\overline{M}_g$ if $g\geq 2$.
See \cite{FP,GP} for a discussion of Hodge classes in
Gromov-Witten theory.}}
$$\mathbb{E}_g \rightarrow \overline{M}_g(S,\beta)$$
with fiber $H^0(C,\omega_C)$ over the moduli point 
$$[f:C\rightarrow S]\in \overline{M}_g(S,\beta)\,.$$
The reduced Gromov-Witten integrals of $S$,
\begin{equation}\label{veq}
R_{g,\beta}(S) = \int_{[\overline{M}_{g}(S,\beta)]^{\red}} (-1)^g \lambda_g   
\  \in \mathbb{Q}\,,
\end{equation}
 are
well-defined. 
Under deformations of $S$ for which $\beta$ remains a $(1,1)$-class,
the integrals \eqref{veq} are invariant. 

Let $\epsilon:\mathcal{X}\rightarrow (B,b)$ be 
a fibration of $K3$ surfaces over a
base $B$ with special fiber 
$$\mathcal{X}_b\cong S\ \ \text{over}\ \ b\in B\,.$$
Let $U\subset B$ be an open set containing $b\in B$
over which the local system of second cohomology 
$R^2\epsilon_*(\mathbb{Z})$ is trivial. 
The class $\beta\in \text{Pic}(S)$ determines a
{\em local Noether-Lefschetz locus} 
$$\text{NL}(\beta) \subset U$$
defined as the subscheme where $\beta$ remains a 
$(1,1)$-class.{\footnote{While $\text{NL}(\beta)$ 
is locally defined on $U$ by a single equation, 
the locus may be degenerate (equal to all of $U$).}}

Let $(\Delta,0)$ be a nonsingular
quasi-projective curve with special point $0\in\Delta$.
The integral
$R_{g,\beta}(S)$ computes the local contribution of 
$S$ to the standard Gromov-Witten theory of
 every $K3$-fibered 3-fold 
\begin{equation} \label{veqq}
\epsilon:T\to(\Delta,0)
\end{equation} with special fiber $S$ 
and
local Noether-Lefschetz locus $\text{NL}(\beta)\subset \Delta$
equal to the reduced point $0\in\Delta$, see \cite{gwnl}.

\subsection{Curve classes}

 The second cohomology of $S$ is a rank 22 lattice
with intersection form 
\begin{equation}\label{ccet}
H^2(S,\mathbb{Z}) \stackrel{\sim}{=} U\oplus U \oplus U \oplus E_8(-1) \oplus E_8(-1)\,,
\end{equation}
where
$$U
= \left( \begin{array}{cc}
0 & 1 \\
1 & 0 \end{array} \right)$$
and 
$$
E_8(-1)=  \left( \begin{array}{cccccccc}
 -2&    0 &  1 &   0 &   0 &   0 &   0 & 0\\
    0 &   -2 &   0 &  1 &   0 &   0 &   0 & 0\\
     1 &   0 &   -2 &  1 &   0 &   0 & 0 &  0\\
      0  & 1 &  1 &   -2 &  1 &   0 & 0 & 0\\
      0 &   0 &   0 &  1 &   -2 &  1 & 0&  0\\
      0 &   0&    0 &   0 &  1 &  -2 &  1 & 0\\ 
      0 &   0&    0 &   0 &   0 &  1 &  -2 & 1\\
      0 & 0  & 0 &  0 & 0 & 0 & 1& -2\end{array}\right)$$
is the (negative) Cartan matrix. The intersection form \eqref{ccet}
is even.

The {\em divisibility} $m(\beta)$ is
the maximal positive integer dividing the lattice
element $\beta\in H^2(S,\mathbb{Z})$.
If the divisibility is 1,
$\beta$ is {\em primitive}.
Elements with
equal divisibility and norm square are equivalent up to orthogonal transformation 
 of $H^2(S,\mathbb{Z})$, see \cite{CTC}.
By straightforward deformation arguments using the Torelli theorem for
$K3$ surfaces, $R_{g,\beta}(S)$ depends, for effective classes, 
 {\em only} on the divisibility $m(\beta)$ and the norm square
$$\langle \beta,\beta \rangle = \int_S \beta^2\,.$$
 We will omit the argument $S$ in the
notation,
$$R_{g,\beta}=R_{g,\beta}(S)\,.$$

\subsection{BPS counts} \label{bpsc}
The KKV conjecture concerns BPS counts associated to the Hodge integrals
\eqref{veq}. Throughout this paper we let
$$\alpha\in \text{Pic}(S)$$ denote a nonzero class which is both \emph{effective} and
\emph{primitive}.
The Gromov-Witten potential $F_{{\alpha}}(\lambda,v)$ 
for classes proportional
to ${\alpha}$
is 
\begin{equation} \label{hbb}
{F}_{{\alpha}}=
\sum_{g\geq 0}\   \sum_{m>0}\   R_{g,m\alpha} \ \lambda^{2g-2} 
v^{m{\alpha}}.
\end{equation}
The
BPS counts $r_{g,m\alpha}$ are uniquely defined 
by the following equation:
\begin{equation} \label{kxxz}
F_\alpha =   \ \ \ \sum_{g\geq 0}  \ \sum_{m>0} \
 r_{g,m\alpha} \ \lambda^{2g-2} \sum_{d>0}
\frac{1}{d}\left( \frac{\sin
({d\lambda/2})}{\lambda/2}\right)^{2g-2} v^{dm\alpha}. 
\end{equation}
Equation \ref{kxxz} defines BPS counts  
for both primitive and
divisible classes.

The string theoretic calculations of Katz, Klemm and Vafa \cite{kkv}
via heterotic duality yield two conjectures.

\begin{Conjecture} \label{xxx1}
The BPS count $r_{g,\beta}$ depends upon $\beta$ only through the norm
square
 $\langle \beta, \beta \rangle$.
\end{Conjecture}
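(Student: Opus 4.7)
The plan is to transport Conjecture \ref{xxx1} across the Gromov-Witten/Pairs correspondence to the stable pairs side, where the dependence on divisibility becomes tractable via degeneration to the primitive case, and then to invoke the already-known primitive KKV to finish.

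First, for each $\beta=m\alpha$ with $\alpha\in\Pic(S)$ primitive, I would construct a smooth $K3$-fibered hypersurface $\epsilon:T\to(\Delta,0)$ as in \eqref{veqq}, with special fiber $S$ and local Noether-Lefschetz locus $\text{NL}(\beta)$ equal to the reduced point $0$. By \cite{gwnl}, this identifies $R_{g,\beta}$ with the $\lambda_g$-local contribution of $S$ to the ordinary (non-reduced) Gromov-Witten invariants of $T$ in class $\iota_*\beta$, and correspondingly identifies the BPS counts $r_{g,\beta}$ with the fiberwise BPS content of $T$. Thus Conjecture \ref{xxx1} translates into a statement about the ordinary $T$-invariants of fiber classes depending only on $\langle\beta,\beta\rangle$.

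Second, I would apply the Gromov-Witten/Pairs correspondence to $T$ to rewrite the generating series $F_\alpha$ of \eqref{hbb} in terms of the stable pair series $\sum_n P_n(T,\iota_*\beta)\,q^n$. On the stable pairs side there is a parallel BPS expansion from which the same $r_{g,\beta}$ can be read off. I would then degenerate $T$ so that the class $\beta=m\alpha$ breaks as a union of pieces each carrying the primitive class $\alpha$ (plus contributions fibered over $S$), reducing $P_n(T,\iota_*\beta)$ to stable pair computations on thickenings of $S$ with primitive fiber class $\alpha$. A multiple cover formula for stable pairs then packages the contribution of $k$-fold covers of a primitive class into a universal, explicit shape determined only by $\langle\alpha,\alpha\rangle$ and $k$ — hence, via $\langle m\alpha,m\alpha\rangle=m^2\langle\alpha,\alpha\rangle$, only by $\langle\beta,\beta\rangle$.

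Finally I would feed in the primitive case of KKV, already established by Maulik-Pandharipande-Thomas, to conclude that $r_{g,m\alpha}$ depends on $m\alpha$ only through $\langle m\alpha,m\alpha\rangle$, as claimed. The main obstacle is the multiple cover formula for stable pairs on thickenings of $K3$: unlike Gromov-Witten invariants, stable pair invariants do not a priori admit a clean multiple cover structure, so proving such a formula for the relevant $K3$-fibered 3-folds — carefully controlling stable pair moduli on non-reduced $K3$-like schemes — is the essential new geometric input and the place where the argument must do the real work.
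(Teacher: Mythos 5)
Your high-level architecture (move to stable pairs, prove a stable-pairs multiple cover formula, reduce to the primitive case and quote Kawai--Yoshioka/\cite{MPT}) is indeed the paper's, but two steps in your outline conceal genuine gaps. First, you cannot simply ``apply the Gromov--Witten/Pairs correspondence to $T$'': the correspondence of \cite{PP} is proven only for complete intersection Calabi--Yau 3-folds admitting suitable toric degenerations, not for an arbitrary quasi-projective $K3$-fibration $\epsilon:T\to(\Delta,0)$, and even granting it for some compact model, the correspondence equates \emph{global} partition functions, not the local contributions of the fiber $S$ over $0\in\Delta$. On the stable pairs side the local contribution probes thickenings of $S$ and is only well-defined under the $m$-rigidity condition $(\star)$ (reducedness of $\mathrm{NL}(\beta)$ alone is not enough); matching the two local theories is exactly what forces the paper to route everything through the single family $\pi_3:\XX\to\proj^1$, prove the GW/NL and P/NL correspondences (Theorem \ref{ffc} and Theorem \ref{ffc2}), and then \emph{invert} the resulting linear systems using the triangularity of Proposition \ref{bbh} to obtain $r_{g,m,h}=\widetilde r_{g,m,h}$ (Theorem \ref{vvee}). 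None of this bookkeeping is present in your proposal, and without it the translation of Conjecture \ref{xxx1} to the sheaf side does not follow formally from GW/P.

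Second, your proposed mechanism for the multiple cover formula --- ``degenerate $T$ so that the class $\beta=m\alpha$ breaks as a union of pieces each carrying the primitive class $\alpha$'' --- would fail: an imprimitive fiber class does not decompose into primitive pieces under any degeneration of the 3-fold, and the contributions of reducible supports are governed by the logarithm in \eqref{fbbb}, not by a splitting of $m\alpha$ into $m$ copies of $\alpha$. The paper's actual route is different in substance: degenerate $T$ to the normal cone of $S\subset T$, interpret the logarithm via the rubber geometry (Theorem \ref{t66t} and Proposition \ref{kedd9}), rigidify and $\C^*$-localize on $Y=S\times\C$, and prove the cosection vanishing of Section \ref{van} (Corollary \ref{tfred}) showing that only \emph{uniformly thickened} pairs contribute. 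The divisibility independence then comes from the fact that the resulting integrals over $P_n(S,\beta)$ are universal in topological constants \cite{KT1,KT2}, yielding the multiple cover formula of Theorem \ref{geget} and Proposition \ref{fcc19}. You correctly identify the multiple cover formula as the place where the real work lies, but the degeneration-splitting argument you sketch for it is not viable as stated and would need to be replaced by something like the uniform-thickening analysis.
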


Conjecture \ref{xxx1} is rather surprising from the point
of view of Gromov-Witten theory. From the definition,
 the invariants $R_{g,\beta}$ and $r_{g,\beta}$
depend
upon both the divisibility $m$ of $\beta$ and the norm
square  $\langle \beta,\beta
\rangle $.
Assuming the validity of
Conjecture \ref{xxx1}, let $r_{g,h}$ denote the BPS count associated
to a class $\beta$ of arithmetic genus $h$,
$$\langle \beta, \beta \rangle = 2h-2\,.$$

\begin{Conjecture} \label{xxx2}
The BPS counts $r_{g,h}$ are uniquely determined by the
following equation:
$$\sum_{g\geq 0} \sum_{h\geq 0} (-1)^g r_{g,h}(y^{\frac{1}{2}} - y^{-\frac{1}{2}})^{2g}q^h =
\prod_{n\geq 1} \frac{1}{(1-q^n)^{20} (1-yq^n)^2 (1-y^{-1}q^n)^2}.$$
\end{Conjecture}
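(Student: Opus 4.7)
The strategy is to reduce both Conjecture \ref{xxx1} (divisibility invariance) and Conjecture \ref{xxx2} (the explicit generating series) to the already established \emph{primitive} case via the Gromov-Witten/Pairs correspondence together with new multiple cover formulas for stable pairs on $K3$ geometries. Since the primitive KKV formula has been proven in earlier work, the task is to handle imprimitive classes $m\alpha$ with $m \geq 2$.

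First, I would realize the reduced invariants $R_{g,m\alpha}$ as local contributions to the standard (non-reduced) Gromov-Witten theory of a suitable $K3$-fibered complete intersection 3-fold $\epsilon: T \to (\Delta, 0)$ with special fiber $S$ and reduced Noether-Lefschetz locus at the origin, as in \eqref{veqq}. On such a 3-fold, the Gromov-Witten/Pairs correspondence applies. I would convert the generating series $F_\alpha$ into a stable pairs generating series $Z_T$ for classes pushed forward from $S$ into $T$, and rephrase the KKV identity directly in the stable pairs variables $(y,q)$, where $y$ tracks Euler characteristic and $q$ tracks curve class.

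Second, I would exploit the product-like structure of $K3$-fibered 3-folds near $S$: degenerating $T$ (or applying a fiberwise degeneration inside $T$) and invoking the degeneration formula for stable pairs localizes the relevant contributions onto stable pairs supported on scheme-theoretic thickenings of $S$. The heart of the argument is then to prove a \emph{multiple cover formula} for stable pairs on such $K3$ thickenings, expressing the contribution of a stable pair in class $m\alpha$ in terms of primitive stable pair contributions of the same arithmetic genus. Once this is in hand, Conjecture \ref{xxx1} becomes automatic — the BPS counts $r_{g,m\alpha}$ are written entirely in terms of invariants indexed only by $\langle m\alpha, m\alpha\rangle$ — and Conjecture \ref{xxx2} follows by substituting the primitive KKV formula into the multiple cover identity and checking that the resulting combinatorics reproduce the infinite product on the right-hand side.

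The principal obstacle is the multiple cover formula for stable pairs on $K3$ thickenings. Unlike familiar BPS rationality statements, imprimitive stable pairs on a non-reduced curve can carry floating points and embedded components that interact non-trivially with the thickening directions, and controlling their contributions will require a careful deformation-theoretic analysis specific to $K3$ geometry — presumably leveraging the triviality of $K_S$ and $H^1(S,\O)$ to rigidify the structure of the support scheme. A secondary difficulty is arranging the auxiliary 3-fold $T$ to be flexible enough to admit the needed degenerations while preserving the local contribution of $S$ to the standard Gromov-Witten theory, which constrains the choice of Noether-Lefschetz-generic pencil quite tightly and must be balanced against the hypotheses of the GW/P correspondence.
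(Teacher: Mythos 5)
Your second stage --- degeneration onto thickenings of $S$, a stable pairs multiple cover formula, and reduction to the primitive case evaluated by Kawai--Yoshioka --- is exactly the route the paper takes. The genuine gap is in your first stage, the bridge from $F_\alpha$ to stable pairs. The GW/P correspondence is not available for the local models $\epsilon\colon T\to(\Delta,0)$ on which $R_{g,m\alpha}$ is realized as a local contribution: it is proven only for suitable projective Calabi--Yau complete intersections (here the anticanonical hypersurface $\XX\subset\widetilde{\proj^2\times\proj^1}\times\proj^1$), and for such a projective family one cannot arrange the Noether--Lefschetz locus to isolate a single class on a single fiber; each vertical degree receives contributions from a whole range of classes $(m,h)$ spread over the fibers. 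So one cannot simply ``convert $F_\alpha$ into a stable pairs series'' by applying GW/P to a pencil chosen so that $S$ contributes alone --- the two requirements you flag as a ``secondary difficulty'' are in fact incompatible. The paper resolves this by expressing the fiberwise BPS counts of $\XX$ on \emph{both} sides as Noether--Lefschetz--weighted sums --- the GW/NL correspondence (Theorem \ref{ffc}) and a new P/NL correspondence (Theorem \ref{ffc2}), the latter a substantial theorem in its own right (Section \ref{pnlc}) --- and then inverting these linear systems via the triangularity of Proposition \ref{bbh}, using classes $msL_1+mL_2+m(s+1)E$ for which the discriminant-zero Noether--Lefschetz divisor is hit by a unique lattice vector, to conclude $r_{g,m,h}=\widetilde r_{g,m,h}$ (Theorem \ref{vvee}). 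A related omission: because stable pairs probe thickenings of the fibers, the pairs analogue $\widetilde R_{n,m\alpha}$ cannot be defined intrinsically on $S$; it is defined through an $m$-rigid family by a logarithm over all effective summands of $m\alpha$, and the rubber/degeneration analysis of Theorem \ref{t66t} is needed to show it depends only on $(n,m,\langle\alpha,\alpha\rangle)$ and to control the non-isolated Noether--Lefschetz contributions.

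On the positive side, your identification of the multiple cover formula as the principal obstacle, and your instinct that the thickening directions must be rigidified using the special geometry of $S$, match the paper: after moving to $S\times\C$ by degeneration and rigidification, a second cosection built from a weight-one deformation shows that only \emph{uniformly} thickened pairs contribute to the $\C^*$-localization (Sections \ref{k3tcl}--\ref{van}); the contributing fixed loci are isomorphic to $P_n(S,\beta)$, and the resulting integrals are universal in $n$ and $\langle\beta,\beta\rangle$ by \cite{KT2}. This is what yields divisibility independence of the pairs BPS counts (Proposition \ref{fcc19}) and reduces the KKV formula to the primitive Kawai--Yoshioka evaluation, as you anticipated.
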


As a consequences of Conjecture \ref{xxx2}, $r_{g,h}\in \mathbb{Z}$,
 $r_{g,h}$ vanishes if $g>h$, and
$$r_{g,g}=(-1)^g (g+1)\,.$$
The integrality of $r_{g,h}$ and the vanishing for high $g$ (when $h$ is fixed)
fit in the framework of the Gopakumar-Vafa conjectures. 
The first values are tabulated below:

\vspace{18pt}

\begin{tabular}{|c||ccccc|}
        \hline
\textbf{}
$r_{g,h}$&    $h= 0$ & 1  & 2 & 3 & 4 \\
        \hline \hline
$g=0$ & $1$ & $24$ & $324$ & 
$3200$ &$25650$  \\
1      &  & $-2$ & 
$-54$ & $-800$  & $-8550$      \\
2      & & & $3$ & 
$88$ & $1401$       \\
3      & &  & 
 & $-4$  & $-126$       \\
4      &  &  & 
 &   & 5       \\
       \hline
\end{tabular}

\vspace{18pt}

The right side of Conjecture \ref{xxx2} is related to the 
generating series of Hodge numbers of the Hilbert schemes of points
$\text{Hilb}^n(S)$.
The genus 0 specialization of Conjecture \ref{xxx2} recovers the
Yau-Zaslow formula
$$\sum_{h\geq 0} r_{0,h} q^h = \prod_{n\geq 1} \frac{1}{(1-q^n)^{24}}$$
related to the Euler characteristics of $\text{Hilb}^n(S)$.

The main result of the present paper is a proof of the KKV conjecture
for all genera $g$ and all $\beta \in H_2(S,\mathbb{Z})$.

\begin{Theorem} \label{kkbb}
 The BPS count $r_{g,\beta}$ depends upon $\beta$
only through \\
$\langle \beta, \beta \rangle=2h-2$, and the 
Katz-Klemm-Vafa formula holds:
$$\sum_{g\geq 0} \sum_{h\geq 0} (-1)^g r_{g,h}(y^{\frac{1}{2}} - y^{-\frac{1}{2}})^{2g}q^h =
\prod_{n\geq 1} \frac{1}{(1-q^n)^{20} (1-yq^n)^2 (1-y^{-1}q^n)^2}.$$
\end{Theorem}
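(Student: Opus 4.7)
The plan is to leverage the Gromov-Witten/Pairs (GW/P) correspondence of Maulik-Nekrasov-Okounkov-Pandharipande to translate the KKV conjecture into a parallel statement about stable pair (PT) invariants on 3-folds containing $S$. Because $R_{g,\beta}(S)$ coincides with the fiberwise contribution to the standard GW invariants of \emph{every} $K3$-fibered Calabi-Yau 3-fold $T\to(\Delta,0)$ satisfying the Noether-Lefschetz condition of \eqref{veqq}, one has the freedom to embed $S$ in a convenient $T$, for instance an anticanonical hypersurface in a 4-fold fibered over $\proj^1$; such $T$ can be arranged so that the pair $(m(\beta),\langle\beta,\beta\rangle)$ ranges over all relevant values.

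First I would pass through GW/P to rewrite the Hodge-integral series $F_\alpha$ of \eqref{hbb} in terms of the PT partition function of $T$ restricted to classes supported on the distinguished fiber $S$, with the insertion $(-1)^g\lambda_g$ producing the standard variable change $y=-e^{i\lambda}$. In the resulting PT formulation the primitive case $m(\alpha)=1$ of Theorem \ref{kkbb} is essentially already in hand, via the Kawai-Yoshioka formula and its reinterpretation through stable pairs and Hilbert schemes of points on $S$. Next, for each divisibility $m>1$, I would prove a multiple-cover formula expressing the PT invariant in class $m\alpha$ as an explicit sum over divisors $d\mid m$ of primitive-class PT invariants on deformed $K3$ surfaces. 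The proof would degenerate $S$ to a union $S_1\cup_E S_2$ along a smooth elliptic curve $E$ representing a multiple of $\alpha$, and apply the degeneration formula for the reduced theory---combined with $\com^*$-localization in the normal direction to $S$ inside $T$---to decompose stable pairs on $T$ in class $m\alpha$ into building blocks on $S_1$ and $S_2$ in primitive classes. Assembling these ingredients reduces Theorem \ref{kkbb} to a combinatorial comparison with the right-hand side of the KKV generating function, which simultaneously forces the claimed dependence of $r_{g,\beta}$ on $\langle\beta,\beta\rangle$ alone.

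The main obstacle is the multiple-cover formula itself. Reduced $K3$ theories do not admit the naive multiple-cover structure familiar from local Calabi-Yau 3-folds, so the correct identity must be discovered, stated, and proved inside the ambient $T$, where the normal bundle $N_{S/T}$ carries a nontrivial structure that one exploits via cosection and virtual-cycle manipulations. Controlling how stable pairs on the special fiber of the $K3$ degeneration glue to deformed primitive pairs on $S_1$ and $S_2$---and verifying that the contributions of components whose underlying curve drops in divisibility combine exactly into the desired cover formula---is the technical heart of the argument. With that formula in hand, matching both sides of the KKV identity in every divisibility becomes a bookkeeping calculation built on the primitive case.
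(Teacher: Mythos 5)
Your high-level frame (pass to stable pairs via GW/P on a $K3$-fibered Calabi--Yau 3-fold containing $S$, settle the primitive case by Kawai--Yoshioka, then prove a multiple-cover formula for imprimitive classes) matches the paper's architecture, but the step you designate as the technical heart --- the multiple-cover formula --- is proposed by a route that does not work. You degenerate the $K3$ surface $S$ itself into $S_1\cup_E S_2$ along an elliptic curve. Such a type II degeneration has components which are (elliptic ruled) rational surfaces, not $K3$ surfaces, so there is no meaning to ``primitive-class PT invariants on deformed $K3$ surfaces'' for the pieces; more fundamentally, the reduced obstruction theory is built from the holomorphic symplectic form via the semiregularity map, and this structure does not extend across a degeneration in which the fibers cease to be $K3$. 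There is also no mechanism in a surface degeneration that sorts the splittings of $m\alpha$ by divisibility: the degeneration formula would produce all effective splittings between the two components, with no control forcing primitivity. The paper never degenerates $S$; it keeps $S$ fixed, degenerates the ambient 3-fold to the normal cone of $S\subset T$, passes through rubber and rigidification to $S\times\PP^1$, and then $\C^*$-localizes on $S\times\C$. The multiple covers appear there not as covers of curves but as $k$-fold uniform thickenings of stable pairs in the $\C$-direction; a second cosection (built from a weight-$1$ deformation that pulls the top thickening apart) kills every non-uniformly-thickened fixed locus, and the surviving integrals over $P_n(S,\beta)$ are evaluated by the Kool--Thomas universality results, which is what gives divisibility independence. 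Your phrase ``$\C^*$-localization in the normal direction to $S$ inside $T$'' also presumes a torus action that a general $m$-rigid fibration $T$ does not carry; the degeneration-to-the-normal-cone step is exactly what manufactures it.

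A second, smaller gap: you assert that after GW/P the primitive case is ``essentially already in hand,'' but the fiber-class GW (or PT) invariants of $T$ mix contributions from all Noether--Lefschetz loci of the family, not just from $(S,\beta)$. Extracting $R_{g,\beta}$, and its stable-pairs analogue $\widetilde{R}_{n,\beta}$, requires the GW/NL and P/NL correspondences together with an inversion of the resulting linear system (the triangularity argument of Proposition \ref{bbh}), and on the pairs side the definition of $\widetilde{R}_{n,\beta}$ itself involves the logarithm/rubber analysis of Section \ref{relt} and the delicate P/NL proof of Section \ref{pnlc}. None of this is addressed in your outline, and without it the comparison of the two BPS expansions ($r_{g,m,h}=\widetilde r_{g,m,h}$) that underlies the reduction to the primitive case is not available.
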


\vspace{8pt}

\subsection{Past work}
The enumerative geometry of curves on
$K3$ surfaces has been studied since the 1995 paper
of Yau and Zaslow \cite{yauz}.
A mathematical approach to the genus 0 Yau-Zaslow formula 
can be found in \cite{beu,xc,fgd}.
The Yau-Zaslow formula was proven for
primitive classes $\beta$ by Bryan and Leung \cite{brl}. 
The divisibility 2 case was settled
by Lee and Leung 
 in \cite{ll}. A complete proof of the
Yau-Zaslow formula for all divisibilities was given in 
\cite{KMPS}. 
Our approach to 
Theorem 1 
provides a completely new proof of the  Yau-Zaslow
formula for all divisibilities (which avoids the mirror calculation of the STU model
and the Harvey-Moore identity used in \cite{KMPS}).

Conjecture 2 for primitive classes $\beta$ is connected
to Euler characteristics of the moduli spaces of
stable pairs on $K3$ surfaces by the GW/P correspondence of \cite{PT1,PT2}.
A proof of Conjecture 2 for primitive classes is given in \cite{MPT}
relying upon the Euler characteristic calculations of
Kawai and Yoshioka \cite{ky}.
For cases where  $g>0$ and $\beta$ is not primitive, 
Theorem 1 is a new result.

The cases understood before are very special.
If the genus is 0,
the calculation can be moved via Noether-Lefschetz theory to the genus 0
Gromov-Witten theory of
toric varieties using the hyperplane principle for $K3$-fibrations \cite{KMPS}.
If the class $\beta$ is irreducible, the moduli space of stable pairs is 
nonsingular \cite{ky}, and the calculation can be moved to  stable pairs \cite{MPT}.
The difficulty for positive genus imprimitive curves -- which are essentially {\em all} curves -- lies in
the complexity of the moduli spaces.
There is no effective hyperplane
principle in higher genus,  and the moduli spaces of stable maps and stable pairs are both
highly singular. 

Y. Toda has undertaken a parallel study of the  Euler characteristic
(following Joyce)
of
the moduli spaces of stable pairs on $K3$ surfaces \cite{toda}.
His results -- together with further multiple cover conjectures 
which are still open -- are
connected to an Euler characteristic version of the KKV formula.  
Our methods and results essentially concern the
virtual class and thus do not imply (nor are implied by) Toda's paper \cite{toda}.
In fact, the motivation of \cite{toda} was the original KKV conjecture
proven here.

\subsection{GW/P correspondence}\label{gwp4}
The Katz-Klemm-Vafa formula concerns integrals over the moduli space of stable maps. Our strategy is to transform the calculation to the theory of stable pairs.
Let $\widetilde{\proj^2 \times \proj^1}$ be the blow up
of $\proj^2\times \proj^1$ in a point.
Consider a nonsingular anticanonical Calabi-Yau 3-fold hypersurface,
$$\XX \subset \widetilde{\proj^2 \times \proj^1} \times \proj^1\,.$$
The projection onto the last factor,
\begin{equation}\label{pxxp1}
\pi_3: \XX \rightarrow \proj^1 \,, 
\end{equation}
 determines a 1-parameter family of anticanonical
$K3$ surfaces in $\widetilde{\proj^2 \times \proj^1}$.
The interplay between the Gromov-Witten, stable pairs, and
Noether-Lefschetz theories for the family $\pi_3$ will be used
to transform Theorem 1 to nontrivial claims about of the moduli of sheaves
on $K3$-fibrations.

The KKV formula (conjecturally) evaluates the integrals $R_{g,\beta}$ occurring
in the reduced Gromov-Witten theory of a $K3$ surface $S$.
If we view $S$ as a fiber of $\pi_3$, then 
$$\beta\in\text{Pic}(S) \subset H^2(S,\mathbb{Z}) \cong H_2(S,\mathbb{Z})$$
determines a fiber class in $H_2(\XX,\mathbb{Z})$ by push-forward.
We consider both the Gromov-Witten and stable pairs invariants of
$\XX$ in $\pi_3$-fiber curve classes.
The GW/NL correspondence of \cite{gwnl}
precisely relates the Gromov-Witten theory of $\XX$ in 
fiber classes with the Noether-Lefschetz numbers of the
family and the integrals $R_{g,\beta}$.
We prove a  P/NL correspondence 
which establishes a parallel relationship between the 
stable pairs theory of $\XX$ in 
fiber classes with the same Noether-Lefschetz numbers 
and the invariants $\widetilde{R}_{n,\beta}$ defined as follows.

\subsection{Stable pairs and $K3$ surfaces} \label{spaks}
Let $S$ be a nonsingular projective $K3$ surface with
a nonzero effective curve class $\beta\in \text{Pic}(S)$.
We define here the
stable pairs analogue $\widetilde{R}_{n,\beta}$ of 
the reduced Gromov-Witten invariants $R_{g,\beta}$ of $S$.

For Gromov-Witten invariants, we defined 
$R_{g,\beta}$ directly \eqref{veq} in terms of the moduli of stable maps
to $S$ and observed the result calculated the contributions of the special
fiber
$S$ to the Gromov-Witten theories of all
families \eqref{veqq} appropriately transverse to the local Noether-Lefschetz
locus corresponding to  $\beta$.
The geometry of stable pairs is
more subtle. While the support of a stable pair may probe thickenings of
the special fiber
$S\subset T$ of \eqref{veqq}, the image of a
 stable map does not. As a result, we
will {\em define}
$\widetilde{R}_{n,\beta}$ via the geometry of appropriately
transverse families of $K3$ surfaces. Later in Section \ref{ped},
we will see how to
define $\widetilde{R}_{n,\beta}$ via the intrinsic geometry of $S$.

Let $\alpha\in \text{Pic}(S)$ be a nonzero class
which is both effective and primitive.
Let $T$ be a nonsingular 3-dimensional quasi-projective
 variety,
$$\epsilon: T \rightarrow (\Delta,0)\,,$$
fibered in $K3$ surfaces over a pointed
curve $(\Delta,0)$
satisfying:
\begin{enumerate}
\item[(i)] $\Delta$ is a nonsingular quasi-projective curve,
\item[(ii)] $\epsilon$ is smooth, projective, and $\epsilon^{-1}(0) \stackrel{\sim}{=} S$,
\item[(iii)] the local 
Noether-Lefschetz locus $\text{NL}(\alpha)\subset \Delta$ 
corresponding to
the class $\alpha \in \text{Pic}(S)$ is the
{\em reduced} point $0\in \Delta$.
\end{enumerate}

The class $\alpha \in \text{Pic}(S)$ is {\em $m$-rigid}
with respect to the family $\epsilon$ if the
following further condition is satisfied:
\begin{enumerate}
\item[$(\star)$] for every
 {\em effective} decomposition{\footnote{An effective
decomposition requires all parts $\gamma_i$ to be effective
divisors.}} 
$$m\alpha=\sum_{i=1}^l \gamma_i
\in \text{Pic}(S)\,,$$
the  local Noether-Lefschetz locus $\text{NL}(\gamma_i)
\subset \Delta$ corresponding to
each  class $\gamma_i \in \text{Pic}(S)$ is the
{\em reduced} point $0\in \Delta$.
\end{enumerate}
Let $\text{Eff}({m}\alpha) \subset \text{Pic}(S)$
denote the subset of effective summands of 
$m\alpha$.  Condition $(\star)$ implies (iii).

Assume $\alpha$ is ${m}$-rigid
with respect to the family $\epsilon$.
By property $(\star)$,
there is a compact, open, and closed component  
$$P_n^\star(T,\gamma) \subset P_n(T,\gamma)$$ 
parameterizing 
 stable pairs{\footnote{For any class $\gamma\in \text{Pic}(S)$,
we denote the push-forward to $H_2(T,\mathbb{Z})$ also
by $\gamma$.
Let $P_n(T,\gamma)$ be the moduli space of stable pairs
of Euler characteristic $n$ and class $\gamma \in H_2(T,\mathbb{Z})$.}}
supported set-theoretically over the point 
$0\in \Delta$ for {\em every} effective summand $\gamma\in 
\text{Eff}({m}\alpha)$.

\vspace{7pt}
\noindent{\bf Definition.} 
{\em Let $\alpha\in \text{\em Pic}(S)$ be a nonzero class
which is both effective and primitive.
Given a family
$\epsilon: T \rightarrow (\Delta,0)$ 
satisfying conditions (i), (ii), and $(\star)$ for $m\alpha$, let
 \begin{multline}
\label{fbbb}
\sum_{n\in \mathbb{Z}}
\widetilde{R}_{n,m\alpha}(S)\ q^n  = \\
\text{\em Coeff}_{v^{m\alpha}} \left[
\log\left(1 + \sum_{n\in \mathbb{Z}} \sum_{\gamma\in \text{\em Eff}({m}\alpha)} 
q^n v^{\gamma}
\int_{[P_n^\star(T,\gamma)]^{\vir}} 1\right)\right]\,.
\end{multline}}
\vspace{7pt}

In Section \ref{depped}, we will prove $\widetilde{R}_{n,m\alpha}$
depends {\em only} upon $n$, $m$ and 
$\langle \alpha,\alpha\rangle$, and {\em not}
upon $S$ nor the family $\epsilon$. 
The dependence result is nontrivial and requires 
new techniques to establish. 
The existence{\footnote{Constructions are
given in Section \ref{c123}.}} of ${m}$-rigid families $\epsilon$
for suitable $S$ and $\alpha$ (primitive with fixed
$\langle \alpha,\alpha \rangle$) then
defines
$\widetilde{R}_{n,m\alpha}$ for all $m$.

The appearance of the logarithm in 
\eqref{fbbb} has a simple explanation.
The Gromov-Witten invariants $R_{g,m\alpha}$ are defined
via moduli spaces of stable maps with {\em connected} domains. 
Stable pairs invariants count sheaves with 
possibly {\em disconnected} support curves. The logarithm
accounts for the difference.

The stable pairs potential $\widetilde{F}_{{\alpha}}(q,v)$ 
for classes proportional
to the primitive class ${\alpha}$
is 
\begin{equation} \label{hbbb}
\widetilde{F}_{{\alpha}}=
\sum_{n\in \mathbb{Z}}\   \sum_{m>0}\   \widetilde{R}_{n,m\alpha} \ 
q^{n} 
v^{m{\alpha}}\,.
\end{equation}
By the properties of $\widetilde{R}_{n,m\alpha}$, the potential
$\widetilde{F}_\alpha$
depends only upon the norm square $\langle \alpha,\alpha\rangle$.

Via the correspondences with Noether-Lefschetz theory, we prove that the
GW/P correspondence \cite{PP,PT1} for suitable 3-folds
fibered in $K3$ surfaces implies the
following basic result for the 
potentials \eqref{hbb} and \eqref{hbbb}.

\begin{Theorem} \label{vvee}
After the variable change $-q=e^{i\lambda}$, the
potentials are equal:
$$F_{\alpha}(\lambda, v) =  \widetilde{F}_{\alpha}(q,v)$$
\end{Theorem}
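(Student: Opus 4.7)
The plan is to derive Theorem~\ref{vvee} from the Gromov-Witten/Pairs correspondence applied to the Calabi-Yau 3-fold $\XX$ of Section~\ref{gwp4}, with both sides decomposed via Noether-Lefschetz theory. Using the $K3$-fibration $\pi_3: \XX \to \proj^1$ of \eqref{pxxp1}, I would realize $S$ as a fiber at a chosen point $0 \in \proj^1$. By choosing $\XX$ appropriately within the class of anticanonical hypersurfaces in $\widetilde{\proj^2 \times \proj^1} \times \proj^1$, one arranges that in a neighborhood $\Delta \subset \proj^1$ of $0$ the local family $\epsilon: T \to (\Delta, 0)$ is $m$-rigid in the sense of $(\star)$ for every $m$ contributing to the expansions of $F_\alpha$ and $\widetilde{F}_\alpha$.

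Next, apply the GW/NL correspondence of \cite{gwnl} to express the $\pi_3$-fiber-class reduced Gromov-Witten invariants of $\XX$ as sums of products of Noether-Lefschetz numbers and reduced $K3$ integrals $R_{g,\gamma}$. By the $m$-rigidity assumption $(\star)$, for class $m\alpha$ the only contribution from the neighborhood $\Delta$ is supported at $0$ with Noether-Lefschetz multiplicity one, so the local contribution to the fiber-class Gromov-Witten partition function of $\XX$ assembles precisely into $F_\alpha(\lambda, v)$.

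Then I would establish a parallel P/NL correspondence: the $\pi_3$-fiber-class stable pairs invariants of $\XX$ decompose into Noether-Lefschetz numbers times stable pairs contributions supported set-theoretically on (thickenings of) $K3$ fibers. The logarithm in the definition \eqref{fbbb} of $\widetilde{R}_{n,m\alpha}$ is exactly the passage from the disconnected-support stable pairs generating function (summed over $\text{Eff}(m\alpha)$) to a connected-style potential, which is what must pair with the intrinsically connected $F_\alpha$. Thus the contribution from the rigid window around $0 \in \Delta$ to the fiber-class P partition function assembles into $\widetilde{F}_\alpha(q,v)$. The GW/P correspondence of \cite{PP,PT1} applied to the Calabi-Yau 3-fold $\XX$ then yields equality of the two $\pi_3$-fiber-class partition functions after the substitution $-q = e^{i\lambda}$; by rigidity both sides localize to the contribution at $0 \in \Delta$ for classes in $\text{Eff}(m\alpha)$, and Theorem~\ref{vvee} follows.

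The main obstacle will be formulating and proving the P/NL correspondence with precisely the right logarithmic structure. Unlike stable map images, stable pair supports can probe infinitesimal thickenings of $S \subset T$, so the fiberwise contribution requires a careful deformation-invariance analysis for stable pairs on nonreduced neighborhoods of $K3$ fibers, together with an identification of the virtual class on $P_n^\star(T,\gamma)$ compatible with the global virtual class on $P_n(\XX,\gamma)$. The definition of $\widetilde{R}_{n,m\alpha}$ via the rigid family $\epsilon$ is engineered to make this matching clean, but verifying that the sum over $\text{Eff}(m\alpha)$ with the logarithm extracts exactly the piece paired with $F_\alpha$ under GW/P is the substantive technical step.
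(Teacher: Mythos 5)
You have the right ingredients on the input side (GW/NL, a parallel P/NL statement, GW/P for $\XX$, and the role of the logarithm for connected versus disconnected counts), but the central mechanism of your argument fails. You propose to choose $\XX$ so that near a fiber $S$ the family \eqref{pxxp1} is $m$-rigid, and then to ``localize'' both fiber-class partition functions of $\XX$ to the contribution at $0\in\Delta$. This cannot be arranged: every $\pi_3$-fiber curve class of $\XX$ restricts on the generic fiber to a class in the polarizing lattice $\Lambda$, and such a class remains of type $(1,1)$ on \emph{every} fiber. The relevant local Noether-Lefschetz loci are therefore all of the base (the degenerate, discriminant-zero case, where the ``number'' is the degree $-2$ of the dual Hodge bundle, not an intersection multiplicity one at a point), so condition $(\star)$ is never satisfied for the family $\pi_3$ and there is no rigid window in which the invariants of $\XX$ isolate $R_{g,m\alpha}$ or $\widetilde{R}_{n,m\alpha}$. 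This is precisely why the paper defines $\widetilde{R}_{n,m\alpha}$ using auxiliary families $\epsilon:T\to(\Delta,0)$ constructed inside the moduli space of polarized $K3$ surfaces (Section \ref{c123}), entirely separate from $\XX$, and never attempts to read the $K3$ invariants off a single fiber of $\pi_3$.

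Consequently the substantive step of the actual proof is missing from your proposal: since the fiber-class invariants of $\XX$ are NL-weighted sums over all $(m,h)$ mixing contributions from all fibers (Theorems \ref{ffc} and \ref{ffc2}), one must \emph{invert} these constraints. The paper does this via the triangularity of Proposition \ref{bbh}: for fixed $(g,m,h)$ one picks the degree vector $(2ms+3m,\,3ms,\,-2m(s+1))$, for which the Hodge index theorem forces all contributions with $h'>h$ to vanish and the leading $h'=h$ coefficient to be the nonzero degenerate NL number $-2$, so the system determines $r_{g,m,h}$ (and identically $\widetilde{r}_{g,m,h}$) uniquely from the BPS counts of $\XX$ and the initial data at $h\le 0$. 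Your proof also omits the finiteness input needed to pass from the GW/P equality of partition functions to equality of BPS counts $n^{\XX}_{g,\gamma}=\widetilde{n}^{\XX}_{g,\gamma}$ (Proposition \ref{gg11}): the two BPS expansions run in opposite directions in $g$, and one needs the vanishing $\widetilde{r}_{g<0,m,h}=0$ (Corollary \ref{ww3}, itself a consequence of the multiple cover analysis), as well as the matching of the $h\le 0$ initial conditions. Without the inversion step and these vanishing statements, the GW/P correspondence for $\XX$ alone does not yield $F_\alpha=\widetilde{F}_\alpha$.
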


In order to show the variable change of Theorem \ref{vvee}
is well defined, a rationality result is required. 
In Section \ref{mulcov}, we prove
for all $m>0$, 
$$\left[ \widetilde{F}_{\alpha}\right]_{v^{m\alpha}} = 
\sum_{n\in \mathbb{Z}}\     \widetilde{R}_{n,m\alpha} \ 
q^{n}$$
is the 
Laurent expansion of a rational function in $q$.

\subsection{Multiple covers}
While Theorem \ref{vvee} transforms
Theorem \ref{kkbb} to a statement about stable pairs,
the evaluation must still be carried out.

The logarithm in definition \eqref{fbbb}
plays no role for the $v^\alpha$ coefficient,
$$\left[ \widetilde{F}_{\alpha}\right]_{v^{\alpha}} =
\sum_{n\in \mathbb{Z}}  q^n 
\int_{[P_n^\star(T,\alpha)]^{\vir}} 1\,.$$
If $\alpha$ is irreducible (which can be
assumed by deformation invariance), $P_n^\star(T,\alpha)$ 
is a nonsingular variety of
dimension $\langle\alpha,\alpha\rangle+n +1$. 
If $T$ is taken to be Calabi-Yau, the obstruction theory on 
$P_n^\star(T,\alpha)$
is self-dual and
$$
\sum_{n\in \mathbb{Z}}  q^n 
\int_{[P_n(T,\alpha)]^{\vir}} 1
= \sum_{n\in \mathbb{Z}} q^n (-1)^{\langle \alpha,\alpha \rangle+n+1}
\ \chi_{\text{top}} \left( P^\star_n(T,\alpha)\right)
\,.$$
The
Euler characteristic calculations of
Kawai and Yoshioka \cite{ky} then imply
the stable pairs KKV prediction for primitive
$\alpha\in \text{Pic(S)}$. A detailed
discussion can be found in Appendix C of
\cite{PT2}.

In order to prove the KKV conjecture for 
$\left[ \widetilde{F}_{\alpha}\right]_{v^{m\alpha}}$ for all $m>1$,
we find
new multiple cover formulas for stable pairs on $K3$ surfaces.
In fact, the multiple cover structure implicit in the KKV
formula is much more natural on the stable pairs side.

By degeneration arguments and deformation to the normal cone, we reduce 
 the stable pairs multiple cover formula to a
 calculation on the trivial $K3$-fibration $S\times\C$,
 where $\C^*$-localization applies. 
A crucial point here is a vanishing result:
 for each $k$ only the simplest $k$-fold multiple covers contribute --
those stable pairs which are a trivial $k$-times thickening in the $\C$-direction of a stable pair on $S$. 
The moduli space of such trivial thickenings 
is isomorphic to 
 the moduli space of stable pairs supported on $S$. 
This simple geometric relationship provides the key to 
the stable pairs multiple cover formula.

\subsection{Guide to the proof}
The main
steps in our proof of the Katz-Klemm-Vafa formula are summarized as follows:

\begin{enumerate}

\item[(i)] We express the Gromov-Witten invariants of the 
anticanonical hypersurface, 
$$\XX \subset \widetilde{\proj^2\times\proj^1}\times \proj^1\,,$$ 
in terms of the Noether-Lefschetz numbers of $\pi_3$ and the reduced invariants $R_{g,\beta}$ via the GW/NL correspondence.

\item[(ii)] We express the stable pairs invariants of $\XX$ in terms of the Noether-Lefschetz numbers of $\pi_3$ and the stable pairs invariants $\widetilde{R}_{n,\beta}$ via the P/NL correspondence.

\item[(iii)] The GW/P conjecture, proved for the complete intersection $\XX$ in 
\cite{PP}, relates the Gromov-Witten and stable pairs invariants of the 3-fold
$\XX$.

\item[(iv)] 
By inverting the relations (i) and (ii) and using the correspondence (iii),
we establish the equivalence between the sets of numbers $R_{g,\beta}$ and $\widetilde{R}_{n,\beta}$  stated in Theorem \ref{vvee}.

\item[(v)] The invariant $\widetilde{R}_{n,\beta}(S)$ is  defined
via an appropriately transverse family 
$$\epsilon: T \rightarrow (\Delta,0)\,, \ \ \ 
\epsilon^{-1}(0)\stackrel{\sim}{=} S\,. $$
Degenerating the total space $T$ to the normal cone of $S\subset T$, we 
reduce $\widetilde{R}_{n,\beta}(S)$
to a calculation of stable pairs integrals over 
a rubber target. After further geometric arguments,
 the calculation is expressed in terms of
 the reduced  stable pairs invariants 
of the trivial $K3$-fibration $S\times\PP^1$.
A careful analysis of several different obstruction theories is required here.

\item[(vi)] By $\C^*$-localization on $S\times\PP^1$,
we reduce further to a calculation on the moduli space of $\C^*$-fixed stable pairs on $S\times\C$.

\item[(vii)] We prove a vanishing result: 
for each $k$ only the simplest $k$-fold multiple covers contribute. 
We only need calculate the contributions of stable pairs which are a trivial $k$-times thickening (in the $\C$-direction) of a stable pair scheme-theoretically
supported
on $S$.

\item[(viii)] The resulting moduli spaces are isomorphic to $P_n(S,\beta)$, the moduli space of stable pairs on $S$.

\item[(ix)] The resulting integral is calculated in \cite{KT1, KT2} in 
terms of universal formulae in topological constants. In particular, the result does not depend on the divisibility of $\beta$.

\item[(x)] We may therefore assume  $\beta$ to be primitive, and moreover,
by deformation invariance, to be irreducible.  The moduli space
$P_n(S,\beta)$ is then nonsingular. The integrals 
$\widetilde{R}_{n,\beta}(S)$
can be expressed in terms of those evaluated by Kawai-Yoshioka, as explained in \cite{MPT,PT2}.
\end{enumerate}

The paper starts with a discussion of Noether-Lefschetz theory for Gromov-Witten invariants
of $K3$-fibrations. The GW/NL correspondence of \cite{gwnl} and Borcherds' results are
reviewed in Section \ref{nnll}.  A crucial property of the family \eqref{pxxp1}
is established in Proposition \ref{bbh} 
of   Section \ref{tttt}: the BPS states and the Noether-Lefschetz numbers for the
family \eqref{pxxp1} uniquely determine {\em all} the  integrals $R_{g,\beta}$
in the reduced Gromov-Witten theory of $K3$ surfaces.
The result follows by finding a triangularity in the GW/NL correspondence.

Theorem \ref{vvee} constitutes half of our proof of the KKV conjecture. 
In Section \ref{t22}, we prove Theorem \ref{vvee} {\em assuming} the P/NL correspondence.
In fact, Theorem \ref{vvee} is an easy consequence of the GW/NL correspondence,
the P/NL correspondence, and the invertibility established in Proposition \ref{bbh}.
The precise statement of the P/NL correspondence is given in Section \ref{spnl}, but the
proof is presented later in Section \ref{pnlc}.

Sections \ref{k3tcl}--\ref{pnlc} mainly concern the geometry of the 
moduli of stable pairs on $K3$ surfaces and
$K3$-fibrations. The first topic is a detailed
study of the trivial fibration $S\times \C$. In Sections 
\ref{k3tcl} and \ref{van},
an analysis of the perfect obstruction theory of the
$\C^*$-fixed loci of the moduli space of stable pairs on 
$S\times \C$ is presented. We find only the simplest
$\C^*$-fixed stable pairs have nonvanishing contributions.
Moreover, these contributions
directly yield multiple cover formulas. The move from Gromov-Witten
theory to stable pairs was made precisely to exploit
the much simpler multiple cover structure on  the sheaf theory side.

The main results
 of Sections \ref{relt} and \ref{mulcov} concern the expression of
$\widetilde{R}_{n,\beta}$ in terms of the stable pair theory
of $S\times \C$. A careful study of the obstruction theory
is needed. The outcome is a multiple cover formula for
$\widetilde{R}_{n,\beta}$. 

After we establish the P/NL correspondence for the family $\XX$ 
in Section \ref{pnlc}, the proof of the Katz-Klemm-Vafa conjecture
is completed in Section \ref{kkvcon} by transforming the
multiple cover formula to the Gromov-Witten invariants $R_{g,\beta}$.
As a consequence of the KKV formula, the  Gromov-Witten theory of
$K3$-fibrations in vertical classes can be effectively computed.
As an example, the classical pencil of quartic $K3$ surfaces
is treated in Section \ref{qk3}.

A summary of our notation for the various Gromov-Witten and
stable pairs invariants for $K3$ surfaces and $K3$-fibrations
is given in Appendix A. Appendix B contains a discussion of
degenerations of $\XX$ needed for the Gromov-Witten/Pairs
correspondence of \cite{PP}. 
Appendix C
contains  results about
cones, the Fulton total Chern class, and virtual cycles.

\subsection{Acknowledgements}
We thank B. Bakker, J. Bryan, I. Dolgachev, 
B. Fantechi, G. Farkas, T. Graber, S. Katz, A. Klemm, M. Kool,
R. Laza, A. Marian, D. Maulik, G. Oberdieck, A. Oblomkov, 
A. Okounkov, A. Pixton, E. Scheidegger, and V. Shende
 for many conversations over the years related to
the reduced Gromov-Witten theory of $K3$ surfaces. Thanks also to
two anonymous referees for their thorough reading and useful comments.
We are very grateful to A. Kresch for his help with the cone results
in the Appendix.

R.P. was partially supported by grants SNF-200021-143274 and ERC-2012-AdG-320368-MCSK.
R.T. was partially supported by EPSRC programme grant EP/G06170X/1.
We thank the {\em Forschungsinstitut 
f\"ur Mathematik} at ETH Z\"urich and the {\em Centro Stefano Franscini} in Ascona
for support. Crucial progress was made during discussions at Imperial College in the
fall of 2012.

The paper was completed in April 2014. Further developments in 
 the Gromov-Witten and stable pairs theories of $K3$ geometries  
can be found in \cite{KKP}, where the motivic invariants are considered, and \cite{ObPan},
where the 3-fold $K3\times E$ is studied.

\section{Noether-Lefschetz theory}
\label{nnll}
\subsection{Lattice polarization} \label{lpol}
Let $S$ be a nonsingular $K3$ surface.
A primitive class 
$L\in \text{Pic}(S)$ is a {\em quasi-polarization}
if
$$\langle L,L \rangle >0  \ \ \ \text{and} \ \  \ \langle L,[C]\rangle
 \geq 0 $$
for every curve $C\subset S$.
A sufficiently high tensor power $L^n$
of a quasi-polarization is base point free and determines
a birational morphism
$$S\rightarrow \widetilde{S}$$
contracting A-D-E configurations of $(-2)$-curves on $S$.
Hence, every quasi-polarized $K3$ surface is algebraic.

Let $\Lambda$ be a fixed rank $r$  
primitive{\footnote{A sublattice
is primitive if the quotient is torsion free.}}
sublattice
\begin{equation*} 
\Lambda \subset U\oplus U \oplus U \oplus E_8(-1) \oplus E_8(-1)
\end{equation*}
with signature $(1,r-1)$, and
let 
$v_1,\ldots, v_r \in \Lambda$ be an integral basis.
The discriminant is
$$\Delta(\Lambda) = (-1)^{r-1} \det
\begin{pmatrix}
\langle v_{1},v_{1}\rangle & \cdots & \langle v_{1},v_{r}\rangle  \\
\vdots & \ddots & \vdots \\
\langle v_{r},v_{1}\rangle & \cdots & \langle v_{r},v_{r}\rangle 
\end{pmatrix}\,.$$
The sign is chosen so $\Delta(\Lambda)>0$.

A {\em $\Lambda$-polarization} of a $K3$ surface $S$   
is a primitive embedding
$$j: \Lambda \rightarrow \mathrm{Pic}(S)$$ 
satisfying two properties:
\begin{enumerate}
\item[(i)] the lattice pairs 
$\Lambda \subset U^3\oplus E_8(-1)^2$ and
$\Lambda\subset 
H^2(S,\mathbb{Z})$ are isomorphic
 via an isometry which restricts to the identity on $\Lambda$,
 \item[(ii)]
$\text{Im}(j)$ contains
a {quasi-polarization}. 
\end{enumerate}
By (ii), every $\Lambda$-polarized $K3$ surface is algebraic.

The period domain $M$ of Hodge structures of type $(1,20,1)$ on the lattice 
$U^3 \oplus E_8(-1)^2$   is
an analytic open set
 of the 20-dimensional  nonsingular isotropic 
quadric $Q$,
$$M\subset Q\subset \proj\big(    (U^3 \oplus E_8(-1)^2 )    
\otimes_\Z \com\big)\,.$$
Let $M_\Lambda\subset M$ be the locus of vectors orthogonal to 
the entire sublattice $\Lambda \subset U^3 \oplus E_8(-1)^2$.

Let $\Gamma$ be the isometry group of the lattice 
$U^3 \oplus E_8(-1)^2$, and let
 $$\Gamma_\Lambda \subset \Gamma$$ be the
subgroup  restricting to the identity on $\Lambda$.
By global Torelli,
the moduli space $\mathcal{M}_{\Lambda}$ 
of $\Lambda$-polarized $K3$ surfaces 
is the quotient
$$\mathcal{M}_\Lambda = M_\Lambda/\Gamma_\Lambda\,.$$
We refer the reader to \cite{dolga} for a detailed
discussion.

Let $\widetilde{S}$ be a $K3$ surface with A-D-E singularities, and let
$$\tilde{j}: \Lambda \rightarrow \mathrm{Pic}(\widetilde{S})$$ 
be a primitive embedding. Via pull-back along
the desingularization,
$$S\rightarrow \widetilde{S},$$
we obtain a composition $j: \Lambda \rightarrow  \mathrm{Pic}({S})$.
If $(S,j)$ satisfies (i) and (ii), 
we define  $(\widetilde{S},\tilde{j})$ to be
 a $\Lambda$-polarized {\em singular} $K3$ surface. Then
$(S,j)$ is 
a $\Lambda$-polarized nonsingular
$K3$ surface canonically associated to $(\widetilde{S},\tilde{j})$.

\subsection{Families} \label{fams}

Let $X$ be a nonsingular 
projective 3-fold  equipped with
line bundles
$$L_1, \ldots, L_r  \ \rightarrow X$$
 and a map
 $$\pi: X \rarr C$$
to a nonsingular complete curve.

The tuple $(X,L_1,\ldots, L_r, \pi)$ is a
{\em 1-parameter family of $\Lambda$-polarized
$K3$ surfaces}
if 
\begin{enumerate}
\item[(i)] the fibers $(X_\xi, L_{1,\xi}, \ldots, L_{r,\xi})$
are $\Lambda$-polarized $K3$ surfaces with at worst {\em a single
nodal singularity}
 via
$$v_i \mapsto L_{i,\xi}$$
for every $\xi\in C$,
\item[(ii)] there exists a $\lambda^\pi\in \Lambda$
which is a quasi-polarization of all fibers of $\pi$ 
simultaneously.
\end{enumerate}
The family $\pi$ yields a morphism,
$$\iota_\pi: C \rarr \mathcal{M}_{\Lambda}\,,$$
to the moduli space of $\Lambda$-polarized $K3$ surfaces.

Let $\lambda^{\pi}= \lambda^\pi_1 v_1+\dots +\lambda^\pi_r v_r$.
A vector $(d_1,\ldots,d_r)$ of integers is {\em positive} if
$$\sum_{i=1}^r \lambda^\pi_i d_i >0\,.$$
If $\beta \in \text{Pic}(X_\xi)$ has intersection numbers
$$d_i = \langle L_{i,\xi},\beta \rangle\,,$$
then $\beta$ has positive degree with respect to the
quasi-polarization if and only if  $(d_1,\dots,d_r)$
is positive.

\subsubsection{Noether-Lefschetz divisors} \label{nld}
Noether-Lefschetz numbers are defined in \cite{gwnl}
by the intersection of $\iota_\pi(C)$ with Noether-Lefschetz 
divisors in $\mathcal{M}_\Lambda$.
We briefly review the definition of the 
Noether-Lefschetz divisors.

Let $(\mathbb{L}, \iota)$ be a rank $r+1$
lattice  $\mathbb{L}$
with an even symmetric bilinear form $\langle\, ,\rangle$ and a primitive embedding
$$\iota: \Lambda \rightarrow \mathbb{L}\,.$$
Two data sets 
$(\mathbb{L},\iota)$ and $(\mathbb{L}',  \iota')$
are isomorphic if and only if there exist an isometry relating $\mathbb{L}$
and $\mathbb{L}'$ 
which takes $\iota$ to $\iota'$.
The first invariant of the data $(\mathbb{L}, \iota)$ is
the discriminant
 $\Delta \in \mathbb{Z}$ of 
$\mathbb{L}$.

An additional invariant of $(\mathbb{L}, \iota)$ can be 
obtained by considering 
any vector $v\in \mathbb{L}$ for which{\footnote{Here, $\oplus$
is used just for the additive structure (not orthogonal
direct sum).}}
\begin{equation}\label{ccff} 
\mathbb{L} = \iota(\Lambda) \oplus \mathbb{Z}v\,.
\end{equation}
The pairing
$$\langle v,\cdot\rangle : \Lambda \rightarrow \mathbb{Z}$$
determines an element of $\delta_v\in \Lambda^*$.
Let 
$G = \Lambda^{*}/\Lambda$
be the quotient defined via the injection
$\Lambda \rightarrow \Lambda^*$
 obtained from the pairing $\langle\, ,\rangle$ on $\Lambda$.
The group $G$ is abelian of order given by  the 
discriminant $|\Delta(\Lambda)|$.
The image 
$$\delta \in G/\pm$$
of $\delta_v$ is easily seen to be independent of $v$ satisfying 
\eqref{ccff}. The invariant $\delta$ is the {\em coset} of $(\mathbb{L},\iota)$

By elementary arguments, two data sets $(\mathbb{L},\iota)$ and $(\mathbb{L}',\iota')$
of  rank $r+1$ are isomorphic if and only if the discriminants and cosets are
equal.

Let $v_1,\ldots, v_r$ be an integral basis of $\Lambda$ as before.
The pairing of $\mathbb{L}$ 
with respect to an extended basis $v_{1}, \dots, v_{r},v$
is encoded in the matrix
$$\mathbb{L}_{h,d_{1},\dots,d_{r}} = 
\begin{pmatrix}
\langle v_{1},v_{1}\rangle & \cdots & \langle v_{1},v_{r}\rangle & d_{1} \\
\vdots & \ddots & \vdots & \vdots\\
\langle v_{r},v_{1}\rangle & \cdots & \langle v_{r},v_{r}\rangle & d_{r}\\
d_{1} & \cdots & d_{r} & 2h-2
\end{pmatrix}.$$
The discriminant is
$$\Delta(h,d_{1},\dots,d_{r}) 
= (-1)^r\mathrm{det}(\mathbb{L}_{h,d_{1},\dots,d_{r}})\,.$$
The coset $\delta(h, d_{1},\dots,d_{r})$ is represented by the functional
$$v_i \mapsto d_i\,.$$

The Noether-Lefschetz divisor $P_{\Delta,\delta} \subset \mathcal{M}_{\Lambda}$
is the closure of the locus
 of $\Lambda$-polarized $K3$ surfaces $S$ for which
$(\mathrm{Pic}(S),j)$ has rank $r+1$, discriminant $\Delta$, and coset $\delta$.
By the Hodge index theorem{\footnote{The intersection
form on $\mathrm{Pic}(S)$ is
nondegenerate for an algebraic $K3$ surface.
Hence, a rank $r+1$ sublattice of $\mathrm{Pic}(S)$ which
contains a quasi-polarization must have signature $(1,r)$
by the Hodge index theorem.}}, $P_{\Delta,\delta}$ is empty unless $\Delta > 0.$  By definition, $P_{\Delta,\delta}$ is a reduced subscheme.

Let $h, d_{1}, \dots, d_{r}$ determine a positive discriminant
$$\Delta(h,d_{1},\dots,d_{r}) > 0\,.$$  The Noether-Lefschetz divisor
$D_{h, (d_{1},\dots,d_{r})}\subset \mathcal{M}_{\Lambda}$ is defined by 
the weighted sum
$$D_{h,(d_{1},\dots,d_{r})} 
= \sum_{\Delta,\delta} m(h,d_1,\dots,d_r|\Delta,\delta)\cdot[P_{\Delta,\delta}]$$
where the multiplicity $m(h,d_1,\dots,d_r|\Delta,\delta)$ is the number
of elements $\beta$ of the lattice $(\mathbb{L},\iota)$ of 
type $(\Delta,\delta)$ satisfying
\begin{equation}\label{34f}
\langle \beta, \beta \rangle = 2h-2\,,\ \  \langle \beta, v_{i}\rangle = d_{i}\,.
\end{equation}
If the multiplicity is nonzero, then $\Delta | \Delta(h,d_{1},\dots,d_{r})$ so only 
finitely many divisors appear in the 
above sum.

If $\Delta(h,d_{1},\dots,d_{r}) = 0$, the divisor $D_{h,(d_1,\dots,d_r)}$
has a different definition.
The tautological
line bundle $\mathcal{O}(-1)$ is $\Gamma$-equivariant on the period domain
$M_\Lambda$ and descends to the {\em Hodge line bundle} 
$$\mathcal{K} \rightarrow \mathcal{M}_{\Lambda}\,.$$
We define
$D_{h,(d_{1},\dots,d_{r})} = \mathcal{K}^{*}$
if there exists $v\in \Lambda$ 
satisfying{\footnote{If $\Delta(h,d_1,\dots,d_r)=0$ and \eqref{gmm12} holds, then
$\langle v,v \rangle=2h-2$ is forced.
Since the $d_i$ do not simultaneously vanish, $v\neq 0$.}}
\begin{equation} \label{gmm12}
\langle v_1,v\rangle=d_1, \  \langle v_2,v \rangle=d_2, \ \ldots \,,
\langle v_r,v\rangle=d_r\,.
\end{equation}
If $v$ satisfies \eqref{gmm12}, $v$ is unique.
If no such $v\in \Lambda$ exists, then 
$$D_{h,(d_1,\ldots,d_r)}=0\,.$$
In case $\Lambda$ is a unimodular lattice, such a $v$
always exists.
See \cite{gwnl} for an alternate view of degenerate intersection.

If $\Delta(h,d_{1},\dots,d_{r}) < 0$, the divisor 
$D_{h,(d_1,\dots,d_r)}$ on $\mathcal{M}_{\Lambda}$ is defined to vanish
by the Hodge index theorem.

\subsubsection{Noether-Lefschetz numbers}
Let $\Lambda$ be a lattice of discriminant $l=\Delta(\Lambda)$, and
let $(X,L_1,\ldots,L_r,\pi)$ be 
a 1-parameter family of $\Lambda$-polarized $K3$ surfaces.
The Noether-Lefschetz number $NL^\pi_{h,d_1,\dots,d_r}$ is
the  classical intersection
product
\begin{equation}\label{def11}
NL^\pi_{h,(d_1,\dots,d_r)} =\int_C \iota_\pi^*[D_{h,(d_1,\dots,d_r)}]\,.
\end{equation}

Let $\mathrm{Mp}_{2}(\mathbb{Z})$ be
the metaplectic double cover of $SL_{2}(\mathbb{Z})$. 
There is a canonical representation \cite{borch}
associated to $\Lambda$,
$$\rho_{\Lambda}^{*}: 
\mathrm{Mp}_{2}(\mathbb{Z}) \rightarrow \mathrm{End}(\mathbb{C}[G])\,,$$
where $G=\Lambda^*/\Lambda$.
The full set of Noether-Lefschetz numbers
$NL^\pi_{h,d_1,\dots,d_r}$ defines a vector valued
modular form
 $$\Phi^{\pi}(q) = \sum_{\gamma\in G} \Phi^{\pi}_{\gamma}(q)v_{\gamma} \in \com[[q^{\frac{1}{2l}}]]
\otimes \com[G]\,,$$
of weight $\frac{22-r}{2}$ and type $\rho_\Lambda^*$
by results{\footnote{While the results of the papers \cite{borch, kudmil}
have considerable overlap, we will follow the point of view of Borcherds.}}
 of Borcherds and Kudla-Millson \cite{borch,kudmil}.
The Noether-Lefschetz numbers are the coefficients{\footnote{If $f$ is a series in $q$,
 $f[k]$ denotes the coefficient of $q^k$. 
}}
 of the components of 
$\Phi^\pi$,
$$NL^{\pi}_{h,(d_1,\dots,d_r)} = \Phi^{\pi}_{\gamma}\left[ \frac{\Delta(h,d_1,\dots,d_r)}{2l}\right]$$
where $\delta(h,d_1,\dots,d_r) = \pm\gamma$.
The modular form results significantly constrain the Noether-Lefschetz numbers.

\subsubsection{Refinements} \label{reff12}
If  $d_1,\ldots,d_r$ do not simultaneously
vanish, refined Noether-Lefschetz divisors
are defined.
If $\Delta(h,d_1,\dots,d_r)>0$, 
$$D_{m,h,(d_1,\dots,d_r)}
\subset D_{h,(d_1,\dots,d_r)}$$ is defined
by requiring the class $\beta \in \text{Pic}(S)$ to satisfy \eqref{34f} and
have divisibility $m>0$. If $\Delta(h,d_1,\dots,d_r)=0$, then
$$D_{m,h,(d_1,\dots,d_r)}=D_{h,(d_1,\dots,d_r)}$$
if there exists $v\in \Lambda$
of divisibility
$m>0$ satisfying
\begin{equation*} 
\langle v_1,v\rangle=d_1, \  \langle v_2,v \rangle=d_2, \ \ldots \,,
\langle v_r,v\rangle=d_r\,.
\end{equation*}
If $v$ satisfies the above degree conditions, $v$ is unique.
If no such $v\in \Lambda$ exists, then 
$$D_{m,h,(d_1,\ldots,d_r)}=0\,.$$

A necessary condition for the existence of $v$ is the divisibility of
each $d_i$ by $m$. In case $\Lambda$ is a unimodular lattice,
$v$ exists if and only if $m$
is the greatest common divisor of $d_1,\ldots,d_r$.

Refined
Noether-Lefschetz numbers are defined by
\begin{equation}\label{def112}
NL^\pi_{m,h,(d_1,\dots,d_r)} =\int_C \iota_\pi^*[D_{m,h,(d_1,\dots,d_r)}]\,.
\end{equation}
The full 
set of Noether-Lefschetz numbers $NL^\pi_{h,(d_1,\dots,d_r)}$ is
easily shown to determine the refined numbers $NL^\pi_{m,h,(d_1,\dots,d_r)}$, 
see \cite{KMPS}.

\subsection{GW/NL correspondence} \label{gwnlcc}
The GW/NL correspondence intertwines
three theories associated to   
a 1-parameter family $$\pi:X \rightarrow C$$
 of $\Lambda$-polarized $K3$ surfaces:
\begin{enumerate}
\item[(i)] the Noether-Lefschetz numbers  of $\pi$,
\item[(ii)] the genus $g$ Gromov-Witten invariants of $X$,
\item[(iii)] the genus $g$ reduced Gromov-Witten invariants of the
$K3$ fibers. 
\end{enumerate}
The Noether-Lefschetz numbers (i) are classical intersection
products while the Gromov-Witten invariants (ii)-(iii) 
are quantum in origin. 
For (ii),
we view the theory in terms  the
Gopakumar-Vafa invariants{\footnote{A review of the definitions
will be given in Section \ref{bpss}.} \cite{GV1,GV2}.

Let $n_{g,(d_1,\dots,d_r)}^X$ denote the Gopakumar-Vafa invariant of $X$
in genus $g$ for $\pi$-vertical curve classes of degrees{\footnote{The
invariant $n^X_{g,(d_1,\dots,d_r)}$ may be a (finite) sum of $n_{g,\gamma}^X$
for $\pi$-vertical curve classes $\gamma \in H_2(X,\mathbb{Z})$.}} 
$d_1,\dots,d_r$
with respect to the line bundles $L_1,\dots, L_r$. Let
$r_{g,\beta}$ denote the reduced $K3$ invariant defined in Section \ref{bpsc}
for an effective curve class $\beta$. Since $r_{g,\beta}$
depends only upon the divisibility $m$ and the norm square 
$$\langle \beta, \beta \rangle = 2h-2\,,$$
we will use the more efficient notation
$$r_{g,m,h}= r_{g,\beta}\,.$$

The following result is proven{\footnote{The result of
the \cite{gwnl} is stated in the rank $r=1$ case, but the
argument is identical for arbitrary $r$.}} in \cite{gwnl}
by a comparison of
the reduced and usual deformation theories of maps of curves
to
the $K3$ fibers of $\pi$.

\begin{Theorem} \label{ffc}
 For degrees $(d_1,\dots,d_r )$ positive with respect to the
quasi-polarization $\lambda^\pi$,
$$n_{g,(d_1,\dots,d_r)}^X= \sum_{h=0}^\infty \sum_{m=1}^{\infty}
r_{g,m,h}\cdot  NL_{m,h,(d_1,\dots,d_r)\,}^\pi.$$
\end{Theorem}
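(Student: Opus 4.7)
The plan is to localize the moduli space of stable maps to $X$ in $\pi$-vertical classes to the Noether-Lefschetz locus in $C$ and, at each such special fiber, compare the standard obstruction theory on $\overline M_g(X,\beta)$ with the reduced obstruction theory on the $K3$ fiber.

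\textbf{Step 1 (Localization to fibers).} Because $(d_1,\ldots,d_r)$ is positive with respect to the simultaneous quasi-polarization $\lambda^\pi$, any stable map $f\colon\Sigma\to X$ in a $\pi$-vertical class with those degrees has image contained in a single fiber $X_\xi$. For a very general $\xi\in C$ one has $\mathrm{Pic}(X_\xi)=j(\Lambda)$, and a class $\beta\in j(\Lambda)$ satisfying $\langle\beta,L_{i,\xi}\rangle=d_i$ and $\langle\beta,\beta\rangle=2h-2$ generically does not exist (it exists along $C$ only in the degenerate zero-discriminant case, where the $v\in\Lambda$ of Section \ref{nld} is forced). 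Hence the moduli space set-theoretically concentrates on fibers above the Noether-Lefschetz locus.

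\textbf{Step 2 (Counting NL points).} At a fiber $X_\xi$ where some $\beta\in\mathrm{Pic}(X_\xi)$ acquires the prescribed intersection numbers, the rank $r+1$ sublattice $j(\Lambda)\oplus\mathbb{Z}\beta\subset\mathrm{Pic}(X_\xi)$ has invariants $\bigl(\Delta(h,d_1,\ldots,d_r),\delta(h,d_1,\ldots,d_r)\bigr)$, and the count of such $\beta$ with divisibility $m$ is exactly the multiplicity defining $D_{m,h,(d_1,\ldots,d_r)}$. Summed with scheme-theoretic multiplicity over $\iota_\pi(C)$, this produces the refined Noether-Lefschetz number $NL^\pi_{m,h,(d_1,\ldots,d_r)}=\int_C\iota_\pi^*[D_{m,h,(d_1,\ldots,d_r)}]$. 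The degenerate discriminant case is handled by the definition $D_{h,(d_1,\ldots,d_r)}=\mathcal K^*$, which is needed to match the Hodge-bundle contribution arising in the obstruction-theoretic comparison below.

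\textbf{Step 3 (Obstruction theory and conclusion).} At a point $[f\colon\Sigma\to X_\xi\hookrightarrow X]$, the perfect obstruction theory of $\overline M_g(X,\beta)$ fits in a triangle with that of $\overline M_g(X_\xi,\beta)$ and a contribution governed by $f^*N_{X_\xi/X}=f^*\pi^*T_{C,\xi}=\mathcal O_\Sigma$. This contributes a trivial line that coincides with the trivial quotient of the standard obstruction sheaf on $\overline M_g(X_\xi,\beta)$ whose removal yields the reduced class. Consequently, integrating $(-1)^g\lambda_g$ against the virtual class of $X$ factors at each NL point as the reduced $K3$ invariant $R_{g,\beta}=r_{g,m,h}$. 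Finally, one passes from Gromov-Witten to Gopakumar-Vafa invariants on the $X$ side to match the divisibility-indexed structure on the right hand side.

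The main obstacle is the last step: the comparison of obstruction theories must be carried out globally over the moduli space, not merely pointwise, with the trivial Hodge quotient on $X_\xi$ identified with the relative line $f^*\pi^*T_{C,\xi}$. Equally subtle is verifying that the refinement by divisibility $m$ in $NL^\pi_{m,h,(d_1,\ldots,d_r)}$ lines up with the divisibility parameter in $r_{g,m,h}$ rather than being double-counted by the multicover part of the Gopakumar-Vafa formalism on $X$; this requires a careful disentangling of which covers are already absorbed in $r_{g,m,h}$ via (\ref{kxxz}) and which come from distinct NL points on $C$.
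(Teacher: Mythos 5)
You should first note that the paper does not reprove this statement at all: Theorem \ref{ffc} is quoted from \cite{gwnl}, with the one-line description that it follows from ``a comparison of the reduced and usual deformation theories of maps of curves to the $K3$ fibers of $\pi$.'' Your outline is in the spirit of that proof, but the step that actually produces the Noether--Lefschetz weights is missing. The number $NL^\pi_{m,h,(d_1,\dots,d_r)}$ is an excess-intersection number $\int_C\iota_\pi^*[D_{m,h,(d_1,\dots,d_r)}]$, not a count of special fibers. In your Step 3 you assert that the trivial line $H^0(\Sigma,f^*N_{X_\xi/X})\cong\C$ ``coincides with'' the trivial quotient of the fiber obstruction sheaf, which would suggest a cancellation; the actual mechanism is that the comparison map from this normal deformation line to the semiregularity quotient is multiplication by the derivative of the local Noether--Lefschetz equation (essentially $\int_\beta\kappa\ip\sigma$), so the virtual class of $\overline M_g(X,\gamma)$ localizes over $0\in C$ with multiplicity equal to the local intersection number of $\iota_\pi(C)$ with the NL divisor. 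Without identifying this map, a set-theoretic concentration plus a point count cannot reproduce tangential or degenerate contributions: in the $\Delta=0$ case $D_{h,(d_1,\dots,d_r)}=\mathcal K^*$ and the NL number can be negative (the value $-2$ is exactly what drives the inversion in Proposition \ref{bbh}), which no enumeration of fibers can yield.

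Second, the Hodge insertion is misplaced. On the 3-fold side $N^X_{g,\gamma}$ is defined with no insertion \eqref{klk}; the factor $(-1)^g\lambda_g$ must come \emph{out} of the obstruction-theory comparison, namely from the rank-$g$ piece $H^1(\Sigma,f^*N_{X_\xi/X})=H^1(\Sigma,\O_\Sigma)$, whose contribution to the obstruction bundle is the dual Hodge bundle $\mathbb{E}^\vee$ with Euler class $(-1)^g\lambda_g$ --- this is precisely why $R_{g,\beta}$ is defined by \eqref{veq} with that insertion, and your proposal never accounts for this rank-$g$ obstruction piece. Relatedly, $R_{g,\beta}$ is not equal to $r_{g,m,h}$: the BPS counts are extracted through \eqref{kxxz}, and passing from the Gromov--Witten identity (relating $N^X_{g,\gamma}$ to the $R_{g,\beta}$) to the BPS identity of the theorem requires decomposing the full vertical-class potential fiberwise so that the multiple-cover transform is applied consistently on both sides; you flag this issue in your final paragraph but do not resolve it. These are exactly the points where the proof in \cite{gwnl} does its work, so as it stands the proposal has genuine gaps rather than a complete argument.
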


For fixed $g$ and $(d_1,\ldots,d_r)$, the sum over 
$m$ is clearly finite since $m$ must divide each $d_i$.
The sum over $h$ is also finite since, for fixed
 $(d_1,\ldots,d_r)$,
$NL_{m,h,(d_1,\dots,d_r)}^\pi$  
vanishes for sufficiently high $h$ by Proposition
3 of \cite{gwnl}. By Lemma 2 of \cite{gwnl},
$r_{g,m,h}$ vanishes for $h<0$ (and is therefore omitted from
the sum in Theorem \ref{ffc}).

\section{Anticanonical $K3$ surfaces in $\widetilde{\proj^2\times \proj^1}$}

\label{tttt}
\subsection{Polarization}\label{fccv}
Let $\widetilde{\proj^2 \times \proj^1}$ be the blow-up of
$\proj^2\times \proj^1$ at a point,
 $$\widetilde{\proj^2 \times \proj^1}\rightarrow \proj^2\times \proj^1\,.$$
The Picard group is of rank 3:
$$\text{Pic}( \widetilde{\proj^2 \times \proj^1})\cong
\mathbb{Z} L_1 \oplus \mathbb{Z} L_2 \oplus \mathbb{Z}{E}\,,$$
where $L_1$ and $L_2$ are the pull-backs of $\mathcal{O}(1)$ from the
factors $\proj^2$ and $\proj^1$ respectively and $E$ is the
exceptional divisor. The anticanonical class $3L_1+2L_2-2E$
is base point free.

A nonsingular anticanonical
 $K3$ hypersurface $S \subset \widetilde{\proj^2 \times\proj^1}$
is naturally lattice polarized by $L_1$, $L_2$, and $E$.
The lattice is
$$\Lambda =  \left( \begin{array}{ccc}
2 & 3 & 0  \\
3 & 0 & 0\\
0 & 0 & -2  \end{array} \right).$$
A general anticanonical Calabi-Yau 3-fold hypersurface,
$$\XX \subset \widetilde{\proj^2 \times \proj^1} \times \proj^1\,,$$
 determines a 1-parameter family of anticanonical $K3$ surfaces 
in $\widetilde{\proj^2\times \proj^1}$, 
\begin{equation}\label{pxxp}
\pi_3: \XX \rightarrow \proj^1 \,, 
\end{equation}
via projection $\pi_3$ onto the last $\proj^1$. The fibers of
$\pi_3$ have at worst nodal singularities.{\footnote{There are 192
nodal fibers. We leave the elementary 
classical geometry here to the reader.}}
The Noether-Lefschetz theory of the $\Lambda$-polarized family
$$(\XX, {L}_1,{L}_2,E,\pi_3)$$
plays a central role in our proof of Theorem 1.
The quasi-polarization $\lambda^{\pi_3}$ (condition (ii) of Section \ref{lpol}) can
be taken to be any very ample line bundle on 
$\widetilde{\proj^2\times \proj^1}$.

\subsection{BPS states}\label{bpss}
Let 
$(\XX,{L}_1,{L}_2,E,\pi_3)$ be 
the $\Lambda$-polarized family of anticanonical $K3$ surfaces of 
$\widetilde{\proj^2\times\proj^1}$
defined in Section \ref{fccv}.
The vertical classes are the kernel of the push-forward map
by ${\pi}_3$,
$$0 \rightarrow H_2({\XX},\mathbb{Z})^{\pi_3} \rightarrow 
H_2({\XX},\mathbb{Z})
\rightarrow H_2(\PP^1,\mathbb{Z}) \rightarrow 0\,.$$
 
Let $\overline{M}_{g}({\XX},\gamma)$ be the moduli space of
stable maps from connected genus $g$ curves to ${\XX}$ of class
$\gamma$.
Gromov-Witten theory is defined
by
integration against the virtual class,
\begin{equation}
\label{klk}
N_{g,\gamma}^{{\XX}}
 = \int_{[\overline{M}_{g}({X},\gamma)]^{\vir}} 1\,.
\end{equation}
The expected dimension of the moduli space is 0.

The full genus 
Gromov-Witten potential $F^{{\XX}}$ for nonzero vertical classes
is the series
$${F}^{{\XX}}=
\sum_{g\geq 0}\  \sum_{0\neq \gamma\in H_2({\XX},\mathbb{Z})^{\pi_3}}  
 N^{{\XX}}_{g,\gamma} \ \lambda^{2g-2} v^\gamma\,,$$
where $v$ is the curve class variable.
The
BPS counts $n_{g,\gamma}^{{\XX}}$
of Gopakumar and Vafa are uniquely defined 
by the following equation:
\begin{equation*}
F^{{\XX}}  =    \sum_{g\geq 0} \ \sum_{0\neq \gamma\in
H_2({\XX},\mathbb{Z})^{\pi_3}} 
 n_{g,\gamma}^{{\XX}} \lambda^{2g-2} \ \sum_{d>0}
\frac{1}{d} \left(\frac{\sin(d\lambda/2)}{\lambda/2}\right)^{2g-2} v^{d\gamma}. 
\end{equation*}
Conjecturally, the invariants $n_{g,\gamma}^{{\XX}}$ are integral and
obtained from the cohomology of an as yet unspecified moduli
space of sheaves on ${X}$. 
We do {\em not} assume the conjectural properties
hold.

Using the  $\Lambda$-polarization, we label the classes
$\gamma \in H_2({\XX},\mathbb{Z})^{\pi_3}$ by their pairings with
$L_i$ and $E$,
$$\gamma \mapsto \left( \int_\gamma [L_1], \int_\gamma [L_2],
\int_\gamma [E]\right).$$
We write the BPS counts as
$
n_{g,(d_1,d_2,d_3)}^{{\XX}}$. Since $\gamma\neq 0$,
not all the $d_i$ can vanish.


\subsection{Invertibility of constraints}
Let $\mathcal{P}\subset \mathbb{Z}^3$ be the set of triples
$(d_1,d_2,d_3)\neq (0,0,0)$ 
which are positive
with respect to the 
quasi-polarization $\lambda^{\pi_3}$ 
of the $\Lambda$-polarized family
$$\pi_3 : \XX \rightarrow \proj^1\,.$$

Theorem \ref{ffc}
applied to $(\XX,L_1,L_2,E,\pi_3)$ yields the equation
\begin{equation}\label{maintheoremforstu}
n^{{\XX}}_{g, (d_1, d_2,d_3)} = 
\sum_{h=0}^\infty \sum_{m=1}^{\infty} 
r_{g,m,h}\cdot NL^{{\pi_3}}_{m,h,(d_1,d_2,d_3)}
\end{equation}
for $(d_1,d_2,d_3)\in \mathcal{P}$.
We view \eqref{maintheoremforstu} as linear constraints
for the unknowns $r_{g,m,h}$ in terms of the 
BPS states on the left and
the refined Noether-Lefschetz degrees.

The integrals $r_{g,m,h}$ are very simple in case $h\leq 0$.
By Lemma 2 of \cite{gwnl},
$r_{g,m,h}=0$ for $h<0$,
$$r_{0,1,0}= 1\,,$$ 
and $r_{g,m,0}=0$ otherwise.

\begin{Proposition}\label{bbh}
The set of invariants $\{r_{g,m,h}\}_{g\geq 0,m\geq 1, h> 0}$ 
is uniquely determined by the set of  constraints \eqref{maintheoremforstu}
 for $(d_1,d_2,d_3) \in \mathcal{P}$ and the integrals $r_{g,m,h\leq 0}$.
\end{Proposition}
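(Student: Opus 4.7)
The plan is to invert the system of constraints \eqref{maintheoremforstu} one genus at a time by exhibiting an upper-triangular structure with respect to an appropriate ordering of the unknowns $r_{g,m,h}$. Since the system decouples across genera $g$, I would fix $g$ throughout.

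First I would establish that for each fixed $(d_1,d_2,d_3) \in \mathcal{P}$ the right-hand side of \eqref{maintheoremforstu} is a finite sum. Two mechanisms force the vanishing of $NL^{\pi_3}_{m,h,(d_1,d_2,d_3)}$: the divisibility refinement forces $m \mid \gcd(d_1,d_2,d_3)$, and the Hodge index theorem forces $\Delta(h,d_1,d_2,d_3) \ge 0$. For the specific lattice $\Lambda$ of Section \ref{fccv} a direct determinant expansion yields
\[
\Delta(h,d_1,d_2,d_3) \,=\, 36(1-h) \,+\, 12\, d_1 d_2 \,-\, 4\, d_2^2 \,-\, 9\, d_3^2,
\]
which is strictly decreasing in $h$; this provides an upper bound $h\le h_{\max}(d_1,d_2,d_3)$.

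Next I would induct on $h_0 > 0$, with base case $h_0 \le 0$ supplied by hypothesis. Given a target pair $(m_0, h_0)$, the point is to choose a triple $(d_1,d_2,d_3)\in\mathcal{P}$ with two properties: (a) $\Delta(h_0,d_1,d_2,d_3) = 0$, forcing $h_{\max}(d_1,d_2,d_3) = h_0$ so that only $h\le h_0$ terms appear in \eqref{maintheoremforstu}; and (b) the extended lattice $\mathbb{L}_{h_0,d_1,d_2,d_3}$ contains a (necessarily unique) vector $v$ of divisibility exactly $m_0$ realizing the prescribed intersection data. Under (a) and (b), the degenerate definition of the refined Noether-Lefschetz divisor in Section \ref{reff12} implies that among the $h=h_0$ contributions only $m=m_0$ survives, and
\[
NL^{\pi_3}_{m_0,h_0,(d_1,d_2,d_3)} \,=\, \deg_C\bigl(\iota_{\pi_3}^*\mathcal{K}^*\bigr),
\]
a fixed nonzero constant (the non-isotriviality of $\pi_3$ forces the Hodge bundle to have positive degree on $\proj^1$, so $\mathcal{K}^*$ has negative degree and the constant is negative but nonzero). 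All contributions with $h<h_0$ are known by the inductive hypothesis, so the equation determines $r_{g,m_0,h_0}$.

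The main obstacle is condition (b): verifying that for every $(m_0,h_0)$ with $h_0>0$ and $m_0\ge 1$ one can realize a triple in $\mathcal{P}$ together with a lattice vector $v$ of the required divisibility. The explicit form of $\Delta$ makes this a tractable arithmetic problem in three integer variables; for instance, the one-parameter family $(d_1,d_2,d_3) = (h_0 m_0,\, 3 m_0,\, 0)$ automatically solves $\Delta=0$ and lies in $\mathcal{P}$. Tuning the entries along this and nearby families of $\Delta=0$ solutions, one arranges $\gcd(d_1,d_2,d_3)=m_0$ together with the existence of a primitive $\mathbb{L}$-extension of $\Lambda$ whose new generator has divisibility exactly $m_0$. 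A modest case analysis on the arithmetic of $\Lambda$ (controlling how divisibility in $\mathrm{Pic}(S)$ relates to the gcd of intersection numbers with the basis $v_1, v_2, v_3$) will be required to handle all $(m_0,h_0)$ uniformly, but no ideas beyond the triangularity outlined above are needed.
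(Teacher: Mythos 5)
Your overall strategy is the same as the paper's: fix $g$, induct on $h$, and obtain triangularity by choosing degrees with vanishing discriminant, so that the Hodge index theorem kills all $h'>h$ terms and the degenerate case of Section \ref{reff12} singles out one divisibility, the diagonal entry being the nonzero degree of the dual Hodge bundle (the paper computes it explicitly as $-2$ via $\omega_{\pi_3}$; your non-isotriviality argument for nonvanishing is acceptable). Your determinant formula $\Delta=36(1-h)+12d_1d_2-4d_2^2-9d_3^2$ is correct.

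The genuine gap is precisely at the step you yourself call ``the main obstacle,'' and your proposed resolution fails. For your family $(d_1,d_2,d_3)=(h_0m_0,3m_0,0)$ your own formula gives
\[
\Delta(h_0,h_0m_0,3m_0,0)=36(1-h_0)+36h_0m_0^2-36m_0^2=36(h_0-1)(m_0^2-1),
\]
which vanishes only when $m_0=1$ or $h_0=1$; for the imprimitive cases $m_0\geq 2$, $h_0\geq 2$ (the cases of real interest) it is strictly positive, so this family never produces the degenerate intersection you need. Even when $m_0=1$, a vector $v=aL_1+bL_2+cE\in\Lambda$ with degrees $(h_0,3,0)$ forces $a=1$, $c=0$, $3b=h_0-2$, hence exists only for $h_0\equiv 2\ (\mathrm{mod}\ 3)$; otherwise $D_{1,h_0,(d_1,d_2,d_3)}=0$ and the would-be diagonal coefficient vanishes. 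The ``tuning and modest case analysis'' you defer is therefore the actual content of the proof, not a routine verification. Two further points: first, you must dispose of the pairs $(m,h)$ with $2m^2\nmid 2h-2$, where $r_{g,m,h}=0$ by definition; for these no vector of divisibility $m$ and square $2h-2$ exists in any even lattice, so your condition (b) is unsatisfiable and your induction as stated would stall there. Second, for the remaining pairs, writing $2h-2=m^2(2s-2)$, the paper exhibits the explicit class $\gamma=msL_1+mL_2+m(s+1)E$, which has divisibility exactly $m$, is effective (hence its degrees $(2ms+3m,3ms,-2m(s+1))$ lie in $\mathcal{P}$), and satisfies $\Delta(h',\cdot)=36(h-h')$, giving exactly the triangular system you want. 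Without such a construction, or a genuine existence argument covering every admissible $(m,h)$, the proposal is incomplete.
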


\begin{proof}
A certain subset of the 
linear equations will be shown to be
upper triangular in the variables $r_{g,m,h}$.  

Let us fix in advance the values of 
$g\geq 0$,
$m\geq 1$, and  $h>0$.
We proceed by induction on $h$  assuming the reduced invariants $r_{g,m',h'}$
have already been determined for all $h' < h$.  
If $2h-2$ is not divisible by $2m^{2}$, then
we have $r_{g,m,h}= 0$ by definition, so we can further assume
$$2h-2 = m^{2}(2s-2)$$
for an integer $s> 0$.

Consider the fiber class $\gamma\in H_2(\XX,\mathbb{Z})^{\pi_3}$ 
given by the lattice
element $msL_1+mL_2+m(s+1)E$,
$$\gamma =  \Big(ms[L_1]+m[L_2]+m(s+1)[E]\Big) \  \cap\ [S]\,,$$
where $S$ is a $K3$-fiber of $\pi_3$.
Since $L_1$, $L_2$ and $E$ are effective on $S$, the
class $\gamma$
is effective and hence positive with respect to the
quasi-polarization of $\Lambda$. The degrees of $\gamma$
are
\begin{equation}\label{chgchg}
(d_1,d_2,d_3)=(2ms+3m,3ms,-2m(s+1))\,,
\end{equation}
and $\gamma$ is of divisibility exactly $m$ in the
lattice $\Lambda$.

Consider equation \eqref{maintheoremforstu} for 
$(d_1,d_2,d_3)$ given by \eqref{chgchg}.
By the Hodge index theorem,
we must have
\begin{eqnarray}\label{fred}
0& \leq& \Delta(h',2ms+3m,3ms,-2m(s+1))\\ & = & \nonumber
18 (2 - 2h' + m^2(2s-2))\\
& =& 36(h-h') \nonumber
\end{eqnarray}
if $NL^{{\pi_3}}_{m',h',(2ms+3m,3ms,-2m(s+1))
} \neq 0$.
Inequality \eqref{fred} implies
$h' \leq h$. If $h'=h$, then 
$$\Delta(h'=h,2ms+3m,3ms,-2m(s+1))=0\,.$$ By the definition
of Section \ref{reff12},
$$NL^{{\pi_3}}_{m',h'=h,(2ms+3m,3ms,-2m(s+1))} = 0$$
unless there exists $v\in\Lambda$ of divisibility $m'$
with degrees $$(2ms+3m,3ms,-2m(s+1))\,.$$ 
But $\gamma\in \Lambda$
is the unique such lattice element,
and $\gamma$ has divisibility $m$.
Therefore, the constraint \eqref{maintheoremforstu}
takes the form
$$n^{{X}}_{g,(2ms+3m,3ms,-2m(s+1))} =  
r_{g,m,h} NL^{{\pi_3}}_{m, h, (2ms+3m,3ms,-2m(s+1))}+ \dots,$$
where the dots represent terms involving $r_{g,m',h'}$ with  
$h' < h$.
The leading coefficient is given by
$$NL^{{\pi}_3}_{m, h, (2ms+3m,3ms,-2m(s+1))} = 
NL^{{\pi}_3}_{h, (2ms+3m,3ms,-2m(s+1))} = -2\,.$$
As the system is upper-triangular, we can invert to solve for $r_{g,m,h}$.

The calculation of $NL^{{\pi}_3}_{h, (2ms+3m,3ms,-2m(s+1)}$ is elementary.
In the discriminant $\Delta=0$ case, we must determine the 
degree of the dual of the Hodge line bundle $\mathcal{K}$ on the 
base $\proj^1$. The relative dualizing sheaf $\omega_{\pi_3}$
is the pull-back of $\mathcal{O}_{\proj^1}(2)$ from the base.
Hence, the dual of the Hodge line has degree $-2$.
See Section 6.3 of \cite{gwnl} for many such calculations.
\end{proof}

The proof of Proposition \ref{bbh} does {\em not}
involve induction on the genus. The same argument will
be used later in the theory of stable pairs.

\section{Theorem 2}
\label{t22}
\subsection{Strategy}
We will prove Theorem 2 via the GW/P and Noether-Lefschetz
correspondences for the family 
$(\XX,L_1,L_2,E,\pi_3)$ of
$K3$ surfaces of defined in Section \ref{fccv}.
While all of the necessary Gromov-Witten theory has been
established in Sections \ref{nnll}  and \ref{tttt}, our proof here
depends upon
stable pairs results proven later in Sections \ref{mulcov} 
and \ref{pnlc}.

\subsection{Stable pairs}
Let $V$ be a nonsingular, projective 3-fold, and let
$\beta \in H_2(V,\mathbb{Z})$ be a nonzero class. We consider the
moduli space of stable pairs
$$[\OO_V \stackrel{s}{\rightarrow} F] \in P_n(V,\beta)$$
where $F$ is a pure sheaf supported on a Cohen-Macaulay subcurve of $V$, 
$s$ is a morphism with 0-dimensional cokernel, and
$$\chi(F)=n\,, \  \  \ [F]=\beta\,.$$
The space $P_n(V,\beta)$
carries a virtual fundamental class of dimension
$\int_\beta c_1(T_V)$ obtained from the 
deformation theory of complexes with trivial determinant in
the derived category \cite{PT1}.

We specialize now to the case where $V$ is the total space of
a $K3$-fibration (with at worst nodal fibers),
$$\pi:V\rightarrow C\,, $$ 
over a nonsingular projective curve 
and $\beta \in H_2(V,\mathbb{Z})^\pi$ is a vertical class.
Then the
expected dimension of $P_n(V,\beta)$ is always 0.
For nonzero $\beta\in H_2(V,\Z)^\pi$,
define the stable pairs invariant 
\begin{eqnarray*}
\widetilde{N}^\bullet_{n,\beta}&  = &
\int_{[P_{n}(V,\beta)]^{\vir}}
1\,. 
\end{eqnarray*}
The partition function is 
$$
\ZZ_{\mathsf{P}}\Big(V;q\Big)_\beta
=\sum_{n} \widetilde{N}^\bullet_{n,\beta}\, q^n.
$$

Since $P_n(V,\beta)$ is empty for sufficiently negative
$n$, the partition function 
is a Laurent series in $q$. The following is a special
case of Conjecture 3.26 of
\cite{PT1}.

\begin{Conjecture}
\label{111} 
The partition function
$\ZZ_{\mathsf{P}}\big(V;q)_\beta$ is the 
Laurent expansion of a rational function in $q$.
\end{Conjecture}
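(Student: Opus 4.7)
The plan is to reduce the global rationality statement on $V$ to the rationality of the local potentials $\widetilde{F}_\alpha$, which will be established in Section \ref{mulcov}. The bridge between the two is a P/NL-type correspondence: since $\beta\in H_2(V,\mathbb{Z})^\pi$ is $\pi$-vertical, every stable pair $[\O_V\to F]\in P_n(V,\beta)$ has set-theoretic support contained in $\pi^{-1}(T)$ for a finite subset $T\subset C$, and each point of $T$ must lie in the local Noether-Lefschetz locus $\mathrm{NL}(\gamma)$ for some effective summand $\gamma$ of $\beta$ in the Picard group of the corresponding fiber. Because only finitely many effective decompositions of $\beta$ occur in any given fiber class, and each contributing NL locus is a proper subscheme of $C$, the moduli space $P_n(V,\beta)$ decomposes over a finite indexing set of support configurations on $C$.

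First I would treat the generic case where $V$ is suitably transverse to every relevant local Noether-Lefschetz locus --- precisely the hypothesis under which $\widetilde{R}_{n,m\alpha}$ was defined in Section \ref{spaks}. In this case, the compact open components $P^\star_n(T,\gamma)\subset P_n(T,\gamma)$ from the definition \eqref{fbbb} model the individual support-point contributions, and the partition function $\ZZ_{\mathsf{P}}(V;q)_\beta$ is assembled from the $\widetilde{R}_{n,m\alpha}$ via the exponential of the logarithmic expression in \eqref{fbbb}. More precisely, the assembly produces a finite product over support points of series of the form $[\widetilde{F}_\alpha]_{v^{m\alpha}}$, weighted by combinatorial multiplicities counting the NL intersections.

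Next I would invoke the rationality of $[\widetilde{F}_\alpha]_{v^{m\alpha}}$ asserted in Section \ref{mulcov}. Since rational functions in $q$ are closed under finite sums, products, and Cauchy products at fixed $v$-weight, and since only finitely many partitions of $\beta$ and finitely many NL loci intervene, the assembled partition function $\ZZ_{\mathsf{P}}(V;q)_\beta$ is the Laurent expansion of a rational function in $q$.

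To extend from transverse $V$ to the general case, I would use deformation invariance: the PT partition function $\ZZ_{\mathsf{P}}(V;q)_\beta$ is constant along any deformation of $V$ within the class of $K3$-fibrations in which $\beta$ remains an algebraic $\pi$-vertical class, since the virtual dimension is zero and the virtual class is deformation-invariant in such families. Transverse $V$ exist in any such deformation family (one can arrange transversality to each of the finitely many NL loci relevant for the fixed class $\beta$), so rationality propagates. The main obstacle I anticipate is the transversality reduction itself: it requires matching the obstruction theory of $P_n(V,\beta)$ with the reduced/twisted obstruction theories used to define $P^\star_n(T,\gamma)$ and $\widetilde{R}_{n,m\alpha}$, and showing that the contributions from non-transverse NL intersections are captured correctly by the logarithm in \eqref{fbbb}. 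This is precisely the content of the P/NL correspondence to be proved in Section \ref{pnlc}, and I would defer the delicate verification there while using the correspondence as a black box for the rationality argument.
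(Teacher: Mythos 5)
The statement you set out to prove is left as a \emph{conjecture} in the paper: it is a special case of Conjecture 3.26 of \cite{PT1}, and the authors never prove it for a general $K3$-fibration $V$. What they actually need is only the Calabi--Yau case (in particular $V=\XX$), which they import from the wall-crossing results of \cite{Br,Toda} and from \cite{PP}; the ``proof'' in the paper is a citation, not an argument built from Noether--Lefschetz theory. Measured as a proof of the general statement, your proposal has concrete gaps. First, the finite-support decomposition is false in general: for a vertical class $\beta$ the relevant local Noether--Lefschetz loci may be degenerate (equal to all of $C$) --- for instance for the trivial fibration $S\times\proj^1$, for isotrivial families, or whenever some effective summand of $\beta$ remains algebraic along all of $C$ --- so stable pairs in class $\beta$ sweep out positive-dimensional families over the base and $P_n(V,\beta)$ does not break into compact open pieces indexed by finitely many support configurations. (The conjecture still holds in such cases, e.g.\ by cosection vanishing for $S\times\proj^1$, but your argument does not see this.) Second, the reduction to the transverse case requires an actual algebraic family of projective $K3$-fibered 3-folds joining $V$ to a transverse $V'$, with $\beta$ remaining a vertical algebraic class; such a family need not exist (the induced map from $C$ to the moduli of lattice-polarized $K3$ surfaces can be constant or rigid), and deformation invariance gives nothing without it. The perturbation used in the paper (proof of Proposition \ref{crt}) moves the curve $C$ inside $\mathcal{M}_\Lambda$ after approximating it to finite order, and only controls the \emph{local} contributions of completely isolated classes for the particular family $\pi_3$; it is not a deformation of a global projective $V$.

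Third, the ``P/NL correspondence as a black box'' is Theorem \ref{ffc2}, which is proved in Section \ref{pnlc} only for the special family $\XX$, using that its very general fibers have Picard lattice exactly $\Lambda$ (and $\Lambda\oplus\Z[\widetilde{E}]$ at nodal fibers), the choice of the sublattice $\widehat{\Lambda}\subset\Lambda$, and degeneration arguments inside $\mathcal{M}_{\widehat{\Lambda}}$. No statement of this kind is available in the paper for an arbitrary $K3$-fibration, so it cannot be invoked to prove Conjecture \ref{111} in the stated generality. What is true, and close in spirit to your plan, is that for $\XX$ and fiber classes one can combine Theorem \ref{ffc2} with the rationality of the $K3$ series (Theorem \ref{geget}) and the finiteness of the Noether--Lefschetz sums to deduce rationality of the connected fiber-class series, hence of $\ZZ_{\mathsf{P}}\big(\XX;q\big)_\gamma$ after exponentiating; this is not circular, since the proof of Theorem \ref{ffc2} proceeds coefficient-by-coefficient in $q$ and does not presuppose rationality. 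But that covers only vertical classes of one Calabi--Yau 3-fold --- exactly the case the paper settles by citing \cite{Br,Toda} and \cite{PP} --- and not the conjecture for general $V$ and general vertical $\beta$.
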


If the total space $V$ is a Calabi-Yau 3-fold, 
then Conjecture \ref{111} has been
proven in \cite{Br,Toda}.
In particular, Conjecture \ref{111} holds for the
anticanonical 3-fold
$$\XX \subset \widetilde{\proj^2 \times \proj^1} \times \proj^1$$
of Section \ref{fccv}.

In fact, if $V$ is any  complete intersection
Calabi-Yau 3-fold 
in a toric variety which admits sufficient degenerations, Conjecture
\ref{111} has been proven in \cite{PP}. 
By factoring equations, there
is no difficulty in constructing the degenerations of $\XX$
into toric $3$-folds required for \cite{PP}. Just as in the
case of the quintic in $\proj^4$, the geometries which occur
are toric $3$-folds, projective bundles over $K3$ and toric
surfaces, and fibrations over curves. A complete discussion
of the degeneration scheme for $\XX$ is given in Appendix B.

\subsection{GW/P correspondence for $\XX$}
Following the notation of Section \ref{bpss}, 
let $H_2(\XX,\mathbb{Z})^{\pi_3}$ denote the vertical classes
of $\XX$ and let
$${F}^{{\XX}}=
\sum_{g\geq 0}\  \sum_{0\neq \gamma\in H_2({X},\mathbb{Z})^{\pi_3}}  
 N^{{\XX}}_{g,\gamma} \ \lambda^{2g-2} v^\gamma\ $$
be the potential of connected Gromov-Witten invariants.
The partition function (of possibly disconnected) Gromov-Witten
invariants is defined via the exponential,
$$
\ZZ_{\mathsf{GW}}\Big(\XX;\lambda\Big)
=\exp\left(F^\XX\right).
$$
Let $\ZZ_{\mathsf{GW}}\Big(\XX;\lambda\Big)_\gamma$
denote the coefficient of $v^\gamma$ in
$\ZZ_{\mathsf{GW}}\Big(\XX;\lambda\Big)$.
The main result of \cite{PP} applied to $\XX$ is the following GW/P
correspondence for complete intersection Calabi-Yau
3-folds in products of projective spaces. 

\vspace{10pt}
\noindent {\bf GW/P correspondence}. {\em After the
change of variable $-q= e^{i \lambda}$, we have
$$\ZZ_{\mathsf{GW}}\Big(\XX;\lambda\Big)_\gamma =
\ZZ_{\mathsf{P}}\Big(\XX;q\Big)_\gamma\,. $$}
\vspace{10pt}

The change of variables is well-defined by the rationality of
$\ZZ_{\mathsf{P}}\Big(\XX;q\Big)_\gamma$ of Conjecture 3. The GW/P correspondence
is proven in \cite{PP} for {\em every} non-zero class in
$H_2(\XX,\mathbb{Z})$, but we only will require here the statement for
fiber classes $\gamma$.

\subsection{$K3$ integrals} \label{k3intp}
Let $S$ be a nonsingular projective $K3$ surface with
a nonzero class $\alpha\in \text{Pic}(S)$
 which is both effective and primitive.
By the definitions of Sections \ref{bpsc} in Gromov-Witten theory,
\begin{equation*} \label{hbb9}
{F}_{{\alpha}}=
\sum_{g\geq 0}\   \sum_{m>0}\   R_{g,m\alpha} \ \lambda^{2g-2} 
v^{m{\alpha}},
\end{equation*}
\begin{equation*} \label{kxxz9}
F_\alpha =   \ \ \ \sum_{g\geq 0}  \ \sum_{m>0} \
 r_{g,m\alpha} \ \lambda^{2g-2} \sum_{d>0}
\frac{1}{d}\left( \frac{\sin
({d\lambda/2})}{\lambda/2}\right)^{2g-2} v^{dm\alpha}. 
\end{equation*}

Via $K3$-fibrations over a pointed curve $$\epsilon: T \rightarrow (\Delta,0)$$
satisfying the conditions (i), (ii), and $(\star)$ of Section \ref{gwp4},
we have defined in \eqref{hbbb} the series
\begin{equation*}
\widetilde{F}_{{\alpha}}=
\sum_{n\in \mathbb{Z}}\   \sum_{m>0}\   \widetilde{R}_{n,m\alpha} \ 
q^{n} 
v^{m{\alpha}}\ 
\end{equation*}
in the theory of stable pairs.
Using the identity
\begin{eqnarray*} 2^{2g-2}{\sin(d\lambda/2)}^{2g-2} &=& \left(
\frac{e^{id\lambda/2}-e^{-id\lambda/2}}{i}\right)^{2g-2}\\
& =&
(-1)^{g-1} \left((-q)^d-2+(-q)^{-d} \right)^{g-1} 
\end{eqnarray*}
under the change of variables $-q=e^{i\lambda}$, we define
the stable pairs BPS invariants
$\widetilde{r}_{g,m\alpha}$ by the relation
$$\widetilde{F}_{{\alpha}}= 
\ \ \ \sum_{g\in \mathbb{Z}}  \ \sum_{m>0} \
 \widetilde{r}_{g,m\alpha} \  \sum_{d>0}
\frac{(-1)^{g-1}}{d} 
 \left((-q)^d-2+(-q)^{-d} \right)^{g-1}
 v^{dm\alpha}\,.$$
See Section 3.4 of \cite{PT1} for a discussion of such
BPS expansions for stable pairs. The invariants $\widetilde{r}_{g,m\alpha}$
are integers.

Since $\widetilde{r}_{g,\beta}$
depends only upon the divisibility $m$ and the norm  square
$$\langle \beta, \beta \rangle = 2h-2\,,$$
we will use, as before, the notation
\begin{eqnarray*}
\widetilde{r}_{g,m,h}= \widetilde{r}_{g,\beta}\,.
\end{eqnarray*}
By definition in Gromov-Witten theory, $r_{g,m,h}=0$ for $g<0$. However
for fixed $m$ and $h$, the definitions allow
$r_{g,m,h}$ to
be nonzero for all positive $g$.
On the stable pairs side for fixed $m$ and $h$,
$\widetilde{r}_{g,m,h}=0$ for sufficiently large $g$, but
$\widetilde{r}_{g,m,h}$ may be nonzero
for all negative $g$.

We will prove Theorem \ref{vvee} by showing the BPS counts
for $K3$ surfaces in
Gromov-Witten theory and stable pairs theory exactly match:
\begin{equation} \label{rtrtrt}
r_{g,m,h} = \widetilde{r}_{g,m,h}\ 
\end{equation}
for all $g\in \mathbb{Z}$, $m\geq 1$, and $h\in \mathbb{Z}$.

\subsection{Noether-Lefschetz theory for stable pairs} \label{spnl}
The stable pairs potential $\widetilde{F}^{{\XX}}$ for nonzero vertical classes
is the series
$$\widetilde{F}^{{\XX}}= \log \left(1 + \sum_{0\neq \gamma\in H_2({\XX},\mathbb{Z})^{\pi_3}}  
 \ZZ_{\mathsf{P}}\Big(\XX;q\Big)_\gamma v^\gamma\right)\,,$$
where $v$ is the curve class variable.
The stable pairs
BPS counts $\widetilde{n}_{g,\gamma}^{{\XX}}$
are uniquely defined 
by 
\begin{equation*}
\widetilde{F}^{{\XX}}  =    \sum_{g\in \mathbb{Z}} \ \sum_{0\neq \gamma\in
H_2({\XX},\mathbb{Z})^{\pi_3}} 
\widetilde{n}_{g,\gamma}^{{\XX}} \ \sum_{d>0}
\frac{(-1)^{g-1}}{d}  \left((-q)^d-2+(-q)^{-d} \right)^{g-1}
v^{d\gamma}, 
\end{equation*}
following Section 3.4 of \cite{PT1}.

The following stable pairs result is proven in Section \ref{pnlc}.
A central issue in the proof is the translation of the
Noether-Lefschetz geometry of stable pairs to 
a precise relation constraining the {\em logarithm} $\widetilde{F}^{{\XX}}$
of
the stable pairs series.

\begin{Theorem} \label{ffc2}
 For degrees $(d_1,d_2,d_3 )$ positive with respect to the
quasi-polariza\-tion of the family $\pi_3:\XX \rightarrow \proj^1$,
$$\widetilde{n}_{g,(d_1,d_2,d_3)}^\XX= \sum_{h=0}^\infty \sum_{m=1}^{\infty}
\widetilde{r}_{g,m,h}\cdot  NL_{m,h,(d_1,d_2,d_3)}^{\pi_3}\,.$$
\end{Theorem}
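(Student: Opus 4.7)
The plan is to mirror the strategy of the GW/NL correspondence (Theorem \ref{ffc}) from \cite{gwnl}, adapting it to the stable pairs setting. The key geometric input is more subtle here: a stable pair $[\OO_\XX \to F]$ whose class is $\pi_3$-vertical need not be scheme-theoretically supported on a single fiber, and can have infinitesimal thickenings transverse to the fiber. This is precisely why $\widetilde{R}_{n,\beta}$ is defined using a local family $T \to (\Delta,0)$ rather than the intrinsic geometry of $S$, and the P/NL correspondence is engineered to accommodate exactly this.

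The first step is a support lemma: for positive degrees $(d_1,d_2,d_3)$, any stable pair in a vertical class $\gamma\in H_2(\XX,\Z)^{\pi_3}$ with these degrees is set-theoretically supported in finitely many fibers of $\pi_3$, and at each supporting fiber $X_\xi$ the underlying class must remain algebraic -- i.e.\ $\xi$ lies in the Noether-Lefschetz locus of some effective summand $\gamma' \in \mathrm{Eff}(\gamma)$. This decomposes $P_n(\XX,\gamma)$ into a disjoint union of open-closed components indexed by finite subsets of $\PP^1$.

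Next, for each such $\xi$ I would pull back $\pi_3$ to a small \'etale neighborhood $\Delta_\xi \hookrightarrow \PP^1$. For generic $\xi$ in an irreducible component of the relevant NL locus, the resulting family $T_\xi := \XX|_{\Delta_\xi} \to (\Delta_\xi,\xi)$ satisfies properties (i), (ii), and $(\star)$ of Section \ref{spaks}, so the contribution to $\ZZ_{\mathsf{P}}(\XX;q)_\gamma$ from pairs supported over $\xi$ is exactly an integral over $P_n^\star(T_\xi,\gamma)$. Since stable pairs supported on disjoint fibers are independent, the partition function of $\XX$ in fiber classes factors as a product over the supporting points; after taking the global logarithm $\widetilde{F}^\XX$, this product becomes a sum of local contributions, each of which, by the definition \eqref{fbbb}, is a $\widetilde{F}_\alpha$-type series for the primitive class underlying $\gamma'$ at $\xi$. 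Summing these local series over $\xi \in \PP^1$ gives a sum weighted by the number of NL points where a class of divisibility $m$ and norm $2h-2$ becomes algebraic with the prescribed degrees -- exactly $NL_{m,h,(d_1,d_2,d_3)}^{\pi_3}$. Extracting BPS coefficients of both sides (which have matching expansion form) then yields the stated identity between $\widetilde{n}^\XX_{g,(d_1,d_2,d_3)}$ and the weighted sum of $\widetilde{r}_{g,m,h}$.

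\textbf{Main obstacle.} The principal difficulty is handling non-generic points $\xi$, where either the $m$-rigidity condition $(\star)$ fails, or $\iota_{\pi_3}(\PP^1)$ meets a Noether-Lefschetz divisor non-transversally, or several relevant classes become algebraic simultaneously. At such points the local identification of virtual classes is more delicate, and one must invoke the intrinsic dependence of $\widetilde{R}_{n,\beta}$ on only $(n,m,h)$ -- in particular its independence from the auxiliary family, established in Section \ref{depped} -- to complete the matching. A subsidiary technical point is comparing the virtual class on the local piece of $P_n(\XX,\gamma)$ over $\xi$ (inherited from the derived-category deformation theory on the Calabi-Yau $\XX$) with that on $P_n^\star(T_\xi,\gamma)$; the reduced-versus-standard obstruction theory comparison here parallels the argument of \cite{gwnl} in Gromov-Witten theory, but with extra care needed because sheaf supports can thicken along $\Delta_\xi$.
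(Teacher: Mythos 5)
There is a genuine gap, and it is exactly at the point you flag as the ``main obstacle'': for the family $\pi_3:\XX\to\proj^1$ the non-generic behaviour is not confined to finitely many bad points, so the decomposition your argument rests on does not exist. The family is $\Lambda$-polarized with $\Lambda$ of rank $3$, and $\operatorname{Pic}(\XX_\xi)\cong\Lambda$ for very general $\xi$; the classes of $\Lambda$ (which include effective classes, and in particular all the classes used in the inversion of Proposition \ref{bbh}) are algebraic on \emph{every} fiber. For such a vertical class $\gamma$ the local Noether-Lefschetz locus is degenerate (all of $\proj^1$), stable pairs in class $\gamma$ sweep out the whole base, and $P_n(\XX,\gamma)$ does \emph{not} break into open-closed pieces supported over finitely many fibers. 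Correspondingly, $NL^{\pi_3}_{m,h,(d_1,d_2,d_3)}$ in the discriminant-zero case is defined as the degree of the dual Hodge bundle, which for this family equals $-2$; a negative number cannot arise as your ``number of NL points where the class becomes algebraic,'' so the point-counting interpretation of the right-hand side already fails for the classes that matter most. Your support lemma and the ensuing factorization of the partition function over isolated supporting points are therefore false for precisely the contributions that the triangularity argument of Proposition \ref{bbh} needs, and invoking the deformation invariance of $\widetilde{R}_{n,\beta}$ cannot repair this, since there is no local family $T_\xi$ satisfying $(\star)$ to compare with.

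For the genuinely isolated contributions your sketch is close in spirit to Proposition \ref{crt} (though even there one must perturb $C$ inside $\mathcal{M}_\Lambda$, after approximating it to high order by a moving curve, because the reduced-point condition in $(\star)$ can fail at an isolated but non-transverse intersection; the paper then uses Corollary \ref{g4545} of Theorem \ref{t66t}). But the heart of the paper's proof is the treatment of the non-isolated classes, for which a different mechanism is required: one chooses a rank-$2$ sublattice $\widehat{\Lambda}\subset\Lambda$ with the properties of Section \ref{lsel}, passes to the larger moduli space $\mathcal{M}_{\widehat{\Lambda}}$ where the relevant NL loci become honest divisors, attaches a curve $C'$ to $C$ and smooths $C\cup C'$ to $C''$ inside a surface $Y\subset\mathcal{M}_{\widehat{\Lambda}}$ (Section \ref{ssmo}), applies the degeneration formula $\ZZ_{\mathsf{P}}(X'')=\ZZ_{\mathsf{P}}(X)\cdot\ZZ_{\mathsf{P}}(X')$ to the degeneration $X''\rightsquigarrow X\cup X'$, and then converts the non-isolated contributions over $C$ into isolated ones over $C'$ and over general members $C''_t$ of the pencil, where Proposition \ref{crt} applies; the final matching with $NL^{\pi_3}$ uses the intersection-theoretic identity $[C]=[C'']-[C']$ and the decomposition $D_{m,h,(e_1,e_2)}\cap Y=w[C]+\sum_j w_j[T_j]$ on $Y$ (Sections \ref{nonano}--\ref{nonan}). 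None of this global geometry appears in your proposal, and without it the identity cannot be established for imprimitive or lattice-borne classes.
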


\subsection{Proof of Theorem 2}
We first match the BPS counts of $\XX$ by using the GW/P correspondence.
Then, the uniqueness statement of Proposition \ref{bbh} implies
\eqref{rtrtrt}.

\begin{Proposition} \label{gg11} For all $g\in \mathbb{Z}$ and all
 $\gamma\in
H_2({\XX},\mathbb{Z})^{\pi_3}$, we have
$$n_{g,\gamma}^\XX = \widetilde{n}_{g,\gamma}^\XX\,.$$
\end{Proposition}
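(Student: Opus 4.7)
The plan is to deduce Proposition \ref{gg11} directly from the GW/P correspondence for $\XX$ together with the uniqueness of the BPS expansion. Specifically, the GW/P correspondence of \cite{PP} applied to the Calabi-Yau 3-fold $\XX$ gives, after the change of variable $-q = e^{i\lambda}$,
$$\ZZ_{\mathsf{GW}}\big(\XX;\lambda\big)_\gamma = \ZZ_{\mathsf{P}}\big(\XX;q\big)_\gamma$$
for every nonzero fiber class $\gamma \in H_2(\XX,\mathbb{Z})^{\pi_3}$. The change of variables is well-defined because Conjecture \ref{111} for $\XX$ (rationality) holds in the Calabi-Yau case by \cite{Br,Toda}.

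First I would take logarithms. Since $\ZZ_{\mathsf{GW}}(\XX;\lambda) = \exp(F^\XX)$ by definition, and $\widetilde{F}^\XX$ is by definition the logarithm of $1 + \sum \ZZ_{\mathsf{P}}(\XX;q)_\gamma v^\gamma$, the GW/P correspondence above implies $F^\XX = \widetilde{F}^\XX$ (coefficient-by-coefficient in $v^\gamma$) after the same variable change.

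Next I would exploit the identity
$$2^{2g-2}\sin(d\lambda/2)^{2g-2} = (-1)^{g-1}\left((-q)^d - 2 + (-q)^{-d}\right)^{g-1},$$
already recorded in Section \ref{k3intp}, to rewrite the Gopakumar-Vafa expansion of $F^\XX$ in the variable $q$. The result is an expansion of $F^\XX$ of exactly the same shape as the defining BPS expansion of $\widetilde{F}^\XX$:
$$F^\XX = \sum_{g \in \mathbb{Z}} \sum_{0 \neq \gamma} n_{g,\gamma}^\XX \sum_{d>0} \frac{(-1)^{g-1}}{d}\left((-q)^d - 2 + (-q)^{-d}\right)^{g-1} v^{d\gamma},$$
with $n_{g,\gamma}^\XX = 0$ for $g<0$. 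By the uniqueness of the BPS expansion for stable pairs (Section 3.4 of \cite{PT1}), the equality $F^\XX = \widetilde{F}^\XX$ forces $n_{g,\gamma}^\XX = \widetilde{n}_{g,\gamma}^\XX$ for every $g \in \mathbb{Z}$ and every nonzero $\gamma$.

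The main conceptual point---rather than an obstacle---is that nothing here requires any results proper to $K3$ surfaces: the proposition is a formal consequence of GW/P for $\XX$ together with the algebraic uniqueness of the GV/BPS-style expansion. The real work in the proof of Theorem \ref{vvee} lies elsewhere, namely in establishing the P/NL correspondence (Theorem \ref{ffc2}) for the family $\pi_3$ and then inverting the linear system using the triangularity of Proposition \ref{bbh} to convert Proposition \ref{gg11} into the sought equality $r_{g,m,h} = \widetilde{r}_{g,m,h}$.
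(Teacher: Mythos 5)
Your skeleton (GW/P for $\XX$ plus uniqueness of BPS expansions) is the same one the paper uses, but there is a genuine gap at the step where you declare the coefficient matching to be ``a formal consequence'' requiring ``no results proper to $K3$ surfaces''. The two BPS expansions run in opposite directions in $g$: on the Gromov--Witten side $n^\XX_{g,\gamma}=0$ for $g<0$, but for fixed $\gamma$ nothing rules out nonzero $n^\XX_{g,\gamma}$ for all $g\geq 0$ (the paper explicitly does not assume the conjectural Gopakumar--Vafa finiteness); on the stable pairs side $\widetilde{n}^\XX_{g,\gamma}=0$ for $g\gg 0$ by construction, but a priori $\widetilde{n}^\XX_{g,\gamma}$ may be nonzero for all $g<0$. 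Because of this, your rewriting of $F^\XX$ as a pairs-type expansion in $q$ is not legitimate as stated: if infinitely many $n^\XX_{g,\gamma}$ were nonzero, the sum $\sum_{g\geq 0} n^\XX_{g,\gamma}(-1)^{g-1}\bigl((-q)^d-2+(-q)^{-d}\bigr)^{g-1}$ is not even a well-defined Laurent series in $q$ (the constant term, for instance, receives a nonzero contribution $\pm\binom{2g-2}{g-1}$ from every $g\geq 1$), so the uniqueness statement of Section 3.4 of \cite{PT1}, which concerns expansions of Laurent series in $q$, cannot be invoked. Symmetrically, transporting the pairs expansion to a GW-type $\lambda$-expansion requires knowing there are no $g<0$ terms, since under $-q=e^{i\lambda}$ the $g<0$ contributions have poles at $\lambda=0$ of unbounded order $2-2g$. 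The GW/P correspondence gives equality of the partition functions after the variable change whether or not $\widetilde{n}^\XX_{g<0,\gamma}$ vanish; extracting equality of BPS coefficients needs finiteness on one side, which is exactly the point of the footnote in the paper's proof.

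This is where $K3$-specific input enters, contrary to your closing claim. The paper first uses Corollary \ref{ww3} of Section \ref{mulcov} (a consequence of the stable pairs multiple cover formula, hence ultimately of the Kawai--Yoshioka evaluation) to get $\widetilde{r}_{g,m,h}=0$ for $g<0$, and then the P/NL correspondence of Theorem \ref{ffc2} to conclude $\widetilde{n}^\XX_{g,\gamma}=0$ for $g<0$. Combined with the vanishing of $\widetilde{n}^\XX_{g,\gamma}$ for large $g$, only finitely many BPS states are nonzero for fixed $\gamma$; the stable pairs BPS expansion then becomes, after $-q=e^{i\lambda}$, an honest expansion of Gromov--Witten type, and uniqueness of that expansion yields $n^\XX_{g,\gamma}=\widetilde{n}^\XX_{g,\gamma}$. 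So your argument must be supplemented by this vanishing, and Proposition \ref{gg11} does depend on Theorem \ref{ffc2} and the multiple cover results, not only on the GW/P correspondence.
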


\begin{proof} By Corollary \ref{ww3} of Section \ref{mulcov},
$\widetilde{r}_{g,m,h}=0$ if $g<0$. Theorem \ref{ffc2} then implies
$\widetilde{n}_{g,\gamma}^\XX=0$ if $g<0$.
Hence, there are only finitely many nonzero BPS states{\footnote{The
GW/P correspondence yields an equality of
partition functions after the variable change $-q=e^{i\lambda}$
whether or not $\widetilde{n}_{g<0,\gamma}$ vanishes. Proposition
\ref{gg11} asserts
a the stronger result: the Gromov-Witten BPS expansion
equals the stable pairs BPS expansion. Since these
expansions are in opposite directions, finiteness is needed.}} 
for fixed $\gamma$ since $\widetilde{n}^\XX_{g,\gamma}$ vanishes
for sufficiently large $g$ by construction \cite{PT1}.
By the 
GW/P correspondence, the $\widetilde{n}^\XX_{g,\gamma}$ then yield
the Gromov-Witten BPS expansion.
\end{proof}

\begin{Proposition}\label{gg12} For all $g\in \mathbb{Z}$, $m\geq 1$, and 
$h\in \mathbb{Z}$, we have
$$r_{g,m,h} = \widetilde{r}_{g,m,h}\,.$$
\end{Proposition}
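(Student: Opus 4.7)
The plan is to combine Theorem \ref{ffc2} with the upper-triangular inversion used in Proposition \ref{bbh}. Applying Theorem \ref{ffc2} to the $\Lambda$-polarized family $(\XX, L_1, L_2, E, \pi_3)$ places the stable pairs BPS invariants $\widetilde{r}_{g,m,h}$ into exactly the same infinite linear system that Theorem \ref{ffc} imposes on $r_{g,m,h}$. By Proposition \ref{gg11} the right-hand sides of the two systems agree, $n^\XX_{g,\gamma} = \widetilde{n}^\XX_{g,\gamma}$. By Corollary \ref{ww3}, both $r_{g,m,h}$ and $\widetilde{r}_{g,m,h}$ vanish for $g < 0$, so it suffices to treat $g \geq 0$. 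Subtracting the two systems, the differences $d_{g,m,h} = r_{g,m,h} - \widetilde{r}_{g,m,h}$ satisfy, for every $(d_1,d_2,d_3) \in \mathcal{P}$,
$$\sum_{m,h} d_{g,m,h}\cdot NL^{\pi_3}_{m,h,(d_1,d_2,d_3)} = 0.$$

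Next, observe that the proof of Proposition \ref{bbh} was purely a linear-algebra argument, independent of the source of the invariants. For each $(m, h)$ with $h > 0$ and $2h - 2 = m^2(2s - 2)$, the constraint indexed by the specific degrees $(2ms+3m, 3ms, -2m(s+1))$ isolates $d_{g,m,h}$ as the unique non-vanishing $h' = h$ term, with nonzero leading coefficient $-2$. Inducting on $h$ propagates $d_{g,m,h'} = 0$ for $h' < h$ to $d_{g,m,h} = 0$. The problem therefore reduces to matching the base case $d_{g,m,h} = 0$ for $h \leq 0$.

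The base case is the heart of the matter, and the main obstacle. The only $(m,h)$ with $h = 0$ and $h \equiv 1 \pmod{m^2}$ is $(1,0)$, corresponding to a primitive rigid $(-2)$-curve, where a direct calculation in both theories yields $r_{0,1,0} = \widetilde{r}_{0,1,0} = 1$ and $r_{g,1,0} = \widetilde{r}_{g,1,0} = 0$ for $g \geq 1$. For $h < 0$ the only cases that occur are $(m, 1-m^{2})$ with $m \geq 2$, corresponding to $m$-fold multiples of a rigid $(-2)$-curve. Here $r_{g,m,1-m^{2}} = 0$ by Lemma 2 of \cite{gwnl}. The parallel vanishing $\widetilde{r}_{g,m,1-m^{2}} = 0$ is the non-formal input: it must be extracted from the stable pairs multiple-cover framework of Section \ref{mulcov}, i.e.\ the reduction of $\widetilde{R}_{n,m\alpha}(S)$ to trivial thickenings on $S \times \C$ and the universal integrals of \cite{KT1, KT2}, which collapse to zero on a rigid rational curve. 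Granting the base case, the upper-triangular induction on $h$ runs identically for both theories and forces $r_{g,m,h} = \widetilde{r}_{g,m,h}$ for all admissible $(g, m, h)$.
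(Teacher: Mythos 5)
Your proposal is essentially the paper's own argument: match the BPS counts of $\XX$ via Proposition \ref{gg11}, observe that Theorem \ref{ffc} and Theorem \ref{ffc2} then give the \emph{same} linear system for $r_{g,m,h}$ and $\widetilde{r}_{g,m,h}$, invoke the upper-triangular uniqueness of Proposition \ref{bbh}, and supply the $h\leq 0$ initial conditions from Lemma 2 of \cite{gwnl} on the Gromov--Witten side and from the multiple cover results of Section \ref{mulcov} (Corollaries \ref{ww22} and \ref{ww3}) on the stable pairs side. Your enumeration of the $h<0$ base cases is slightly off (e.g.\ $m=1$ with arbitrary $h<0$, and $h=m^2(s-1)+1$ with $s\leq 0$, also occur), but this is immaterial since the cited vanishings cover all $h<0$.
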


\begin{proof}
The equality $r_{g,m,h} = \widetilde{r}_{g,m,h}$
holds in case $h\leq 0$ by following the
argument of Lemma 2 of \cite{gwnl} for stable pairs.{\footnote{A 
different argument is given in Corollary \ref{ww22} of Section
\ref{mulcov}.}}
For $h<0$, the
vanishing of 
$\widetilde{r}_{g,m,h}$
holds by the same geometric 
argument given in Lemma 2 of \cite{gwnl}.
The $h=0$ case is the conifold for which
the equality is well known (and a consequence of
GW/P correspondence).

We view
relation \eqref{maintheoremforstu} and Theorem 4 as systems of linear
equations for the unknowns
$r_{g,m,h}$  and $\widetilde{r}_{g,m,h}$
respectively. 
By Proposition \ref{gg11}, we have
$$n_{g,\gamma}^X = \widetilde{n}_{g,\gamma}^X\ $$
for all $g$.
Hence, the two systems of linear equations are
the {\em same}.

We now apply the uniqueness established
in Proposition \ref{bbh}. The initial conditions and
the linear equations are identical. Therefore
the solutions must also agree. 
 \end{proof}

Theorem 2 follows immediately from Proposition \ref{gg12} for the
$K3$ invariants in Gromov-Witten theory and stable pairs. \qed


\section{$K3 \times \C$: Localization}
\label{k3tcl}
\subsection{Overview}
We begin now our analysis of the moduli spaces of stable pairs related to $K3$ surfaces and $K3$-fibrations.
Let $S$ be a nonsingular projective $K3$ surface.
We first study the trivial fibration 
$$Y=S \times \C \To \C$$
 by $\C^*$-localization with respect to
the scaling action on $\C$.  Let $\t$ denote
 the weight 1 representation
of $\C^*$ on the tangent space to $\C$ at $0\in \C$.

 We compute here the $\C^*$-residue contribution to the 
reduced stable pairs theory of $S \times \C$ 
of 
the $\C^*$-fixed component\footnote{Throughout we use the term  {\em component}  to denote any open and closed subset. More formally, a component for us is a union of connected components
in the standard sense.} parameterizing stable pairs supported on $S$ and thickened uniformly $k$ times about 
$0\in\C$. In Section \ref{van}, all other $\C^*$-fixed components will be shown to have vanishing 
contributions to the virtual localization formula.

\subsection{Uniformly thickened pairs} \label{uniform}
Define the following Artinian rings and schemes:
\beq{Ak}
A_k=\C[x]/(x^k)\, ,\qquad B_k=\Spec A_k\, .
\eeq
We have the obvious maps
$$
\xymatrix{\Spec\C & B_k\ \ar[l]_(.38){\pi_k}\ar@{^(->}[r]^(.3){\iota\_k} & \C=\Spec\C[x]\, .}
$$
For any variety $Z$,  we define
 $$Z_k=Z\times B_k\, ,$$ and use the same symbols $\pi_k,\iota_k$ to denote the corresponding projections and inclusions,
\beq{Y}
\xymatrix{Z & Z_k\ \ar[l]_(.46){\pi_k}\ar@{^(->}[r]^(.4){\iota\_k} & Z\times\C\, .}
\eeq
We will often abbreviate $\iota_k$ to $\iota$.

Let $\beta \in H_2(S,\Z)$ be a curve class.
Let $P_S=P_n(S,\beta)$ denote the moduli space of stable pairs on $S$ 
with universal stable pair $(\FF,s)$ and universal complex
$$
\II_S=\big\{\O_{S\times P_S}\Rt{s}\FF\big\}\,.
$$
Using the maps \eqref{Y} for $Z=S\times P_S$ (so $Z\times\C=Y\times P_S$) we define
\beq{kk}
\FF_k=\pi_k^*\FF, \qquad \II_{S_k}=\big\{\O_{S_k\times P_S}\Rt{s_k}\FF_k\big\}
\eeq
on $S_k\times P_S$, where $s_k=\pi_k^*s$. Pushing $s_k$ forward to $Y\times P_S$ we obtain
\beq{IXk}
\II_Y=\big\{\O_{Y\times P_S}\Rt{s_k}\iota_*\FF_k\big\}\,.
\eeq
Since we have constructed a 
flat family over $P_S$ of stable pairs on $Y$ of class{\footnote{For any class $\gamma\in \text{Pic}(S)$,
we denote the push-forward to $H_2(Y,\mathbb{Z})$ also
by $\gamma$.}} 
$k\beta$ and holomorphic Euler characteristic $kn$, we obtain a classifying map from $P_S$ to the moduli space of stable pairs on $Y$:
\beq{family}
\xymatrix{f\colon P_S=P_n(S,\beta)\ \ar[r] & P_{kn}(Y,k\beta)=P_Y\,.}
\eeq


\begin{Lemma}
The map \eqref{family} is an isomorphism onto
an open and closed component of
 the $\C^*$-fixed locus of $P_Y$. 
\end{Lemma}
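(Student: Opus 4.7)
The plan is to verify three things: (1) the classifying map $f$ factors through the $\C^*$-fixed locus $P_Y^{\C^*}$; (2) $f$ is injective and \'etale onto its image; (3) the image is an open and closed subscheme of $P_Y^{\C^*}$.

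For (1), the universal sheaf $\iota_*\FF_k=\iota_*\pi_k^*\FF$ on $Y\times P_S$ is scheme-theoretically supported on the $\C^*$-invariant subscheme $S_k\times P_S$, and inherits a natural $\C^*$-equivariant structure by pullback along the equivariant map $\pi_k\colon S_k\to S$ (with $\C^*$ acting trivially on $S$). The section $s_k=\pi_k^*s$ is $\C^*$-invariant. Hence the classifying morphism $f$ lands in $P_Y^{\C^*}$.

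For (2), injectivity on closed points is immediate: from a uniformly thickened pair $[\O_Y\to\iota_*\pi_k^*\FF]$ one recovers $[\O_S\to\FF]$ as the quotient of $\iota^*(\iota_*\pi_k^*\FF)$ by its $x$-torsion, together with the induced section. To upgrade this to a scheme-theoretic isomorphism with the image, I would compare deformation--obstruction theories: the natural map from the tangent space $\Ext^1_S(\II_S,\FF)$ of $P_S$ at $[\O_S\to\FF]$ to the $\C^*$-fixed part $\Ext^1_Y(\II_Y,\iota_*\pi_k^*\FF)^{\C^*}$ of the tangent space of $P_Y$ at $f([\O_S\to\FF])$ should be an isomorphism, and similarly for obstructions. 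This is expected to follow from a weight-decomposition argument using $\iota_*\O_{B_k}=\bigoplus_{i=0}^{k-1}\C\cdot x^i$ with $\C^*$-weights $0,-1,\ldots,-(k-1)$: only the weight-$0$ summand contributes to the invariant part, and it precisely reproduces the Ext on $S$.

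For (3), the image of $f$ is characterized among $\C^*$-fixed pairs by the property that the underlying sheaf is scheme-theoretically supported on $S_k$, is $\pi_k$-flat, and is realized as a pullback from $S$; equivalently, its $\C^*$-weight decomposition has exactly the weights $0,-1,\ldots,-(k-1)$, with each weight space of the same numerical class as $\FF$. These discrete invariants are locally constant in flat families of $\C^*$-fixed stable pairs on $Y$, so the image is a union of connected components of $P_Y^{\C^*}$. Combined with (2), $f$ gives an isomorphism onto this open-and-closed subscheme. The main obstacle is the equivariant deformation-theoretic comparison in (2): one must check that every $\C^*$-fixed first-order deformation of a uniformly thickened pair automatically preserves the uniform thickening, which is exactly where the weight computation is essential.
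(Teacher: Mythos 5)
Your overall strategy --- show $f$ lands in the fixed locus, match deformation theories, and cut out the image by locally constant discrete data --- is viable, but it is a different and heavier route than the paper's, and as written the two decisive steps are asserted rather than proved. First, the equivariant comparison in your step (2) is exactly where the content lies: bijectivity on closed points together with isomorphisms of tangent spaces does \emph{not} give an isomorphism of schemes (compare $\Spec\C[x]/(x^2)\to\Spec\C[x]/(x^3)$), so you need to identify the weight-zero part of the pair-deformation functor of $(\iota_*\FF_k,s_k)$ on $Y$ with the pair-deformation functor of $(\FF,s)$ on $S$ functorially over Artinian rings (equivalently, bijectivity on tangent spaces plus a compatible injection on obstruction spaces). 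This identification is true --- it is in effect the weight-zero piece of the decomposition established later in Proposition \ref{obsthys} --- but it is a genuine computation which you have deferred with ``is expected to follow''. Second, the ``equivalently'' in your step (3) hides an argument: a $\C^*$-fixed stable pair whose weight spaces all have class $\beta$ and Euler characteristic $n$ is not a pullback from $S$ by fiat; one must use stability and purity (the section forces each multiplication map $x\colon F_i\to F_{i+1}$ to have $0$-dimensional cokernel, hence equal curve classes force the kernel to be $0$-dimensional, purity of $F$ kills it, and the Euler characteristic equality then forces these maps to be isomorphisms) to conclude uniform thickening. A smaller slip: quotienting $\iota^*\iota_*\FF_k\cong\FF\otimes_\C A_k$ by its $x$-torsion returns $\FF\otimes_\C A_{k-1}$, not $\FF$; the canonical way to recover $\FF$ is the weight-$0$ part, i.e.\ $\FF_k/x\FF_k$.

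For contrast, the paper's proof avoids deformation theory entirely at this stage: it takes $P_f$ to be the open and closed component of $(P_Y)^{\C^*}$ containing the image, observes that $f$ is bijective onto $P_f$ on closed points, and constructs the inverse directly by pushing the universal $\C^*$-fixed pair on $Y\times P_f$ down to $S$ and taking $\C^*$-invariant (weight-$0$) sections; the result is a flat family of stable pairs on $S$ whose classifying map $P_f\to P_S$ inverts $f$. The ``all $x$-maps are isomorphisms'' verification you need in (3) is also implicitly what makes this invariant pushforward a family with the correct $(n,\beta)$, so that check must be supplied on either route; but the explicit inverse makes the obstruction-theoretic comparison unnecessary for this Lemma --- it is only needed later, for comparing virtual classes.
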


\begin{proof}
Let $P_f$ denote the open and closed component of $(P_Y)^{\C^*}$ containing the image of $f$. Certainly, $f$ is a bijection on closed points onto $P_f$.
There is a $\C^*$-fixed universal stable pair on $Y\times P_f$.
We push down the universal stable pair to $S\times P_f$ and 
then take $\C^*$-invariant sections. The result is flat over $P_f$
and hence classified by a map $P_f\to P_S$ which is
easily seen to be the inverse map to $f$.
\end{proof}

\subsection{Deformation theory of pairs}

Let $P_Y=P_m(Y,\gamma)$  be the moduli space of stable pairs on $Y$
of class $\gamma\in H_2(Y,\Z)$ with holomorphic Euler characteristic $m$. There is a universal 
complex $\II_Y$ over $Y\times P_Y$.  We will soon take 
$m=kn$ and $\gamma=k\beta$, in which case $\II_Y$ pulls back 
via the classifying map 
$f\colon P_S\to P_Y$ to \eqref{IXk}.

We review here the basics of
 the deformation theory of stable pairs on the 3-fold $Y$ \cite{PT1}.
Let $$\pi_P\colon Y\times P_Y\to P_Y$$ be the projection, and define
\beq{pot}
E_Y\udot=(R\hom_{\pi_P}(\II_Y,\II_Y)\_0)^\vee[-1]\cong
R\hom_{\pi_P}(\II_Y,\II_Y\otimes\omega_{\pi_P})\_0[2]\,.
\eeq
Here $R\hom_{\pi_P}=R\pi_{P*}R\hom$, the subscript $0$ denotes trace-free homomorphisms, and the isomorphism is Serre duality\footnote{Although $\pi_P$ is not proper, the compact support of $R\hom(\II_Y,\II_Y)_0$ ensures that Serre duality holds. This is proved in \cite[Equations 15,16]{MPT}, for instance, by compactifying $Y=S\times\C$ to $Y=\C\times\PP^1$.} 
down $\pi_P$. 
Let $\LL$ denote the truncated cotangent complex $\tau^{\ge-1}L\udot$. Using the Atiyah class of $\II_Y$, we obtain a map \cite[Section 2.3]{PT1},
\begin{equation}\label{xcc12}
E_Y\udot\To\LL_{P_Y}\,,
\end{equation}
exhibiting $E_Y\udot$ as a perfect obstruction theory for $P_Y$ \cite[Theorem 4.1]{HT}.

In fact, \eqref{xcc12} is the natural obstruction theory of trivial-determinant objects $I\udot=\{\O_Y\rt{s}F\}$ of the derived category
$D(Y)$. The more natural obstruction theory of pairs $(F,s)$ 
is given by the complex\footnote{This is essentially proved in \cite{Ill} once combined with \cite[Theorem 4.5]{BehFan}, see \cite[Sections 12.3-12.5]{JS} for a full account.}
\beq{wrongone}
(R\hom_{\pi_P}(\II_Y,{\mathbb{F}}_Y))^\vee
\eeq
where ${\mathbb{F}}_Y$ is the universal sheaf.
However \eqref{wrongone} is \emph{not} perfect in general. To define stable pair invariants, we must use \eqref{pot}. The two theories give the same tangent
spaces, but different obstructions.  On surfaces, however, the analogous obstruction theory
\beq{Spot}
E\udot_S=(R\hom_{\pi_P}(\II_S,\mathbb F))^\vee
\eeq
is indeed perfect and is  used to define invariants \cite{KT1}. 
Here $\pi_P$ denotes the projection $S\times P_S\to P_S$. 

The following result
describes the relationship between the above obstruction theories 
when pulled-back via the map
$f:P_S\to P_Y$  of \eqref{family}.

\begin{Proposition} \label{obsthys}
We have an isomorphism
$$
f^*E_Y\udot\ \cong\ E_S\udot\otimes\!A_k^*\ \oplus\
(E_S\udot)^\vee\!\otimes\!\t^{-1\!}A_k[1]\,,
$$
where\footnote{We ignore here the ring structure \eqref{Ak} on $A_k$ and considering $A_k$ as just a vector space with $\C^*$-action. As such, $A_k^*\cong
\t^{k-1\!}A_k$, as we use below.} $A_k^*=1+\t+\ldots+\t^{k-1}$ and
$\t^{-1\!}A_k=\t^{-1}+\t^{-2}+\ldots+\t^{-k}$.
\end{Proposition}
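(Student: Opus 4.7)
The plan is to compute $f^*E_Y\udot = f^*R\hom_{\pi_P}(\II_Y,\II_Y)_0^\vee[-1]$ directly, exploiting the product $Y=S\times\C$ and the uniform-thickening structure $\II_Y = \{\O_Y \to \iota_{k*}\FF_k\}$ with $\FF_k = \pi_k^*\FF$. The main device is the $\C^*$-equivariant triangle
\[
 I_{S_k} \To \II_Y \To \iota_{k*}\II_{S_k}
\]
on $Y\times P_S$, where $I_{S_k} = (x^k)\subset\O_Y$ is an invertible $\O_Y$-module of $\C^*$-weight $\t^{-k}$ (in the convention $T_0\C = \t$, so that $x\in\t^{-1}$). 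Applying $R\hom_{\pi_P}(-,-)$ to this triangle in both slots produces a $2\times 2$ bifiltration of $R\hom_{\pi_P}(\II_Y,\II_Y)$ whose four corners are the mutual $R\hom$'s between $I_{S_k}$ and $\iota_{k*}\II_{S_k}$. The trace-free subfactor eliminates the pure diagonal $R\hom(I_{S_k},I_{S_k}) \simeq R\hom(\O_Y,\O_Y)$ together with the matching trace piece on the other diagonal, leaving a well-controlled remainder.

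Each surviving corner reduces to a computation on $S$ via two standard tools. First, Grothendieck duality for the regular codimension-one embedding $\iota_k\colon S_k\hookrightarrow Y$, whose normal bundle $N_{S_k/Y} \cong \O_{S_k}\otimes\t^k$ is a trivial $\C^*$-equivariant line bundle of weight $k$, gives $\iota_k^! \simeq \iota_k^*\otimes\t^k[-1]$ and hence
\[
 R\hom_Y(\iota_{k*}A,\iota_{k*}B) \;\simeq\; R\hom_{S_k}(A,B) \ \oplus\ R\hom_{S_k}(A,B)\otimes\t^k[-1].
\]
Second, the flat-pullback identity $\II_{S_k} = \pi_k^*\II_S$ combined with the projection formula $\pi_{k*}\O_{S_k} = \O_S \otimes A_k$, where $A_k = 1 + \t^{-1} + \cdots + \t^{-(k-1)}$ as a $\C^*$-representation, pushes everything down from $S_k$ to $S$. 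After Serre-dualizing the result into the form $E_Y\udot$, the $\C^*$-weights of the surviving pieces fall into two disjoint ranges, $\{0,1,\ldots,k-1\}$ and $\{-1,\ldots,-k\}$, forcing the bifiltration to split as a direct sum rather than a nontrivial extension.

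The non-negative-weight summand identifies with $E_S\udot\otimes A_k^*$, with $A_k^* = 1 + \t + \cdots + \t^{k-1}$ dual to $A_k$, by means of the defining triangle $\mathbb F[-1]\to\II_S\to\O_S$ on $S$ together with the pair obstruction definition $E_S\udot = R\hom_{\pi_P}(\II_S,\mathbb F)^\vee$: the triangle rewrites the Grothendieck-duality output in terms of $R\hom_S(\II_S,\mathbb F)$, with the remaining scalar and trace contributions cancelling against the trace-free condition on $Y$. The negative-weight summand is then forced, by Serre duality on the $K3$ surface $S$ (where $\omega_S = \O_S$) combined with the $\t^{-1}$-twist coming from $\omega_{\pi_P} = \omega_\C = \t^{-1}$, to be the shifted Serre dual of the first summand, yielding $(E_S\udot)^\vee\otimes\t^{-1}A_k[1]$.

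The main obstacle is the careful accounting of the four corners of the bifiltration: the trace-free cancellations between the two diagonals, the identification of the Grothendieck-duality output with the \emph{pair} obstruction theory $E_S\udot$ rather than the complex obstruction theory $R\hom_{\pi_P}(\II_S,\II_S)_0$ (the two have the same tangent spaces but different obstruction spaces, and matching them requires the triangle $\mathbb F[-1]\to\II_S\to\O_S$), and the tracking of all $\C^*$-weight shifts introduced by $I_{S_k}$, by $N_{S_k/Y}$, and by $\omega_\C$. Once these bookkeeping steps are done, the direct-sum splitting is automatic from the disjointness of the two weight ranges.
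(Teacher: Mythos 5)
Your strategy is viable and, at the level of equivariant $K$-theory and weights, it reproduces exactly the decomposition of the Proposition, but it is organized differently from the paper's argument. The paper never filters $R\hom_{\pi_P}(\II_Y,\II_Y)_0$ symmetrically in both slots: it applies $R\hom_{\pi_P}(\II_Y,-)$ to the single triangle \eqref{stand}, maps the resulting triangle \eqref{starti} onto its weight-$[1,k]$ counterpart \eqref{larst} via the first arrow of \eqref{kx}, removes the trace through the diagram \eqref{biggy} using that $R\pi_{P*}\O_{Y\times P_S}[1]$ has weights in $(-\infty,0]$, splits the left column of \eqref{6am} because $A_k$ and $\t^kA_k$ have disjoint weights, and identifies \emph{both} summands directly (the second through the isomorphism \eqref{shriek}, which requires the final compatibility check in the proof). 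Your route --- the two-sided filtration coming from $I_{S_k}\to\II_Y\to\iota_*\II_{S_k}$ (which is the paper's square diagram below \eqref{starti} in disguise), computation of the corners by duality for the divisor $S_k\subset Y$ together with $\pi_{k*}\O_{S_k}=\O_S\otimes A_k$, identification of only the non-negative-weight summand, and then recovery of the negative-weight summand from the self-duality $E_Y\udot\cong(E_Y\udot)^\vee\otimes\t^{-1}[1]$ of \eqref{pot} --- buys a pleasant economy (you only have to pin down half the answer and the relative Calabi--Yau symmetry gives the other half), at the price of controlling four corners and their connecting maps instead of one chain of triangles.

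Two steps are stated more strongly than they are proved in your sketch. First, the formula $R\hom_Y(\iota_{k*}A,\iota_{k*}B)\cong R\hom_{S_k}(A,B)\oplus R\hom_{S_k}(A,B)\otimes\t^k[-1]$ is \emph{not} a consequence of Grothendieck duality alone: duality (together with the triangle \eqref{FFF} for $\iota^*\iota_*$) only gives a triangle with these outer terms, and the connecting map need not vanish in general. It does vanish here, but only because $A=B=\II_{S_k}$ is pulled back from $S$, so that the two terms sit in weights $[-(k-1),0]$ and $[1,k]$ and every equivariant connecting map is zero --- i.e.\ you are already invoking the weight argument inside this ``standard tool''. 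Second, the disjointness of the ranges $\{0,\dots,k-1\}$ and $\{-1,\dots,-k\}$ only holds after the trace has been removed: the corner $R\hom_{\pi_P}(I_{S_k},I_{S_k})\cong R\pi_{P*}\O_{Y\times P_S}$ and the trace summand both have weights unbounded below (the $\C[x]$-direction), and showing that they cancel compatibly with your filtration, and that what survives in weights $[-(k-1),0]$ is the cone of the natural map $R\hom_{\pi_P}(\II_S,\II_S)\otimes A_k\to R\hom_{\pi_P}(\II_S,\O_S)\otimes A_k$ --- hence $R\hom_{\pi_P}(\II_S,\FF)\otimes A_k=(E_S\udot)^\vee\otimes A_k$ as in \eqref{Spot}, and not merely a complex with the same $K$-class --- is exactly the diagram chase the paper carries out in \eqref{biggy}, \eqref{6am} and the closing commutativity check. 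Your $K$-theoretic bookkeeping shows the classes agree, which suffices for the virtual-class and virtual-normal-bundle applications, but the Proposition asserts an isomorphism in the derived category, so these connecting-map identifications must be exhibited to complete the proof.
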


\begin{proof}
We will need two preliminaries on pull-backs. First, over $S_k\times P_S$ there is a canonical exact triangle
\beq{FFF}
\mathbb F_k\otimes N_k^*[1]\To\iota^*\iota_*\mathbb F_k\To\mathbb F_{k}\,,
\eeq
where $\iota^*=L\iota_k^*$ is the \emph{derived} pull-back functor, and
 $$N_k\cong\O_{S_k\times P_S}\otimes\t^k$$ denotes the normal bundle of $S_k\times P_S$ in $Y\times P_S$. 
Second, combine the first arrow of \eqref{FFF} with the obvious map $\O_{Y\times P_S}\to\iota_*\O_{S_k\times P_S}$:
\begin{equation*}
\begin{split}
R\curly H&\!om(\iota_*\mathbb F_k,\O_{Y\times P_S})\To R\hom(\iota_*\mathbb F_k,\iota_*\O_{S_k\times P_S}) \\ &\cong\,\iota_*R\hom(\iota^*\iota_*\mathbb F_k,\O_{S_k\times P_S})\To
\iota_*R\hom(\mathbb F_k\otimes N_k^*[1],\O_{S_k\times P_S})\,.
\end{split}
\end{equation*}
A local computation shows the above composition is an isomorphism:
\beq{shriek}
R\hom(\iota_*\mathbb F_k,\O_{Y\times P_S})\ \cong\ \iota_*R\hom(\mathbb F_k\otimes N_k^*[1],\O_{S_k\times P_S})\,.
\eeq

\smallskip

Now combine \eqref{FFF} with $\iota^*$ of the triangle
\beq{stand}
\II_Y\to\O_{Y\times P_S}\to\iota_*\mathbb F_k
\eeq
to give the following diagram of exact triangles on $S_k\times P_S$:
\beq{iFI}\xymatrix@=16pt{
& \iota^*\II_Y \ar[d] \\
& \O_{S_k\times P_S} \ar[d]\ar@{=}[r] & \O_{S_k\times P_S} \ar[d] \\
\mathbb F_k\otimes N_k^*[1] \ar[r] & \iota^*\iota_*\mathbb F_k \ar[r] & \mathbb F_k\,.\!\!\!}
\eeq
The right hand column defines the complex $\II_{S_k}$ \eqref{kk}, so by the octahedral axiom we can fill in the top row with the exact triangle
\beq{kx}
\FF_k\otimes N_k^*\To\iota^*\II_Y\To\II_{S_k}\,.
\eeq
Again letting $\pi_P$ denote both projections 
$$S_k\times P_S\to P_S\ \ \text{and}\ \ Y\times P_S\to P_S\,,$$
we apply $R\hom_{\pi_P}(\II_Y,\ \cdot\ )$ to $\II_Y\to\O_{Y\times P_S}\to\iota_*\mathbb F_k$ to give the triangle
\beq{starti}
R\hom_{\pi_P}(\II_Y,\iota_*\mathbb F_k)\To
R\hom_{\pi_P}(\II_Y,\II_Y)[1]\To R\hom_{\pi_P}(\II_Y,\O)[1]
\eeq
relating the obstruction theory \eqref{wrongone} to the obstruction theory \eqref{pot} (without its trace-part removed: we will deal with this presently).

Now use the following obvious diagram of exact triangles on $Y\times P_S$:
$$\xymatrix@=16pt{
\II_Y \ar[r]\ar[d] & \O_{Y\times P_S} \ar[r]\ar[d] & \iota_*\mathbb F_k \ar@{=}[d]
\\ \iota_*\II_{S_k} \ar[r] & \iota_*\O_{S_k\times P_S} \ar[r] & \iota_*\mathbb F_k\,.\!\!}
$$
This maps the triangle \eqref{starti} to the triangle
$$
R\hom_{\pi_P}(\II_Y,\iota_*\mathbb F_k)\!\To\!R\hom_{\pi_P}(\II_Y,\iota_*\II_{S_k})[1] \!\To\!R\hom_{\pi_P}(\II_Y,\iota_*\O)[1].
$$
By adjunction this is
$$
R\hom_{\pi_P}(\iota^*\II_Y,\mathbb F_k)\!\To\!R\hom_{\pi_P}(\iota^*\II_Y,\II_{S_k})[1] \!\To\!R\hom_{\pi_P}(\iota^*\II_Y,\O)[1],
$$
which in turn maps to
\beq{larst}
R\hom_{\pi_P}(\mathbb F_k,\mathbb F_k)\t^k\!\To\!R\hom_{\pi_P}(\mathbb F_k,\II_{S_k})\t^k[1] \!\To\!R\hom_{\pi_P}(\mathbb F_k,\O)\t^k[1]
\eeq
by the first arrow $\FF_k\otimes N_k^*\to\iota^*\II_Y$ of \eqref{kx}. Notice that since $\II_{S_k}$ and $\FF_k$ are the pull-backs of $\II_S$ and $\FF$ by $\pi_k\colon S_k\to S$, the central term simplifies to
$$
R\hom_{\pi_P}(\mathbb F,\II_S)\otimes\t^kA_k[1]\ \cong\ E\udot_S\otimes\t^k A_k[-1]\,,
$$
where $E\udot_S$ is the obstruction theory \eqref{Spot}.
\medskip

We next remove the trace component of \eqref{starti} using the diagram
\vspace{3mm}\beq{biggy}
\raisebox{15pt}[0pt]{\xymatrix@=14pt{
& R\pi_{P*}\O_{Y\times P_S}[1] \ar@{=}[r]\ar[d]^\id & R\pi_{P*}\O_{Y\times P_S}[1] \ar[d] \\
R\hom_{\pi_P}(\II_Y,\iota_*\mathbb F_k) \ar[r]&
R\hom_{\pi_P}(\II_Y,\II_Y)[1] \ar[r]\ar[d]&
R\hom_{\pi_P}(\II_Y,\O)[1] \ar[d] \\
& R\hom_{\pi_P}(\II_Y,\II_Y)\_0[1] \ar[r]&
R\hom_{\pi_P}(\iota_*\mathbb F_k,\O)[2]\,.}}
\eeq
Here the right hand column is given by applying $R\hom_{\pi_P}(\ \cdot\ ,\O)$ to \eqref{stand}. The top right hand corner commutes because the composition $\O\Rt{\id}\hom(\II_Y,\II_Y)
\to\hom(\II_Y,\O)$  takes $1$ to the canonical map $\II_Y\to\O$. Therefore the whole diagram commutes.

The central row of \eqref{biggy} is \eqref{starti}, and
our map from \eqref{starti} to \eqref{larst} kills the top row of \eqref{biggy} by $\C^*$-equivariance: $R\pi_{P*}\O_{Y\times P_S}[1]$ has $\C^*$-weights in $(-\infty,0]$ while \eqref{larst} has weights in $[1,k]$. Therefore it descends to a map from the bottom row of \eqref{biggy} (completed using the octahedral axiom) to \eqref{larst}. The upshot is the following map of triangles
\beq{6am}
\xymatrix@=16pt{
R\hom_{\pi_P}(\II_Y,\iota_*\mathbb F_k) \ar[r]\ar[d] & f^*(E\udot_Y)^\vee \ar[r]\ar[d] & R\hom_{\pi_P}(\iota_*\mathbb F_k,\O)[2] \ar[d] \\
R\hom_{\pi_P}(\mathbb F_k,\mathbb F_k)\t^k \ar[r] & E\udot_S\otimes\t^k A_k[-1] \ar[r] & R\hom_{\pi_P}(\mathbb F_k,\O)\t^k[1]\,.}
\eeq
Recall that the first column was induced from the triangle \eqref{kx} so sits inside a triangle
$$
R\hom_{\pi_P}(\II_{S_k},\FF_k)\To R\hom_{\pi_P}(\II_Y,\iota_*\FF_k)\To
R\hom_{\pi_P}(\FF_k,\FF_k)\t^k\,.
$$
Again we can simplify because $\II_{S_k}$ and $\FF_k$ are the pull-backs of $\II_S$ and $\FF$ by $\pi_k\colon S_k\to S$. That is,
$$
R\hom_{\pi_P}(\II_Y,\iota_*\FF_k)\ \cong\
R\hom_{\pi_P}(\II_S,\FF)\otimes A_k\ \oplus\ R\hom_{\pi_P}(\FF,\FF)\otimes\t^{k}A_k\,,
$$
where the splitting follows from the $\C^*$-invariance of the connecting homomorphism: it must vanish because $A_k$ has weights in $[-(k-1),0]$ while $\t^kA_k$ has weights in $[1,k]$.

So this splits the first vertical arrow of \eqref{6am}; we claim the last vertical arrow is the isomorphism induced by \eqref{shriek}. Altogether this gives the splitting
$$
f^*(E_Y\udot)^\vee\ \cong\ R\hom_{\pi_P}(\II_S,\FF)\otimes A_k\ \oplus\ E\udot_S[-1]\otimes\t^kA_k\,.
$$
Dualizing gives
$$
f^*E_Y\udot\ \cong\ E_S\udot\otimes A_k^*\ \oplus\ (E\udot_S)^\vee[1]\otimes\t^{-1\!}A_k\,,
$$
as required.

It remains to prove the claim that the third vertical arrow of \eqref{6am} is induced by \eqref{shriek}. By the construction of these maps, it is sufficient to prove the commutativity of the diagram
$$\xymatrix@=16pt{
R\hom_{\pi_P}(\II_Y,\O_{Y\times P_S}) \ar[r]\ar[d]_{\partial^*} & R\hom_{\pi_P}(\mathbb F_k\otimes N_k^*,\O_{S_k\times P_S}) \\
R\hom_{\pi_P}(\iota_*\mathbb F_k[-1],\O_{Y\times P_S})\,. \ar[ur]_\sim}
$$
Here the vertical arrow is induced by the connecting homomorphism $\partial$ of the standard triangle $\II_Y\to\O_{Y\times P_S}\to\iota_*\mathbb F_k$, and the diagonal arrow is $R\pi_{P*}$ applied to \eqref{shriek}.

The horizontal arrow is our map from \eqref{starti} to \eqref{larst} (restricted to the right hand term in each triangle). It is therefore the composition
\begin{equation*}
\begin{split}
R\hom_{\pi_P}&(\II_Y,\O_{Y\times P_S})\To R\hom_{\pi_P}(\II_Y,\iota_*\O_{S_k\times P_S}) \\ \cong&\
R\hom_{\pi_P}(\iota^*\II_Y,\O_{S_k\times P_S})\Rt{\eqref{kx}} R\hom_{\pi_P}(\mathbb F_k\otimes N_k^*,\O_{S_k\times P_S})\,.
\end{split}
\end{equation*}
Via $\partial\colon\iota_*\mathbb F_k[-1]\to\II_Y$, the above composition maps to the composition
\begin{equation*}
\begin{split}
R&\hom_{\pi_P}(\iota_*\mathbb F_k[-1],\O_{Y\times P_S})\To R\hom_{\pi_P}(\iota_*\mathbb F_k[-1],\iota_*\O_{S_k\times P_S}) \\ &\ \cong
R\hom_{\pi_P}(\iota^*\iota_*\mathbb F_k[-1],\O_{S_k\times P_S})\Rt{\eqref{FFF}}
R\hom_{\pi_P}(\mathbb F_k\otimes N_k^*,\O_{S_k\times P_S})\,.
\end{split}
\end{equation*}
Therefore the first two resulting squares commute. The last square  is:
$$
\xymatrix@R=15pt{
R\hom_{\pi_P}(\iota^*\II_Y,\O_{S_k\times P_S}) \ar[r]^(.46){\eqref{kx}}\ar[d]^{\iota^*\partial^*}
& R\hom_{\pi_P}(\mathbb F_k\otimes N_k^*,\O_{S_k\times P_S}) \ar@{=}[d] \\
R\hom_{\pi_P}(\iota^*\iota_*\mathbb F_k[-1],\O_{S_k\times P_S}) \ar[r]^{\eqref{FFF}}
& R\hom_{\pi_P}(\mathbb F_k\otimes N_k^*,\O_{S_k\times P_S})\,.}
$$
By the construction of \eqref{kx} from \eqref{FFF}, the square commutes.
\end{proof}

A virtual class on $P_{kn}(Y,k\beta)^{\C^*}$ 
induced by $(E\udot_Y)^{\fix}$ is defined in \cite{GP}.
The moduli space  
$P_n(S,\beta)$ hence carries several virtual classes:
\begin{enumerate}
\item[(i)] via the intrinsic obstruction theory $E\udot_S$,
\item[(ii)] via $(E\udot_Y)^{\fix}$ and the local isomorphism 
$$f:P_n(S,\beta) \Into P_{kn}(Y,k\beta)^{\com^*}$$
for
 every $k\geq 1$.
\end{enumerate}

\begin{Proposition}\label{obmat} The virtual classes on $P_n(S,\beta)$
obtained from (i) and (ii) are all equal.
\end{Proposition}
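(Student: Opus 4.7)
The plan is to derive Proposition \ref{obmat} as an immediate corollary of Proposition \ref{obsthys} by taking $\C^*$-fixed parts. First I would examine the weight decomposition of
$$f^*E_Y\udot \ \cong\ E_S\udot\otimes A_k^*\ \oplus\ (E_S\udot)^\vee\otimes\t^{-1\!}A_k[1].$$
Since $A_k^*=1+\t+\ldots+\t^{k-1}$ contains a single weight-zero summand (the $1=\t^0$) and $\t^{-1\!}A_k=\t^{-1}+\ldots+\t^{-k}$ has only strictly negative weights, taking $\C^*$-invariants produces a canonical isomorphism
$$(f^*E_Y\udot)^{\fix}\ \cong\ E_S\udot.$$
The remaining summands contribute to the virtual normal bundle of the fixed locus in the virtual localization formula, but not to the induced obstruction theory on the fixed component itself.

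Next I would appeal to the construction of \cite{GP}: the virtual class on $P_n(S,\beta)\Into P_{kn}(Y,k\beta)^{\C^*}$ induced by the ambient $\C^*$-equivariant obstruction theory $E_Y\udot$ is, by definition, the one cut out by $(E_Y\udot)^{\fix}$ as a perfect obstruction theory on the fixed locus. Combined with the isomorphism above, this yields a virtual class identical to the one defined by $E_S\udot$, independently of $k\ge1$.

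The one genuine point that needs verification — and the main (if mild) obstacle — is that the fixed part of the isomorphism of Proposition \ref{obsthys} intertwines the two natural morphisms to $\LL_{P_S}$, since a virtual class is determined by a perfect complex \emph{together with} its map to the cotangent complex. Both obstruction theories are constructed from Atiyah classes, of $\II_Y$ and $\II_S$ respectively, and the key geometric input is the exact triangle \eqref{kx},
$$\FF_k\otimes N_k^*\To\iota^*\II_Y\To\II_{S_k},$$
whose second arrow (the weight-zero piece in the $\C^*$-decomposition) projects $\iota^*\II_Y$ onto $\II_{S_k}=\pi_k^*\II_S$. By naturality of Atiyah classes under pullback along $\pi_k$ and $\iota$, the Atiyah map $\II_Y\to\II_Y\otimes\LL_{Y\times P_Y}[1]$ restricts on the weight-zero summand to the Atiyah map of $\II_S$, composed with the pullback $\LL_{S\times P_S}\to\pi_{P*}\LL_{S\times P_S}$; this is precisely what is needed to match the induced maps $(f^*E_Y\udot)^{\fix}\to\LL_{P_S}$ and $E_S\udot\to\LL_{P_S}$.

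Finally I would conclude by invoking the standard fact that two perfect obstruction theories $E^\bullet\to\LL_{P_S}$ which are isomorphic as morphisms in the derived category produce equal virtual fundamental classes, completing the identification of all the virtual classes (i) and (ii) for every $k\ge1$.
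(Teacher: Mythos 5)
Your first step coincides with the paper's: taking $\C^*$-fixed parts of Proposition \ref{obsthys} gives $(f^*E\udot_Y)^{\fix}\cong E\udot_S$. Where you diverge is in how the equality of virtual classes is then extracted. You insist on upgrading this to an isomorphism of obstruction \emph{theories}, i.e.\ compatible with the two maps to $\LL_{P_S}$, and then invoke the standard comparison result for isomorphic obstruction theories. The paper deliberately avoids that check: it quotes the formula (reviewed in Appendix C) expressing the virtual class as
$$[P]^{\vir}=\big[s\big((E\udot)^\vee\big)\cap c_F(P)\big]_{\mathrm{virdim}}\,,$$
so the class depends only on the derived-category (in fact $K$-theory) class of the obstruction complex and the Fulton total Chern class of the space, and the bare isomorphism $(f^*E\udot_Y)^{\fix}\cong E\udot_S$ already suffices. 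Your route is legitimate in principle, but the step you flag as ``the one genuine point that needs verification'' is also the weakest part of your write-up: the isomorphism of Proposition \ref{obsthys} is assembled from several octahedra, splittings forced by $\C^*$-weights, and the identification \eqref{shriek}, and checking that its weight-zero piece intertwines the Atiyah-class maps to $\LL_{P_S}$ requires retracing that whole construction, not just a one-line appeal to naturality of Atiyah classes under $\pi_k$ and $\iota$. So your argument, as written, leaves a real verification open which the paper's Fulton-class argument shows to be unnecessary; either complete that compatibility check carefully or, more efficiently, replace your last two paragraphs by the $K$-theoretic argument via $c_F$ and the Segre class. The same remark applies to the reduced version, Proposition \ref{obmat2}, whose proof the paper says is identical.
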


\begin{proof}
By Proposition \ref{obsthys}, there is an isomorphism,
\begin{equation}
\label{n234}
{f^*(E\udot_Y)^{\fix} \cong E\udot_S}\,,
\end{equation}
in the derived category.
Since the virtual class
is expressed in terms of the
Fulton total Chern class of $P_n(S,\beta)$ and the
Segre class of the dual of the obstruction theory{\footnote{The relationship of the
virtual class with the Fulton total Chern class
and the normal cone is reviewed in Appendix \ref{ap11}.}},
the isomorphism \eqref{n234} implies the equality of
the virtual classes.
\end{proof}

In fact Proposition \ref{obmat} is trivial:
the virtual classes of
$P_n(S,\beta)$ obtained from the obstruction theories $E\udot_S$
and $(E\udot_Y)^{\fix}$ both vanish
by the existence of the
{\em reduced} theory.

The reduced obstruction theory for $Y$ is constructed in, for instance, \cite[Section 3]{MPT}. We review the construction in a slightly more general setting in Section \ref{rott}. For the reduced theory of $S$,
 we can either $\C^*$-localize the 3-fold reduced class, or equivalently, use the construction in \cite{KT1}. In particular, Proposition 3.4 of \cite{KT1} shows  the two constructions are compatible under the isomorphism \eqref{family}. They both remove a trivial piece $\O_P[-1]$ (of $\C^*$-weight 0!) from the obstruction theory. The nontrivial version of Proposition \ref{obmat} is the following.

\begin{Proposition}\label{obmat2} The reduced virtual classes on $P_n(S,\beta)$
obtained from (i) and (ii) are all equal.
\end{Proposition}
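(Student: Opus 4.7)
The strategy is to upgrade the identification $f^*(E_Y\udot)^{\fix} \cong E_S\udot$ underlying Proposition \ref{obmat} to the reduced level, by showing that the canonical trivial summand $\O_{P_S}[-1]$ removed by each reduction procedure is carried to itself under this isomorphism.

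Concretely, I would first extract the $\C^*$-weight zero part of the splitting in Proposition \ref{obsthys}. Since $A_k^* = 1 + \t + \cdots + \t^{k-1}$ contributes precisely one weight-zero term while $\t^{-1\!}A_k[1]$ has only strictly negative weights, one recovers the identification $f^*(E_Y\udot)^{\fix} \cong E_S\udot$. I would then recall that both reduction procedures remove a canonical trivial summand $\O_{P}[-1]$ from their respective obstruction theories, constructed from the semiregularity-type map obtained by pairing the Atiyah class with the holomorphic symplectic form on $S$ (viewed on the $Y$-side through the identification $\Omega^2_S \cong K_S$ and the product structure $Y = S \times \C$).

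The essential claim to verify is that the isomorphism from Proposition \ref{obsthys} carries the trivial summand on one side to the trivial summand on the other. This can be checked by tracking the Atiyah class through the diagram \eqref{6am}: the restriction $\iota^*\At(\II_Y) \to \At(\II_{S_k})$ induced by the triangle \eqref{kx} intertwines the two semiregularity maps after pairing with the symplectic form. For $k=1$ this is exactly the content of Proposition 3.4 of \cite{KT1} cited in the paper; the argument extends to $k>1$ because the extra summands arising from $A_k^*$ (weights in $[1,k-1]$) and $\t^{-1\!}A_k$ (weights in $[-k,-1]$) carry only nonzero $\C^*$-weights and so cannot interfere with the weight-zero semiregularity identification.

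Once the trivial summands are matched, quotienting both sides of \eqref{n234} by the resulting $\O_{P_S}[-1]$ factor yields an isomorphism
$$f^*\bigl((E_Y\udot)^{\red}\bigr)^{\fix} \;\cong\; (E_S\udot)^{\red}$$
of reduced perfect obstruction theories, and equality of the associated virtual classes follows by the Fulton-total-Chern-class description from Appendix C, exactly as in Proposition \ref{obmat}. The principal obstacle I anticipate is bookkeeping: one must carefully track the trace removal and the Serre-duality identification \eqref{shriek} used in Proposition \ref{obsthys} in order to confirm that the trivial semiregularity summand on the $Y$-side lies entirely within the $E_S\udot \otimes A_k^*$ factor, rather than being mixed across the two summands of the decomposition.
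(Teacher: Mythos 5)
Your proposal is correct and follows essentially the same route as the paper: the paper likewise observes that both reductions remove a trivial weight-zero piece $\O_{P}[-1]$, invokes Proposition 3.4 of \cite{KT1} for the compatibility of the two reduced constructions under the isomorphism \eqref{family}, and then repeats the Fulton--Chern-class argument of Proposition~\ref{obmat}. Your extra bookkeeping (isolating the $\C^*$-weight-zero part of Proposition~\ref{obsthys} and noting the nonzero-weight summands cannot mix with the semiregularity piece) is just an expanded version of what the paper leaves to the cited reference.
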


The proof of Proposition \ref{obmat2} is exactly the same as
the proof of Proposition \ref{obmat} given above.

\subsection{Localization calculation I} \label{calc}
We can now evaluate
 the residue contribution of the locus of
$k$-times uniformly thickened stable pairs 
$$P_n(S,\beta)\subset P_{kn}(Y,k\beta)^{\C^*}$$ of \eqref{family}
to the $\com^*$-equivariant
integral $$\int_{[P_{kn}(Y,k\beta)]^{\red}_{\com^*}} 1\,. $$
We will see in Section \ref{van}  that the
contributions of all other $\C^*$-fixed loci 
to the virtual localization formula vanish.

By Proposition \ref{obmat2}, the reduced
virtual class on $P_{kn}(Y,k\beta)^{\C^*}$ 
obtained from $(E\udot_Y)^{\fix}$ matches the reduced virtual class of 
$P_n(S,\beta)$ obtained from the obstruction theory $E\udot_S$.
The virtual normal bundles 
are the same for the reduced and standard
obstruction theories (since the semiregularity map is
$\C^*$-invariant here).

Writing $A_k$ as $\C\oplus\t^{-1\!} A_{k-1}$, we can read off the virtual normal bundle to $P_n(S,\beta)\subset P_{kn}(Y,k\beta)$ 
from Proposition \ref{obsthys}:
$$
N^{\vir}\ =\ (E_S\udot)^\vee\!\otimes\t^{-1\!}A_{k-1}\ \oplus\
E_S\udot\!\otimes\t A_k^*[-1]\,.
$$
After writing $\t A_k^*$ as $\t A_{k-1}^*\oplus\t^k$, the residue
contribution of $P_n(S,\beta)$ to the $\com^*$-equivariant
integral $\int_{[P_{kn}(Y,k\beta)]^{\red}_{\com^*}} 1$ is
$$
\int_{[P_n(S,\beta)]^{\red}}\frac1{e(N^{\vir})}\ =\,\int_{[P_n(S,\beta)]^{\red}}
\frac{e\big(E\udot_S\otimes\t A_{k-1}^*\big)}{e\big((E\udot_S\otimes\t A_{k-1}^*)^\vee\big)}
\,e\big(E\udot_S\otimes\t^k\big)\,.
$$
The rank of $E\udot_S$ is the virtual dimension $$\langle\beta,\beta 
\rangle+n=2h_\beta-2+n$$ of $P_n(S,\beta)$ before reduction. Therefore, the rank of tensor product $E\udot_S\otimes\t A_{k-1}^*$ is
 $(k-1)(2h_\beta-2+n)$, and the quotient in the integrand is 
$$(-1)^{(k-1)(2h_\beta-2+n)}=(-1)^{(k-1)n}\, .$$
Let $t$ denote the $\C^*$-equivariant first Chern class of the
representation $\t$. We have proven the following result.

\begin{Proposition} \label{seven7} The residue contribution of 
$$P_n(S,\beta)\subset P_{kn}(Y,k\beta)^{\C^*}$$ 
to the 
integral $\int_{[P_{kn}(Y,k\beta)]^{\red}_{\com^*}} 1$ is:
\begin{eqnarray*}
\int_{[P_n(S,\beta)]^{\red}}\frac1{e(N^{\vir})} &=&
(-1)^{(k-1)n}\int_{[P_n(S,\beta)]^{\red}}e\big(E\udot_S\otimes\t^k\big)
\\ &=&\frac{(-1)^{(k-1)n}}{kt}\,\int_{[P_n(S,\beta)]^{\red}}c\_{\langle
\beta,\beta\rangle+n+1}(E\udot_S)\,.
\end{eqnarray*}
\end{Proposition}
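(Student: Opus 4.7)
The plan is to establish the two asserted equalities in turn, both essentially reading off from the setup already in place. The key inputs are Proposition \ref{obmat2} (identifying the reduced virtual classes on $P_n(S,\beta)$ obtained from $E\udot_S$ and from $(E\udot_Y)^{\fix}$) and the isomorphism of Proposition \ref{obsthys}, which has already been used in the preceding paragraph to compute
$$
N^{\vir}\;=\;(E_S\udot)^\vee\!\otimes\t^{-1\!}A_{k-1}\ \oplus\ E_S\udot\!\otimes\t A_k^*[1]\,.
$$

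For the first equality, I would split $\t A_k^*=\t A_{k-1}^*\oplus \t^k$, so that the Euler class of $N^{\vir}$ factors as
$$
e(N^{\vir})\;=\;\frac{e\bigl((E\udot_S)^\vee\otimes\t^{-1\!}A_{k-1}\bigr)}{e\bigl(E\udot_S\otimes\t A_{k-1}^*\bigr)}\cdot\frac{1}{e\bigl(E\udot_S\otimes\t^k\bigr)}\,.
$$
Dualizing and using that for any virtual bundle $V$ of rank $r$ one has $e(V^\vee)=(-1)^r e(V)$, the first factor reduces to $\pm 1$. Since $(\t^{-1\!}A_{k-1})^\vee = \t A_{k-1}^*$, the ratio $e(E_S\udot\otimes\t A_{k-1}^*)/e((E_S\udot\otimes\t A_{k-1}^*)^\vee)$ equals $(-1)^{\text{rk}(E\udot_S\otimes\t A_{k-1}^*)}=(-1)^{(k-1)(2h_\beta-2+n)}=(-1)^{(k-1)n}$, because $2h_\beta-2$ is even. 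This yields the first line of the proposition.

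For the second equality, I would use the standard Chern-root expansion of the Euler class of a twist by a line bundle. For a virtual bundle $V$ of rank $r$ and a line bundle $L$ with $c_1(L)=\ell$,
$$
e(V\otimes L)\;=\;\sum_{i\ge 0} c_i(V)\,\ell^{\,r-i}\,,
$$
a formula that holds for virtual $V$ by $K$-theoretic linearity (one checks it for an honest rank $r$ bundle using splitting, and then extends by writing $V$ as a difference of vector bundles and taking a formal ratio of two such expansions). Applied to $V=E\udot_S$ (virtual rank $\langle\beta,\beta\rangle+n$) and $L=\t^k$ with $\ell=kt$, this gives
$$
e(E\udot_S\otimes\t^k)\;=\;\sum_{i\ge 0} c_i(E\udot_S)\,(kt)^{\langle\beta,\beta\rangle+n-i}\,.
$$

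To finish, I would apply a pure dimension count. The reduced virtual class $[P_n(S,\beta)]^{\red}$ has complex dimension $\langle\beta,\beta\rangle+n+1$ (obtained from $E\udot_S$ by removing a trivial summand $\O_P[-1]$, which increases the virtual dimension by one). Since $t$ has cohomological degree $1$ and $c_i(E\udot_S)$ has degree $i$, only the single term with $i=\langle\beta,\beta\rangle+n+1$ can pair nontrivially against the reduced class; all other terms vanish either because their non-equivariant part has the wrong degree or because $c_i$ vanishes for $i$ too small relative to $\langle\beta,\beta\rangle+n$. This contributes a factor $(kt)^{-1}$, producing
$$
\int_{[P_n(S,\beta)]^{\red}} e(E\udot_S\otimes\t^k)\;=\;\frac{1}{kt}\int_{[P_n(S,\beta)]^{\red}} c_{\langle\beta,\beta\rangle+n+1}(E\udot_S)\,,
$$
and combining with the first equality gives the proposition. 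The only subtle point, and the one I would pay most attention to when writing the details, is the bookkeeping of virtual ranks and the precise justification that the virtual Chern-class expansion above commutes with the localized integration against the reduced class; this is technical but formal, and follows from the construction of the reduced obstruction theory recalled before Proposition \ref{obmat2}.
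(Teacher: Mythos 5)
Your proposal is correct and follows essentially the same route as the paper: the same splitting $\t A_k^*=\t A_{k-1}^*\oplus\t^k$ of the virtual normal bundle from Proposition \ref{obsthys}, the same sign computation $(-1)^{(k-1)(2h_\beta-2+n)}=(-1)^{(k-1)n}$ for the ratio of dual Euler classes, and the standard expansion $e(E\udot_S\otimes\t^k)=\sum_i c_i(E\udot_S)(kt)^{\langle\beta,\beta\rangle+n-i}$ combined with the fact that the reduced class has dimension $\langle\beta,\beta\rangle+n+1$ to isolate the $c_{\langle\beta,\beta\rangle+n+1}$ term. (Only a small wording slip: the terms with $i\neq\langle\beta,\beta\rangle+n+1$ drop out purely because the non-equivariant classes $c_i(E\udot_S)$ have the wrong cohomological degree to pair with the reduced class, not because any $c_i$ itself vanishes.)
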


\subsection{Dependence}
Let $S$ be a $K3$ surface equipped with an ample primitive 
polarization $L$.
Let $\beta \in \text{Pic}(S)$ be a positive class
with respect to $L$,
$$\langle L,\beta\rangle >0\,.$$
If $\beta$ is nonzero and effective, $\beta$ must be positive.
The integral 
\begin{equation}\label{fcc99}
\int_{[P_n(S,\beta)]^{\red}}c\_{\langle
\beta,\beta\rangle+n+1}(E\udot_S)
\end{equation}
 is deformation invariant
as $(S,\beta)$ varies
so long as $\beta$ remains an algebraic class.
Hence, the integral depends {\em only} upon $n$, the divisibility
of $\beta$, and $\langle \beta, \beta \rangle$.

 If $\beta$ is effective, then $H^2(S,\beta)=0$
otherwise $-\beta$ would also be effective by Serre duality.
Hence,
by the results of \cite{KT2}, the integral
\eqref{fcc99} depends {\em only} upon $n$ and 
$$\langle \beta,\beta\rangle = 2h_\beta-2$$
in the effective case.

If $\beta$ is effective and  $h_\beta<0$, then
the integral vanishes
since the virtual number of sections of $\beta$ is negative.
A proof is given below in Section \ref{ktvan} following
\cite{KT1,KT2}.
Finally, if $h_\beta \geq 0$, then $\beta$ must be effective
(since $\beta$ is positive) by Riemann-Roch.

If $\beta$ is not effective, then the integral \eqref{fcc99}
vanishes. In the ineffective case, $h_\beta<0$ must hold.
The discussion of cases is summarized by the following
result, whose final statement will be proved in Proposition \ref{hxxh} below.

\begin{Proposition} \label{nnn333}
For a positive class $\beta \in \text{\em Pic}(S)$, the integral
\begin{equation}
\int_{[P_n(S,\beta)]^{\red}}c\_{\langle
\beta,\beta\rangle+n+1}(E\udot_S)
\end{equation}
depends {\em only} upon
$n$ and $h_\beta$. Moreover, if $h_\beta<0$, the integral
vanishes.
\end{Proposition}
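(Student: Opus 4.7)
\textbf{Plan for Proposition \ref{nnn333}.} The strategy is a straightforward case analysis according to whether $\beta$ is effective, assembling the three inputs highlighted in the paragraph preceding the statement: deformation invariance of the reduced integral, the universality results of \cite{KT2}, and the ``negative virtual sections'' vanishing that will be established in Section \ref{ktvan}.

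First I would treat the ineffective case as a warm-up. If $\beta$ is not effective then $P_n(S,\beta)=\emptyset$, since the support of the sheaf in any stable pair of class $\beta$ would be an effective curve in that class; hence the integral vanishes trivially. On the other hand, $L$-positivity of $\beta$ prevents $-\beta$ from being effective, so Serre duality gives $h^2(\O_S(\beta))=h^0(\O_S(-\beta))=0$, and Riemann--Roch reads $h^0(\O_S(\beta))\ge\chi(\O_S(\beta))=h_\beta+1$. Therefore $h_\beta\ge 0$ would force $\beta$ to be effective; contrapositively, ineffective $\beta$ has $h_\beta<0$, so the vanishing claim in that subcase is automatic and both assertions of the proposition hold vacuously.

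Next I would handle the effective case, which is where the substance lies. Deformation invariance of the reduced obstruction theory under deformations of $(S,\beta)$ preserving algebraicity of $\beta$ (a standard consequence of the construction recalled in Section \ref{yzc}) already shows that the integral depends only on $n$, on the divisibility $m(\beta)$, and on $\langle\beta,\beta\rangle$. The step removing the dependence on $m(\beta)$ is to invoke \cite{KT2}: since $h^2(\O_S(\beta))=0$ (by the Serre duality computation above, which remains valid here), the integrand $c_{\langle\beta,\beta\rangle+n+1}(E\udot_S)$ against $[P_n(S,\beta)]^{\red}$ is expressed there by a universal tautological formula in topological invariants of $(S,\beta)$ that depends only on $n$ and $\langle\beta,\beta\rangle=2h_\beta-2$.

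Finally, the vanishing when $h_\beta<0$ in the effective case will be deduced by transporting the argument of \cite{KT1,KT2}, to be recorded in Section \ref{ktvan}: when $h_\beta<0$ the expected dimension of the linear system $|\beta|$ is negative, so the reduced obstruction theory acquires a trivial factor (equivalently a cosection) that kills the reduced class. The main obstacle of the whole argument is really this Section \ref{ktvan} vanishing, since the rest is a packaging of already-cited inputs; once that vanishing is in place, the proposition follows by the case split above.
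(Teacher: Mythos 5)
Your case analysis is essentially the paper's own argument: deformation invariance plus the vanishing $H^2(S,\beta)=0$ feeding into the universality results of \cite{KT2} in the effective case, Serre duality and Riemann--Roch to show a positive ineffective class has $h_\beta<0$ with empty moduli space, and the effective $h_\beta<0$ vanishing deferred to Section \ref{ktvan}, exactly as the paper does. The only quibble is your parenthetical guess at the mechanism of that deferred vanishing: the paper does not produce a cosection there, but instead observes that $P_{1-h_\beta}(S,\beta)=\proj_\beta$ has negative reduced virtual dimension (so its reduced class vanishes for degree reasons) and then transfers this to all $P_{1-h_\beta+k}(S,\beta)$ by flat pullback over $\proj_\beta$ via Proposition \ref{hxxh}.
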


\subsection{Vanishing}\label{ktvan}
Let $S$ be an $K3$ surface with an effective curve class 
$\beta\in \text{Pic}(S)$ satisfying
$$
\beta^2=2h-2 \quad\mathrm{with}\quad h<0\,.
$$
Let $\proj_\beta$ be the linear system of all curves of class $\beta$.
Since $h$ is the reduced virtual dimension of the moduli space
$$
P_{1-h}(S,\beta)=\proj_\beta\,,
$$
the corresponding virtual cycle vanishes,
\beq{train0}
[P_{1-h}(S,\beta)]^{\red}=0\,.
\eeq
We would like to conclude
\beq{train02}
[P_{1-h+k}(S,\beta)]^{\red}=0
\eeq
for all $k$.

If $k<0$, then $P_{1-h+k}(S,\beta)$ is empty, so \eqref{train02}
certainly holds.
If $k>0$,
the moduli space $P_{1-h+k}(S,\beta)$ fibers over $P_{1-h}(S,\beta)$:
$$
P_{1-h+k}(S,\beta)\ \cong\ \Hilb^k(\mathcal C/\proj_\beta) \Rt{\pi} \proj_\beta
\ \cong\ P_{1-h}(S,\beta)\,,
$$
where $\mathcal C\to\proj_\beta$ is the universal curve. 
By \cite{AIK} and \cite[Footnote 22]{KT1},
 the projection $\pi$ is flat of relative dimension $n$. Therefore,
 the vanishing \eqref{train02} follows from \eqref{train0} and 
 Proposition \ref{hxxh} below.{\footnote{The result  was implicit in \cite{KT1} but never actually stated there, so we provide a proof. The result holds more generally for any surface $S$ and class $\beta\in H^2(S,\Z)$ for which $H^2(L)=0$ whenever $c_1(L)=\beta$.}} 
 Since $\pi$ is flat, pull-back is well defined on algebraic cycles. 
Also, as we have noted, $\beta$ effective implies $H^2(S,\beta)=0$.

\begin{Proposition} \label{hxxh} For $H^2(S,\beta)=0$ and $k\geq 0$, we have
$$[P_{1-h+k}(S,\beta)]^{\red}\ =\ \pi^*
[P_{1-h}(S,\beta)]^{\red}\,. $$
\end{Proposition}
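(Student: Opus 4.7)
The strategy is to realize $\pi\colon P_{1-h+k}(S,\beta)\to P_{1-h}(S,\beta)$ as a flat morphism equipped with a compatible triple of perfect obstruction theories whose relative piece is quasi-isomorphic to the relative cotangent complex (i.e.\ the relative obstruction theory is ``virtually smooth''), and then invoke Manolache's virtual pull-back formalism.

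First, I would construct the compatible triple. The support morphism $\pi$ sends a stable pair $(\O_S\to F)$ to the divisor $C\in\proj_\beta$, and the triangle
\[
  \pi^*\II_{1-h}\To\II_{1-h+k}\To\mathcal Q[\,\cdot\,]
\]
on $S\times P_{1-h+k}(S,\beta)$, with $\mathcal Q$ the length-$k$ zero-dimensional quotient on the universal curve, yields after applying $R\hom_{\pi_P}(\II_{1-h+k},\FF)^{\vee}$ a triangle relating the two absolute obstruction theories $E^{\bullet}_S$ to a relative obstruction theory $E^{\bullet}_{\pi}$. By the identification $P_{1-h+k}(S,\beta)\cong\Hilb^k(\mathcal C/\proj_\beta)$ of \cite{AIK} and the fact that $\mathcal C\subset\mathcal S_{\proj_\beta}$ is a divisor in a smooth surface-bundle, the fibers of $\pi$ are Hilbert schemes of points on planar Cohen--Macaulay curves; these are flat of relative dimension $k$ with relative cotangent complex concentrated in degree $0$. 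In particular $E^{\bullet}_{\pi}\simeq\LL_\pi$ is of perfect amplitude in $[-1,0]$ with $h^{-1}=0$, so Manolache's relative virtual pull-back $\pi^{!}_{E^{\bullet}_{\pi}}$ reduces to ordinary flat pull-back $\pi^{*}$ on cycles. The compatible triple then gives $[P_{1-h+k}]^{\vir}=\pi^{*}[P_{1-h}]^{\vir}$.

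Second, I would lift the compatibility to the reduced obstruction theories. The reduction removes a trivial summand $\O_P[-1]$ generated by the semiregularity cosection $\mathrm{Obs}_{P_n}\twoheadrightarrow H^2(S,\O_S)\cong\C$, and it suffices to check this cosection on $P_{1-h+k}(S,\beta)$ is the $\pi$-pull-back of the cosection on $\proj_\beta=P_{1-h}(S,\beta)$. Under the triangle above, the relative obstruction piece lives in the Ext groups computing deformations of the length-$k$ subscheme on $\mathcal C$; the hypothesis $H^2(S,\beta)=0$ (equivalently, $H^0(S,K_S\otimes\O(-\beta))=0$ by Serre duality) ensures that the trace of $E^{\bullet}_\pi$ into $R\pi_{P*}\O_S[2]$ vanishes, so the cosection factors uniquely through $\pi^{*}$ of the cosection on $\proj_\beta$. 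Hence the reduced obstruction theories themselves form a compatible triple, and the virtual pull-back argument applies verbatim to give $[P_{1-h+k}(S,\beta)]^{\red}=\pi^{*}[P_{1-h}(S,\beta)]^{\red}$.

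The main obstacle is the verification in the second step: that the trivializing cosection on the obstruction sheaf is compatible with $\pi$. All other ingredients (flatness of $\pi$ from \cite{AIK}, triviality of the relative obstruction theory, and Manolache's functoriality) are essentially formal once the triple is in place, but the cosection compatibility is precisely where the hypothesis $H^2(S,\beta)=0$ enters in an essential way and must be checked carefully at the level of Atiyah-class-induced maps rather than merely on cohomology.
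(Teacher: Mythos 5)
The central gap is your second-to-last claim in the first step: that the relative obstruction theory $E\udot_\pi$ of the support map is quasi-isomorphic to $\LL_\pi$ because ``the fibers of $\pi$ are Hilbert schemes of points on planar Cohen--Macaulay curves \ldots\ with relative cotangent complex concentrated in degree $0$.'' The fibers are $\Hilb^k(C)$, which are singular whenever the curve $C\in\proj_\beta$ is singular, and deformations of a length-$k$ subscheme $Z\subset C$ are genuinely obstructed at singular points of $C$ (in general $\Ext^1_C(I_Z,\O_Z)\neq 0$); so neither unobstructedness of the relative problem nor the identification of the pairs-theoretic relative theory with the canonical theory of the flat lci morphism $\pi$ is automatic. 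That identification is exactly what makes the virtual pull-back collapse to flat pull-back, and it is not formal: it amounts to knowing that the (reduced) pairs obstruction theory agrees with the one induced by the zero-locus description of $\Hilb^k(\mathcal C/\proj_\beta)$ inside the smooth ambient space $\proj_{\beta+A}\times\Hilb^k S$, which is the main result of \cite[Appendix A]{KT1} and is precisely the input the paper's proof invokes. Without it, Manolache functoriality (even granting the compatible triple, whose construction from the triangle of universal complexes you only sketch) yields $[P_{1-h+k}]^{\red}=\pi^!_{E\udot_\pi}[P_{1-h}]^{\red}$ for an unidentified relative theory, not $\pi^*$. The cosection compatibility you flag in the second step is likewise only asserted, though you correctly isolate it as delicate.

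For comparison, the paper's proof avoids obstruction-theory functoriality altogether: using \cite[Appendix A]{KT1} it writes \emph{both} reduced classes as refined Euler-type classes in smooth ambient spaces, namely $[P]^{\red}=\big[s(P\subset\mathcal A)\,c(\pi^*E)\,c(F)\cap[P]\big]_{h+k}$ with $\mathcal A=\proj_{\beta+A}\times\Hilb^kS$, then uses regularity of the section of $F$ (from \cite{AIK} and \cite[Footnote 22]{KT1}) to identify the cone $C_{P\subset\proj_\beta\times\Hilb^kS}$ with the bundle $F$, so that the exact sequence of cones $0\to F\to C_{P\subset\mathcal A}\to\pi^*C_{\proj_\beta\subset\proj_{\beta+A}}\to 0$ reduces the statement to a two-line Segre class computation. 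If you wish to complete your route, the work to be done is essentially a re-proof of the \cite{KT1} comparison (plus its reduced/cosection refinement), at which point the direct cone argument is shorter; note also that the hypothesis $H^2(S,\beta)=0$ enters the paper through the validity of this ambient description, not only through a trace/cosection vanishing as in your sketch.
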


\begin{proof} In \cite[Appendix A]{KT1},
 the moduli space $P_{1-h+k}(S,\beta)$ is described by equations as 
follows.{\footnote{Since $H^1(S,\O_S)=0$ the description here is
simpler.}}
Let $A$ be a sufficiently ample divisor on $S$. The inclusion,
 $$\proj_\beta\subset \proj_{\beta+A}\,,$$ 
is described as the zero locus of a section of a vector bundle $E$. Next,
\beq{cutF}
\Hilb^k(\mathcal C/\proj_\beta)\ \subset\ \proj_\beta\times\Hilb^kS
\eeq
is described as the zero locus of a section of a bundle $F$ which extends 
to $\proj_{\beta+A}\times\Hilb^kS$.

Let $\mathcal A$ denote the nonsingular ambient space 
$\proj_{\beta+A}\times\Hilb^kS$ which contains our moduli space 
$$P=P_{1-h+k}(S,\beta)\cong \Hilb^k(\mathcal C/\proj_\beta)\,.$$
The above
description then defines a natural virtual cycle on $P$ via
\beq{redA}
[P]^{\red}\ =\ \big[s(P\subset\mathcal A)c(\pi^*E)c(F)\ \cap\ [P]\big]_{h+k}\, ,
\eeq
the refined top Chern class of $\pi^*E \oplus F$ on $P$.
Here,
$$h+k\ =\ \dim\mathcal A-\rk E-\rk F$$
is the virtual dimension of the construction. 
By the main result of \cite[Appendix A]{KT1}, the
class \eqref{redA} is, as the notation suggests, equal to the reduced virtual cycle of $P$.

By \cite{AIK} and \cite[Footnote 22]{KT1}, the section of $F$ cutting out \eqref{cutF} is in fact regular. Hence,  the resulting normal cone
$$
C_{P\subset\proj_\beta \times\Hilb^kS}\cong F
$$
is locally free and isomorphic to $F$. 
We have the following exact sequence of cones{\footnote{Since $H^1(S,\O_S)=0$ and the Hilbert scheme of curves is just the nonsingular
 linear system $\proj_\beta$, all three terms are locally free. In general only the first is.}}
:
$$
0\To C_{P\subset\proj_\beta\times\Hilb^kS}\To C_{P\subset\mathcal A}\To
\pi^*C_{\proj_\beta\subset \proj_{\beta+A}}\To0\,.
$$
After substitution in \eqref{redA}, we obtain
\begin{align*}
[P]^{\red}\ =&\ \Big[s(F)\pi^*s\big(\proj_\beta\subset\proj_{\beta+A}\big)\pi^*c(E)c(F)\ \cap\ \pi^*\big[\proj_\beta\big]\Big]_{h+n} \\
=&\ \pi^*\Big(\Big[s\big(\proj_\beta\subset\proj_{\beta+A}\big)c(E)\ \cap\ 
\big[\proj_\beta\big]\Big]_h\,\Big)
\\ =&\ \pi^*\big[\proj_\beta\big]^{\red}\,. \hfill\qedhere
\end{align*}

\end{proof}

\section{$K3\times \C$: Vanishing} \label{van}
\subsection{Overview}
 We will show the components of $(P_Y)^{\C^*}$ which do not correspond
to the thickenings studied in Section \ref{k3tcl}
do {\em not} contribute to the localization formula.

Recall first the proof  
of the vanishing of the ordinary (non-reduced) $\C^*$-localized invariants of 
$$Y=S\times\C\,.$$ Translation along the $\C$-direction in $Y$ induces a vector field on $P_Y$ which has $\C^*$-weight 1. By the symmetry of the obstruction theory, such translation  induces a $\C^*$-weight 0 cosection: a surjection from the obstruction sheaf $\Omega_{P_Y}$ to $\O_{P_Y}$. Since 
the cosection is $\C^*$-fixed, it descends to a cosection for the $\C^*$-fixed obstruction theory on $(P_Y)^{\C^*}$, forcing the virtual cycle to vanish \cite{KiemLi}.

To apply the above strategy to the \emph{reduced} obstruction theory,
 we need to find \emph{another} weight 1 vector field on the 
moduli space. We will describe such a vector field which is proportional to the original translational vector field along $(P_Y)^{\C^*}\subset P_Y$ precisely on the components of uniformly thickened stable pairs of Section \ref{k3tcl}. 
On the other components of $(P_Y)^{\C^*}$,
the  linear independence of the two weight 1 vector fields
forces the reduced localized invariants to vanish.

\subsection{Basic model} \label{lm}
Our new vector field will again arise from an action in the $\C$-direction on 
$Y$, pulling apart a stable pair supported on a thickening of the central fiber $S\times\{0\}$. 
The $S$-direction plays little role, so we start by explaining the basic
 model on $\C$ itself. For clarity, we will here use the notation
 $$\C_x=\Spec\C[x]\,,$$  where the subscript denotes the corresponding parameter. The space $\C_x$ carries its usual $\C^*$-action, with the coordinate function
$x$ having weight $-1$.

Consider the $k$-times thickened origin $B_k\subset\C_x$. We wish to fix $B_{k-1}\subset B_k$ and move the remaining point away through $\C_x$ at unit speed. In other words, we consider the flat family of subschemes of $\C_x$ parameterized by $t\in\C_t$ given by 
\beq{ffam}
Z=\{x^{k-1}(t-x)=0\}\subset\C_x\times\C_t\,.
\eeq
Specializing to $t=0$ indeed gives the subscheme $\{x^k=0\}=B_k$, while for $t\ne0$ we have $\{x^{k-1}=0\}\sqcup\{x=t\}$.


\subsection{Extension class}
Consider $\O_Z$ as a flat family 
of sheaves over $\C_t$ defining a deformation of fiber $\O_{B_k}$ over $t=0$.
After restriction to $\Spec\C[t]/(t^2)$,
we obtain a first order deformation $\O_Z/(t^2)$ of the sheaf 
$$\O_Z/(t)=\O_{B_k} \,.$$
Such deformations are  classified by an element $e$ of the group
\beq{ext1}
\Ext^1_{\C_x}(\O_{B_k},\O_{B_k})
\eeq
described as follows. The exact sequence
$$
0\To\O_Z/(t)\Rt{t}\O_Z/(t^2)\To\O_Z/(t)\To0\,,
$$
is isomorphic to
\beq{nightgarden}
0\To\O_{B_k}\Rt{t}\frac{\C[x,t]}{(t^2,x^{k-1}(x-t))}\To\O_{B_k}\To0\,.
\eeq
Considering \eqref{nightgarden} as a sequence of $\C[x]$-modules (pushing it down by $\pi_2:\C_x\times\Spec\C[t]/(t^2)\to\C_x\,$)
an extension class $e$ in \eqref{ext1} is determined.
\medskip

Using the resolution
$$
0\To\O_{\C_x}(-B_k)\Rt{x^k}\O_{\C_x}\To\O_{B_k}\To0
$$
of $\O_{B_k}$ we compute \eqref{ext1} as
\begin{eqnarray} \nonumber
\Ext^1_{\C_x}(\O_{B_k},\O_{B_k}) &\cong& \Hom(\O_{\C_x}(-B_k),\O_{B_k}) \\ \label{eext}
&\cong& H^0(\O_{B_k}(B_k))\ \cong\ H^0(\O_{B_k})\otimes\t^k.
\end{eqnarray}

\begin{Proposition} \label{local}
The class $e\in\Ext^1_{\C_x}(\O_{B_k},\O_{B_k})$ of the first order deformation $\O_Z/(t^2)$ of $\O_{B_k}$ is
$$
x^{k-1}\otimes \t^k\,\in\,H^0(\O_{B_k})\otimes\t^k.
$$
It coincides (to first order) with the deformation given by moving $\O_{B_k}$ by the translational vector field $\frac1k\partial_x$. In particular, $e$ has $\C^*$-weight $1$.
\end{Proposition}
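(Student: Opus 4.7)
\textbf{Proof plan for Proposition \ref{local}.}
The plan is to compute the extension class directly from the defining equation of the family $Z$ by lifting the identity map $\O_{B_k}\to\O_{B_k}$ through a free resolution, then to compare with a hand calculation of the translational deformation.

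First I would unpack the identification in \eqref{eext}. Using the two-term locally free resolution
$$
0\To\O_{\C_x}(-B_k)\Rt{x^k}\O_{\C_x}\To\O_{B_k}\To0,
$$
the group $\Ext^1_{\C_x}(\O_{B_k},\O_{B_k})$ is computed as the cokernel of multiplication by $x^k$ on $\Hom(\O_{\C_x}(-B_k),\O_{B_k})\cong\O_{B_k}(B_k)$; since $x^k$ acts as zero on $\O_{B_k}(B_k)$, this cokernel is $H^0(\O_{B_k}(B_k))$, and the $\C^*$-weight of the generator $x^{-k}$ of $\O_{B_k}(B_k)$ is $+k$, giving the identification with $H^0(\O_{B_k})\otimes\t^k$.

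Next I would compute the extension class represented by \eqref{nightgarden} by lifting the right-hand identity $\O_{B_k}\to\O_{B_k}$ to a chain map from the resolution to the sequence \eqref{nightgarden}. The lift $\tilde\phi\colon\O_{\C_x}\to \C[x,t]/(t^2,x^{k-1}(x-t))$ can be taken to be $1\mapsto 1$. The composition $\tilde\phi\circ x^k$ lands inside the kernel of the projection, i.e.\ inside $t\cdot\O_{B_k}$; using the relation $x^{k-1}(x-t)=0$, which rewrites as $x^k=t\,x^{k-1}$ in the middle term, I read off the induced chain map $\phi\colon\O_{\C_x}(-B_k)\to\O_{B_k}$ as $x^k\mapsto x^{k-1}$. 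Under the identification above, this is exactly $e=x^{k-1}\otimes\t^k$.

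Finally I would verify the two remaining claims. For the comparison with translation, expanding $(x-s)^k=x^k-ks\,x^{k-1}+O(s^2)$ shows that the family of subschemes $\{(x-s/k)^k=0\}$ agrees to first order in $s$ with $\{x^{k-1}(s-x)=0\}=\{x^k-s\,x^{k-1}=0\}$, so after setting $s=t$ the family $Z$ of \eqref{ffam} is, to first order, the flow of $\O_{B_k}$ under $\tfrac1k\partial_x$; the corresponding Kodaira--Spencer class necessarily equals $e$. For the weight, $x^{k-1}$ has $\C^*$-weight $-(k-1)$ while $\t^k$ has weight $+k$, giving total weight $+1$, consistent with the fact that $\partial_x$ acts with weight $+1$ on the tangent space to $\C$ at the origin. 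The only step where care is required is keeping track of the identification $\O_{B_k}(B_k)\cong\O_{B_k}\otimes\t^k$ so that the weights come out correctly; all other calculations are routine.
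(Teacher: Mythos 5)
Your proposal is correct and follows essentially the same route as the paper: your chain-map lift of the identity through the resolution (with $1\mapsto 1$ in the middle and $x^k\mapsto x^{k-1}$ on the left, using $x^k=tx^{k-1}$ in $\O_Z/(t^2)$) is precisely the commuting diagram the paper writes down, and your comparison with the family $\{(x-t/k)^k=0\}$ and the weight count $-(k-1)+k=1$ match the paper's argument verbatim. No gaps.
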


\begin{proof}
Consider the generator of $\Hom(\O_{\C_x}(-B_k),\O_{B_k})$ multiplied by $x^{k-1}$. From the description \eqref{eext}, it
 corresponds to the extension $E$ coming from the pushout diagram
\beq{extn}
\xymatrix@R=18pt{
0 \ar[r]& \O_{\C_x}(-B_k) \ar[d]_{x^{k-1}}\ar[r]^(.65){x^k}& \O_{\C_x} \ar[r]\ar[d]& \O_{B_k} \ar[r]\ar@{=}[d]& 0 \\
0 \ar[r]& \O_{B_k} \ar[r]& E \ar[r]& \O_{B_k} \ar[r]& 0\,.\!\!}
\eeq
On the other hand, $e$ is defined by the push-down to $\C_x$ of the extension \eqref{nightgarden}. The latter sits inside the diagram
$$
\xymatrix@R=18pt{
0 \ar[r]& \C[x] \ar[d]_{x^{k-1}}\ar[r]^(.53){x^k}& \C[x] \ar[r]\ar[d]^{\pi^*}& \C[x]/(x^k) \ar[r]\ar@{=}[d]& 0 \\
0 \ar[r]& \C[x]/(x^k) \ar[r]^(.35)t& \C[x,t]/(t^2,x^{k-1}(x-t)) \ar[r]& \C[x]/(x^k) \ar[r]& 0\,.\!\!}
$$
Here the central vertical arrow $\pi^*$ takes a polynomial in $x$ to the same polynomial in $x$ (with no $t$-dependence). This is indeed a map of $\C[x]$-modules (though not $\C[x,t]$-modules) and makes the left hand square commute since $x^k=x^{k-1}t$ in the ring 
$$\O_Z/(t^2)=\C[x,t]/(t^2,x^{k-1}(x-t))\,.$$
Since the second diagram is isomorphic to the first diagram \eqref{extn}, we find 
 $e$ is indeed $x^{k-1}\otimes \t^k$.
%

\medskip
Next we observe that moving $\O_{B_k}$ by the translation vector field $\frac1k\partial_x$ yields the structure sheaf of the different family
$$
\left\{(x-t/k)^k=0\right\}\subset\C_x\times\C_t\,.
$$
Restricting to $\Spec\C[t]/(t^2)$ gives the first order deformation
$$
\frac{\C[x,t]}{\Big(t^2,\big(x-t/k\big)^k\Big)}\ =\
\frac{\C[x,t]}{(t^2,x^k-tx^{k-1})}\,,
$$
the same as $\O_Z/(t^2)$. \medskip

Finally, the $\C^*$-weight of $\frac 1k\partial_x$ is clearly 1. More directly, $x^{k-1}\otimes \t^k$ has weight $-(k-1)+k=1$.
\end{proof}

The conceptual reason for the surprising result of Proposition \ref{local}
 is that, \emph{to first order}, weight $1$ deformations only see the corresponding 
deformation of the center of mass of the subscheme. 
The two deformations in Proposition \ref{local} clearly deform the center of mass in the same way.

We will next apply a version of the above deformation to $\C^*$-fixed stable pairs. 
The first order part of the deformation will describe a weight $1$ vector field on $P_Y$
along $(P_Y)^{\C^*}\subset P_Y$ and thus a $\C^*$-invariant cosection of the obstruction theory.

On the stable pairs which are \emph{uniformly} thickened as in Section \ref{uniform}, 
Proposition \ref{local} will show the new vector field to be proportional to the standard vector field given by the translation $\partial_x$. Thus our cosection is proportional to the cosection
 we have already reduced by, and provides us nothing new. Hence, the nonzero contributions of Section \ref{calc} are permitted.

For non uniformly thickened  stable pairs, however, the new
vector field will be seen to be 
linearly independent of the translational vector field. 

\subsection{Full model} \label{fullmod}
The basic model of Section \ref{lm} gives a deformation of the $\C[x]$-modules $A_k=\C[x]/(x^k)$. We now extend this to describe a deformation of any $\C^*$-equivariant torsion $\C[x]$-module $M$ which is a (possibly infinite) direct sum of finite dimensional $\C^*$-equivariant $\C[x]$-modules.
By the classification of modules over PIDs, $M$ is a direct sum of $\t^j$-twists of the standard modules 
$A_k=\C[x]/(x^k)$. Since these were treated in Section \ref{lm}, the extension
is a simple matter. 
However, by describing our deformations intrinsically, we will be able 
to apply the construction
to $\C^*$-fixed stable pairs $(F,s)$ on $S\times\C_x$. 
Let $$U\subset S$$ be an affine open set. Then, $F|_{U\times\C_x}$ 
is equivalent to a $\C^*$-equivariant torsion $\C[x]$-module 
carrying an action of the ring $\O(U)$. 
The model developed here will sheafify over $S$ and
determines a deformation of $(F,s)$.

Since the sheaf $F|_{U\times\C_x}$ has only finitely many weights (all
nonpositive), 
we restrict attention to torsion $\C^*$-equivariant $\C[x]$-modules $M$ with
weights lying in the interval $[-(k-1),0]$ for some $k\ge1$. Examples include $A_k$ and $A_j\t^{-(k-j)}$ for $j\le k$. 
Write
$$
M=\bigoplus_{i=0}^{k-1} M_i
$$
as a sum of weight spaces, where $M_i$ has weight $-i$. 
Multiplication by $x$ is encoded in the weight $(-1)$ operators
$$
X\colon M_i\To M_{i+1}\,.
$$
Since $X$ annihilates the most negative weight space, $M_{k-1}\subset M$ is 
an equivariant $\C[x]$-submodule. We will define a deformation which moves $M_{k-1}$ away at unit speed while leaving the remaining $M/M_{k-1}$ fixed.

To do so, notice the basic
 model \eqref{ffam} of Section \ref{lm} can be described as follows. 
Take the direct sum of the structure sheaves of the two irreducible components of $Z$ (or, equivalently, the structure sheaf $\O_{\,\overline{\! Z}}$ of the normalization of $Z$), then $\O_Z$ is the subsheaf of sections which agree over the intersection $\Delta(B_{k-1})$ of the two components:
\beq{localmodel}
\xymatrix@=35pt{
\O_Z\ =\ \ker\Big(\pi^*\O_{B_{k-1}}\oplus\Delta_*\O \ar[r]^(.65){(r,-r)} & \Delta_*\O_{B_{k-1}}\Big)\,.}
\eeq
Here, $\Delta_*\O_{B_{k-1}}=\Delta_*A_{k-1}$ and $r$ denotes restriction to $\Delta(B_{k-1})$. Finally
\begin{align*}
\pi\colon&\C_x\times\C_t\To\C_x\,, \\
\Delta\colon&\C\Into\C_x\times\C_t
\end{align*}
are the projection and the inclusion of the diagonal respectively.

We define the $\C^*$-equivariant $\C[x,t]$-module $\widetilde M$ to be
the kernel of the map
\begin{align} \nonumber
\pi^*(M/M_{k-1})\,\oplus\, & \Delta_*(M_{k-1}\t^{k-1}\otimes_\C \C[x]) \\
&\xymatrix@=50pt{&\ar[r]^(.23){(\psi\,\circ\,r,\ -r)} & 
\Delta_*(M_{k-1}\t^{k-1}\otimes_\C A_{k-1})\,,} \label{model}
\end{align}
where $\psi$ is the map
$$
\xymatrix@=90pt{M/M_{k-1}\,=\,\bigoplus_{i=0}^{k-2}M_i
\ar[r]^{\bigoplus_i X^{k-1-i}\t^{k-1}\otimes\,x^i} &
M_{k-1}\t^{k-1}\otimes_\C A_{k-1}\,.}
$$
By construction this is a weight 0 map of equivariant $\C[x]$-modules.

By splitting $M$ into direct sums of irreducible modules $A_n\t^m$, comparing with \eqref{localmodel}, and using Proposition \ref{local},
we obtain the following result.

\begin{Proposition} \label{tildeM}
The sheaf $\widetilde M$ defined by \eqref{model} is flat over $\C_t$ and
 specializes to $\widetilde M/t\widetilde M=M$ over $t=0$. The first order deformation
$$
e\in\Ext^1_{\C_x}(M,M) \quad\mathrm{classifying}\quad \widetilde M/t^2\widetilde M
$$
is proportional to the first order translation deformation $\partial_x$ on any irreducible module $M$ with weights in $[-(k-1),0]$ as follows:
\begin{itemize}
\item For $M=A_k=\O_{B_k}$ we have $e=\partial_x/k$.
\item For $M=A_j\t^{-(k-j)}=\O_{B_j}\t^{-(k-j)}$ with $j\le k$ we have $e=\partial_x/j$.
\item For $M$ with $M_{k-1}=0$ we have $e=0$. \hfill$\square$
\end{itemize}
\end{Proposition}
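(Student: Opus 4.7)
The construction \eqref{model} is additive in $M$: if $M = N \oplus N'$, then $M_{k-1}$, $M/M_{k-1}$, and the map $\psi$ all split compatibly, so $\widetilde{M} \cong \widetilde{N} \oplus \widetilde{N'}$. Since $\Ext^1_{\C_x}(M,M)$ decomposes along the same splitting, it suffices to verify the flatness and specialization claims in general, but the formula for $e$ only on each irreducible summand $A_j\t^{-m}$ with $m + j \leq k$. This reduces the proposition to a flatness calculation and three separate computations of $e$.

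\textbf{Flatness and specialization.} The second component of the map in \eqref{model}, namely
\[
-r \colon \Delta_*(M_{k-1}\t^{k-1}\otimes_\C \C[x]) \To \Delta_*(M_{k-1}\t^{k-1}\otimes_\C A_{k-1}),
\]
is already surjective via the natural quotient $\C[x]\twoheadrightarrow A_{k-1}$, so the defining map of $\widetilde M$ is surjective. Both summands of the domain are flat over $\C_t$: the first is a $\pi$-pullback, and the second is supported on the diagonal where $\C[x]$ is free over $\C[t]$ under $t = x$. Writing the target $\Delta_*(M_{k-1}\t^{k-1}\otimes A_{k-1})$ as a quotient of a flat $\C[t]$-module by $t^{k-1}$ gives $\operatorname{Tor}_{\geq 2}^{\C[t]}(-,\C)=0$ for it. The long exact sequence of the short exact sequence defining $\widetilde M$ then forces $\operatorname{Tor}_1^{\C[t]}(\widetilde M,\C) = 0$, so $\widetilde M$ is flat. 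The same long exact sequence, together with a direct comparison between the $t = 0$ fiber of the construction and the gluing presentation of $M$ along its weight filtration (the analogue of \eqref{localmodel} for $M$), identifies $\widetilde{M}/t\widetilde{M}$ with $M$.

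\textbf{Computation of the extension class.} For an irreducible $M = A_j\t^{-m}$ with $m+j < k$ we have $M_{k-1} = 0$, so both the second summand and the target of \eqref{model} vanish, $\widetilde{M} = \pi^*M$ is the trivial deformation, and $e = 0$. For $M = A_j\t^{-(k-j)}$ with $j\leq k$, the top weight space $M_{k-1}$ is one-dimensional, generated by $x^{j-1}$ (with the appropriate twist). A direct inspection shows that the image of $\psi$ inside $M_{k-1}\t^{k-1}\otimes A_{k-1}$ is the shifted submodule $x^{k-j}A_{k-1}\cong A_{j-1}$, and the resulting kernel $\widetilde M$ is, up to the overall $\t^{-(k-j)}$ twist and the corresponding coordinate shift, the structure sheaf of the flat family $\{x^{j-1}(x-t) = 0\}\subset \C_x\times\C_t$. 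This is exactly the basic model \eqref{localmodel} with $k$ replaced by $j$, so Proposition \ref{local} gives $e = \partial_x/j$. The case $M = A_k$ corresponds to $j = k$ and yields $e = \partial_x/k$.

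\textbf{Main obstacle.} The conceptually subtle point is the identification of $\widetilde M$ for $M = A_j\t^{-(k-j)}$ with the twisted basic family $\{x^{j-1}(x-t)=0\}$: although the construction \eqref{model} is set up with ambient parameter $k$, the fact that it only moves the top weight piece $M_{k-1}$ (leaving $M/M_{k-1}$ fixed) means that the deformation depends only on the underlying scheme $B_j$ and not on $k$ or the equivariant twist. Once this reduction to the basic model is in hand, the rest is a mix of formal additivity and the explicit computation already carried out in Proposition \ref{local}, from which the $\C^*$-weight $1$ assertion also follows.
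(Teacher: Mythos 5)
Your argument is correct and is essentially the paper's own (one-line) proof spelled out: split $M$ into irreducible equivariant summands, identify the construction \eqref{model} on each summand $A_j\t^{-(k-j)}$ with the basic model \eqref{localmodel} with $k$ replaced by $j$ (up to the overall twist, exactly as you do via the factorization $b=x^{k-j}b'$), and invoke Proposition \ref{local}. One cosmetic caveat: your flatness step quotes the criterion ``$\operatorname{Tor}_1(\,\cdot\,,\C)=0\Rightarrow$ flat,'' which in its standard form assumes finite generation (not assumed for $M$); it is cleaner to note that $\widetilde M$ is a submodule of a $\C[t]$-torsion-free module, hence torsion-free and therefore flat over the principal ideal domain $\C[t]$, or simply to deduce flatness from your identification with the basic models.
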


Proposition \ref{tildeM}
is the foundation of our localization to \emph{uniformly thickened} stable pairs. The above deformation applied to $\C^*$-fixed stable pairs will describe a weight $1$ vector field on $P_Y$ along $(P_Y)^{\C^*}\subset P_Y$, and thus a $\C^*$-invariant cosection of the obstruction theory.


For any stable pair which is not uniformly thickened, 
the new vector field acts as a translation which operates at different speeds 
along different parts of the stable pair.
The corresponding cosection is therefore linearly independent of the 
pure translation $\partial_x$  and descends to give a nowhere vanishing
 cosection of the reduced obstruction theory. 

\subsection{Second cosection} \label{vf}
Fix a component of $(P_Y)^{\C^*}$ over which the $\C^*$-fixed stable pairs are $k$-times thickened, supported on 
$$(P_Y)^{\C^*}\!\times S\times B_n\ \subset\ (P_Y)^{\C^*}\!\times S\times\C_x$$ 
for $n=k$ but not for any $n<k$.

We now apply the results of Section \ref{fullmod} to the universal sheaf
$$
\FF \quad\mathrm{on}\quad (P_Y)^{\C^*}\!\times S\times\C_x
$$
to produce a flat deformation over $(P_Y)^{\C^*}\!\times S\times\C_x\times\C_t$ by the formula \eqref{model}. The universal section $s$ of $\FF$ also deforms to the $\C^*$-invariant section
$$
\big([s],\,X^{k-1}s.\t^{k-1}\otimes1\big)
$$
of \eqref{model}. Restricting to $\Spec\C[t]/(t^2)$ defines a $P_Y$ tangent vector field along $(P_Y)^{\C^*}$ of weight 1:
$$
v\,\in\,\t^*\otimes H^0\Big((P_Y)^{\C^*}\!,\,\ext^1_{\pi_P}(\II_Y,\II_Y)\_0\Big).
$$
From $v$ and the isomorphism $\omega_Y\cong\t^{-1}$,
 we construct a weight 0 cosection over $(P_Y)^{\C^*}$:
$$
\xymatrix{
\ext^2_{\pi_P}(\II_Y,\II_Y)\_0
\ar[r]^(.42){\cup v}& \ext^3_{\pi_P}(\II_Y,\II_Y)\otimes\t^{-1} 
\ar[r]^(.52) {\ \ \ \ \ \ \tr}
&\O_{(P_Y)^{\C^*}}\,,}
$$
where the final arrow is dual to the identity
$$\O_{(P_Y)^{\C^*}} \rightarrow  \hom_{\pi_P}(\II_Y,\II_Y)$$
under the Serre duality of \eqref{pot}.
Combined with our standard cosection, we obtain a map
\beq{2cosection}
\xymatrix@=125pt{
\ext^2_{\pi_P}(\II_Y,\II_Y)\_0
\ar[r]^(.53){\tiny{\tr(\ \cdot\ \cup v)\,\oplus\,
\tr\!\big(\,\cdot\ \cup(\partial_x\ip \At(\II_Y))\!\big)}}
& \O_{(P_Y)^{\C^*}}^{\oplus2}\,.}
\eeq

\begin{Proposition} \label{dubred} The map \eqref{2cosection} is surjective over $(P_Y)^{\C^*}$ away from stable pairs which are uniformly thickened as in Section \ref{uniform}.
\end{Proposition}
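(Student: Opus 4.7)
My strategy is to translate surjectivity of the cosection \eqref{2cosection} into linear independence of two specific classes in $\ext^1_{\pi_P}(\II_Y,\II_Y)_0$, and then use Proposition \ref{tildeM} to verify this independence pointwise on the non-uniformly thickened locus. First, I would observe that the Serre duality of \eqref{pot} identifies $\ext^2_{\pi_P}(\II_Y,\II_Y)_0$ fiberwise at a stable pair $[I^\udot] \in (P_Y)^{\C^*}$ with the dual of $\Ext^1(I^\udot, I^\udot)_0$, with the trace–cup pairing implementing the duality. Under this identification, the two components of \eqref{2cosection} at a closed point $p$ become evaluation against the two classes
\[
v_p,\ \partial_x \ip \At(I^\udot_p) \ \in\ \Ext^1(I^\udot_p, I^\udot_p)_0,
\]
the first being the Kodaira–Spencer class of the deformation constructed in Section \ref{vf}, the second being the Kodaira–Spencer class of the translational action $\partial_x$. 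Surjectivity of \eqref{2cosection} at $p$ is therefore equivalent to the linear independence of these two classes in $\Ext^1(I^\udot_p,I^\udot_p)_0$.

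Next I would argue the linear independence using the local-to-global structure of Proposition \ref{tildeM}. Fix a component of $(P_Y)^{\C^*}$ of maximal thickness $k$, and let $[I^\udot_p]$ correspond to a $\C^*$-fixed stable pair $(F,s)$ that is \emph{not} uniformly $k$-times thickened. Over any affine open $U \subset S$, the sheaf $F|_{U\times \C_x}$ decomposes as a $\C^*$-equivariant torsion $\C[x]$-module $M$ with weights in $[-(k-1),0]$, carrying an $\O(U)$-action. By construction, $v$ sheafifies the local model deformation \eqref{model}, and by Proposition \ref{tildeM} it coincides, on each irreducible summand $A_j \t^{-(k-j)}$ of $M$, with $\tfrac{1}{j}\partial_x$. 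The translational class $\partial_x \ip \At(I^\udot_p)$ on the other hand acts uniformly with the same constant on all summands, independent of $j$. Since $p$ is not uniformly $k$-thickened, there exists some point of $S$ where the local model has $j < k$ (or $M_{k-1} = 0$), so there are summands with two genuinely distinct scalars, and the two classes cannot be proportional.

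The only subtle step is the conversion from the local scalar discrepancy to a global linear independence statement in $\Ext^1(I^\udot_p, I^\udot_p)_0$. I plan to handle this by exhibiting a test class in $\Ext^2(I^\udot_p, I^\udot_p)_0$ — or, dually, by producing an equivariant endomorphism supported where the thickness drops — whose trace–cup pairings with $v_p$ and with $\partial_x \ip \At(I^\udot_p)$ are in a non-scalar ratio. Concretely, localizing to an open $U$ where the fiber of $F$ splits into components of distinct $j$ and using the decomposition of $\Ext^1$ into weight spaces reduces the computation to the irreducible case $M = A_j \t^{-(k-j)}$ already handled in Proposition \ref{local} and Proposition \ref{tildeM}, where the scalars $1/j$ and $1$ differ. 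This is the main technical obstacle: ensuring the $\C^*$-equivariant trace pairing detects the relevant rank-one subspace, which follows from the non-degeneracy of Serre duality on each weight piece. Once linear independence holds at every such point $p$, the two cosections assemble to a surjection of sheaves on the complement of the uniformly thickened locus, as desired. \qed
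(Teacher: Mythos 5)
Your reduction is the same as the paper's: by the Nakayama lemma surjectivity of \eqref{2cosection} is checked at closed points $(F,s)$, and by Serre duality it amounts to the linear independence of $v$ and $\partial_x\ip\At(I\udot_Y)$ in $\Ext^1(I\udot_Y,I\udot_Y)_0$; comparing $v$ with $\partial_x$ on irreducible equivariant summands via Propositions \ref{local} and \ref{tildeM} is also exactly the paper's mechanism. The genuine gap is at the step you yourself flag as the ``main technical obstacle'': you never construct the test class in $\Ext^2(I\udot,I\udot)_0$ whose pairings with the two classes have non-scalar ratio, so the argument is incomplete as written. Moreover this detour is unnecessary, because the implication you need runs in the opposite direction: argue the contrapositive. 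If $a\,v+b\,\partial_x\ip\At(I\udot_Y)=0$ with $(a,b)\neq(0,0)$, then, since both classes arise from actual first-order deformations of the pair, the same linear relation holds among the induced first-order deformations of $F|_{U\times\C_x}$ for every affine $U\subset S$. Because $F$ is pure and its support is $k$-times thickened over the open set where $s$ is an isomorphism, some $U$ yields a module containing $A_k$-summands, on which $v=\partial_x/k$; this pins the relation to $v=\partial_x/k$ everywhere, and Proposition \ref{tildeM} then forces every irreducible equivariant summand, for every $U$, to be $A_k$, i.e.\ the isomorphism \eqref{uth} holds and $F$ is uniformly thickened. Restriction of a hypothetical global relation does all the work; no duality ``detection'' argument is required.

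Two further points are skipped. First, you conflate non-uniform thickening of the pair with non-uniform thickening of the sheaf: the definition in Section \ref{uniform} also requires the section to be pulled back, so one must still observe that a $\C^*$-invariant section sends $1$ to a weight-$0$ element of $F\cong F_{k-1}\t^{k-1}\otimes_\C A_k$, necessarily of the form $f\otimes 1$, hence pulled back from $S$; only with this step is ``$(F,s)$ not uniformly thickened'' equivalent on the fixed locus to ``$F$ has a non-$A_k$ summand somewhere''. Second, your assertion that there are ``two genuinely distinct scalars'' tacitly assumes an $A_k$-summand (scalar $1/k$) exists somewhere in addition to the summand with $j<k$ or $M_{k-1}=0$; this is what the purity/support argument above provides, and it is essential, since if all summands had a single type $A_j$ the two cosections would in fact be proportional and surjectivity would fail.
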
 

\begin{proof} By the Nakayama Lemma, we can check surjectivity at  closed points $(F,s)$. By Serre duality, we need only show that if
our two elements of the Zariski tangent space to $P_Y$ at $(F,s)$,
$$
v,\ \partial_x\ip\At(I\udot_Y)\,\in\Ext^1(I\udot_Y,I\udot_Y)_0\,,
$$
are linearly dependent then $(F,s)$ is uniformly thickened.

Pick an affine open set $U\subset S$ and consider the restriction of $(F,s)$ to $U\times\C_x$ as a $\C^*$-equivariant $\C[x]$-module. Since $F$ is $k$-times thickened and pure, 
the support $C$  of $F$ is also $k$-times thickened over the open set where $\O_C\rt{s}F$ is an isomorphism. 
In particular the equivariant $\C[x]$-module $F|_{U\times\C_x}$ contains copies of $A_k$ as summands. By Proposition \ref{local}, the deformation $v$ is the same as $\partial_x/k$ on $A_k$.

Therefore if $v$ and $\partial_x$ are linearly dependent at $(F,s)$ then in fact $v$ must equal $\partial_x/k$ at $(F,s)$. In particular, by Proposition \ref{tildeM} all of the irreducible equivariant $\C[x]$-submodule summands of $F|_{U\times\C_x}$ are isomorphic to $A_k$ (for any $U$). Therefore we get an isomorphism
\beq{uth}
F\Rt{\sim}F_{k-1}\t^{k-1}\otimes_\C A_k
\eeq
by the map defined in terms of the weight space decomposition of $F$ as
$$
\xymatrix@=75pt{\bigoplus_{i=0}^{k-1}F_i \ar[r]^(.42){X^{k-1-i}\t^{k-1}\otimes \,x^i} & F_{k-1}\t^{k-1}\otimes_\C A_k\,.}
$$
The isomorphism \eqref{uth} implies $F$ is uniformly thickened.

Finally, the $\C^*$-invariant section maps 1 to a degree 0 element of the module $F|_{U\times\C_x}\cong F|_{U\times\{0\}}\otimes_\C A_k$. Such elements are of the form $f\otimes1$. Hence, the elements are pulled back from $S$ to $S\times B_k$,
and the pair $(F,s)$ is uniformly thickened.
\end{proof}

\begin{Corollary} \label{tfred}
The invariants calculated in Section \ref{calc} are the only nonzero contributions to the reduced localized invariants of $Y=S\times\C$.
\end{Corollary}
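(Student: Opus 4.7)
The plan is to deduce the corollary from Proposition \ref{dubred} by invoking Kiem-Li cosection localization \cite{KiemLi}. The heavy lifting---the construction of a second weight-$1$ vector field $v$ on $P_Y$ along $(P_Y)^{\C^*}$ that is linearly independent of the translational vector field $\partial_x$ off of the uniformly thickened locus---was already carried out in Sections \ref{lm}--\ref{vf}. What remains is the formal passage from independence of cosections to vanishing of the reduced virtual cycle.

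First I would recall the construction of the reduced obstruction theory of $P_Y=P_{kn}(Y,k\beta)$. Starting from the standard theory $E\udot_Y$ of \eqref{pot}, whose obstruction sheaf is $\ext^2_{\pi_P}(\II_Y,\II_Y)_0$, one removes the canonical weight-$1$ translation cosection
$$\sigma\ =\ \tr\bigl(\,\cdot\,\cup(\partial_x\ip\At(\II_Y))\bigr)\ \colon\ \ext^2_{\pi_P}(\II_Y,\II_Y)_0\ \To\ \O_{P_Y}\,.$$
The resulting reduced obstruction sheaf is identified with $\ker(\sigma)$, and the whole construction is $\C^*$-equivariant, so it is compatible with virtual $\C^*$-localization and descends to a reduced theory on $(P_Y)^{\C^*}$.

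Next I would fix a component $Z\subset (P_Y)^{\C^*}$ whose generic point does not parametrize a uniformly thickened stable pair. By Proposition \ref{dubred}, the two-cosection \eqref{2cosection}, whose entries are $\tr(\,\cdot\,\cup v)$ and $\sigma$, is surjective along $Z$. This joint surjectivity onto $\O_Z^{\oplus 2}$ is equivalent to the statement that the restriction of $\tr(\,\cdot\,\cup v)$ to $\ker(\sigma)$ remains surjective onto $\O_Z$. In other words, the cosection induced by $v$ survives the reduction procedure and yields a nowhere-vanishing cosection of the reduced obstruction sheaf on $Z$.

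Finally, the Kiem-Li theorem then forces the $\C^*$-fixed reduced virtual cycle of $Z$ to vanish. Thus $Z$ contributes zero in the virtual localization formula for the reduced invariants of $Y$, and the entire contribution comes from the uniformly thickened components, which is precisely the calculation of Section \ref{calc}. The main technical obstacle---the existence and linear independence of $v$ off the uniformly thickened locus---has already been handled in Propositions \ref{local}--\ref{dubred}; the step described here is essentially a direct application of cosection localization, and I do not foresee further difficulties.
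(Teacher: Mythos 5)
Your proposal is correct and follows essentially the same route as the paper: restrict to a non-uniformly-thickened component of $(P_Y)^{\C^*}$, use $\C^*$-invariance to pass to the fixed part of the obstruction sheaf, observe that the surjectivity of the two-cosection \eqref{2cosection} from Proposition \ref{dubred} gives a surjection $\tr(\,\cdot\,\cup v)$ from the reduced obstruction sheaf (the kernel of the translation cosection) onto the structure sheaf, and conclude vanishing by Kiem--Li. The paper's proof is exactly this, with the small parenthetical observation (which you implicitly use) that by $\C^*$-invariance a component is uniformly thickened at one point if and only if it is so everywhere.
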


\begin{proof}
We work on a component of $(P_Y)^{\C^*}$ parameterizing pairs which are \emph{not} uniformly thickened. (By $\C^*$-invariance, one pair in a component is uniformly thickened if and only if they all are.)
 
Since \eqref{2cosection} is $\C^*$-invariant it factors through the $\C^*$-fixed part of the obstruction sheaf, which by \cite{GP} is the obstruction sheaf of the induced perfect obstruction theory on $(P_Y)^{\C^*}$. The \emph{reduced} obstruction sheaf is given by taking the kernel of the first factor of this map:
$$
\Ob^{\red}_{(P_Y)^{\C^*}}=\ker\Big(\ext^2_{\pi_P}(\II_Y,\II_Y)^{\C^*}_0
\xymatrix@C=80pt{\ar[r]^{\tr\!\big(\,\cdot\ \cup(\partial_x\ip \At(\II_Y))\!\big)}&}
\O_{(P_Y)^{\C^*}}\Big).
$$
Proposition \ref{dubred} then states that \eqref{2cosection} gives a surjection
$$
\Ob^{\red}_{(P_Y)^{\C^*}}\xymatrix@C=40pt{\ar[r]^{\tr(\ \cdot\ \cup v)}&}
\O_{(P_Y)^{\C^*}}.
$$
Therefore the reduced class vanishes.
\end{proof}

\subsection{Localization calculation II}\label{lc22}
The results of Sections \ref{k3tcl} and \ref{van} together yield
a complete localization calculation.

Let $S$ be a $K3$ surface equipped with a ample primitive 
polarization $L$.
Let $\alpha \in \text{Pic}(S)$ be a primitive and positive class.{\footnote{No further conditions
are placed on $\alpha$: the Picard rank of $S$ may be high and
$\alpha$ may be the sum of effective classes.}
Define
\begin{equation}\label{kzz21}
\big\langle 1 \big\rangle_{Y,m\alpha}^{\red}=
\sum_{n} y^n
\int_{[P_n(Y,m\alpha)]^{\red}_{\com^*}} 1 \,. 
\end{equation}

The integral on the right side of \eqref{kzz21}, denoting
the $\com^*$-residue, is well-defined since the
$\com^*$-fixed loci of $P_n(Y,m\alpha)$ are compact.
Since the {\em reduced} virtual dimension of 
$P_n(Y,m\alpha)$ is 1, the residues are of degree $-1$ in $t$ (cf. Proposition \ref{seven7}),
$$\big\langle 1 \big\rangle_{Y,m\alpha}^{\red}\in 
\frac{1}{t}\mathbb{Q}(\!(y)\!)\,.$$

\begin{Proposition} \label{zzzzz}
Let $\alpha\in \text{Pic}(S)$ be a primitive and positive class. Then
$\big\langle 1 \big\rangle_{Y,\alpha}^{\red}$ depends
{\em only} upon 
$$\langle \alpha,\alpha\rangle=2h-2\,.$$ Moreover, if $h<0$,
then $\big\langle 1 \big\rangle_{Y,\alpha}^{\red}=0$.
\end{Proposition}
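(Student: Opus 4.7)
The plan is to deduce Proposition \ref{zzzzz} as a direct consequence of the virtual $\com^*$-localization formula, combined with the vanishing result of Corollary \ref{tfred}, the residue calculation of Section \ref{calc}, and the dependence/vanishing result of Proposition \ref{nnn333}. I would proceed as follows.

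First, I would apply virtual $\com^*$-localization to $P_n(Y,\alpha)$ for each $n$, writing $\int_{[P_n(Y,\alpha)]^{\red}_{\com^*}} 1$ as a sum of residue contributions over the components of $P_n(Y,\alpha)^{\com^*}$. By Corollary \ref{tfred}, only components parameterizing uniformly $k$-thickened stable pairs contribute non-trivially. A uniformly $k$-thickened pair carries a class of the form $k\beta$ for some curve class $\beta$ on $S$. To match the total class $\alpha \in H_2(Y,\Z)$ we need $k\beta = \alpha$; since $\alpha$ is primitive, this forces $k = 1$ and $\beta = \alpha$. Thus the only contributing $\com^*$-fixed component is the image of the isomorphism
$$f: P_n(S,\alpha) \ \stackrel{\sim}{\longrightarrow}\ P_n(Y,\alpha)^{\com^*}_{\text{unif.\ thickened}}$$
of \eqref{family}.

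Next, I would specialize the residue computation of Section \ref{calc} to $k=1$, which eliminates the normal bundle factor coming from the $\t A_{k-1}^*$ summand and yields
$$\int_{[P_n(Y,\alpha)]^{\red}_{\com^*}} 1 \;=\; \frac{1}{t}\int_{[P_n(S,\alpha)]^{\red}} c_{\langle \alpha,\alpha\rangle + n + 1}(E^\bullet_S).$$
Multiplying by $y^n$ and summing over $n$ gives
$$\big\langle 1 \big\rangle^{\red}_{Y,\alpha} \;=\; \frac{1}{t}\sum_{n} y^n \int_{[P_n(S,\alpha)]^{\red}} c_{\langle \alpha,\alpha\rangle + n + 1}(E^\bullet_S).$$

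Finally, I would apply Proposition \ref{nnn333} coefficient-by-coefficient in $y$. Since $\alpha$ is primitive and positive, each integral on the right depends only on $n$ and $h = h_\alpha$, and vanishes whenever $h < 0$. Both conclusions of the Proposition then follow term by term.

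I do not anticipate a serious obstacle: the real work has been carried out in Sections \ref{k3tcl} and \ref{van} and in Proposition \ref{nnn333}. The one point worth emphasizing is the role of primitivity of $\alpha$, which is what collapses the sum over $k$-thickened components to a single $k=1$ contribution; without primitivity the statement would involve contributions from all divisors $k \mid m$, and the dependence/vanishing arguments would need to be repackaged accordingly (as is done later in the multiple cover analysis of Section \ref{mulcov}).
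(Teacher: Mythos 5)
Your proposal is correct and follows essentially the same route as the paper: the paper's proof likewise combines the vanishing of Corollary \ref{tfred} with the residue formula of Section \ref{calc} (which for primitive $\alpha$ collapses to the $k=1$ term, giving exactly your displayed formula \eqref{hbb3}) and then invokes Proposition \ref{nnn333} coefficient-by-coefficient. Your explicit remark that primitivity forces $k=1$ is the only point the paper leaves implicit.
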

\begin{proof}
By the vanishing of Corollary \ref{tfred} and the
residue formula of Section \ref{calc}, we have
\begin{equation}\label{hbb3}
\big\langle 1 \big\rangle_{Y,\alpha}^{\red} =
\frac{1}{t} 
\sum_n y^n
\int_{[P_n(S,\alpha)]^{\red}} c_{\langle \alpha,\alpha \rangle+n+1}(E^\bullet_S) \,.
\end{equation}
By Proposition \ref{nnn333}, the integral over 
$[P_n(S,\alpha)]^{\red}$ occurring in \eqref{hbb3} depends
only upon $n$ and $\langle \alpha,\alpha \rangle$ and vanishes
if $h<0$. 
\end{proof}

To isolate the dependence of Proposition \ref{zzzzz}, we define, 
for primitive and
positive $\alpha$,
\begin{equation}\label{v231v}
I_h = \big\langle 1 \big\rangle^{\red}_{Y,\alpha}\,, \ \ \langle
\alpha,\alpha \rangle = 2h-2\,.
\end{equation}
In case $\alpha$ is {\em also} irreducible (which we may assume), the moduli
space $P_n(S,\alpha)$ is nonsingular \cite{ky,PT2}. 
The evaluation of \eqref{hbb3}
reduces to the Euler characteristic calculation of Kawai and
Yoshioka \cite{ky} as explained in the Appendix C of \cite{PT2}
and reviewed in Section \ref{kye} below.

\begin{Proposition} \label{mc11}
The following multiple cover formula holds:
$$\big\langle 1 \big\rangle_{Y,m\alpha}^{\red} =
 \sum_{k|m} \frac{1}{k} I_{\frac{m^2}{k^2}(h-1)+1}(-(-y)^k)\,.
$$
\end{Proposition}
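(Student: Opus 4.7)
The plan is to combine the three main results of Sections \ref{k3tcl} and \ref{van}: the vanishing of all non-uniformly-thickened contributions (Corollary \ref{tfred}), the explicit residue formula from Section \ref{calc}, and the dependence/vanishing statement of Proposition \ref{nnn333}. The multiple cover formula is then a direct bookkeeping exercise once the $\C^*$-fixed locus of $P_N(Y,m\alpha)$ is stratified by thickening order.

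First I would identify the relevant components of the $\C^*$-fixed locus. By Corollary \ref{tfred}, only uniformly thickened pairs contribute. A uniformly $k$-times thickened stable pair on $Y$ of class $m\alpha$ and Euler characteristic $N$ must, by the construction \eqref{family} of Section \ref{uniform}, arise as the pushforward of a stable pair in $P_{n'}(S,\beta)$ with $\beta = (m/k)\alpha$ and $n' = N/k$. In particular $k$ must divide $m$, and $\beta$ has arithmetic genus $h' = (m/k)^2(h-1)+1$. Thus the $\C^*$-fixed locus relevant for the residue integral \eqref{kzz21} decomposes as a disjoint union, indexed by $k\mid m$, of loci isomorphic to $P_{n'}(S,(m/k)\alpha)$ for varying $n'$.

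Next I would apply the residue formula of Section \ref{calc} to each such component. The $k$-fold thickened locus contributes
\[
\frac{(-1)^{(k-1)n'}}{kt}\int_{[P_{n'}(S,(m/k)\alpha)]^{\red}}c_{\langle\beta,\beta\rangle+n'+1}(E_S\udot)
\]
to the coefficient of $y^{kn'}$ in $\langle 1\rangle^{\red}_{Y,m\alpha}$. By Proposition \ref{nnn333}, the integral depends only on $n'$ and $h'=(m/k)^2(h-1)+1$, and not on the divisibility of $\beta$, so it equals the corresponding integral for a primitive positive class of the same arithmetic genus. Using the evaluation \eqref{hbb3} of $\langle 1\rangle^{\red}_{Y,\alpha'}$ in the primitive case from the proof of Proposition \ref{zzzzz}, this integral is precisely $t$ times the coefficient of $y^{n'}$ in $I_{h'}(y)$.

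Finally I would assemble the pieces and match the variable change. Summing over $k\mid m$ and $n'$ gives
\[
\langle 1\rangle^{\red}_{Y,m\alpha} \;=\; \sum_{k\mid m}\frac{1}{k}\sum_{n'}(-1)^{(k-1)n'}y^{kn'}\cdot [y^{n'}]\,I_{h'}(y),
\]
so it remains to verify that the inner sum equals $I_{h'}(-(-y)^k)$. Writing $(-(-y)^k)^{n'} = (-1)^{n'}(-1)^{kn'}y^{kn'} = (-1)^{(k-1)n'}y^{kn'}$ (using $(-1)^{2n'}=1$) finishes the identification. The only subtle point — and the one where I would be most careful — is ensuring the identification of integrals across possibly imprimitive $\beta$ with the primitive invariant $I_{h'}$ defined in \eqref{v231v}; this rests essentially on Proposition \ref{nnn333}, which is why the dependence result plays the decisive role.
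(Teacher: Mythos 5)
Your proposal is correct and follows essentially the same route as the paper, whose proof of Proposition \ref{mc11} consists precisely of citing Corollary \ref{tfred}, the residue formula of Section \ref{calc}, Proposition \ref{nnn333}, and the definition \eqref{v231v}; you have simply written out the bookkeeping (stratification of the fixed locus by thickening order $k\mid m$, identification of the imprimitive integrals with the primitive $I_{h'}$ via Proposition \ref{nnn333}, and the sign check for the variable change $-(-y)^k$) that the paper leaves implicit. The details, including the sign $(-1)^{(k-1)n'}$ and the index $\frac{m^2}{k^2}(h-1)+1$, check out.
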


\begin{proof}
The result follow from
the vanishing of Corollary \ref{tfred}, the
residue formula of Section \ref{calc}, the dependence
result of Proposition \ref{nnn333},
and the definition \eqref{v231v}.
\end{proof}

\subsection{Kawai-Yoshioka evaluation} \label{kye}
  Let $P_n(S,h)$ denote the nonsingular moduli space 
of stable pairs for an irreducible
class $\alpha$ satisfying
$$
2h-2= \langle \alpha, \alpha \rangle .
$$
The cotangent bundle $\Omega_{P}$
of
the moduli space $P_n(S,h)$ is the obstruction
bundle of the reduced theory. Since the
dimension of $P_n(S,h)$ is $2h-2+n+1$,
$$I_h(y) = \frac{1}{t}\sum_{n} (-1)^{2h-1+n} e(P_n(S,h))\  y^n.$$

The topological Euler characteristics of 
$P_n(S,h)$ have been calculated by Kawai-Yoshioka.
By Theorem 5.80 of \cite{ky},
\begin{multline*}
\sum_{h=0}^\infty \sum_{n=1-h}^\infty  
e(P_n(S,h))\  y^n q^h = \\
\left(\sqrt{y}-\frac{1}{\sqrt{y}}\right)^{-2}\
\prod_{n=1}^\infty \frac{1}{(1-q^n)^{20} (1-yq^n)^2(1-y^{-1}q^n)^2} \,.
\end{multline*}
For our pairs invariants, we require the signed Euler characteristics,
$$
\sum_{h=0}^\infty I_h(y) \ q^h = 
\frac{1}{t}\sum_{h=0}^\infty \sum_{n=1-h}^\infty (-1)^{2h-1+n} 
e(P_n(S,h))\  y^n q^h. 
$$
Therefore,
$\sum_{h=0}^\infty tI_h(y) \ q^h$ equals
$$
-\left(\sqrt{-y}-\frac{1}{\sqrt{-y}}\right)^{-2}\
\prod_{n=1}^\infty \frac{1}{(1-q^n)^{20} (1+yq^n)^2(1+y^{-1}q^n)^2} \,.
$$
The above formula implies $tI_h(y)$ is the Laurent expansion
of a rational function of $y$.

\section{Relative theory and the logarithm}
\label{relt}
\subsection{Overview} \label{rtov}
Our goal here is to define and study the
analogue $\widetilde{R}_{n,\beta}$ for stable pairs
of the Gromov-Witten integrals $R_{g,\beta}$ 
associated to $K3$ surfaces. Though the definition is via
the stable pairs theory of $K3$-fibrations, the main idea is
to move the integration to  the rubber of an associated
relative geometry. The interplay with various rubber theories
allows for a geometric interpretation of the logarithm occurring
in the definition of $\widetilde{R}_{n,\beta}$. 

\subsection{Definition}  \label{c123}
Let $S$ be a $K3$ surface equipped with an ample 
primitive polarization
$L$. 
Let $\alpha\in \text{Pic}(S)$ be a primitive, positive{\footnote{Positivity,
$\langle L, \alpha \rangle >0$,
is with respect to the polarization $L$.}} class 
not proportional to $L$ with norm square
 $$\langle \alpha,\alpha \rangle =2h-2\,. $$
Let ${m}>0$ be an integer. By replacing $L$ with $\widehat{L}= xL + \alpha$
for large $x$, we can assume $\widehat{L}$ is ample and primitive,
$\alpha$ is positive with respect to $\widehat{L}$, and the inequality
\begin{equation}\label{vddx34}
\langle \widehat{L}, \widehat{L} \rangle > {m} \langle\widehat{L},\alpha\rangle \ 
\end{equation}
holds.
Condition \eqref{vddx34} forbids effective summands of $m\alpha$ 
to be multiples
of $\widehat{L}$.

Let $(S,\widehat{L})\in \mathcal{M}$
denote the corresponding moduli 
point in the associated moduli space of
polarized $K3$ surfaces.{\footnote{We require $\widehat{L}$ to be ample,
so $\mathcal{M}$ is an open set of the moduli of quasi-polarized
$K3$ surfaces considered in Section \ref{nnll}.}
Since no effective summand  $\gamma$ of $m\alpha$ 
is a multiple of $\widehat{L}$,  
every such summand corresponds to a nondegenerate
local Noether-Lefschetz locus $\text{NL}(\gamma)$ near $(S,\widehat{L})$
of codimension 1.
Let $$\Delta \subset \mathcal{M}$$ be a quasi-projective
curve passing through
$(S,\widehat{L})$ and transverse to the local Noether-Lefschetz loci 
corresponding to all the (finitely many)
effective summands of  $m\alpha$.

Associated to $\Delta$ is a
3-fold $X$ fibered in polarized $K3$ surfaces,
\begin{equation}\label{gqrt}
\epsilon: X \rightarrow (\Delta,0)
\end{equation}
We summarize the conditions
we have as follows: 
\begin{enumerate}
\item[(i)] $\Delta$ is a nonsingular quasi-projective curve,
\item[(ii)] $\epsilon$ is smooth, projective, and $\epsilon^{-1}(0) \stackrel{\sim}{=} S$,
\item[$(\star)$] for every
 {\em effective} decomposition 
$$m\alpha=\sum_{i=1}^l \gamma_i
\in \text{Pic}(S)\,,$$
the  local Noether-Lefschetz locus $\text{NL}(\gamma_i)\subset\Delta$ corresponding to
each  class $\gamma_i \in \text{Pic}(S)$ is the
{\em reduced} point $0\in \Delta$.
\end{enumerate}
The class $\alpha\in \text{Pic}(S)$ is ${m}$-rigid
with respect to the $K3$-fibration
 $\epsilon$.

 In Section \ref{spaks}, ${m}$-rigidity was defined
for effective $\alpha$. The above definition  is for positive
$\alpha$. Since effective implies positive, the definition here
extends the definition of Section \ref{spaks}.

At the special fiber $\epsilon^{-1}(0) \cong S$,
the   Kodaira-Spencer class 
\beq{kappa}
\kappa\in H^1(T_S)
\eeq
associated to $\epsilon$ 
is the extension class of the exact sequence
$$
0\To T_S\To T_X|_S\To\O_S\To0\,.
$$
After fixing a holomorphic symplectic form $\sigma\in H^0(\Omega^2_S)$,
we obtain the (1,1) class
$$
\kappa\ip\sigma\in H^1(\Omega_S)\,.
$$
The transversality of $\Delta$ to  the local 
Noether-Lefschetz locus corresponding
to the class $\gamma \in \text{Pic}(S)$ is equivalent to the condition
\beq{twitor}
\int_\gamma\kappa\ip\sigma\ne0 \,.
\eeq

Let $\text{Eff}({m}\alpha) \subset \text{Pic}(S)$
denote the subset of effective summands of 
$m\alpha$. 
By property $(\star)$,
there is a compact, open, and closed component  
$$P_n^\star(X,\gamma) \subset P_n(X,\gamma)$$ 
parameterizing 
 stable pairs
supported set-theoretically over the point 
$0\in \Delta$ for {every} effective summand $\gamma\in 
\text{Eff}({m}\alpha)$.

\vspace{7pt}
\noindent{\bf Definition.} 
{\em Let $\alpha\in \text{\em Pic}(S)$ be a primitive, positive class.
Given a family $\epsilon: X \rightarrow (\Delta,0)$
satisfying conditions (i),(ii), and $(\star)$ for $m\alpha$, let
\begin{multline}
\label{fbbb4}
\sum_{n\in \mathbb{Z}}
\widetilde{R}_{n,m\alpha}(S)\ q^n  = \\
\text{\em Coeff}_{v^{m\alpha}} \left[
\log\left(1 + \sum_{n\in \mathbb{Z}} \sum_{\gamma\in \text{\em Eff}({m}\alpha)} 
q^n v^{\gamma}
\int_{[P_n^\star(X,\gamma)]^{\vir}} 1\right)\right]\,.
\end{multline}}
\vspace{7pt}

An immediate geometric consequence of the above definition is
the following vanishing statement: {\em if $m\alpha \in \text{\em Pic}(S)$
is not effective, then $\widetilde{R}_{n,m\alpha}(S)=0$ for all $n$}.

The main result of our study here  will be a geometric interpretation of the
logarithm on the right. As a consequence, we will see
that $\widetilde{R}_{n,m\alpha}(S)$
depends {\em only} upon $n$, $m$, and 
$\langle \alpha,\alpha\rangle$ and {\em not}
upon $S$ nor the family $\epsilon$. We therefore drop $S$
from the notation. 
Also, $\widetilde{R}_{n,m\alpha}$ is well-defined
for all $m$ by 
the existence of ${m}$-rigid families $\epsilon$
for suitable $\widehat{L}$ (as we have constructed).

The integrals over $P^\star_n(X,m\alpha)$ appearing
on the right side of \eqref{fbbb4} play a central role,
$$P_{n,\gamma}^\star(X)= \int_{[P^\star_n(X,\gamma)]^{\vir}} 1\,, \ \ \ \ \gamma \in 
\text{Eff}({m}\alpha)\,.
$$

\subsection{Relative moduli spaces}
Let $\alpha\in \text{Pic}(S)$ be a primitive class. 
Let $\epsilon$ be a family of polarized $K3$ surfaces 
$$\epsilon: X \rightarrow (\Delta,0)$$
for which $\alpha$ is positive and ${m}$-rigid.
We will consider the relative geometry associated to
$$X/S = X/\epsilon^{-1}(0)\,.$$ 

Let $\beta \in \text{Eff}({m}\alpha) \subset \text{Pic}(S)$.
We recall the definition \cite{LiWu, PT1} of the moduli space $P_n(X/S,\beta)$ 
parameterizing
stable relative pairs
\begin{equation}\label{vyq}
\O_{X[k]} \stackrel{s}{\longrightarrow} F
\end{equation}
on $k$-step degenerations\footnote{$X[k]$ is the union of $X$ with a chain of $k\ge0$ copies of $S\times\PP^1$, where the $i$th copy of $S\times\PP^1$ is attached along $S\times\{\infty\}$ to the $(i+1)$st along $S\times\{0\}$. Contracting the chain to $S\subset X$ defines a projection $X[k]\to X$ with automorphism group $(\C^*)^k$. There is a distinguished divisor $S_\infty\subset X[k]$ at $S\times\{\infty\}$ in the extremal component. If $k=0$, then
$S_\infty$ is just $S\subset X$.} $X[k]$ of $X$ along $S$
\cite{LiRelative}. Here, $F$ is a sheaf on $X[k]$ with
$$\chi(F)=n$$
and support $[F]$ which pushes down to the class
$\beta\in H_2(X,\Z)$.
The pair $(F,s)$ satisfies the following stability conditions:
\begin{enumerate}
\item[(i)] $F$ is pure with finite locally free resolution,
\item[(ii)] the higher derived functors of the
restriction of $F$ to the singular loci of $X[k]$, and the divisor at infinity, vanish,
\item[(iii)] the section $s$ has 0-dimensional cokernel supported
away from the singular loci of $X[k]$ and away from $S_\infty$.
\item[(iv)] the pair \eqref{vyq} has only finitely many automorphisms covering
the automorphisms of $X[k]/X$.
\end{enumerate}
The moduli space $P_n(X/S,\beta)$ is a Deligne-Mumford stack with a perfect obstruction theory which we describe in Section \ref{defy}.

In our situation, $\beta$ is a fiber class and the nearby fibers $X_{t\ne0}$ contain no curves of class $\beta$ (by the transversality condition 
of $\epsilon$). Hence, there is a compact, open and closed substack
$$P^\star_n(X/S,\beta) \subset P_n(X/S,\beta)$$
parameterizing 
stable pairs $(F,s)$ lying over the central fiber. 
By condition (ii) of stability, the target must be bubbled, 
with $(F,s)$ living on some $X[k]$ with $k\ge1$. Restricted to
 the $i^{th}$ bubble $S\times\PP^1$, $(F,s)$ determines  a stable pair
\beq{bubbled}
(F_i,s_i) \ \text{ disjoint from }\ S\times\{0,\infty\}\,,
\eeq
with invariants{\footnote{For the class of the supports $[F_i]$, we 
always push down to $S$.}}
$$
\chi(F_i)=n_i, \quad [F_i]=\beta_i\in \text{Eff}({m}\alpha) \subset H_2(S,\Z)\,.
$$
Since $F$ is a disjoint union of the $F_i$, 
\beq{sum}
 \sum_{i=1}^kn_i=n\,, \qquad \sum_{i=1}^k\beta_i=\beta\,.
\eeq

Let $\B$ denote the stack of $(n,\beta)$-\emph{marked} expanded 
degenerations{\footnote{To emphasize the marking, we will sometimes denote
 $\B$ by $\B_{n,\beta}$.}}
\cite{LiRelative, LiWu} of $X/S$, with  universal family 
$$\X\to\B\,.$$ 
Over a closed point of $\B$ with stabilizer $(\C^*)^k$, the fiber of $\X$ is the scheme $X[k]$ acted on by $(\C^*)^k$, covering the identity on $X$,
 with markings
$$
(n_0,\beta_0)\in \Z \oplus H_2(X,\Z)\,, \qquad (n_i,\beta_i)\in \Z
\oplus H_2(S,\Z)\,,
$$
satisfying
\beq{2sum}
n_0+\sum_{i=1}^kn_i=n\,, \qquad \beta_0+\sum_{i=1}^k\beta_i=\beta\,. 
\eeq
All the stable pairs parameterized by $P_n^\star(X/S,\beta)$ 
lie over the substack where $(n_0,\beta_0)=(0,0)$. 
By \eqref{2sum}, we see the $(n_i,\beta_i)$ are required to satisfy \eqref{sum}. 

We view 
$
P_n^\star(X/S,\beta)$ as a moduli space of stable pairs on the fibers of $\X\to\B$
 with a map 
$$P_n^\star(X/S,\beta)\to\B$$ taking a pair to the marked support.

\subsection{Rubber} The universal family over the
 divisor of  $\B$ corresponding to a nontrivial degeneration of $X$ over
$S$ with
$$(n_0,\beta_0)=(0,0)$$
is called \emph{rubber}.

Alternatively,  rubber geometry arises from the following
construction.
Consider the stack $\B_{0,\infty}$ of $(n,\beta)$-marked expanded 
degenerations of
\beq{rubb}
S\times\PP^1\ \big/  \ S\times\{0,\infty\}\,.
\eeq
The markings $(n_i,\beta_i)$ on the components are required to satisfy \eqref{sum}. Let
$${\B_\infty}\subset \B_{0,\infty}$$ 
be the open substack where $S\times\{0\}$ has not been bubbled.
The standard $\C^*$-action on $\PP^1$ induces a $\C^*$-action on ${\B_\infty}$ 
and the associated universal family. 
Quotienting by the $\C^*$-action yields the {\em rubber target}: the universal family
\beq{rubdef}
\S\To\B_r
\eeq
over the {\em rubber stack} 
$$\B_r = {\B_\infty}/\C^*\,.$$
 The universal family $\S$ carries the canonical divisors 
$$S_0\times\B_r\,,\ \ S_\infty\times\B_r
\subset \S \,.$$

Gluing $S_0$ of the rubber target to the central fiber $S$ of $X$ embeds 
the rubber stack 
into the stack of $(n,\beta)$-marked expanded degenerations of $X$.
We obtain
the commutative diagram
\beq{durex}
\xymatrix{
\S\,\ \ar@{^(->}[r]\ar[d]<-.5ex> & \X \ar[d] \\
\B_r\ \ar@{^(->}[r] & \B\,.\!\!}
\eeq

Let $R(n, \beta)$ denote the
moduli space of stable pairs on the fibers of \eqref{rubdef}.
Concretely, $R(n,\beta)$ is the moduli space of relative stable pairs on
$$S\times\PP^1\ \big/\ S\times\{0,\infty\}$$
with Euler characteristic $n$, class $\beta$,
and no bubbling over $S\times\{0\}$ -- all modulo the action of $\C^*$. 
The compactness of $R(n, \beta)$ is a consequence of
the $\C^*$-quotient geometry.

We have seen that relative stable pairs on $X/S$ near $0\in \Delta$ are 
in fact supported on the rubber target \eqref{durex}. Pushing forward from rubber to the expanded degenerations of $X/S$ yields a 
morphism 
\beq{iRP}
\iota:R(n,\beta) \rightarrow
P_n^\star(X/S,\beta)
\eeq
which is a closed embedding of Deligne-Mumford stacks {\em and} a bijection on closed points.
The equation which cuts out $R(n,\beta)\subset P_n^\star(X/S,\beta)$ is 
the smoothing parameter of the first bubble.

We will  prove $\iota$ 
is {\em almost} an isomorphism: $\iota$ satisfies the
curvilinear lifting property. 
To prove $\iota$ is an isomorphism, the smoothing parameter of the first bubble
must be shown to vanish in all flat families associated to
the moduli space $P^\star_n(X/S,\beta)$. We leave the isomorphism question open.

\subsection{Deformation theory} \label{defy} 
Following Section \ref{rtov}, let
$$\epsilon: X\rightarrow (\Delta,0)$$
be a $K3$-fibration  for which
$\alpha$ is ${m}$-rigid.
Let $\beta\in \text{Eff}({m}\alpha)$.
We study here the deformation theory of
$$
P=P_n^\star(X/S,\beta)\To\B\,,
$$
the moduli space of stable pairs on the fibers of the right hand side of the diagram \eqref{durex}. 
Identical arguments apply to the left hand side of \eqref{durex}, 
replacing $\B$ by the substack $\B_r$ and $P$ by $R(n,\beta)$ 
to give the deformation theory of stable pairs on the rubber target.

Over $\X\times_\B P$ there is a universal stable pair
$$
\II=\{\O_{\X\times_\B P}\Rt{s}\FF\}\,,
$$
where the complex $\II$ has $\O$ in degree 0. Let $\pi_P$ denote the projection $\X\times_\B P\to P$ and\footnote{Here $R\hom_{\pi_P}
=R\pi_{P*}R\hom$ is the right derived functor of 
$$\hom_{\pi_P}=\pi_{P*}\hom\,.$$}
\beq{edot}
E\udot=(R\hom_{\pi_P}(\II,\II)_0[1])^\vee.
\eeq
By \cite{HT,LiWu}, $P\to\B$ admits a relative perfect obstruction theory
\beq{rpot}
E\udot\To\LL_{P/\B}
\eeq
described as follows. Under the map $\LL_{\X\times_\B P}\to\LL_{(\X\times_\B P)/\B}$, the Atiyah class $\At(\II)$ of $\II$ projects to the \emph{relative} Atiyah class:
\beq{pAt} \xymatrix@R=0pt{
\Ext^1\big(\II,\II\otimes\LL_{\X\times_\B P}\big) \ar[r]&
\Ext^1\Big(\II,\II\otimes\big(\LL_{\X/\B}\oplus\LL_{P/\B}\big)\Big)\,, \\ \hspace{2cm}
\At(\II)\qquad \ar@{|->}[r]& \qquad(\At_{\X/\B}(\II),\At_{P/\B}(\II))\,. \hspace{1cm}}
\eeq
The map \eqref{rpot} is given by the partial Atiyah class $\At_{P/\B}(\II)$ via the following identifications:
\begin{eqnarray}
\Ext^1(\II,\II\otimes\pi_P^*\LL_{P/\B}) &=&
H^1\big(R\hom(\II,\II)\otimes\pi_P^*\LL_{P/\B}\big) \nonumber \\
&=& H^0\big(R\pi_{P*}R\hom(\II,\II)[1]\otimes\LL_{P/\B}\big) \nonumber \\
&\!\!\To\!\!& H^0\big(R\pi_{P*}R\hom(\II,\II)_0[1]\otimes\LL_{P/\B}\big)
\nonumber \\ &=& \Hom\big(E\udot,\LL_{P/\B}\big)\,. \label{ident}
\end{eqnarray}
Defining $\E\udot$ to be the cone on the induced map $E\udot[-1]\to\LL_\B$,
 we obtain a commutative diagram of exact triangles:
\beq{5} \xymatrix@R=18pt{
\E\udot \ar[r]\ar[d] & E\udot \ar[r]\ar[d] & \LL_\B[1] \ar@{=}[d]<-.5ex>
\\ \LL_P \ar[r] & \LL_{P/\B} \ar[r] & \LL_\B[1]\,.\!\!}
\eeq
The Artin stack $\B$ is smooth  with $\LL_\B$ a 2-term complex of bundles supported in degrees $0$ and $1$. The induced map
$$
\Omega_P=h^0(E\udot)\To h^1(\LL_\B)
$$
is onto because the points of $P$ all have finite stabilizers by condition (iv) above. Therefore the long exact sequence of sheaf cohomologies of the top row of \eqref{5} shows $\E\udot$ has cohomology in degrees $-1$ and $0$ only. 
The complex $\E\udot$ 
is perfect because $E\udot$ and $\LL_\B$ are. Since the Deligne-Mumford stack $P$ is projective, $\E\udot$ is quasi-isomorphic to a 2-term complex of locally free sheaves on $P$. Finally, the 5-Lemma applied to the long exact sequences in cohomology of \eqref{5} 
implies
\beq{apot}
\E\udot\To\LL_P
\eeq
is an isomorphism on $h^0$ and onto on $h^{-1}$. Therefore \eqref{apot} is a perfect obstruction theory for $P$.

The virtual dimension of $P_{n}(X/S,\beta)$ is 0. The open and closed 
component
$P_n^\star(X/S,\beta) \subset P_n(X/S,\beta)$
hence carries a virtual class of dimension 0.  
We define
$$P^\star_{n,\beta}(X/S) = \int_{[P_n^\star(X/S,\beta)]^{\vir}} 1\,.$$

\begin{Lemma} We have $P^\star_{n,\beta}(X)= P^\star_{n,\beta}(X/S)$.
\end{Lemma}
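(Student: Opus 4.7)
The plan is to prove the equality as an identity of numerical invariants (the integrals of $1$ against the respective $0$-dimensional virtual classes) by degenerating the ambient total space $X$ to its normal cone along the special fibre $S$ and applying the degeneration formula for stable pair invariants. Since $\epsilon \colon X\to \Delta$ is smooth at $0\in\Delta$ and $\Delta$ is nonsingular, the normal bundle $N_{S/X}$ is trivial. The deformation to the normal cone thus produces a flat family $\mathcal W\to\mathbb A^1$ whose general fibre is $X$ and whose special fibre is $X\cup_S(S\times\PP^1)$, glued along $S\times\{\infty\}\cong S\subset X$. The $m$-rigidity condition $(\star)$ is an open condition and therefore ensures that the moduli stack of relative stable pairs on this family, of class $\beta$ and supported set-theoretically over $0\in\Delta$, is proper over $\mathbb A^1$.

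Applying the Li--Wu degeneration formula for stable pairs to $\mathcal W\to\mathbb A^1$, the general-fibre invariant $P^\star_n(X,\beta)$ expresses as a sum over splittings of $(n,\beta)$ between the two components of the special fibre, matched along the common divisor $S$ with inserted diagonal class. I claim only the trivial splitting contributes: the $X$-component must carry $(n_0,\beta_0)=(0,0)$, forcing the entire class $(n,\beta)$ onto the $(S\times\PP^1)$-component. Indeed, a nonzero effective summand $\beta_0\le\beta=m\alpha$ appearing as the class of a pair on the $X$-side of the central fibre would correspond, by flatness of $\mathcal W\to\mathbb A^1$, to an effective curve of class $\beta_0\in\mathrm{Eff}(m\alpha)$ on a nearby $K3$-fibre of $\epsilon$; but by condition $(\star)$ the local Noether--Lefschetz locus $\mathrm{NL}(\beta_0)$ is the reduced point $\{0\}$, so no such curves exist on nearby fibres. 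Hence only the trivial splitting contributes, and the remaining piece is the relative stable pairs invariant on $(S\times\PP^1)/S_\infty$ which, after the natural $\C^\ast$-scaling quotient on the $\PP^1$-direction, is precisely the rubber invariant defining $P^\star_n(X/S,\beta)$.

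The main technical obstacle is to justify that the degeneration formula restricts cleanly to the $P^\star$-components (rather than to the full relative moduli stacks over $\mathcal W/\mathbb A^1$) and that the $\C^\ast$-quotient appearing in the degeneration formula output on the $(S\times\PP^1)$-side agrees with the rubber quotient built into the definition of $P_n(X/S,\beta)$ via the stack $\B_r=\B_\infty/\C^\ast$. The first point is handled by $(\star)$, which makes the $P^\star$-components an open and closed substack of the relative moduli over $\mathcal W\to\mathbb A^1$. The second point is the standard rubber calculus of Li--Wu: the automorphism $(\C^\ast)^k$ of the bubble chain $X[k]/X$ used to define $\B_r$ is precisely the group quotiented out in producing relative invariants from pairs on an $(S\times\PP^1)$ bubble attached to the central fibre of $\mathcal W$. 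Combining these with the deformation invariance of the general-fibre invariant along $\mathcal W\to\mathbb A^1$ gives the asserted equality.
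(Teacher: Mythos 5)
Your first step---deformation to the normal cone of $S\subset X$ and the degeneration formula---is the same as the paper's, but the mechanism you use to decide which terms of the formula survive is wrong, and in fact exactly reversed. The $m$-rigidity condition $(\star)$ cannot kill the $X/S$-side contributions: the parameter of the normal-cone family $\mathcal W\to\mathbb A^1$ is an auxiliary direction and has nothing to do with the base $\Delta$ of the $K3$-fibration. The $X$-component of the central fibre $X\cup_S(S\times\PP^1)$ is still $X$ itself; it contains $S$ and hence all curves in classes $\beta_0\in\mathrm{Eff}(m\alpha)$, and relative stable pairs on $X/S$ in such classes exist without any need to deform to nearby fibres of $\epsilon$. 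Your flatness argument conflates the two one-parameter directions, and the object you are trying to argue out of existence, $P^\star_{n_1}(X/S,\beta_1)$, is precisely the nontrivial moduli space whose invariants the Lemma is about. Worse, if your claim were correct and the entire class were forced onto the $(S\times\PP^1)$-bubble, the conclusion would be $P^\star_n(X,\beta)=0$ in general: the \emph{standard} virtual class of $P_{n_2}(S\times\PP^1/S\times\{0\},\beta_2)$ vanishes for $\beta_2\neq 0$ by the existence of the reduced theory (the cosection coming from the holomorphic symplectic form, cf.\ Section \ref{rott}). That vanishing is exactly the ingredient the paper uses, but on the \emph{bubble} side: in the degeneration formula every term with nonzero class on the $(S\times\PP^1)/S\times\{0\}$ factor dies, so the only surviving term is $(n_1,\beta_1)=(n,\beta)$ carried entirely by the $X/S$ factor, which is the asserted equality.

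There is a second gap at the end of your argument: identifying the bubble-side output, ``after the natural $\C^*$-scaling quotient,'' with $P^\star_n(X/S,\beta)$ is not the definition of that invariant. By definition $P^\star_n(X/S,\beta)$ is the relative stable pairs theory of $X$ along $S$ (with its own perfect obstruction theory), not a rubber invariant on $S\times\PP^1$. The comparison between the rubber theory $R(n,\beta)$ and $P^\star_n(X/S,\beta)$ is genuinely delicate---the embedding $\iota:R\to P^\star$ is only shown to satisfy the curvilinear lifting property, the obstruction theories differ, and the resulting discrepancy is the source of the correction terms in Theorem \ref{t66t}---so it cannot be waved through as ``standard rubber calculus'' inside this Lemma.
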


\begin{proof}
Consider the degeneration of the total space $X$ to the
normal cone of the special fiber $S=\epsilon^{-1}(0)$.
By the degeneration
formula for stable pairs invariants, $P^\star_{n,\beta}(X)$ is expressed
as a product of integrals over $P^\star_{n_1}(X/S,\beta_1)$ and
$P_{n_2}(S\times\proj^1/S\times\{0\}, \beta_2)$ where
$$n=n_1+n_2\,, \ \ \ \beta=\beta_1+\beta_2\,. $$
Since the virtual class of 
$P_{n_2}(S\times\proj^1/S\times\{0\}, \beta_2)$ vanishes by the
existence of the reduced theory (see Section \ref{rott} below), 
the only surviving term of
the degeneration formula is $n_1=n$ and $\beta_1=\beta$.
\end{proof}

\subsection{Reduced obstruction theory} \label{rott}
Let $R=R(n,\beta)$ be the moduli space of stable pairs on the
rubber target.

The construction of the
obstruction theory in Section \ref{defy}
applies to $\S\times_{\B_r}R$ with the
associated universal complex $\II$ and projection $\pi_R$ to $R$. 
The result is a relative obstruction theory for $R$ given by a similar formula:
\beq{rrpot}
F\udot=(R\hom_{\pi_R}(\II,\II)_0[1])^\vee\To\LL_{R/\B_r}\,,
\eeq
and an absolute obstruction theory
\beq{arpot}
\F\udot=\mathrm{Cone\,}\big(F\udot[-1]\To\LL_{\B_r}\big)\,.
\eeq
The relative obstruction sheaf of \eqref{rrpot} contains $H^{0,2}(S)$ which can be thought of as the topological or Hodge theoretic part of the obstruction to deforming a stable pair.
So long as $S$ remains fixed, $H^{0,2}(S)$ is trivial
and can be removed. After removal, we obtain the {\emph {reduced
obstruction theory}}. By now,
there are many approaches to the reduced theory: see \cite{KT1} for an extensive account and references. We include here a brief treatment.

We fix a holomorphic symplectic form $\sigma$ on $S$.
Let the 2-form $\bar\sigma$ denote the pull-back of $\sigma$ to $\S$. The 
 semiregularity map from the relative obstruction sheaf $\Ob_F=h^1((F\udot)^\vee)$ to $\O_R$ plays a central role:
\begin{align} \label{SRmap}
\nonumber \ext^2_{\pi_R}(\II,\II)_0
\Rt{\cup\At(\II)} \ext^3_{\pi_R}(\II,\II\otimes\LL_{(\S\times_{\B_r}R)/\B_r}) \hspace{2cm} \\ \To\ext^3_{\pi_R}(\II,\II\otimes\Omega_{\S/\B_r})\Rt{\wedge\bar\sigma}
\ext^3_{\pi_R}(\II,\II\otimes\Omega^3_{\S/\B_r}) \nonumber \\
\Rt{\tr}R^3\pi_{R*}(\Omega^3_{\S/\B_r})\To R^3\pi_{R*}(\omega_{\S/\B_r})\,\cong\ \O_R\,. \hspace{-1cm}
\end{align}
In the last line, $\omega_{\S/\B_r}$ is the fiberwise canonical sheaf.
Using the simple structure of the singularities, 
we see $\omega_{\S/\B_r}$ is the sheaf of fiberwise 3-forms with logarithmic poles along the singular divisors in each fiber with opposite
 residues along each branch. The canonical sheaf $\omega_{\S/\B_r}$
inherits a natural map from $\Omega^3_{\S/\B_r}$.

\begin{Proposition} \label{SR}
The semiregularity map \eqref{SRmap} is onto.
\end{Proposition}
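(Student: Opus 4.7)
The plan is to reduce surjectivity to a pointwise statement via Nakayama's Lemma, rephrase it by Serre duality as the non-vanishing of an explicit Ext class built from $\bar\sigma$ and the Atiyah class of $\II_0$, and finally verify non-vanishing by a Kunneth decomposition on a smooth bubble of the rubber target together with the symplectic non-degeneracy of $\sigma$ and the non-triviality of the curve class $\beta$.

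By Nakayama, it suffices to check surjectivity of \eqref{SRmap} at each closed point $[\II_0]\in R$. The stable pair $\II_0$ lives on a rubber target $\S_0$, a projective Gorenstein $3$-fold (an expanded degeneration of $S\times\PP^1$ modulo $\C^*$), and the pointwise assertion is that the linear functional
$$
f\colon\Ext^2_{\S_0}(\II_0,\II_0)_0\To H^3(\S_0,\omega_{\S_0})\cong\C,\qquad a\mapsto\int_{\S_0}\tr(a\cup\At(\II_0))\wedge\bar\sigma,
$$
is nonzero. By Serre duality on the Gorenstein $3$-fold $\S_0$, $\Ext^2(\II_0,\II_0)_0^\vee\cong\Ext^1(\II_0,\II_0\otimes\omega_{\S_0})_0$, and the Serre dual of $f$ is a linear map $\C\to\Ext^1(\II_0,\II_0\otimes\omega_{\S_0})_0$ sending $1$ to the class $\xi$ defined by pushing $\bar\sigma\wedge\At(\II_0)\in\Ext^1(\II_0,\II_0\otimes\Omega^3_{\S_0})$ through the natural map $\Omega^3_{\S_0}\to\omega_{\S_0}$. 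Since $\tr(\At(\II_0))=\ch_1(\II_0)=0$, the class $\xi$ is automatically trace-free, so surjectivity of $f$ becomes equivalent to $\xi\ne 0$.

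To prove $\xi\ne 0$, restrict to a smooth bubble $S\times\PP^1\subset\S_0$ on which the support of $\II_0$ is nontrivial; such a bubble exists because the support projects to a curve of nonzero class $\beta\in H_2(S,\Z)$. There $\Omega^3\cong\omega$, and the Kunneth splitting $\Omega=\pi_S^*\Omega_S\oplus\pi_{\PP^1}^*\Omega_{\PP^1}$ decomposes the Atiyah class as $\At(\II_0)=\At_S+\At_{\PP^1}$. Since $\Omega^3_S=0$ (for $\dim_\C S=2$), we have $\Omega^3_{\S_0}\cong\pi_S^*\Omega^2_S\otimes\pi_{\PP^1}^*\Omega_{\PP^1}$, and wedging with $\bar\sigma=\pi_S^*\sigma$ annihilates $\At_S$. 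The symplectic trivialization $\pi_S^*\Omega^2_S\cong\O_{\S_0}$ then identifies $\xi$ (restricted to the bubble) with the $\PP^1$-direction Atiyah class
$$
\At_{\PP^1}(\II_0)\in\Ext^1\bigl(\II_0,\II_0\otimes\pi_{\PP^1}^*\Omega_{\PP^1}\bigr).
$$
This class encodes the first-order $\C^*$-translation deformation of $\II_0$ along the $\PP^1$-direction, and is nonzero by the explicit weight-one computation of Proposition \ref{local}, applied to the $\C^*$-equivariant $\C[x]$-module structure of $\II_0$ near a generic $S$-slice of the bubble.

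The main obstacle is the final step: the non-vanishing $\At_{\PP^1}(\II_0)\ne 0$ requires the $\C^*$-equivariant local analysis of Section \ref{fullmod} together with the non-triviality of the curve class $\beta$. The preceding reductions (Nakayama, Serre duality, the Kunneth splitting, and the symplectic trivialization of $\Omega^2_S$) are all formal, and the essential K3-specific input is the symplectic non-degeneracy of $\sigma$.
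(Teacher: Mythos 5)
Your formal reductions are fine: checking surjectivity at closed points via Nakayama, identifying the pointwise functional $a\mapsto\int\tr(a\cup\At(I\udot))\wedge\bar\sigma$ with a class $\xi\in\Ext^1(I\udot,I\udot\otimes\omega)_0$ by Serre duality, and using the K\"unneth splitting of $\Omega_{S\times\PP^1}$ together with the $\sigma$-trivialization of $\Omega^2_S$ to identify $\xi$ on a bubble with the $\PP^1$-component $\At_{\PP^1}(I\udot)$ are all legitimate (and are really just the dual repackaging of the statement to be proved). But the proof has a genuine gap exactly where you locate the ``main obstacle'': the non-vanishing of $\At_{\PP^1}(I\udot)$ is not established by Proposition \ref{local}. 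That proposition (and all of Section \ref{fullmod}) concerns $\C^*$-\emph{equivariant} $\C[x]$-modules, i.e.\ $\C^*$-fixed stable pairs on $S\times\C$; a stable pair parameterized by the rubber moduli space $R(n,\beta)$ is only set-theoretically supported on slices and carries no $\C^*$-equivariant structure, so ``the $\C^*$-equivariant $\C[x]$-module structure of $\II_0$ near a generic $S$-slice'' does not exist. Moreover, even in the equivariant setting Proposition \ref{local} merely identifies the extension class of one explicit deformation of $A_k$ with the translation class $\partial_x/k$; it is not a statement that the Atiyah class component (equivalently the first-order translation class) of an arbitrary stable pair is nonzero, which is precisely what your argument needs.

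To close the gap you would have to \emph{detect} $\At_{\PP^1}(I\udot)\neq 0$, and the natural way to do that is a trace/pairing computation: for instance $\tr(\At_{\PP^1}(I\udot)\circ\At_S(I\udot))$ computes (up to a factor) the K\"unneth component of $\ch_2(I\udot)=-\beta\otimes[\mathrm{pt}]$, which is nonzero since $\beta\neq0$; equivalently one pairs against a Kodaira--Spencer class $\kappa\in H^1(T_S)$ with $\int_\beta\kappa\ip\sigma\neq0$. This is exactly what the paper does: it exhibits the explicit trace-free class $\bar\kappa\circ\At(I\udot)\in\Ext^2(I\udot,I\udot)_0$ (the obstruction to deforming $I\udot$ along $\kappa$), and the Buchweitz--Flenner identity $\tr\big(\bar\kappa\circ\At(I\udot)\circ\At(I\udot)\big)=2\bar\kappa\ip\ch_2(I\udot)$ shows the semiregularity functional takes the value $2\int_\beta\kappa\ip\sigma\neq0$ on it, using the transversality \eqref{twitor} (or, as you note, just nondegeneracy of $\sigma$ plus $\beta\neq0$ after choosing $\kappa$ suitably). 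So once your final step is repaired, the argument collapses into the paper's computation rather than providing an alternative to it; as written, the crux is unproven.
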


\begin{proof}
We work at a closed point $(F,s)$ of $R$ where $F$ is
 a sheaf on  $S\times\PP^1[k]$. 
In \eqref{SRmap}, we replace $\II$ by 
$$I\udot=\{\O\rt{s}F\}$$ and each $\ext_{\pi\_R}$ sheaf by the corresponding $\Ext_{S\times\PP^1[k]}$ group. We will show the result is a surjection
\beq{map2}
\Ext^2(I\udot,I\udot)_0\To\C\,.
\eeq
By the vanishing of the higher trace-free Ext sheaves, base change and the Nakayama Lemma, the surjection \eqref{map2} implies the claimed result.

We use the first order deformation $\kappa\in H^1(T_S)$ of $S$ of \eqref{kappa} and the holomorphic symplectic form $\sigma$. By \eqref{twitor},
 we have
\begin{equation} \label{nonzero}
\int_\beta\kappa\ip\sigma\ne0\,.
\end{equation}
The pull-back of the Kodaira-Spencer class $\kappa$ to $S\times\PP^1[k]$ is
$$
\bar\kappa\in\Ext^1(\LL_{S\times\PP^1[k]},\O_{S\times\PP^1[k]})\,,
$$
the class of the corresponding deformation of $S\times\PP^1[k]$.
We consider
\begin{equation}\label{h555}
\bar\kappa\circ\At(I\udot)\,\in\,\Ext^2(I\udot,I\udot)\,,
\end{equation}
which by \cite{BuF,Ill} is the obstruction to deforming $I\udot$ to first order with the deformation $\kappa$ of $S$. Since $\det(I\udot)$ is trivial, \eqref{h555}
 lies in the subgroup of trace-free Exts. We will show the map \eqref{map2} is nonzero on the element \eqref{h555} of $\Ext^2(I\udot,I\udot)_0$.

The semiregularity map is entirely local to the support
$$
\operatorname{supp}(F)\subset\coprod_i S\times\{p_i\}\,,
$$
where the $p_i$ lie in the interiors of the $\PP^1$ bubbles.
 Using the $(\C^*)^k$ action, we may assume the $p_i$ are all different points of $\C^*=\PP^1\take\{0,\infty\}$.
By moving all of the $p_i$ to a single bubble, we may 
compute the same map on $S\times\PP^1$.

By \cite[Proposition 4.2]{BuF}, 
$$
\tr\big(\bar\kappa\circ\At(I\udot)\circ\At(I\udot)\big)\in H^3(\Omega_{S\times\PP^1})
$$
equals
$
2\bar\kappa\ip \mathrm{ch}_2(I\udot).
$
Therefore, the image of $\bar\kappa\circ\At(I\udot)$ under the map \eqref{map2} is 
\begin{equation}\label{bb5}
2\int_{S\times\PP^1}(\bar\kappa\ip\mathrm{ch}_2(I\udot))\wedge\bar\sigma\ =\ -2\int_{S\times\PP^1}(\bar\kappa\ip\bar\sigma)\wedge\mathrm{ch}_2(I\udot)\,,
\end{equation}
by the homotopy formula 
\begin{equation}\label{htyy}
0\ =\ \bar\kappa\ip(\mathrm{ch}_2(I\udot)\wedge\bar\sigma)\ =\
(\bar\kappa\ip\mathrm{ch}_2(I\udot))\wedge\bar\sigma+
(\bar\kappa\ip\bar\sigma)\wedge\mathrm{ch}_2(I\udot)\,.
\end{equation}
Since $\mathrm{ch}_2(I\udot)$ is Poincar\'e dual to $-\beta$, we conclude \eqref{bb5} equals
$$
2\int_\beta\kappa\ip\sigma\,,
$$
which is nonzero \eqref{nonzero} by the choice of $\kappa$.
\end{proof}

Composing \eqref{SRmap} with the truncation map $(F\udot)^\vee\to h^1((F\udot)^\vee)[-1]$ and dualizing gives a map
\beq{dSR}
\O_R[1]\To F\udot\,.
\eeq

\begin{Proposition} \label{descen} The map \eqref{dSR} lifts uniquely to the absolute obstruction theory $\F\udot$ of \eqref{arpot}.
\end{Proposition}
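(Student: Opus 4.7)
The plan is to apply $\Hom(\O_R[1],\ \cdot\ )$ to the defining triangle $\LL_{\B_r}\to\F\udot\to F\udot\Rt{\delta}\LL_{\B_r}[1]$ of \eqref{arpot}. Uniqueness of the lift is immediate from the vanishing of $\Hom(\O_R[1],\LL_{\B_r})=h^{-1}(\LL_{\B_r})$, which holds because $\B_r$ is a smooth Artin stack and hence $\LL_{\B_r}$ is a perfect complex concentrated in degrees $0$ and $1$.

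Existence amounts to showing the composition $\delta\circ(\O_R[1]\to F\udot)$ vanishes in $\Hom(\O_R[1],\LL_{\B_r}[1])$. Dualising, this becomes the statement that the composition
$$T_{\B_r}\Rt{\mathrm{KS}}\Ob_F=\ext^2_{\pi_R}(\II,\II)_0\To\O_R$$
is identically zero, where $\mathrm{KS}(\xi)=\bar\mu_\xi\circ\At(\II)$ for $\bar\mu_\xi\in\Ext^1(\LL_{\S\times_{\B_r}R},\O)$ the Kodaira--Spencer class of $\xi\in T_{\B_r}$, and the second map is the semiregularity \eqref{SRmap}. The same Buchweitz--Flenner identity and homotopy formula \eqref{htyy} used in the proof of Proposition \ref{SR} reduce this composition, evaluated on $\xi$, to
$$-2\int(\bar\mu_\xi\ip\bar\sigma)\wedge\ch_2(\II),$$
so it suffices to establish the fibrewise vanishing $\bar\mu_\xi\ip\bar\sigma=0$.

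This final vanishing is the geometric heart of the argument. Tangent vectors to $\B_r$ arise from moving bubble points along $\PP^1$, smoothing nodes between adjacent bubbles, and the infinitesimal $\C^*$-action used to form $\B_r=\B_\infty/\C^*$. In every case, $\bar\mu_\xi$ lies in the $\PP^1$-direction of the universal rubber target $\S\to\B_r$, while $\bar\sigma$ is pulled back from the symplectic form on the fixed $K3$ surface $S$. The two directions are transverse, so the interior product vanishes fibrewise. The main technical obstacle will be making this transversality precise along the stacky strata of $\B_r$ parameterising heavily bubbled targets; I would handle this by passing to the smooth atlas $\B_\infty\to\B_r$ and verifying the splitting of the universal tangent sheaf into ``$S$-horizontal'' and ``$\PP^1$-vertical'' components, a splitting that is already implicit in the local structure of the expanded degenerations.
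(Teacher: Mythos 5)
Your proposal is correct in outline and its uniqueness argument is exactly the paper's ($\Hom(\O_R[1],\LL_{\B_r})=0$ since $\LL_{\B_r}$ sits in degrees $0,1$), but your existence argument takes a genuinely different route from the paper's, and the difference matters. The paper proves the stronger statement that the composition $F\udot\to\LL_{R/\B_r}\to\LL_{\B_r}[1]$ of \eqref{arpot} already vanishes, by a purely local argument: after using the exact triangle of cotangent complexes and the global existence of $\II$ over $\S\times_{\B_r}R$ to trade the composition through $\At_{R/\B_r}(\II)$ for (minus) the one through $\At_{\S/\B_r}(\II)$ (the step \eqref{11}$=-$\eqref{22}, which your identification ``$\mathrm{KS}(\xi)=\bar\mu_\xi\circ\At(\II)$'' silently presupposes), it observes that the Kodaira--Spencer class of $\S/\B_r$ vanishes \emph{identically} in a neighbourhood of the support of $\FF$ (stable pairs avoid the creases, and away from the creases $\S$ is locally a trivial family $S\times\C^*\times\B_r$ over $\B_r$), while the Atiyah class is supported on the support of $\FF$; hence the composition is zero with no trace identity, no $\bar\sigma$, and no integration. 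You instead compose with the semiregularity map \eqref{SRmap} and reduce, via Buchweitz--Flenner and the homotopy formula \eqref{htyy}, to the fibrewise contraction $\bar\mu_\xi\ip\bar\sigma=0$. This is morally the same geometric input (deformations parameterised by $\B_r$ are trivial in the $S$-directions), but as stated it is imprecise: the tangent directions of $\B_r$ are crease-smoothing parameters (and stacky directions), not ``moving bubble points,'' so $\bar\mu_\xi$ is a class in $\Ext^1(\LL,\O)$ of a \emph{singular} fibre concentrated on the creases, and both the BF identity and the contraction $\bar\mu_\xi\ip\bar\sigma$ need justification there --- exactly the technical obstacle you defer. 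The cleanest repair of your deferred step is the paper's locality observation itself: since $\At(\II)$ and $\ch_2(\II)$ are supported on the pair, which is disjoint from the creases where $\bar\mu_\xi$ lives, the composition vanishes before any trace computation, making the semiregularity form $\bar\sigma$ and the integral unnecessary. What your route buys is a direct link to the semiregularity formalism of Proposition \ref{SR} (and it only proves what is strictly needed, the vanishing after precomposition with $\O_R[1]\to F\udot$); what the paper's route buys is a sharper statement proved by a softer, purely local argument that sidesteps all analysis on the singular fibres.
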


\begin{proof}
To obtain a lifting, we must show the composition
$$
\O_R[1]\To F\udot\To\LL_{R/\B_r}\To\LL_{\B_r}[1]
$$
is zero.
In fact, the composition of the second and third arrows is already zero on $R$. We will show the vanishing of the dual composition
\beq{zzer}
(\LL_{\B_r})^\vee[-1]\To(\LL_{R/\B_r})^\vee\To(F\udot)^\vee\,.
\eeq

We work with the universal complex $\II$ on $\S\times_{\B_r}R$.
By \eqref{pAt}, we have the diagram (in which we have suppressed some 
pull-back maps):
$$ \xymatrix{
\LL_{\B_r}^\vee[-1] \ar[r]& \LL_{\S/\B_r}^\vee\oplus\LL_{R/\B_r}^\vee
\ar[r]\ar[d]_{\At_{\S/\B_r}(\II)\oplus\!}^{\!\At_{R/\B_r}(\II)}&
\LL_{\S\times_{\B_r}R}^\vee \ar[d]^{\At(\II)} \\
& R\hom(\II,\II)_0[1] \ar@{=}[r] & R\hom(\II,\II)_0[1]\,.\!}
$$
The top row is an exact triangle, so the induced map
$$
\LL_{\B_r}^\vee[-1]\To R\hom(\II,\II)_0[1]
$$
vanishes.\footnote{There is no obstruction to deforming 
 as we move through $R$ over the base $\B_r$: 
there indeed exists a complex $\II$ over all of $\S \times_{\B_r} R$.} 
Therefore the composition
\beq{11}
\xymatrix@C=21pt{
\pi_R^*\LL_{\B_r}^\vee[-1] \ar[r]& \pi_{\S}^*\LL^\vee_{\S/\B_r}
\ar[rr]^(.41){\At_{\S/\B_r}(\II)}&& R\hom(\II,\II)_0[1]}
\eeq
equals minus the composition
\beq{22}
\xymatrix@C=21pt{
\pi_R^*\LL_{\B_r}^\vee[-1] \ar[r]& \pi_R^*\LL^\vee_{R/\B_r}
\ar[rr]^(.4){\At_{R/\B_r}(\II)}&& R\hom(\II,\II)_0[1]\,.}
\eeq
By adjunction the composition \eqref{22} gives the composition
\beq{33}
\xymatrix{
\LL_{\B_r}^\vee[-1] \ar[r]& \LL^\vee_{R/\B_r}
\ar[rr]^(.34){\At_{R/\B_r}(\II)}&& R\pi_{R*}R\hom(\II,\II)_0[1]\,,}
\eeq
which by (\ref{rpot}, \ref{ident}, \ref{rrpot}) is precisely the composition \eqref{zzer} we want to show is zero. So it is sufficient to show  \eqref{11} vanishes.

The first arrow of \eqref{11} is (the pull-back from $\B_r$ to $R$ of) 
the Kodaira-Spencer class of $\S/\B_r$:
 the final arrow in the exact triangle
$$
\LL_{\B_r}\To\LL_{\S}\To\LL_{\S/\B_r}\To\LL_{\B_r}[1]\,.
$$
Away from the singularities $S\times\{0,\infty\}$ in each $S\times\PP^1$-bubble, $\S$ is locally a trivial family over $\B_r$: it is isomorphic to 
$$S\times\C^*\times\B_r$$
 locally\footnote{But not globally due to the nontrivial $\C^*$-action on the $\C$ factor giving a nontrivial line bundle over $\B_r$.} over $\B_r$. 
Therefore this Kodaira-Spencer map vanishes in a neighborhood of the support of the universal sheaf $\FF$.
But the second arrow of \eqref{11} -- the Atiyah class $\At_{\S/\B_r}(\II)$ of $\II$ -- is nonzero only on the support of $\FF$, so the composition is zero. \medskip

Finally, choices of lift are parameterized by $\Hom(\O_R[1],\LL_{\B_r})$. This vanishes because $\LL_{\B_r}$ is concentrated in degrees $0$ and $1$. Therefore the lift is unique.
\end{proof}

The relative and  absolute \emph{reduced} obstruction theories 
are defined respectively by:
\beq{rred}
F\udot_{\red}=\mathrm{Cone}\Big(\O_R[1]\To F\udot\Big), \qquad
\F\udot_{\red}=\mathrm{Cone}\Big(\O_R[1]\To\F\udot\Big)
\eeq
The associated obstruction sheaves
$$
\Ob_F^{\red}=h^1((F\udot_{\red})^\vee)\,, \qquad
\Ob_\F^{\red}=h^1((\F\udot_{\red})^\vee)
$$
are the kernels of the induced semiregularity maps
\beq{aSR}
\Ob_F\To\O_R\,, \qquad
\Ob_\F\To\O_R\,,
\eeq
with the first given by \eqref{SRmap}.

Though not required here, one can show \cite{KT1,Pr} the complexes \eqref{rred} do indeed define perfect obstruction theories for $R$. 
For our purposes of extracting invariants,  the simpler cosection
method of Kiem-Li \cite{KiemLi} is sufficient
to produce the reduced virtual cycle $[R]^{\red}$ as in \cite{MPT}. 

We summarize here the cosection method for the reader. Writing
$$
(\F\udot)^\vee=\{F_0\To F_1\}
$$
as a global two-term complex of locally free sheaves on $R$, Behrend and
Fantechi \cite{BehFan} produce a cone
$$
C\subset F_1 \quad\mathrm{such\ that}\quad [R]^{\vir}=0_{F_1}^!C=
[s(C)c(F_1)]_{\text{virdim}}\,.
$$
Here, $s$ is the Segre class, $c$ is the total Chern class, and we take the piece in degree equal to the virtual dimension 
$$\text{virdim}=\rk F_0-\rk F_1\,.$$ 
Kiem and Li show the  cone $C$ lies cycle theoretically (rather than scheme theoretically) in the kernel $K$ of the composition
$$
F_1\To\Ob\To\O_R.
$$
We define the reduced virtual cycle in the reduced virtual dimension 
$(\text{virdim}+1)$ by
\beq{reddef}
[R]^{\red}=0_K^!C=[s(C)c(K)]_{\text{virdim}+1}\,.
\eeq

The reduced class is much more interesting than
the standard virtual class from the point of view of invariants:
the exact sequence
$$
0\To K\To F_1\To\O_R\To0
$$
implies the vanishing of  the standard virtual class,\footnote{In our particular situation, the vanishing is even more obvious since the standard
virtual dimension 
is $-1$. Since the reduced virtual dimension is $0$, the reduced virtual class is nonetheless nontrivial in general.}
$$
[R]^{\vir}=c_1(\O_R).[R]^{\red}+[s(C)c(K)]_{\text{virdim}}=0\,.
$$
(The second term vanishes because $C$ is a cycle inside $K$, so $s(C)c(K)$ is a sum of cycles in dimension $\ge\rk F_0-\rk K=\text{virdim}+1$.)

The vanishing reflects the fact that we can deform $S$ along $\kappa_S$ \eqref{nonzero}: $\beta$ does not remain  of type $(1,1)$ so there can be no holomorphic curves in class $\beta$.

We define the reduced rubber invariants of $S$ via
integration over the dimension 0 class \eqref{reddef}:
\beq{rrinvt}
R^{\red}_{n,\beta}(S\times\mathcal R)=\int_{[R(n,\beta)]^{\red}}1\,.
\eeq
Here $\mathcal R$ denotes the rubber, the quotient by $\C^*$ of the relative geometry $\PP^1\big/\{0,\infty\}$.

\subsection{Comparison of obstruction theories} \label{cot}
We have constructed three obstruction theories:
\begin{enumerate}
\item[(i)] $\F\udot$ on the rubber moduli space $R$, 
\item[(ii)] $\F\udot_{\red}$ on the rubber moduli space $R$,
\item[(iii)] $\E\udot$ on the moduli space $P^\star$ of stable pairs on $X/S$
over $0\in \Delta$.
\end{enumerate}
Our goal in Sections \ref{cot} - \ref{rigi} is to relate (i), (ii), and (iii).

By pushing stable pairs forward from the rubber to the expanded degenerations of $X/S$, we get a map \eqref{iRP}:
$$
\iota\colon R\To P^\star.
$$
Since $E\udot$ and $F\udot$ were defined by essentially the same formulas 
\eqref{edot} and \eqref{rrpot} respectively, we see
\beq{sameo}
\iota^*E\udot\ \cong\ F\udot.
\eeq
The definitions of $\E\udot$ and $\F\udot$ (\ref{5} and \ref{arpot})
 then yield the following diagram of exact triangles on $R$:
\beq{diog}
\xymatrix@R=16pt{
\ N^* \ar@{=}[r]\ar[d]& N^*\!\!\! \ar[d] \\
\iota^*\LL_\B \ar[r]\ar[d]& \iota^*\E\udot \ar[r]\ar[d]& \iota^*E\udot \ar@{=}[d] \\
\LL_{\B_r} \ar[r]& \F\udot \ar[r]& F\udot\,,\!\!}
\eeq
where $N^*=\LL_{\B_r/\B}[-1]$ is the conormal bundle of the divisor 
$\B_r\subset\B$.
Dualizing the central column and passing to cohomology gives a map
\beq{map1}
N\To\Ob_\F=h^1((\F\udot)^\vee)
\eeq
which describes the obstruction to deforming a pair in the image of $\iota$ 
off the rubber and into the bulk of $X/S$.
Composing with the semiregularity map \eqref{aSR} gives
\beq{map3}
N\To\O_R\,.
\eeq
The rest of Section \ref{cot} will be devoted to proving the following result.

\begin{Proposition} \label{lemm} The maps \emph{(\ref{map1}, \ref{map3})} are injections of sheaves on $R$. Moreover, \eqref{map1} has no zeros.
\end{Proposition}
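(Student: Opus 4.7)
The plan is to reduce both assertions to the nonvanishing already established in the proof of Proposition \ref{SR}. Because $\B_r\subset\B$ is a smooth divisor in a smooth Artin stack, $N^*=\LL_{\B_r/\B}[-1]$ is a line bundle on $R$ and hence so is $N$. Since $\O_R$ is also a line bundle, it suffices to show that the composition \eqref{map3} is nowhere vanishing at closed points: by Nakayama this forces \eqref{map3} to be an isomorphism of line bundles, and since \eqref{map3} factors through \eqref{map1}, the map \eqref{map1} is then necessarily injective as a map of sheaves and nonzero on every fibre, giving both conclusions at once.

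To evaluate \eqref{map3} at a closed point $(F,s)\in R$ supported on some $X[k]$, I would first identify \eqref{map1}. Unpacking the dualized central column of \eqref{diog}, the map $N\to\Ob_\F$ is the connecting morphism measuring the obstruction to deforming $(F,s)$ off the rubber and into the bulk of $X/S$. By standard deformation theory (cf.\ \cite{BuF,Ill}), this obstruction is $\bar\kappa\circ\At(I\udot)$, where $\bar\kappa$ is the Kodaira-Spencer class of the smoothing of the node of $X[k]$ that borders the bulk, viewed as a class on $X[k]$. Smoothing this outermost node in the universal family $\X\to\B$ locally reproduces the family $\epsilon\colon X\to\Delta$ near $S$, so $\bar\kappa$ is identified (up to a nowhere-vanishing scalar fixed by conventions) with the pull-back to $X[k]$ of the Kodaira-Spencer class $\kappa\in H^1(T_S)$ from \eqref{kappa}.

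Composing with the semiregularity map \eqref{aSR} and running the computation from the proof of Proposition \ref{SR} verbatim -- the Buchweitz-Flenner identity $\tr\bigl(\bar\kappa\circ\At(I\udot)\circ\At(I\udot)\bigr)=2\,\bar\kappa\ip\ch_2(I\udot)$, the symplectic homotopy formula \eqref{htyy}, and the Poincar\'e duality between $\ch_2(I\udot)$ and $-\beta$ -- the image of the generator of $N$ in $\O_R|_{(F,s)}$ evaluates to $2\int_\beta\kappa\ip\sigma$. Since $\beta\in\text{Eff}(m\alpha)$, the transversality identity \eqref{twitor} guaranteed by property $(\star)$ of the family $\epsilon$ ensures this is nonzero, completing the plan.

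The hard part will be the clean identification of the generator of $N$ with the bulk-bordering node-smoothing direction in $\B$, and the matching of the associated Kodaira-Spencer class on $X[k]$ with the pull-back of $\kappa$. This is a matter of unwinding the conventions in the construction of the universal family $\X\to\B$ and tracking them through the diagram \eqref{diog}, and of checking that no combinatorial factor depending on $k$ or on which node of the chain is being smoothed ruins the identification. Once this is pinned down, the rest is a direct application of the proof of Proposition \ref{SR}.
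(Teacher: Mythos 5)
Your argument has a genuine gap, and it shows up exactly where the statement is delicate: you prove too much. You claim that \eqref{map3} is nowhere vanishing at closed points and hence an isomorphism of line bundles. The Proposition does not assert this, and it is false in general: the vanishing locus of \eqref{map3} is precisely the Cartier divisor $D$ appearing in \eqref{term4}, which Lemma \ref{gbb3} identifies (up to linear equivalence) with $\sum_{n_1,\beta_1}D_{n_1,\beta_1}$, the locus of pairs with components beyond the first bubble. If \eqref{map3} were an isomorphism, this divisor class would be trivial, the correction term in \eqref{compari} would vanish, and one would conclude $P^\star_{n,\beta}(X)=R^{\red}_{n,\beta}(S)$ identically --- contradicting the recursion of Theorem \ref{t66t} and destroying the logarithm structure that the whole section is built to produce. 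Only \eqref{map1}, the map into the full obstruction sheaf $\Ob_\F$, is claimed to have no zeros; \eqref{map3} is claimed merely to be an injection of sheaves.

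The source of the error is the step you yourself flag as ``the hard part'': the identification of the Kodaira--Spencer class of the first-crease smoothing with the pull-back of $\kappa$ ``up to a nowhere-vanishing scalar.'' The explicit chart computation in the paper (the family $t=\lambda x$ of \eqref{key}) shows the class is $\lambda^{-1}\kappa$ as in \eqref{key2}: the scalar is the coordinate $\lambda^{-1}$, which varies over the bubble. Consequently a connected component of the pair supported on $S\times\{a\}$ in the \emph{first} bubble contributes $2a^{-1}\int_{\beta_i}\kappa\ip\sigma$ to the composition \eqref{tub}--\eqref{cmpp}, while components on later bubbles contribute nothing to this particular deformation; your evaluation $2\int_\beta\kappa\ip\sigma$ with the total class $\beta$ is therefore not the value of \eqref{map3} at a disconnected pair. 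For such pairs the weighted sum $\sum_i 2a_i^{-1}\int_{\beta_i}\kappa\ip\sigma$ can cancel, which is exactly why \eqref{map3} acquires zeros along $D$. The correct route (the paper's ``disconnected case'') is to decompose $\Ob_\F$ according to the connected components of the pair and show that the component of \eqref{map1} through the summand corresponding to a first-bubble component is nonzero at every closed point; this cannot be cancelled by the other summands, giving the ``no zeros'' statement for \eqref{map1} and the sheaf-injectivity of \eqref{map3}, but nothing stronger. Your treatment of the connected case does agree with the paper (the factor $a^{-1}$ is harmless there), but the Proposition's content lies in handling arbitrary, disconnected supports.
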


\subsection*{Connected case}
We first work at a stable pair $(F,s)$ with connected support. 
The sheaf $F$  is therefore supported on $S\times(\PP^1\take\{0,\infty\})$ with no further bubbles in the
rubber.\footnote{If we view the stable pair as lying in $X/S$,
 there is a single bubble and $(F,s)$ is supported in its interior.} We will show that the composition \eqref{map3} is an isomorphism at the point $(F,s)$. By the vanishing of the higher trace-free Exts, base change and the Nakayama Lemma,
 the Proposition will follow in the connected support case.
 
A chart for the stack $\B$ of $(n,\beta)$-marked expanded degenerations of $X/S$ in a neighborhood of the 1-bubble locus is $\C/\C^*$  with universal family $\X\to\B$ given by \cite{LiRelative}
\beq{t}\xymatrix@C=0pt@R=18pt{
\operatorname{Bl}_{S\times\{0\}}(X\times\C) \ar[d] & \curvearrowleft\C^* \\ \C & \curvearrowleft\C^*.\!}
\eeq
Here, the trivial $\C^*$-action on $X$  and  the usual weight $1$ action on 
$\C$ yield a $\C^*$-action on $X\times\C$. The blow up along
the $\C^*$-fixed subvariety $S\times\{0\}$ has a canonically induced
$\C^*$-action.
The exceptional divisor $S\times\PP^1$ inherits a $\C^*$-action.
The central fiber is
$$X[1]=X\cup_S(S\times\PP^1) \,.$$

More explicitly, let $x$ denote the coordinate on $X$ pulled back locally
at $0\in \Delta$ 
from the base of the $K3$-fibration
\begin{equation}\label{cvv5} 
X\to\Delta\,, 
\end{equation}
 and let $t$ denote the coordinate pulled back from the $\C$-base of \eqref{t}. 
By definition,
\beq{blowcone}
\operatorname{Bl}_{S\times\{0\}}(X\times\C_t)\ \ \mathrm{is}\ \ 
\big\{t=\lambda x\big\}\subset X\times\C_t\times\PP^1_\lambda\,,
\eeq
where $x$ has $\C^*$-weight $0$ while $t$ and $\lambda$ have weight $-1$. 
Here, $\lambda$ is the usual coordinate on $\PP^1$ which takes the value $\infty$ on the relative divisor $S\times\{\infty\}$ in the central fiber, and the value $0$ on the proper transform $\overline{X\times\{0\}}$ of the central fiber. Removing these loci, which are disjoint from the support of $(F,s)$, our universal family over $\B$ becomes the quotient by $\C^*$ of
\beq{key}\xymatrix@=30pt{
X\times\C^*_\lambda \ar[r]^(.55){t=\lambda x} & \C_t\,.}
\eeq
The key to Proposition \ref{lemm} is the following observation: 
as we move in the direction $\partial_t$ in the base of \eqref{key}, 
we move away from the central fiber $S\subset X$ in the direction 
$\lambda^{-1}\partial_x$ over the base of the $K3$-fibration \eqref{cvv5}. In other
words, on the central fiber $S\times\C^*_\lambda$, the Kodaira-Spencer class of the family \eqref{key} applied to $\partial_t$ is
\beq{key2}
\lambda^{-1}\kappa\ \in\ \Gamma(\O_{\C^*})\otimes H^1(T_S)\ \cong\ H^1(T_X|_{S\times\C^*})\,.
\eeq
Here, as usual, $\kappa\in H^1(T_S)$ is the Kodaira-Spencer class \eqref{kappa} of \eqref{cvv5} on the central fiber $S$. Since $\lambda\ne \infty$ on the support of $(F,s)$, the $(0,2)$-part of the class $\beta$ of $F$ immediately becomes nonzero along $\lambda^{-1}\partial_x$, just as in the proof of Proposition \ref{SR}. Thus the semiregularity map is nonzero. We now make the argument more precise.

The semiregularity map \eqref{aSR} of Proposition \ref{descen}
was first defined on $F\udot$. After rearranging \eqref{diog},
 we see  the composition \eqref{map3} we require is induced from the composition
\beq{bed}
\iota^*\LL_\B^\vee\To(\iota^*E\udot)^\vee[1]\ \cong\ (F\udot)^\vee[1]\To\O_R\,.
\eeq
The last arrow is \eqref{dSR}. In the proof of Proposition \ref{descen} we showed the vanishing of the composition of the first arrow with
$\LL_{\B_r}^\vee\to \iota^*\LL_\B^\vee$, so the first
arrow factors through the cone $N$ as required. In fact, we even have a splitting
\beq{LBsplit}
\iota^*\LL_\B^\vee\ \cong\ N\oplus\LL_{\B_r}^\vee\,,
\eeq
obtained from expressing $\B$ locally as $\C/\C^*$, with the substack $\B_r$ 
given by $\{0\}/\C^*$. Therefore,
 $$\LL_\B\cong\{\Omega_\C\to\mathfrak g^*\otimes \O_\B\}\,,$$
 where $\mathfrak g$ is the Lie algebra of $\C^*$, and, in the
 standard trivialization, the map takes $dt$ to $t$. 
On applying $\iota^*$ the map therefore vanishes, leaving
$$\iota^*\Omega_\C\oplus (\mathfrak g^*\otimes
\O_{\B_r})[-1]\,=\,N^*\oplus\LL_{\B_r}$$ as claimed.

By the same argument as in (\ref{11}--\ref{33}), the first arrow of \eqref{bed} is (up to a sign) the composition of the following Kodaira-Spencer and Atiyah classes:
$$
\xymatrix@C=20pt{
\iota^*\LL_\B^\vee \ar[r]& R\pi_{R*}(\iota^*\LL^\vee_{\X/\B})[1]
\ar[rr]^(.45){\At_{\X/\B}(\II)}&& R\pi_{R*}R\hom(\II,\II)_0[2]\,.}
$$
Together with the splitting \eqref{LBsplit} and the description \eqref{bed} of our map, we find that at a point $I\udot$ the map \eqref{map3} is the composition
\beq{tub}
N|_{I\udot}\To\Ext^1(\LL_{X[1]},\O_{X[1]})\To\Ext^2(I\udot,I\udot)_0\To\C\,.
\eeq
The first arrow is the Kodaira-Spencer class of the family \eqref{t} on the central fiber $X[1]$. The connected support of our stable pair $(F,s)$ is contained in $S\times\{1\}$, without loss of generality. On restriction to this support, the Kodaira-Spencer class is $\kappa$ (\ref{key}, \ref{key2}).\footnote{If we act by $\lambda\in\C^*$ the relevant statement becomes that for a stable pair supported in $S\times\{\lambda\}$, the value of the Kodaira-Spencer class on $\lambda\partial_t$ is $\lambda.\lambda^{-1}\kappa=\kappa$ \eqref{key2}. The point is that there is no natural trivialisation of $N$, and $\lambda\in\C^*$ takes the trivialization $\partial_t$ to the trivialization $\lambda\partial_t$.}

The second arrow is composition with the Atiyah class of the complex $I\udot$ on $X[1]$. The Atiyah class vanishes on the complement of the support $S\times\{1\}$. We may restrict $I\udot$ to the bubble $S\times\PP^1_\lambda$ and calculate there. The final arrow is the semiregularity map \eqref{SRmap}. So \eqref{tub} simplifies to the composition
\begin{equation}\label{cmpp}
\xymatrix@=30pt{
\C \ar[r]^(.33){\kappa}& H^1(T_{S\times\PP^1}) \ar[r]^(.43){\At(I\udot)}&
\Ext^2_{S\times\PP^1}(I\udot,I\udot)_0 \ar[rr]^(.62){\tr(\,\cdot\,\circ\At(I\udot)\wedge\bar\sigma)}&& \C\,,}
\end{equation}
where again we have trivialized $N$ by the section $\partial_t$.
The composition is therefore
$$
\int_{S\times\PP^1}
\tr(\kappa\circ\At(I\udot)\circ\At(I\udot))\wedge\bar\sigma\,.
$$
By \cite[Proposition 4.2]{BuF}, this is
$$
2\int_{S\times\PP^1}\kappa\ip\mathrm{ch}_2(I\udot)\wedge\bar\sigma=
-2\int_{S\times\PP^1}(\kappa\ip\bar\sigma)\wedge\mathrm{ch}_2(I\udot)=
2\int_{\beta}\kappa\ip\sigma\,,
$$
just as in \eqref{bb5}. Since $\kappa\ip\sigma$ is nonzero on $\beta$ by design \eqref{twitor}, the composition \eqref{cmpp} is nonzero.  Proposition \ref{lemm} is established in the connected  case.

\subsection*{Disconnected case}
To deal with the case of arbitrary support, we write 
a stable pair $(F,s)$ on the rubber target as a 
direct sum of stable pairs with connected supports:
\beq{decom}
(F,s)\,=\,\bigoplus_i(F_i,s_i)\,.
\eeq
By stability we may assume, without loss of generality, that $(F_1,s_1)$ is supported on the interior of the first bubble.

The decomposition \eqref{decom} holds in a neighborhood of $(F,s)$ in the moduli space $R$
(though the $i^{th}$ summand need not have connected
support for pairs not equal to $(F,s)$). The obstruction sheaf $\Ob_\F$ is additive with respect to the decomposition:
 $\ext^2_{\pi_R}(\II,\II)_0$ splits into a corresponding direct sum. 
We will prove \eqref{map3} is an isomorphism on the summand $(F_1,s_1)$. The
isomorphism
 will follow from the connected case after we have set up appropriate notation.
Since the map \eqref{map1} is linear, the result will prove 
\eqref{map1} has no zeros. We will address the injectivity claim
for the map \eqref{map3} in the statement of 
Proposition \ref{lemm} at the end of the proof. \medskip

Suppose $(F,s)$ is supported on $X[k]$. In a neighborhood of $X[k]$, 
a chart for the stack of
$(n,\beta)$-marked expanded degenerations of $X$ is given by 
\beq{chat}
\C^k\big/(\C^*)^k
\eeq
where the group acts diagonally \cite{LiRelative}. We let $t_1,\ldots,t_n$ denote the coordinates on the base $\C^k$.
Let $x$ be the coordinate pulled back from the base $\Delta$ of the $K3$-fibration $X$.

The universal family 
\begin{equation}
\X\to\B\,, \label{unif}
\end{equation}
 restricted to the chart \eqref{chat}, is constructed
 by the following sequence of $(\C^*)^k$-equivariant blow-ups of $X\times\C^k$:
\begin{itemize}
\item Blow-up $X\times\C^k$ along $x=0=t_1$ (the product of the surface $S\subset X$ and the first coordinate hyperplane).
\item Blow-up the result along the \emph{proper transform} of $x=0=t_2$, (the proper transform of $S$ times the second coordinate hyperplane).
\item At the $i$th stage, blow-up the result of the previous step in the proper transform of $x=0=t_i$.
\end{itemize}
After $k$ steps,
 we obtain the universal family \eqref{unif} over the chart \eqref{chat}. 

The fiber of the universal family over the origin of \eqref{chat} is $X[k]$ with marking 
$$(n_0,\beta_0)\in \Z\oplus H_2(X,\Z)$$ on the first component and 
marking  $(n_i,\beta_i)\in \Z\oplus H_2(S,\Z)$ on the $i$th bubble
for  $1\leq i \leq k$. The data 
 satisfy \eqref{2sum}:
$$
n_0+\sum_{i=1}^kn_i=n\,, \qquad \beta_0+\sum_{i=1}^k\beta_i=\beta\,. 
$$
Over a point of $\C^k$ with precisely $j$ of the coordinates $t_i$ vanishing, 
the fiber is $X[j]$. The $j$ vanishing coordinates are in bijective
 ordered correspondence with the $j$ bubbles and the $j$ creases\footnote{The $i$th crease of $X[j]$ is the copy of $S$ at the bottom of the $i$th bubble: the intersection of the $(i-1)$th and $i$th bubbles of $X[j]$.} of $X[j]$. Moving away from this point of the base, a crease smooths if and only if the corresponding coordinate becomes nonzero. If the $i$th and $(i+1)$th vanishing coordinates are $t_a$ and $t_b$, then the marking on the $i$th bubble of $X[j]$ is
$$
\left( \sum_{i=a}^{b-1}n_i\,,\, \sum_{i=a}^{b-1}\beta_i \right)\in 
\Z\oplus H^2(S)\,.
$$
Similarly, if the first vanishing coordinate is $t_c$, then the marking on $X\subset X[j]$ is
\beq{nonbub}
\left(n_0+\sum_{i=1}^{t_c-1}n_i\,,\,\beta_0+\sum_{i=a}^{t_c-1}\beta_i\right)\in \Z\oplus H^2(X)\,.
\eeq
Relative stable pairs -- which cannot lie in $X$ -- all lie over the locus 
$$t_1=0,\ n_0=0, \ \beta_0=0\,,$$ 
where $c=1$ in \eqref{nonbub} and the marking on $X$ vanishes. 
The inclusion of the hyperplane $t_1=0$,
\beq{B0}
\iota\colon\C^{k-1}\big/(\C^*)^k\ =\ \{t_1=0\}\big/(\C^*)^k\ \Into\ \C^k\big/(\C^*)^k
\eeq
describes the inclusion \eqref{durex} of the corresponding chart of the stack $\B_r\subset\B$ over which the rubber target $\S$ lies. 

On the chart \eqref{chat},
$$
\LL_\B\ =\ \big\{\Omega_{\C^k}\To(\mathfrak g^*)^k\otimes \O_\B\big\}\,,
$$
where the map is diag\,$(t_1,\ldots,t_k)$ in the natural trivializations.
Pulling back by \eqref{B0} gives
\beqa
\iota^*\LL_\B &=& \big(N^*\oplus\mathfrak g^*\otimes \O_{\B_r}
[1]\big)\,\oplus
\xymatrix@=55pt{\big\{\Omega_{\C^{k-1}} \ar[r]^(.42){\operatorname{diag}(t_2,\ldots,t_k)}& (\mathfrak g^*)^{k-1}\otimes \O_{\B_r}\big\}} \\ &=&
N^*\ \oplus\ \LL_{\B_r}\,.
\eeqa
Thus $\iota^*\LL_\B^\vee\cong N\oplus\LL^\vee_{\B_r}$ just as in \eqref{LBsplit}. 
The element $\partial_{t_1}$ lies in -- and generates -- the first summand (but, just as before, is not a global trivialisation as it is not $\C^*$-invariant).

We have to work out the composition \eqref{tub} as before, replacing $\partial_t$ by $\partial_{t_1}$.
The stable pair $(F_1,s_1)$ is supported on some $S\times\{1\}$ of the first bubble of $X[k]$ just as before. Restricting to the first bubble, the Kodaira-Spencer class evaluated on the section $\partial_{t_1}$ of $N$ is $\kappa$ just as in the single bubble case: all further blow-ups in the construction of $\X\to\B$ occur at $S\times\{\infty\}$ in the bubble and hence do not affect the interior of the first bubble or its Kodaira-Spencer class. The same calculation then
shows the map \eqref{map3} at the point $(F_1,s_1)$ takes $\partial_{t_1}$ to
\begin{equation}\label{vddx3}
2\int_{\beta_1}\kappa\ip\sigma\,,
\end{equation}
where $\beta_1=[F_1]$. Since \eqref{vddx3} is nonzero by \eqref{twitor}, the map \eqref{map3} is an isomorphism on the first summand $(F_1,s_1)$ as claimed. As explained above, this implies that \eqref{map1} has no zeros. \medskip

Consider now the summand $(F_2,s_2)$. If the support of $(F_2,s_2)$ is in the first bubble, the support lies in $S\times\{\lambda\}$ for some $\lambda\ne1$, and the work we have already done shows that applied to $(F_2,s_2)$ the map \eqref{map3} takes  $\partial_{t_1}$ to
\beq{vary}
2\lambda^{-1}\int_{\beta_2}\kappa\ip\sigma\ne0\,.
\eeq
Here, the nonvanishing is by \eqref{twitor} applied to $\beta_2=[F_2]$.

For summands $(F_i,s_i)$ not in the first bubble we can do a similar calculation, blowing up \eqref{blowcone} once more and using local coordinates again. The result is that the Kodaira-Spencer class in the higher bubbles is 0 (this is effectively the $\lambda\to\infty$ limit of the above calculation).

To prove the  injectivity claim for the  map
\eqref{map3} in the statement of Proposition \ref{lemm}, we 
consider two possibilitites.
\begin{enumerate}
\item[$\bullet$]
 If $(F_1,s_1)$ is the only summand in the first bubble, all others contribute zero to \eqref{map3}, so in total \eqref{map3} is nonzero by \eqref{vddx3}. 

\item[$\bullet$] If there is another summand $(F_2,s_2)$ in the first bubble, then, by linearity, 
the nonzero contribution \eqref{vary} of $(F_2,s_2)$ is added to that of the other summands, and can be varied by perturbing its support 
$$\lambda\in\C^*\subset\PP^1\, .$$ Therefore, even though the map \eqref{map3} might be zero at $(F,s)$, the map \eqref{map3} is nonzero at a nearby perturbation.
Since the perturbation by moving the support  $\lambda$ is along an
\'etale trivial factor in the moduli space $R$, 
%
the map $\eqref{map3}$ must be injective as a morphism of sheaves. 
\end{enumerate}
The proof of Proposition \ref{lemm} is complete. \qed

\subsection{Curvilinear lifting}\label{izz}


Proposition \ref{lemm} does not imply the moduli spaces $R$ and $P^\star$ are isomorphic.
Our analysis of $\widetilde{R}_{n,\beta}$ is crucially dependent upon 
a weaker curvilinear lifting relationship between $R$ and $P^\star$ which does follow
from Proposition \ref{lemm}.

\begin{Lemma} \label{P=RR}
The map $\iota\colon R\to P^\star$ of \eqref{iRP} induces an isomorphism 
$\iota^*\Omega_{P^\star}\cong\Omega_R$ of cotangent sheaves.
\end{Lemma}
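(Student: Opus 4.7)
The plan is to prove that the natural map $\iota^*\Omega_{P^\star}\to\Omega_R$ (which is surjective by the standard cotangent sequence for the closed embedding $\iota$) is in fact injective, and hence an isomorphism. The whole argument is driven by the exact triangle in the central column of diagram \eqref{diog}.

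First I would take $h^0$ of the exact triangle
$$N^*\To\iota^*\E\udot\To\F\udot$$
on $R$. Since $\E\udot\to\LL_{P^\star}$ and $\F\udot\to\LL_R$ are perfect obstruction theories, they are isomorphisms on $h^0$; combined with the fact that derived pull-back of the 2-term complex $\E\udot$ preserves $h^0$, this gives $h^0(\iota^*\E\udot)=\iota^*\Omega_{P^\star}$ and $h^0(\F\udot)=\Omega_R$. Since $N^*$ is a line bundle, $h^1(N^*)=0$, so the cohomology long exact sequence reduces to
$$N^*\To\iota^*\Omega_{P^\star}\To\Omega_R\To0.$$
The claimed isomorphism is therefore equivalent to vanishing of the first map.

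To prove the vanishing, I would reduce to a fiberwise calculation. Since $N^*$ is a line bundle and its image in $\iota^*\Omega_{P^\star}$ is a coherent subsheaf, Nakayama reduces us to showing $N^*|_r\to T_r^*P^\star$ is zero at every closed point $r\in R$. At a point, both sides are finite-dimensional vector spaces, so this vanishing is equivalent to vanishing of the dual $T_rP^\star\to N|_r$. To get at this dual, I would take the derived fiber at $r$ of the triangle $N^*\to\iota^*\E\udot\to\F\udot$, dualize, and read off the resulting cohomology long exact sequence of vector spaces
$$T_rR\To T_rP^\star\To N|_r\To\Ob_{\F,r}.$$
The crucial input is Proposition \ref{lemm}: the sheaf map $N\to\Ob_\F$ has no zeros, so at every $r$ it restricts to an injection of vector spaces, and exactness of the sequence above then forces $T_rP^\star\to N|_r$ to be zero. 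This gives the required fiberwise vanishing.

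The main obstacle I anticipate is the comparison between statements about $\Omega$ and statements about tangent sheaves: since $P^\star$ and $R$ are singular (indeed, even nonreduced in general), the sheaves $\iota^*\Omega_{P^\star}$ and $\Omega_R$ need not be reflexive or torsion free, and one cannot simply apply $\hom(-,\O_R)$ to a sheaf-level exact sequence and reverse the operation. Bypassing this via the fiberwise Nakayama argument is essential, because on fibers one is working with finite-dimensional vector spaces where double-dualization is an isomorphism and the passage between $T$ and $\Omega$ is harmless.
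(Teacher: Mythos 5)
Your setup is the same as the paper's: you use the exact triangle $N^*\to\iota^*\E\udot\to\F\udot$ from \eqref{diog}, identify $h^0(\iota^*\E\udot)=\iota^*\Omega_{P^\star}$ and $h^0(\F\udot)=\Omega_R$, and feed in Proposition \ref{lemm}. The gap is in your final reduction: ``Nakayama reduces us to showing $N^*|_r\to T^*_rP^\star$ is zero at every closed point.'' Nakayama lets you check \emph{surjectivity} (i.e.\ vanishing of a cokernel) on fibers; it does \emph{not} let you check that a map of coherent sheaves is zero on fibers. If $\phi\colon N^*\to\iota^*\Omega_{P^\star}$ has image $\im\phi$, then $\im\phi\otimes k(r)$ maps to $\iota^*\Omega_{P^\star}\otimes k(r)$ possibly non-injectively, so vanishing of the composite fiber map does not force $\im\phi\otimes k(r)=0$ (think of multiplication by a nilpotent function, e.g.\ $x\colon\O\to\O$ on $\Spec\C[x]/(x^2)$). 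Worse, the information you extract is purely tangent-space level: all you prove is that $\iota^*\Omega_{P^\star}\otimes k(r)\to\Omega_R\otimes k(r)$ is an isomorphism of Zariski cotangent spaces for every closed $r$, and that is strictly weaker than the Lemma. For instance $R=\Spec\C[x]/(x^2)\subset P=\Spec\C[x]/(x^3)$ is a closed embedding, bijective on points, inducing an isomorphism on Zariski (co)tangent spaces at the unique closed point, yet $\iota^*\Omega_{P}$ is $2$-dimensional while $\Omega_R$ is $1$-dimensional. Since the whole content of Lemma \ref{P=RR} is about the scheme structure (it is the input for the curvilinear lifting of Corollaries \ref{cor5}--\ref{jeqq}), a fiberwise argument of this shape cannot suffice.

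The paper avoids this by keeping one more term of the long exact sequence,
$$
h^{-1}(\F\udot)\To N^*\To\iota^*\Omega_{P^\star}\To\Omega_R\To0\,,
$$
and proving that the \emph{first} map is a surjection of sheaves; surjectivity is a cokernel statement and genuinely can be checked on fibers. Moreover, for that surjectivity the fiberwise injectivity of \eqref{map1} alone is not enough: one can write down a two-term complex, e.g.\ $\F\udot=[\O^2\xrightarrow{(x,0)}\O]$ on $\Spec\C[x]$ with extension class represented by $(1,x)\colon\O^2\to N^*=\O$, where the connecting map $N\to\Ob_\F$ is injective with no zeros, yet $h^{-1}(\F\udot)\to N^*$ is multiplication by $x$ and the map $N^*\to h^0$ of the cone is nonzero. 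What rescues the argument is the stronger content of the proof of Proposition \ref{lemm}: the composite semiregularity map \eqref{map3}, $N\to\Ob_\F\to\O_R$, is nonzero on every fiber, hence locally an isomorphism of line bundles, so $N\to\Ob_\F$ is locally a \emph{split} injection; dualizing the splitting (using a local presentation of $\F\udot$ by vector bundles) produces the required surjection $h^{-1}(\F\udot)\to N^*$. So to repair your proof you should replace the ``fiberwise vanishing'' step by this sheaf-level surjectivity, and invoke the splitting coming from \eqref{map3} rather than only the no-zeros statement for \eqref{map1}.
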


\begin{proof}
The obstruction theories $\E\udot,\ \F\udot$ are related by the exact triangle
$$
N^*\To \iota^*\E\udot\To\F\udot
$$
of \eqref{diog}, giving the exact sequence
$$
h^{-1}(\F\udot)\To N^*\To h^0(\iota^*\E\udot)\To h^0(\F\udot)\To0\,.
$$
Since $\E\udot$ vanishes in strictly positive degrees 
and $\iota^*$ is right exact, 
$h^0(\iota^*\E\udot)=\iota^*h^0(\E\udot)$. Therefore, we obtain
\beq{eseq}
h^{-1}(\F\udot)\To N^*\To \iota^*\Omega_{P^\star}\To\Omega_R\To0\,.
\eeq
By Proposition \ref{lemm}, the first map is surjective.
\end{proof}

\begin{Corollary} \label{cor5}
Suppose $A$ is a subscheme of $B$ with ideal $J$ satisfying
\beq{curviy}
d\colon J\to\Omega_B|_A\ \ \mathrm{injective}.
\eeq
(In particular $J^2=0$.) Then any extension $\widetilde f\colon B\to P^\star$ of a map $f\colon A\to R$ factors through $R$.
\end{Corollary}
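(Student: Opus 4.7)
The plan is to reduce the problem to the vanishing of an explicit map of sheaves on $A$. Let $I\subset\mathcal O_{P^\star}$ denote the ideal sheaf of $\iota(R)$. Factoring $\widetilde f$ through $\iota$ is equivalent to the vanishing of the ideal $\widetilde f^*I\cdot\mathcal O_B$, i.e.\ of the natural map $\widetilde f^*I\to\mathcal O_B$.

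\textbf{Step 1 (Landing in $J$).} Since $\widetilde f|_A=\iota\circ f$ already factors through $R$, the composition $\widetilde f^*I\to\mathcal O_B\to\mathcal O_A$ is zero. Hence the image of $\widetilde f^*I\to\mathcal O_B$ lies inside $J$, giving a canonical map
$$
\alpha\colon\widetilde f^*I\To J.
$$

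\textbf{Step 2 (Factoring through the conormal).} The hypothesis \eqref{curviy} forces $J^2=0$, since $d(ab)=a\,db+b\,da$ vanishes on $J\cdot J$ inside $\Omega_B|_A$ and $d$ is injective on $J$. Because $\alpha(J\cdot\widetilde f^*I)\subset J\cdot J=0$, the map $\alpha$ descends to a map of $\mathcal O_A$-modules
$$
\overline\alpha\colon f^*(I/I^2)\To J,
$$
using the natural identification $\widetilde f^*I\otimes_{\mathcal O_B}\mathcal O_A\cong f^*\iota^*I=f^*(I/I^2)=f^*N^*_{R/P^\star}$.

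\textbf{Step 3 (Killing $\overline\alpha$ via Lemma \ref{P=RR}).} Naturality of the Kähler differential produces a commutative square of $\mathcal O_A$-modules
$$
\xymatrix@R=16pt{
f^*(I/I^2)\ar[r]^-{f^*d}\ar[d]_{\overline\alpha}& f^*\iota^*\Omega_{P^\star}\ar[d]^{d\widetilde f|_A}\\
J\ar[r]_-{d}& \Omega_B|_A.
}
$$
By Lemma \ref{P=RR}, the map $\iota^*\Omega_{P^\star}\to\Omega_R$ is an isomorphism, so the conormal/cotangent exact sequence $I/I^2\xrightarrow{d}\iota^*\Omega_{P^\star}\to\Omega_R\to 0$ forces $d\colon I/I^2\to\iota^*\Omega_{P^\star}$ to be the zero map. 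Hence the top arrow of the square is zero. The bottom arrow is injective by hypothesis \eqref{curviy}, so commutativity gives $\overline\alpha=0$, whence $\alpha=0$ and $\widetilde f$ factors through $R$.

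There is no serious obstacle; the substantive input is Lemma \ref{P=RR} (which already encodes Proposition \ref{lemm}), and the remaining argument is a standard curvilinear-lifting manipulation with conormal sequences. The one point to handle with a little care is the commutativity of the square in Step 3, which I would verify affine-locally by writing $\widetilde f^*$ as a ring homomorphism and checking that $d$ intertwines pullback of functions on $P^\star$ lying in $I$ with pullback into $J$.
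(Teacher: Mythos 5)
Your proposal is correct and follows essentially the same route as the paper's proof: both reduce to showing the induced map $f^*I\to J$ vanishes after restriction to $A$ (using $J^2=0$), and both kill it via the commutative diagram comparing the conormal/cotangent sequence of $R\subset P^\star$ (whose first map vanishes by Lemma \ref{P=RR}) with $0\to J\to\Omega_B|_A\to\Omega_A\to0$, using injectivity of $d$ on $J$. Your Steps 2--3 just make explicit the descent to $f^*(I/I^2)$ and the naturality check that the paper leaves implicit.
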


\begin{proof}
Let $I$ denote the ideal of $R$ inside $P$. To show the factorization of $\tilde{f}$ through $R$,
we must show the image of
$$\tilde{f}^* I \rightarrow J$$
vanishes. Since  $J^2=0$, the above image can be evaluated after restriction to A,
\begin{equation}\label{vqq2}
\tilde{f}^*I|_A =  f^*I \rightarrow J\,.
\end{equation}

By Lemma \ref{P=RR}, the first map $d$ vanishes in the
 exact sequence of K\"ahler differentials
$$
I\Rt{d}\iota^*\Omega_{P^\star}\To\Omega_R\To0\,.
$$
Pulling-back via $f$ gives the top row of the following commutative diagram with exact rows.
$$\xymatrix@=18pt{
& f^*I \dto^{\widetilde f^*}\rto^(.4){0}& f^*\iota^*\Omega_{P^\star} \dto^{\widetilde f^*}\rto^{\sim}& f^*\Omega_R \dto^{f^*}\rto& 0 \\
0 \rto& J \rto^(.4)d& \Omega_B|_A \rto& \Omega_A \rto& 0\,.\!\!}
$$
Here, the central map uses the isomorphism $f^*\iota^*\Omega_{P^\star}\cong(\widetilde f^*\Omega_{P^\star})|_A$. 
As a result, the first vertical arrow (given by \eqref{vqq2}) is zero. Hence,
$\widetilde f$ has image in $R$.
\end{proof}

The basic relationship between $R$ and $P^\star$ which we need is the 
 {\em curvilinear lifting property} proven in the following Corollary.

\begin{Corollary} \label{jeqq}
Every map $\Spec\C[x]/(x^k)\to P^\star$ factors through $R$.
\end{Corollary}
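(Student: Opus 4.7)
The plan is to prove Corollary \ref{jeqq} by induction on $k$, using Corollary \ref{cor5} at each stage with a carefully chosen curvilinear pair $A \subset B$.

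\textbf{Base case.} For $k=1$, the map $\Spec \C[x]/(x) \to P^\star$ is just a choice of closed point of $P^\star$. Since $\iota : R \to P^\star$ of \eqref{iRP} is a bijection on closed points, any such map factors (uniquely) through $R$.

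\textbf{Inductive step.} Fix $k \geq 2$ and assume every map $\Spec \C[x]/(x^{k-1}) \to P^\star$ factors through $R$. Given $\widetilde f : B = \Spec \C[x]/(x^k) \to P^\star$, let $f : A = \Spec \C[x]/(x^{k-1}) \to P^\star$ be its restriction along the closed immersion $A \hookrightarrow B$. By the inductive hypothesis, $f$ factors as a map $f : A \to R$. Thus $\widetilde f$ is an extension of $f$ in the sense of Corollary \ref{cor5}, and it remains only to verify the two hypotheses of that Corollary for this particular $A \subset B$.

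\textbf{Verification of the hypotheses of Corollary \ref{cor5}.} The ideal of $A$ in $B$ is the principal ideal $J = (x^{k-1}) \subset \C[x]/(x^k)$, which is a one-dimensional $\C$-vector space generated by $x^{k-1}$. Since $x^{2(k-1)}=0$ in $B$ whenever $k\geq 2$, we have $J^2 = 0$. For the injectivity of $d$, compute
$$
\Omega_B \,=\, B\cdot dx\,\big/\,d(x^k)\cdot B \,=\, B\cdot dx\,\big/\,k x^{k-1}\,dx \,\cong\, \C[x]/(x^{k-1})\cdot dx
$$
in characteristic zero. Tensoring with $A = B/J$ gives
$$
\Omega_B|_A \,\cong\, \C[x]/(x^{k-1})\cdot dx\,,
$$
and $d$ sends the generator $x^{k-1}$ of $J$ to $(k-1)\,x^{k-2}\,dx$, which is nonzero in $\Omega_B|_A$ for $k\geq 2$. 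Hence $d : J \to \Omega_B|_A$ is injective.

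\textbf{Conclusion.} With both hypotheses verified, Corollary \ref{cor5} shows that the extension $\widetilde f : B \to P^\star$ of $f : A \to R$ factors through $R$. This completes the induction, proving the corollary. The main substance of the argument is contained in Corollary \ref{cor5} and ultimately in Proposition \ref{lemm}; the work here is just the choice of the curvilinear filtration $A \subset B$ and the verification that the successive quotients have injective differential, which is a straightforward calculation in the coordinate ring.
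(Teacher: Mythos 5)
Your proof is correct and follows essentially the same route as the paper: induction on $k$ using Corollary \ref{cor5} applied to the square-zero extension $\Spec\C[x]/(x^{k-1})\subset\Spec\C[x]/(x^k)$, with the base case handled by the bijectivity of $\iota$ on closed points. The only difference is that you spell out the verification of the hypothesis \eqref{curviy} (the computation of $\Omega_B$ and the injectivity of $d$ on the one-dimensional ideal $(x^{k-1})$), which the paper leaves implicit.
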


\begin{proof} Since $R\subset P^\star$ is a bijection of sets, 
we have the result for
$k=1$. Since
$$A=\Spec\frac{\C[x]}{(x^k)}\subset\Spec\frac{\C[x]}{(x^{k+1})}=B$$ satisfies \eqref{curviy}, the 
result for higher $k$ follows by Corollary \ref{cor5} and induction.
\end{proof}

 We summarize the above results in the following
Proposition.

\begin{Proposition} \label{P=R}
The map $\iota\colon R\to P^\star$ of \eqref{iRP} is a
closed embedding of Deligne-Mumford stacks which satisfies the
curvilinear lifting property. $\hfill\square$
\end{Proposition}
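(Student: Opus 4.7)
The plan is straightforward: this Proposition is a packaging together of results already established. The closed embedding of Deligne-Mumford stacks part was already noted when $\iota$ was introduced at \eqref{iRP}, where we observed that $R(n,\beta) \subset P^\star_n(X/S,\beta)$ is cut out set-theoretically as the locus where the smoothing parameter of the first bubble vanishes, and that $\iota$ is a bijection on closed points. So the only remaining content is the curvilinear lifting property.

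For curvilinear lifting, I would simply invoke Corollary \ref{jeqq}, which asserts that every map $\Spec\C[x]/(x^k) \to P^\star$ factors through $R$. That corollary was obtained by combining (a) the cotangent-sheaf identification $\iota^*\Omega_{P^\star} \cong \Omega_R$ of Lemma \ref{P=RR}, which rests on the injectivity and nontrivial kernel behaviour of the map $N \to \Ob_\F$ proved in Proposition \ref{lemm}, together with (b) the formal diagram chase of Corollary \ref{cor5}, which upgrades the cotangent comparison to a factorization statement for any square-zero thickening $A \subset B$ satisfying $d\colon J \hookrightarrow \Omega_B|_A$. The induction in Corollary \ref{jeqq} starts from the set-theoretic bijection (the $k=1$ case) and iterates Corollary \ref{cor5} along $\Spec\C[x]/(x^k) \subset \Spec\C[x]/(x^{k+1})$, which manifestly satisfies the curvilinear hypothesis \eqref{curviy}.

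The hard part of this entire story has already been carried out earlier, in the proof of Proposition \ref{lemm}, where the composition of the Kodaira-Spencer class of the family \eqref{t} with the Atiyah class and the semiregularity map had to be computed -- in both the connected and disconnected support cases -- and shown nonzero, ultimately reducing to the transversality $\int_\beta \kappa \ip \sigma \neq 0$ from \eqref{twitor}. By contrast, Proposition \ref{P=R} itself presents no additional obstacle: it is purely a summary statement, and the proof amounts to quoting the earlier remark on $\iota$ together with Corollary \ref{jeqq}.
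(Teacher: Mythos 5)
Your proposal matches the paper exactly: Proposition \ref{P=R} is stated there as a summary ("We summarize the above results in the following Proposition"), with the closed-embedding part coming from the remark at \eqref{iRP} and the curvilinear lifting property being precisely Corollary \ref{jeqq}, which rests on Lemma \ref{P=RR}, Corollary \ref{cor5}, and ultimately Proposition \ref{lemm}. Nothing is missing.
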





 The complexes $\iota^*\E\udot$,
$\F\udot$, and $\F_{\red}\udot$ on $R$
are related by the exact triangles
\beq{term4}
\xymatrix@R=14pt{
& (\F\udot_{\red})^\vee \ar@{=}[r]\ar[d] & (\F\udot_{\red})^\vee \ar[d] \\
N[-1] \ar[r]\ar@{=}[d]& (\F\udot)^\vee \ar[r]\ar[d]& (\iota^*\E\udot)^\vee \ar[d] \\
N[-1] \ar[r] & \O_R[-1] \ar[r]& \O_D[-1]\,.\!\!}
\eeq
Here, $D$ is the Cartier divisor on which the injection $N\to\O_R$ \eqref{map3} vanishes.

In particular, in $K$-theory the classes of $(\iota^*\E\udot)^\vee$ and $(\F\udot_{\red})^\vee$ differ by $\mathcal O_D[-1]$.
The $K$-theory classes determine the corresponding virtual cycles via the formula of \cite{Fu,Si}. For the virtual class $[P^\star]^{\vir}$ associated to
$\E\udot$, the formula is
\beq{virtform}
[P^\star]^{\vir}=\big[s\big((\E\udot)^\vee\big) \cap c_F(P^\star)\big]_{\text{virdim}}\,,
\eeq
where $c_F(P^\star)$ is the Fulton total Chern class \cite[4.2.6.(a)]{Fu} of the 
scheme $P^\star$, $s$ denotes the total Segre class, and the subscript denotes the term in degree 
specified by the virtual dimension (equal to 0 here).
The homology class $c_F(P^\star)$ is \emph{not} of pure degree. 
The expression \eqref{virtform} is a sum of different degree parts of the cohomology class $s\big((\E\udot)^\vee\big)$ capped with the different degree
parts of $c_F(P^\star)$
to give the virtual class.

Since the reduced scheme structure of $R$ and $P^\star$ is
the same, we may view $c_F(P^\star)$ and $[P^\star]^{\vir}$ 
as cycles on $R$.
The curvilinear lifting property of Proposition \ref{P=R} implies
a basic relation between the Fulton Chern classes of $R$ and $P^\star$ 
explained in Appendix \ref{ap22},
\begin{equation}
\label{credd} c_F(R) = c_F(P^\star)\  \in A_*(R)\,.
\end{equation}
Formulas \eqref{virtform} and \eqref{credd} allow us to
study $[P^\star]^{\vir}$ via the geometry of $R$.

Since $s(\O_D)=1-D$, the rightmost column of \eqref{term4} yields the identity
$$
s\big((\iota^*\E\udot)^\vee\big)\ =\ s\big((\F_{\red}\udot)^\vee\big)+
D.\,s\big((\iota^*\E\udot)^\vee\big).
$$
After substituting in \eqref{virtform}, we obtain
\begin{eqnarray}
\nonumber
\ \ { } \ { }\ \hspace{10pt}[P^\star]^{\vir} \hspace{-5pt}
&=&\hspace{-5pt}
\big[s\big((\F_{\red}\udot)^\vee\big) \cap c_F(P^\star)\big]_{0}
+
\big[s\big((\iota^*\E\udot|\_D)^\vee\big)
\cap c_F(P^\star)|\_D
\big]_0 \\
&=&\hspace{-5pt} \label{compari}
\big[s\big((\F_{\red}\udot)^\vee\big) \cap c_F(R)\big]_{0}
+
\big[s\big((\iota^*\E\udot|\_D)^\vee\big)
\cap c_F(R)|\_D
\big]_0\\
&=&\hspace{-5pt}[R]^{\red}+      \nonumber
\big[s\big((\iota^*\E\udot|\_D)^\vee\big)
\cap c_F(R)|\_D
\big]_0\, .
\end{eqnarray}
We may replace $D$ in the rightmost term
by any other Cartier divisor
in the same linear equivalence class, since the replacement
 leaves the $K$-theory class $[\O_D]$ unchanged. 
We will work on a cover of the rubber moduli space $R$ on which $D$ becomes linearly equivalent to a rather more tractable divisor.

\subsection{Rigidification}\label{rigi}
Let $R=R(n,\beta)$ be the moduli space of stable pairs on the rubber target.
Let
\begin{equation}\label{UtoR}
\pi:U(n,\beta)=\S\times _{\B_r} R \rightarrow R
\end{equation}
be the universal target over the rubber moduli space $R$.
Let 
\begin{equation}\label{vttpp2}
W(n,\beta)\subset U(n,\beta)
\end{equation} denote the
open set on which the morphism $\pi$ is smooth.

We  view $U$ as a moduli of pairs $(r,p)$ where $r\in R$ and $p$ is
a point in the rubber target associated to $r$. For pairs $(r,p)\in W$,
the point $p$ is not
permitted to lie on {\em any} creases. Hence, the restriction
$$\pi: W \rightarrow R$$ is a smooth morphism.
The rubber target admits a natural map, 
$$\rho: W \rightarrow S\,,$$
to the underlying $K3$ surface.

Viewing $[R]^{\red}$ and  $[P^{\star}]^{\vir}$ as cycle classes on $R$, 
we define classes
$[W]^{\red}$ and $[W]^{\vir}$ on $W$ by flat pull-back:
$$[W]^{\red}= \pi^*[R]^{\red}\,,\ \ \ \ [W]^{\vir}= \pi^*[P^\star]^{\vir}.
$$
By the definitions of the cones and the Fulton class,
\begin{equation}\label{cxxr4}
[W]^{\red} = \big[s\big((\pi^*\F\udot_{\red})^\vee\big)s\big(\mathcal{T}_\pi\big)
 \cap c_F(W)\big]_{3}\, ,
\end{equation}
where $\mathcal{T}_\pi$ on $W$ is the relative tangent bundle of $\pi$.
Similarly,
\begin{equation}\label{cxxr45}
[W]^{\vir} = \big[s\big((\pi^*\iota^*\E\udot)^\vee\big)s\big(\mathcal{T}_\pi\big)
 \cap c_F(W)\big]_{3}\, .
\end{equation}



Integrals over $R$ may be moved to $W$ by the following procedure.
To a class 
$\delta\in H^2(S,\mathbb{Q})$, we associate a primary insertion
$$
T_0(\delta)=\text{ch}_2(\mathbb{F}) \cup \rho^*(\delta)\in H^6\big(U,\mathbb{Q}
\big)\,,
$$
where $\mathbb{F}$ is the universal sheaf.
The key identity is the divisor formula obtained
by integrating down the fibers of \eqref{UtoR}: 
\beq{push}
\pi_*\big(T_0(\delta)\big)=\langle \delta ,\beta\rangle =\int_\beta \delta
\in H^0(R,\mathbb{Q})\,.
\eeq
The push-forward $\pi_*$ is well-defined since $\ch_2(\mathbb{F})$ and $T_0(\delta)$ are supported on $\text{Supp}(\mathbb{F})$ which is
projective over $R$ via $\pi$.

The derivation in Section \ref{izz} of \eqref{compari} can be pulled-back
via $\pi$ to $W$ to yield:
\beq{compari2}
[W]^{\vir}\ =\ [W]^{\red}+\big[s\big((\pi^*\iota^*\E\udot|\_D)^\vee\big)
s\big(\mathcal{T}_\pi\big)
\cap c_F(W)|\_D
\big]_3\,.
\eeq
As before, $D$ is any divisor  representing the first Chern class of the pull-back of $N^*$, the conormal bundle
of the divisor $\B_r\subset\B$. 
By integration against \eqref{compari2}, formula \eqref{push} 
yields
\begin{multline}\label{formula}
P^\star_{n,\beta}(X)-R^{\red}_{n,\beta}(S\times\mathcal R) = \\
\frac1{\langle \delta,\beta\rangle}
\int_{D} 
\Big(s\big((\pi^*\iota^*\E\udot|\_D)^\vee\big) 
s\big(\mathcal{T}_\pi\big)
\cap
c_F(W)|\_D \Big)
\cdot
T_0(\delta)\,.
\end{multline}
We describe next a geometric representative for $D$.

Attaching the infinity section $S_\infty$ of the rubber over $R(n_1,\beta_1)$
to the zero section $S_0$ of rubber over $W$ defines a divisor
\begin{equation}\label{nn1b1}
D_{n_1,\beta_1}=R(n_1,\beta_1)\times W(n_2,\beta_2)\ \subset\ W(n,\beta)\,,
\end{equation}
whenever $(n_1,\beta_1)+(n_2,\beta_2)=(n,\beta)$.
The following result is a form of {\em topological recursion}.

\begin{Lemma}\label{gbb3}
The line bundle $\pi^*N^*$ has a section with zeros given by the divisor
$$
D=\sum_{n_1,\beta_1}D_{n_1,\beta_1}\,.
$$
The sum is over the finitely many $(n_1,\beta_1)\in \Z\oplus H_2(S,\Z)$ for which $D_{n_1,\beta_1}$ is nonempty.
\end{Lemma}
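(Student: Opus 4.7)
The plan is to exhibit a canonical section of $\pi^*N^*$ on $W$ whose zero divisor is precisely $\sum D_{n_1,\beta_1}$. This is a topological-recursion identity of the same flavour as the standard psi-class divisor identities in the rubber Gromov--Witten theory of target curves. First I would identify the line bundle $N^*$ on $R$ as a psi-class: from the local chart $\C^k/(\C^*)^k$ for $\B$ in \eqref{chat}, $N^*$ is generated by $dt_1|_{t_1=0}$, where $t_1$ is the smoothing parameter of the crease attaching $X$ to the first bubble of the rubber. By standard node deformation theory, $t_1$ decomposes as a tensor product of conormal lines at the two sides of the node; since $S$ is a fibre of the smooth family $\epsilon\colon X\to\Delta$, its conormal in $X$ is trivial, and we are left with $N^* \cong \psi_{S_0}$, the psi-class at the bottom section $S_0$ of the universal target $\S\to\B_r$, pulled back to $R$.

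Next I would construct the section. On $\S$ there is the line bundle $\O_\S(S_0)$ together with its canonical constant section cutting out $S_0$, and the standard identification of the normal bundle of $S_0\subset\S$ with $\pi^*\psi_{S_0}^{\vee}$. Pulling back to $U=\S\times_{\B_r}R$ and restricting to $W$, this dualises to a section $\sigma \in H^0(W,\pi^*N^*)$ which geometrically records the relative position of the smooth marked point $p$ with respect to $S_0$ in the component of the rubber containing $p$.

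Third, I would compute the zero locus of $\sigma$. By construction, $\sigma$ vanishes precisely where this relative position degenerates, which in the expanded degeneration formalism corresponds to the insertion of a new bubble below $p$. Such insertions are indexed by splittings $(n,\beta) = (n_1,\beta_1) + (n_2,\beta_2)$ with $p$ on the upper component, giving exactly
$$
D_{n_1,\beta_1} = R(n_1,\beta_1) \times W(n_2,\beta_2) \subset W(n,\beta)
$$
from \eqref{nn1b1}, each appearing with multiplicity one (corresponding to a single smoothing parameter going to zero). Finiteness of the sum is immediate: $\beta$ admits only finitely many effective decompositions, and for each such $\beta_1$ the values of $n_1$ with both $R(n_1,\beta_1)\neq\emptyset$ and $W(n_2,\beta_2)\neq\emptyset$ form a finite set, bounded via the Euler-characteristic lower bounds for pure stable pairs.

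The main obstacle is the precise identification of $\O_\S(-S_0)|_{S_0}$ with $\pi^*\psi_{S_0}$ and the verification that $\sigma$ vanishes to exactly first order along each $D_{n_1,\beta_1}$. This requires careful $\C^*$-weight bookkeeping in the $(\C^*)^k$-quotient chart \eqref{chat}, together with the explicit blow-up description of the universal target used to construct $\X\to\B$. Once in place, the identification of the zero divisor follows from the local analysis already developed in Section \ref{cot} around the first smoothing parameter $t_1$, where the vector $\partial_{t_1}$ was shown to trivialise $N$ along the bottom crease.
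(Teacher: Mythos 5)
Your first step --- identifying $N^*\cong\psi_0$ via the smoothing of the first crease, $N\cong N_{S\subset X}\otimes N_{S_0\subset\S}$, and the trivialisation of $N_{S\subset X}$ --- is exactly the first half of the paper's argument. The gap is in the construction of the section. The canonical section of $\O_{\S}(S_0)$, pulled back to $W$, is a section of $\ev^*\O_{\S}(S_0)$, where $\ev\colon W\to\S$ is the tautological map sending $(r,p)$ to $p$; this line bundle is \emph{not} isomorphic to $\pi^*N^*$. The identification of $N_{S_0\subset\S}$ with the (pulled back) tangent line $\psi_0^\vee$ holds only along $S_0\times\B_r$, whereas $\pi^*N^*$ is pulled back from $R$ and hence trivial on every fibre of $\pi$, while $\O_{\S}(S_0)$ restricts nontrivially to each fibre (already over the locus of unbubbled targets it restricts to $\O_{\PP^1}(1)$ on the $\PP^1$ factor). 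So no amount of $\C^*$-weight bookkeeping along $S_0$ will let the canonical section of $\O_{\S}(S_0)$ ``dualise'' into a section of $\pi^*N^*$ over all of $W$; moreover its zero locus, the locus where $p\in S_0$, is not the divisor $\sum_{n_1,\beta_1}D_{n_1,\beta_1}$ you need.

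The missing idea is to use the point $p$ as a rigidification --- this is precisely what your phrase ``relative position of $p$ with respect to $S_0$'' should mean, and it is why the lemma lives on $W$ rather than on $R$. Forgetting $S$ but keeping the $(n,\beta)$-marking, the pair $(r,p)$ defines a map $W(n,\beta)\to\B^p_{n,\beta}$ to the stack of marked expanded degenerations of $\PP^1/\{0,\infty\}$: the component of the chain containing $p$ is identified with the rigid $\PP^1$ with $p\mapsto 1$, and the remaining components are bubbles over $0$ or $\infty$. Under this map, $\pi^*N^*$ is the pullback of the cotangent line $\psi_0$ at the relative divisor $0$. Now fix once and for all a nonzero covector in $T^*_{\PP^1}|_0$; pulling it back under the contraction map of each expanded degeneration and restricting to the new $0$-section gives a section of $\psi_0$ over $\B^p_{n,\beta}$, which is nonzero exactly when $0$ lies on the rigid component and vanishes (simply, via the smoothing parameter of the first crease) exactly when $0$ has been bubbled off. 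Pulling back to $W$ produces the required section of $\pi^*N^*$ with zero divisor $\sum_{n_1,\beta_1}D_{n_1,\beta_1}$. With this replacement, your zero-locus description and your finiteness argument go through unchanged.
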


\begin{proof}
The universal family $\mathcal X\to\B$ has smooth total space. Moving normal to $\B_r$ smooths the first crease in the expanded degeneration $\X$: the crease
 where $X$ joins the rubber $\S$ across $S\subset X$ and the 0-section $S_0$ of the rubber. Therefore the normal bundle $N$, pulled back to $\X$ and restricted to the first crease, is isomorphic to
$$
N_{S\subset X}\otimes N_{S_0\subset\S}\,.
$$
Fixing once and for all a trivialization of $N_{S\subset X}$, we find that $$
N\cong\psi_0^*
$$
is isomorphic to the tangent line to $\PP^1$ on the zero section.\footnote{More precisely, fix any point $s\in S$. Then the corresponding point of the zero section $S_0$ of $\S$ defines a section $s_0$ of $\S\to\B_r$. The first bubble is $S_0\times\PP^1$, and the conormal bundle to $S_0$ is the restriction of $T^*_{\PP^1}\subset T^*_{S_0\times\PP^1}$. Pulling back by $s_0$ gives the cotangent line $\psi_0$.}

Now pull-back to $W(n,\beta)$ via \eqref{UtoR}. By forgetting $S$ (but remembering the 
$\Z \oplus H_2(S,\Z)$ marking), $W(n,\beta)$ maps to the stack 
$\B^p_{n,\beta}$ of $(n,\beta)$-marked 
expanded degenerations
 of $\PP^1/\{0,\infty\}$,
\begin{equation} \label{jss2}
W(n,\beta)\rightarrow \B^p_{n,\beta}\,.
\end{equation}
The moduli space $W(n,\beta)$ parameterizes pairs $(r,p)$.
The map \eqref{jss2} is defined by using $p$ to select the rigid
component and to determine $1\in \PP^1$. 
The cotangent line at the relative divisor $0$ defines a line bundle on  
$\B^p_{n,\beta}$ which we also call $\psi_0$. On $W(n,\beta)$, $\psi_0$ pulls-back to  $$\pi^*\psi_0\cong\pi^*N^*$$ above.

Pick a nonzero element of $T^*_{\PP^1}|\_0$, pull-back to any expanded degeneration, and restrict to the new $0$-section. 
We have constructed  a section of $\psi_0$ over $\B_{n,\beta}^p$ which vanishes precisely on the divisor where $0$ has been bubbled. The latter
 divisor is the sum of the divisors $D_{n_1,\beta_1}$ as required.
\end{proof}

After combining Lemma \ref{gbb3}  with \eqref{formula}, we obtain the formula:
\begin{multline}
 P^\star_{n,\beta}(X)-R^{\red}_{n,\beta}(S\times\mathcal R) = \\
\label{formula2}
\frac1{\langle \delta,\beta\rangle}\sum_{n_1,\beta_1}
\int_{D_{n_1,\beta_1}}
\Big(s\big((\pi^*\iota^*\E\udot)^\vee\big) 
s\big(\mathcal{T}_\pi\big)|_{D_{n_1,\beta_1}}
\cap
c_F(W)|\_{D_{n_1,\beta_1}} \Big)
\cdot
T_0(\delta)\,.
\end{multline}
To proceed, we must to compute the restriction of $\pi^*\iota^*\E\udot$ to the divisor
$D_{n_1,\beta_1}$.

The \emph{relative} obstruction theory is given by the same formula \eqref{rpot} on  the moduli spaces $R$ and $P^\star$.
On $R$, the relative obstruction theory was denoted $F\udot$ \eqref{rrpot}. 
Since the relative obstruction theory is
 additive over the connected components of a stable pair,
\beq{roof0}
\pi^*\iota^*E\udot\big|\_{D_{n_1,\beta_1}}\,\cong\ 
\iota^*E_{R(n_1,\beta_1)}
\udot\oplus 
\pi^*F
\udot_{W(n_2,\beta_2)}\,,
\eeq
where we have split $D_{n_1,\beta_1}$ as $R(n_1,\beta_1)\times W(n_2,\beta_2)$ using \eqref{nn1b1}.

As in (\ref{5}, \ref{arpot}) the relationship of the relative to the
absolute obstruction theories $\E\udot_W$ and $\F\udot$ is through the usual exact triangles:
\beq{roof1}
\iota^*\E \udot\To \iota^*E \udot\To\iota^*\LL_{\B_{n,\beta}}[1]
\eeq
on the moduli space $R$ and 
\begin{multline} \label{klakla}
\iota^*\E_{R(n_1,\beta_1)} \udot\oplus \pi^*\F\udot_{W(n_2,\beta_2)}\To 
\iota^*E_{R(n_1,\beta_1)}\udot\oplus \pi^*F_{W(n_2,\beta_2)}\udot
\\ \To 
\iota^*\LL_{\B_{n_1,\beta_1}}[1]\oplus\pi^*\LL_{\B_{r}}[1]
\end{multline}
on $D_{n_1,\beta_1}=R(n_1,\beta_1)\times W(n_2,\beta_2)$. 
In $K$-theory, 
the isomorphism \eqref{roof0} and the
 exact sequences \eqref{roof1}-\eqref{klakla} yield
\begin{multline}\label{roof9}
\big[\pi^*\iota^*\E\udot\big|\_{D_{n_1,\beta_1}}\big]\,= 
[\iota^*\E\udot_{R(n_1,\beta_1)}]+
[\pi^*\F\udot_{W(n_2,\beta_2)}] \\ -[\iota^*\LL_{\B_{n_1,\beta_1}}]
-[\pi^*\LL_{\B_{r}}]+
[\iota^*\LL_{\B_{n,\beta}}]\,.
\end{multline}

Since $D_{n_1,\beta_1}$ is pulled-back{\footnote{More
precisely, $D_{n_1,\beta_1}\subset W$ is
an open and closed component of the pull-back.}}
 from $\B_{n,\beta}$, the
standard divisor conormal bundle sequence yields
$$[\LL_{\B_{n_1,\beta_1}}]+[\LL_{\B_{r}}]-
[\LL_{\B_{n,\beta}}] = -[\O_{D_{n_1,\beta_1}}(-D_{n_1,\beta_1})]\,. $$
Hence, we obtain
\begin{multline} \label{terr1}
s\Big(\big(\pi^*\iota^*\E\udot\big|\_{D_{\beta_1,n_1}}\big)^\vee\Big)
c\big(\O_{D_{n_1,\beta_1}}(D_{n_1,\beta_1})\big) \\
=s\big((\iota^*\E_{R(n_1,\beta_1)}\udot)^\vee\big) s\big((\pi^*\F\udot_{W(n_2,\beta_2)})^\vee\big)
\end{multline}

By the basic properties of cones and the embedding
$D_{n_1,\beta_1}\subset W(n,\beta)$
discussed in  Appendix \ref{ap33}, we have 
\begin{equation} \label{ggg}
c_F(W(n,\beta))\big|\_{D_{n_1,\beta_1}}\,
\ =\ c_F(D_{n_1,\beta_1})c(\O_{D_{n_1,\beta_1}}(D_{n_1,\beta_1}))\,.
\end{equation}

We replace $c_F(W(n,\beta))\big|\_{D_{n_1,\beta_1}}$ by \eqref{ggg}
 in \eqref{formula2}. After using  \eqref{terr1}
to cancel the $c(\O_{D_{n_1,\beta_1}}(D_{n_1,\beta_1}))$ term, we 
obtain{\footnote{The integration over $W(n_2,\beta_2)$ 
is well-defined since
the insertions $\tau_0(\delta)$ yields a complete cycle as
before.}}
\begin{multline*}
\frac1{\langle \delta,\beta\rangle}\sum_{n_1,\beta_1}
\int_{R(n_1,\beta_1)} s((\iota^*\E\udot)^\vee)\cap
c_F(R(n_1,\beta_1)) \\
\times\int_{W(n_2,\beta_2)}
\Big( s((\pi^*\F\udot)^\vee) 
s\big(\mathcal{T}_\pi\big)
\cap
c_F(W(n_2,\beta_2)) \Big)\cdot
T_0(\delta)\,.
\end{multline*}
Since the two obstruction theories $\F\udot$ and $\F\udot_{\red}$
on $W(n_2,\beta_2)$ differ only by the trivial line bundle,
 $$s((\F\udot)^\vee)=
s((\F\udot_{\red})^\vee)\,.$$ 
By another application of the rigidification formula \eqref{push}, we obtain
$$
P^\star_{n,\beta}(X)-R^{\red}_{n,\beta}(S\times\mathcal R)\ =
\ \frac1{\langle \delta,\beta\rangle} \sum_{n_1,\beta_1}
P^\star_{n_1,\beta_1}(X)\,  \cdot\,  \langle \delta,\beta_2\rangle\, 
R^{\red}_{n_2,\beta_2}(S\times\mathcal R)  \,.
$$

\subsection{Logarithm}
Let $\alpha\in \text{Pic}(S)$ is a  primitive class
which is positive with respect to a polarization, and
let $\beta\in \text{Eff}({m}\alpha)$. 
The only effective decompositions of $\beta$ are
$$\beta=  \beta_1 + \beta_2\,, \ \ \ \ \beta_i\in \text{Eff}({m}\alpha)\,.$$
We formulate the last equation of Section \ref{rigi} as the following result.
\begin{Theorem} \label{t66t}
For the family $\epsilon: X \rightarrow (\Delta,0)$ satisfying
conditions (i,ii,$\star$) of Section \ref{rtov} for ${m}\alpha$, we have
\begin{multline*}
P^\star_{n,\beta}(X)-R^{\red}_{n,\beta}(S\times\mathcal R)\ = \\
\ \frac1{\langle \delta,\beta\rangle} \sum_{n_1+n_2=n} \sum_{\beta_1+\beta_2 =\beta}
P^\star_{n_1,\beta_1}(X)\,  \cdot\,  \langle \delta,\beta_2\rangle\, 
R^{\red}_{n_2,\beta_2}(S\times\mathcal R)  \ 
\end{multline*}
for every $\beta\in \text{\em Eff}({m}\alpha)$ and 
$\delta\in H^2(S,\mathbb{Z})$ satisfying $\langle \delta,\beta\rangle\neq 0$.
\end{Theorem}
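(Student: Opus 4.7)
The plan is to assemble the pieces developed over Sections \ref{cot}--\ref{rigi} into the stated recursion. Essentially all of the analytical work has been done; the proof amounts to bookkeeping of the cancellations that arise when the boundary divisor on the rubber is pushed through the obstruction-theoretic comparison between $R$ and $P^\star$.

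First I would start from the virtual-class comparison \eqref{compari} on $R$ and its pull-back \eqref{compari2} to the rigidified space $W = W(n,\beta)$. Integration against an insertion $T_0(\delta)$ against the flat projection $\pi\colon W\to R$ yields, by the divisor formula \eqref{push}, the expression \eqref{formula2} for $P^\star_{n,\beta}(X)-R^{\red}_{n,\beta}(S)$ as a sum of contributions over the linear-equivalence representative of $D$. The first step is then to replace $D$ by the explicit sum of boundary divisors $D_{n_1,\beta_1}$ provided by the topological recursion Lemma \ref{gbb3}, using the key geometric identification $\pi^*N^*\cong\pi^*\psi_0$ on $W(n,\beta)$. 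Since $\beta\in\text{Eff}(m\alpha)$ and the transversality condition $(\star)$ restricts curves over $0\in\Delta$ to effective summands of $m\alpha$, only finitely many $(n_1,\beta_1)$ with $\beta_1,\beta-\beta_1\in\text{Eff}(m\alpha)$ contribute.

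Next, on each $D_{n_1,\beta_1}=R(n_1,\beta_1)\times W(n_2,\beta_2)$ I would invoke the additivity of the relative obstruction theory under the node-splitting \eqref{roof0}, combined with the exact triangles \eqref{roof1}--\eqref{klakla} relating relative to absolute obstruction theories. This yields the $K$-theoretic identity \eqref{roof9}, and after accounting for the conormal sequence of the boundary divisor, the product identity \eqref{terr1} for Segre classes. In parallel, the cone result \eqref{ggg}, which is an instance of the Fulton-class calculation recorded in Appendix \ref{ap33}, produces exactly the factor $c(\O_{D_{n_1,\beta_1}}(D_{n_1,\beta_1}))$ needed to cancel against the extra Chern-class factor on the right of \eqref{terr1}. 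This cancellation is the main organisational step: it reduces the integrand over $D_{n_1,\beta_1}$ to a clean product of a class on $R(n_1,\beta_1)$ and a class on $W(n_2,\beta_2)$.

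Finally, I would apply the rigidification formula \eqref{push} a second time, this time on $W(n_2,\beta_2)$, to replace the integral carrying the insertion $T_0(\delta)$ by $\langle\delta,\beta_2\rangle$ times the corresponding integral on $R(n_2,\beta_2)$. Using the fact that the absolute obstruction theories $\F\udot$ and $\F\udot_{\red}$ differ by a trivial line bundle (so their Segre classes agree), the first factor becomes $P^\star_{n_1,\beta_1}(X)$ and the second becomes $R^{\red}_{n_2,\beta_2}(S)$, giving exactly the stated formula. The main technical obstacle, already dealt with in Section \ref{cot}, is the passage from $P^\star$ to $R$ via the curvilinear lifting of Proposition \ref{P=R}, which is what licenses the identification $c_F(P^\star) = c_F(R)$ used implicitly in \eqref{compari2} and is why $[R]^{\red}$ appears on the left-hand side rather than an a priori unrelated residual class.
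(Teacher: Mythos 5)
Your proposal is correct and follows essentially the same route as the paper: Theorem \ref{t66t} is precisely the final identity of Section \ref{rigi}, obtained from the comparison \eqref{compari}--\eqref{compari2} (via the curvilinear lifting and Fulton-class equality \eqref{credd}), the divisor formula \eqref{push}, Lemma \ref{gbb3}, the splitting and cancellation steps \eqref{roof0}--\eqref{terr1} and \eqref{ggg}, and a second application of \eqref{push} together with $s((\F\udot)^\vee)=s((\F\udot_{\red})^\vee)$. Nothing essential is missing from your account.
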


The basic relationship between $P^\star_{n,\beta}(X)$ and $R^{\red}_{n,\beta}(S\times\mathcal R)$
is an immediate Corollary of Theorem \ref{t66t}. Let
$$P^\star_{\beta}(X)= \sum_{n\in \mathbb{Z}} P^\star_{n,\beta}(X)\, q^n\,, \ \ \ \ 
R^{\red}_{\beta}(S\times\mathcal R)= \sum_{n\in \mathbb{Z}} R^{\red}_{n,\beta}(S\times\mathcal R)\, q^n\,. $$

\begin{Corollary} For $\beta \in \text{\em Eff}({m}\alpha)$, \label{g4545}
\begin{equation*}
P^\star_{\beta}(X) = \text{\em Coeff}_{v^{\beta}} \left[
\exp\left(  \sum_{\widehat{\beta}\in \text{\em Eff}({m}\alpha)} 
v^{\widehat{\beta}} R^{\red}_{\widehat{\beta}}(S\times\mathcal R)
\right)\right]\,.
\end{equation*}
\end{Corollary}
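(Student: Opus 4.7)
The plan is to derive Corollary~\ref{g4545} as a formal consequence of Theorem~\ref{t66t}. Setting $P^\star_0 := 1$ and assembling the generating series
\begin{equation*}
\mathcal{P}(v,q) \;=\; 1 + \sum_{\widehat\beta\in\text{Eff}(m\alpha)} v^{\widehat\beta}\,P^\star_{\widehat\beta}(X),
\qquad
\mathcal{R}(v,q) \;=\; \sum_{\widehat\beta\in\text{Eff}(m\alpha)} v^{\widehat\beta}\,R^{\red}_{\widehat\beta}(S),
\end{equation*}
as formal power series with coefficients in $\mathbb{Q}(\!(q)\!)$, the Corollary is the assertion $\mathcal{P} = \exp(\mathcal{R})$.

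The key observation is that Theorem~\ref{t66t}, applied for varying admissible $\delta$, is the componentwise form of the logarithmic derivative identity $D_\delta \mathcal{P} = (D_\delta\mathcal{R})\,\mathcal{P}$, where $D_\delta$ denotes the continuous $\mathbb{Q}(\!(q)\!)$-linear derivation acting on monomials by $D_\delta(v^\gamma) = \langle\delta,\gamma\rangle\, v^\gamma$. Indeed, extracting the coefficient of $v^\beta$ on both sides of $D_\delta\mathcal{P} = (D_\delta\mathcal{R})\,\mathcal{P}$, separating out the $\beta_1 = 0$ contribution (which produces the term $\langle\delta,\beta\rangle\,R^{\red}_\beta(S)$), noting that any $\beta_2 = 0$ summand vanishes since $\langle\delta,0\rangle = 0$, and dividing by $\langle\delta,\beta\rangle$, reproduces exactly the recursion of Theorem~\ref{t66t}.

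To convert this into a proof, I would induct on $d(\beta) := \langle \widehat L,\beta\rangle$, which is a positive integer by the ampleness of $\widehat L$ and the effectivity of $\beta$. Choosing $\delta = \widehat L$ in Theorem~\ref{t66t} ensures $\langle\delta,\beta\rangle > 0$ for every $\beta\in\text{Eff}(m\alpha)$, so the recursion is always applicable. Any decomposition $\beta = \beta_1 + \beta_2$ with $\beta_1,\beta_2\in\text{Eff}(m\alpha)$ satisfies $d(\beta_i) < d(\beta)$, so the recursion determines $P^\star_\beta(X)$ uniquely in terms of $R^{\red}_\beta(S)$ together with the values of $P^\star_{\widehat\beta}(X)$ and $R^{\red}_{\widehat\beta}(S)$ on lower-degree classes. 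The formal series $\exp(\mathcal{R})$ satisfies the same recursion with the same base data by formal calculus, so $\mathcal{P} = \exp(\mathcal{R})$ follows inductively.

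All the substantive geometry has been absorbed into Theorem~\ref{t66t}; the step from the Theorem to the Corollary is purely formal manipulation of power series, and I do not expect any real obstacle. The only points requiring a little care are the bookkeeping of the $\beta_i = 0$ boundary terms when translating Theorem~\ref{t66t} into the derivation identity $D_\delta\mathcal{P} = (D_\delta\mathcal{R})\,\mathcal{P}$, and verifying that $\widehat L$ pairs nontrivially with every class in $\text{Eff}(m\alpha)$ — both immediate from the construction.
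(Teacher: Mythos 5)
Your proposal is correct and follows essentially the same route as the paper: the paper also deduces the Corollary from Theorem \ref{t66t} by noting that the recursion uniquely determines $P^\star_\beta(X)$ and that the exponential formula reproduces exactly that recursion upon applying the derivation $\sum_i c_i v_i\partial_{v_i}$ (your $D_\delta$). Your explicit induction on $\langle\widehat{L},\beta\rangle$ merely spells out the uniqueness step the paper leaves implicit.
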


\begin{proof} Since $\delta \in H^2(S,\mathbb{Z})$ is arbitrary,
Theorem \ref{t66t} uniquely determines $P^\star_\beta(X)$ in terms of
$R^{\red}_{\beta_i}(S\times\mathcal R)$ for $\beta_i\in \text{Eff}({m}\alpha)$.
We see Corollary \ref{g4545} implies
exactly the recursion of Theorem \ref{t66t} by differentiating the
exponential.
To write the
differentiation explicitly, let 
$$v_1,\ldots, v_{22} \in H^2(S,\mathbb{Z})$$
be a basis. For $\beta = \sum_{i=1}^{22}{b_i v_i}$, we write
$$v^\beta = \prod_{i=1}^{22} v_i^{b_i} \,.$$
Let $\langle \delta,v_i \rangle = c_i$.
Then, differentiation of the equation of Corollary \ref{g4545} by 
$$\sum_{i=1}^{22} c_i v_i\frac{\partial}{\partial v_i}$$
yields the recursion of Theorem \ref{t66t}.
\end{proof}

\subsection{Definition of $\widetilde{R}_{n,\beta}$} \label{ped}
We  now return to the definition of stable pairs invariants
for $K3$ surface given in Section \ref{c123}.

Let $\alpha\in \text{Pic}(S)$ be a primitive class
which is positive with respect to a polarization.
Let $$\epsilon: X \rightarrow (\Delta,0)$$ satisfying the
conditions of Section \ref{rtov}: (i), (ii), and ($\star$)  for ${m}\alpha$.
We have
$$
\widetilde{R}_{m\alpha}(S)  = 
\text{Coeff}_{v^{m\alpha}} \left[
\log\left(1 +  \sum_{\beta\in \text{Eff}({m}\alpha)} 
v^{\beta} P^\star_\beta(X)\right)\right]\,.
$$
Let $\widetilde{R}_{n,m\alpha}(S)$ be
the associated $q$ coefficients:
$$ 
\widetilde{R}_{m\alpha}(S)= \sum_{n\in \mathbb{Z}} \widetilde{R}_{n,m\alpha}(S)\, q^n\,. $$
By Corollary \ref{g4545} of Theorem \ref{t66t}, we can take the logarithm.

\begin{Proposition}\label{kedd9}
We have
$\widetilde{R}_{m\alpha}(S)= R^{\red}_{m\alpha}(S\times\mathcal R)$.
\end{Proposition}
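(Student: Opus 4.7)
\bigskip

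\noindent\emph{Proof proposal.} The plan is to deduce Proposition \ref{kedd9} formally from Corollary \ref{g4545}, since both $\widetilde{R}_{m\alpha}(S)$ and $R^{\red}_{m\alpha}(S)$ are defined as the coefficient of $v^{m\alpha}$ in essentially complementary generating series. Set
$$
H(v)=1+\sum_{\beta\in\mathrm{Eff}(m\alpha)} v^{\beta}P^\star_{\beta}(X), \qquad
G(v)=\exp\!\Big(\sum_{\widehat\beta\in\mathrm{Eff}(m\alpha)} v^{\widehat\beta} R^{\red}_{\widehat\beta}(S)\Big),
$$
so that $\widetilde{R}_{m\alpha}(S)=[v^{m\alpha}]\log H$ and $R^{\red}_{m\alpha}(S)=[v^{m\alpha}]\log G$. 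Corollary \ref{g4545} asserts precisely that $[v^{\beta}]H=[v^{\beta}]G$ for every $\beta\in\mathrm{Eff}(m\alpha)$ (the $\beta=0$ constant terms both equal $1$). Thus the difference $E(v):=H(v)-G(v)$ is supported on monomials $v^{\gamma}$ with $\gamma\notin\mathrm{Eff}(m\alpha)\cup\{0\}$.

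Write $\log H=\log G+\log(1+E/G)$. The task reduces to showing
$$
[v^{m\alpha}]\log(1+E/G)=0,
$$
equivalently $[v^{m\alpha}](E/G)^{k}=0$ for every $k\geq 1$. Here I would exploit the fact that $G^{-1}=\exp\!\big(-\!\sum_{\widehat\beta}v^{\widehat\beta}R^{\red}_{\widehat\beta}\big)$ is a formal exponential whose support consists of nonnegative integer combinations of classes in $\mathrm{Eff}(m\alpha)$; in particular every class in $\mathrm{supp}(G^{-k})$ is \emph{effective}. A contribution to $[v^{m\alpha}](E/G)^{k}$ requires a decomposition $m\alpha=\gamma+\delta$ with $[v^{\gamma}]E\neq 0$ and $\delta\in\mathrm{supp}(G^{-k})$. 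Since $\delta$ is effective and $\gamma=m\alpha-\delta$, effectivity of $\gamma$ would place $\gamma$ in $\mathrm{Eff}(m\alpha)$, where $E$ vanishes by construction. If instead $\gamma$ is not effective, then $[v^{\gamma}]G=0$ (the exponential $G$ is supported on effective classes), whence $[v^{\gamma}]E=[v^{\gamma}]H-[v^{\gamma}]G=0$ as well, since $H$ is supported on $\mathrm{Eff}(m\alpha)\cup\{0\}$. Both alternatives force the contribution to vanish.

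Combining these remarks yields $[v^{m\alpha}]\log H=[v^{m\alpha}]\log G=R^{\red}_{m\alpha}(S)$, which is exactly Proposition \ref{kedd9}. I expect no serious obstacle: the argument is a purely formal manipulation of power series, relying only on the support properties of $H$, $G$, and $G^{-1}$ together with the matching of coefficients furnished by Corollary \ref{g4545}. The only point requiring care is the bookkeeping of effective versus ineffective classes appearing in the decompositions of $m\alpha$, which is handled by the dichotomy above. As a byproduct, one sees that $\widetilde R_{m\alpha}(S)$ depends only on the reduced rubber invariants $R^{\red}_{m\alpha}(S)$ introduced in \eqref{rrinvt}, confirming the promised independence of the auxiliary family $\epsilon:X\to(\Delta,0)$ announced in Section \ref{c123}.
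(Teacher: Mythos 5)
Your proposal is correct and takes essentially the same route as the paper: Proposition \ref{kedd9} is deduced purely formally from Corollary \ref{g4545} by comparing the $v^{m\alpha}$ coefficients of the two logarithms, the paper doing this by substituting the exponential of Corollary \ref{g4545} into the definition and cancelling $\log\exp$. Your support bookkeeping (only monomials $v^{\gamma}$ with $\gamma\in\mathrm{Eff}(m\alpha)$ can contribute to the $v^{m\alpha}$ coefficient, and there the two series agree) is precisely the justification that substitution tacitly uses, supplemented in the paper only by the remark that both sides vanish when $m\alpha$ is not effective.
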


\begin{proof}
By Corollary \ref{g4545} and the above definition,
\begin{eqnarray*}
\widetilde{R}_{m\alpha}(S) & = & 
\text{Coeff}_{v^{m\alpha}} \left[
\log
\exp\left(  \sum_{\widehat{\beta}\in \text{Eff}({m}\alpha)} 
v^{\widehat{\beta}} R^{\red}_{\widehat{\beta}}(S\times\mathcal R)
\right)
\right] \\
& = &\text{Coeff}_{v^{m\alpha}} \left[
 \sum_{\widehat{\beta}\in \text{Eff}({m}\alpha)} 
v^{\widehat{\beta}} R^{\red}_{\widehat{\beta}}(S\times\mathcal R)\right] \,.
\end{eqnarray*}
Hence, if $m\alpha\in \text{Pic}(S)$ is effective,
$\widetilde{R}_{m\alpha}(S)=R^{\red}_{m\alpha}(S\times\mathcal R)$.
If $m\alpha\in \text{Pic}(S)$ is not effective, both 
$\widetilde{R}_{m\alpha}(S)$ and $R^{\red}_{m\alpha}(S\times\mathcal R)$
vanish.
\end{proof}

Proposition \ref{kedd9} is the main point of Section \ref{relt}.
The complete geometric interpretation of the logarithm 
as integration over a moduli of stable pairs is crucial for our
proof of the KKV conjecture.

\subsection{Dependence} \label{depped}
Let $\alpha\in \text{Pic}(S)$ be a primitive class
which is positive with respect to a polarization.
As a consequence of Proposition \ref{kedd9}, we obtain
a dependence result.

\begin{Proposition}  
$\widetilde{R}_{m\alpha}(S)$ 
depends {\em only}
upon $m$ and 
$$\langle \alpha, \alpha \rangle=2h-2$$
and {\em not} upon $S$ or the family $\epsilon$. 
\end{Proposition}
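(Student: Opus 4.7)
The plan is to combine the identification in Proposition \ref{kedd9} with standard deformation invariance of the reduced virtual class, following the strategy used for $R_{g,\beta}$ in Section \ref{yzc}.

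First, by Proposition \ref{kedd9} we have the identity
$$\widetilde{R}_{m\alpha}(S) = R^{\red}_{m\alpha}(S) \in \Q((q)),$$
where the right side is the generating series of the rubber invariants \eqref{rrinvt} defined on the moduli spaces $R(n, m\alpha)$ of stable pairs on the rubber target associated to $S$ itself. Crucially, the construction of $R(n, m\alpha)$, of its reduced obstruction theory $\F\udot_{\red}$, and of the resulting reduced virtual cycle $[R(n,m\alpha)]^{\red}$ in Section \ref{rott} refer only to the $K3$ surface $S$, to the class $m\alpha$, and to the auxiliary choice of holomorphic symplectic form $\sigma \in H^0(\Omega^2_S)$ used to define the semiregularity map \eqref{SRmap}. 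No mention is made of the $K3$-fibration $\epsilon \colon X \to (\Delta, 0)$ or of its Kodaira-Spencer class. Thus independence from $\epsilon$ is immediate once we replace $\widetilde{R}_{m\alpha}(S)$ by $R^{\red}_{m\alpha}(S)$.

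It therefore remains to prove that $R^{\red}_{m\alpha}(S)$ depends on $(S, m\alpha)$ only through the pair of invariants $(m, \langle \alpha, \alpha \rangle)$. I would argue this by the usual deformation invariance of reduced theories for $K3$ surfaces. Given two pairs $(S, \alpha)$ and $(S', \alpha')$ with $\alpha, \alpha'$ primitive and positive, and with $\langle \alpha, \alpha \rangle = \langle \alpha', \alpha' \rangle$, the lattice result recalled in Section 0.2 (equivalence of primitive classes with equal norm square under orthogonal transformation of the $K3$ lattice) combined with the Torelli theorem for $K3$ surfaces allows us to connect $(S, \alpha)$ to $(S', \alpha')$ by a smooth connected family of $K3$ surfaces in which the class $\alpha$ stays of Hodge type $(1,1)$; multiplying this family by $m$ keeps $m\alpha$ algebraic throughout. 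Over such a family, the rubber moduli spaces $R(n, m\alpha)$ fit together in a relative moduli problem, and the reduced perfect obstruction theory of Section \ref{rott} extends, since its construction is relative to the family and the semiregularity map is well-defined once a relative symplectic form is chosen. Standard properties of virtual cycles in families then yield the equality
$$R^{\red}_{n, m\alpha}(S) = R^{\red}_{n, m\alpha'}(S')$$
for every $n$, proving dependence only on $m$ and $\langle \alpha,\alpha\rangle$.

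The main technical point, and the only nontrivial step, is justifying that the reduced rubber theory of Section \ref{rott} does behave well in such a one-parameter deformation of $S$. The semiregularity map \eqref{SRmap} and the surjectivity statement of Proposition \ref{SR} were established pointwise using the nondegeneracy condition \eqref{twitor}; in a deformation where we leave $S$ fixed and only vary algebraic structure, one instead uses that $\wedge\bar\sigma\colon \Omega_{\S/\B_r} \to \Omega^3_{\S/\B_r}$ remains an isomorphism relatively, so that the semiregularity cosection extends to a surjection of sheaves on the total family. The reduced class $[R(n,m\alpha)]^{\red}$ constructed via Kiem-Li cosection localization \eqref{reddef} is then a well-defined bivariant class over the base of the deformation, and its integral is locally constant. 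This completes the argument, and gives the well-defined invariants $\widetilde{R}_{n,m,h}$ with $2h-2 = \langle \alpha, \alpha\rangle$ used throughout Section \ref{t22}.
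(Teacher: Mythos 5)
Your proposal is correct and follows essentially the same route as the paper: replace $\widetilde{R}_{m\alpha}(S)$ by $R^{\red}_{m\alpha}(S)$ via Proposition \ref{kedd9}, note the rubber theory makes no reference to the family $\epsilon$, and then use deformation invariance of the reduced rubber invariants together with the fact that the deformation class of $(S,\alpha)$, for $\alpha$ primitive, depends only on $\langle\alpha,\alpha\rangle$. The paper states this more tersely, treating the deformation invariance of the reduced class as standard, while you spell it out; the only small imprecision is your claim that no Kodaira--Spencer class enters the construction, since the surjectivity in Proposition \ref{SR} does use such a class satisfying \eqref{twitor} -- but any first-order deformation of $S$ pairing nontrivially with $\beta$ serves, so this does not tie the construction to $\epsilon$.
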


From the definition of
$\widetilde{R}_{m\alpha}(S)$, the dependence statement is not immediate
(since the argument of the logarithm depends upon invariants
of effective summands of $m\alpha$ which may or may not persist in
deformations of $S$ for which $\alpha$ stays algebraic).
However, $R^{\red}_{m\alpha}(S\times\mathcal R)$ depends {only} upon $m$ and the deformation
class of the pair $(S,\alpha)$ -- and the latter depends only upon
$\langle \alpha, \alpha\rangle$.

We finally have a well-defined
analogue in stable pairs of the Gromov-Witten invariants $R_{n,m\alpha}$.
We drop $S$ from the notation, and
to make the dependence clear, we define
\begin{equation}\label{gqgq}
\widetilde{R}_{m,m^2(h-1)+1}=\widetilde{R}_{m\alpha}
\end{equation}
where right side is obtained from {\em any} $K3$ surface and
family $\epsilon$ satisfying conditions (i), (ii), and ($\star$) for
$m\alpha$. 
Here, $m\alpha$ is of divisibility $m$ and has norm square 
$$\langle m\alpha,m\alpha \rangle=m^2(2h-2)=2(m^2(h-1)+1)-2\,.$$
In case $m=1$, we will use the abbreviation
\begin{equation}\label{psszz}
\widetilde{R}_h = \widetilde{R}_{1,h}\,.
\end{equation}

\section{Multiple covers}\label{mulcov}
\subsection{Overview}
By Proposition \ref{kedd9}, the stable pairs invariants
$\widetilde{R}_{m\alpha}(S)$ equal the reduced rubber invariants
$R^{\red}_{m\alpha}(S\times\mathcal R)$. Our goal here is to express the latter
in terms of the reduced residue invariants $\langle 1 \rangle^{\red}_{Y,m\alpha}$
of
$$Y= S \times \C$$
studied in Sections \ref{k3tcl} and \ref{van}.
The explicit calculations of Section \ref{lc22} then determine 
$\widetilde{R}_{m\alpha}(S)$ and can be interpreted in terms of
multiple cover formulas.

\subsection{Rigidification} \label{rigi2}
Let $S$ be a $K3$ surface equipped with an ample polarization $L$.  
Let $\alpha\in \text{Pic}(S)$ be a primitive, positive{\footnote{Positivity,
$\langle L, \alpha \rangle >0$,
is with respect to the polarization $L$.}} class 
with norm square
 $$\langle \alpha,\alpha \rangle =2h-2\,. $$
Let $m>0$ be a integer.
The invariants
$$R^{\red}_{m\alpha}(S\times\mathcal R)= \sum_{n\in \mathbb{Z}} R^{\red}_{n,m\alpha}(S\times\mathcal R) \, q^n$$
have been defined in Section \ref{defy} by integration over the
rubber moduli spaces $R(n,m\alpha)$.

Following the notation of Section \ref{rigi}, let
\begin{equation*}
\pi:U(n,m\alpha)=\S\times _{\B_r} R(n,m\alpha) \rightarrow R(n,m\alpha)
\end{equation*}
be the universal target with virtual class pulled-back from $R(n,m\alpha)$.
We also consider here
$$V(n,m\alpha/ 0,\infty)= P_n( S\times \proj^1/  S_0 \cup S_\infty, m\alpha)\,, $$
the moduli space of stable pairs on $S \times \proj^1$
relative to the fibers over $0,\infty \in \proj^1$.
There is a standard rigidification 
map{\footnote{We identify the point in the universal target $\S$ with the corresponding distinguished point of $S\times \proj^1/  S_0 \cup S_\infty$ lying over $1\in\PP^1$. The resulting identification of the two universal targets (for the rubber and for $S\times \proj^1/  S_0 \cup S_\infty$) is used to transport stable pairs from the former to the latter.}}
$$\rho:U(n,m\alpha) \rightarrow V(n,m\alpha/0,\infty) $$
with the point in universal target determining $1\in \proj^1$.

As in Section \ref{rigi}, let $T_0(L)$ be the primary insertion
in the rubber theory obtained from $L\in H^2(S,\mathbb{Z})$. 
By the divisor property \eqref{push},
$$m\langle L,\alpha \rangle\,
R_{m\alpha}^{\red}(S\times\mathcal R) = \sum_n q^n \int_{[U(n,m\alpha)]^{\red}}
T_0(L) \,.$$
By rigidification, we find
$$\sum_n q^n \int_{[U(n,m\alpha)]^{\red}}
T_0(L)  =
\sum_n q^n \int_{[V(n,m\alpha/0,\infty)]^{\red}}
T_0(\mathcal{L}) \,.$$
where $\mathcal{L} \in H^4(S \times \proj^1, \mathbb{Z})$
is dual to the cycle 
$$\ell \times \{1\} \subset S \times \proj^1 \ $$
and 
$\ell \subset S$ represents $L\cap [S] \in H_2(S,\mathbb{Z})$.
We define 
$$V_{m\alpha}^{\red}(S\times \proj^1/S_0\cup S_\infty)=\sum_n q^n \int_{[V(n,m\alpha/0,\infty)]^{\red}}
T_0(\mathcal{L})$$ and conclude
\begin{equation}\label{vff33}
m\langle L,\alpha \rangle\,
R_{m\alpha}^{\red}(S\times\mathcal R)=V^{\red}_{m\alpha}(S\times \proj^1/S_0\cup S_\infty) \,. 
\end{equation}

\subsection{Degeneration}

We now study $V^{\red}_{m\alpha}(S\times\proj^1/S_0\cup S_\infty)$
on the right side of \eqref{vff33} via the degeneration formula.

We consider the degeneration of the relative geometry
$S\times\proj^1/S_\infty$ to the normal cone of 
$$S_0 \subset S\times \proj^1\,.$$
The degeneration formula for the virtual class of the moduli of
stable pairs under
\begin{equation}\label{rdd44}
S\times \proj^1/S_\infty \ \rightsquigarrow\  S \times\proj^1/S_\infty
\ \bigcup\  S\times \proj^1/S_0  \cup S_\infty  \ 
\end{equation}
is easily seen to be compatible with the reduced class (since
all the geometries project to $S$). 
Let
\begin{eqnarray*}
V(n,m\alpha/ \infty)& = &P_n( S\times \proj^1/  S_\infty, m\alpha)\,,  \\
V_{m\alpha}^{\red}(S\times \proj^1/S_\infty)
&=&\sum_n q^n \int_{[V(n,m\alpha/\infty)]^{\red}}
T_0(\mathcal{L})\,.
\end{eqnarray*}
for the insertion $T_0(\mathcal{L})$ defined in Section \ref{rigi2}.
\begin{Proposition} We have
$$
V^{\red}_{m\alpha}(S\times \proj^1/S_\infty) =
V^{\red}_{m\alpha}(S\times \proj^1/S_0\cup S_\infty) \,. $$
\end{Proposition}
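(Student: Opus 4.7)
\noindent\emph{Proof proposal.}

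The plan is to apply the degeneration formula for reduced stable pair invariants to the deformation \eqref{rdd44} of $S\times\proj^1/S_\infty$ to the normal cone of $S_0\subset S\times\proj^1$. The special fiber is the scheme-theoretic union $A\cup_{S_0}B$ with
$$
A\,=\,S\times\proj^1\,/\,S_0\,, \qquad B\,=\,S\times\proj^1\,/\,S_0\cup S_\infty\,,
$$
the bubble $A$ being attached to the strict transform $B$ along $S_0$. Both components project to $S$, so the semiregularity cosection built from the holomorphic symplectic form $\sigma$ restricts to each piece. Consequently the reduced virtual class is compatible with the degeneration, and the reduced degeneration formula of \cite{LiWu,MPT} applies.

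Next, I would track the insertion $T_0(\mathcal{L})=\mathrm{ch}_2(\mathbb{F})\cup\rho^*\mathcal{L}$ under the degeneration. Since $\mathcal{L}$ is Poincar\'e dual to $\ell\times\{1\}$ with $1\in\proj^1\setminus\{0,\infty\}$, and the point $1$ lies in the strict transform, the entire insertion specializes to $B$. The degeneration formula therefore reads
$$
V^{\red}_{m\alpha}(S\times\proj^1/S_\infty)\ =\ \sum_{\beta_1+\beta_2=m\alpha}
V^{\red}_{\beta_1}(A)\ \star\ V^{\red,\mathcal{L}}_{\beta_2}(B),
$$
the sum running over effective splittings in $H_2(S,\Z)$, with $\star$ the usual gluing sum over a Künneth-diagonal on $H^*(S_0)=H^*(S)$.

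The crucial step is the vanishing of every term with $\beta_1\ne0$. I would argue as follows: the bubble $A=S\times\proj^1/S_0$ carries the $\C^*$-action scaling the $\proj^1$-factor and fixing $S_0$; this lifts to the relative moduli $V(n_1,\beta_1/S_0)$ together with its expanded targets, and preserves both the perfect obstruction theory and the semiregularity cosection, hence also the reduced virtual class. The only insertions on the $A$-factor are the gluing classes at $S_0$, which are pulled back from $H^*(S)$ and therefore $\C^*$-invariant, so the whole integrand descends $\C^*$-equivariantly. A short dimension count then forces the vanishing: for $\beta_1\in H_2(S)$ vertical one has $\int_{\beta_1}c_1(T_{S\times\proj^1})=0$, so the reduced virtual dimension of $V(n_1,\beta_1/S_0)$ is $1$; however the gluing Künneth insertions at $S_0$ have even cohomological degree on each side and contribute only integer degrees to the cap product, so one cannot match the required odd total degree against a $1$-dimensional reduced cycle. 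Equivalently and more intrinsically, the reduced cycle on $A$ descends, via the $\C^*$-quotient, to the rubber moduli $R(n_1,\beta_1)$, whose dimension is one less; integrating the trivial class produces zero.

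The only surviving splitting is therefore $\beta_1=0$, $\beta_2=m\alpha$, in which case the $A$-factor is the trivial "empty pair" contribution $1$ and the gluing collapses to the identity, leaving precisely $V^{\red}_{m\alpha}(S\times\proj^1/S_0\cup S_\infty)$ as claimed. The main obstacle is the vanishing step: it requires a careful implementation of the $\C^*$-equivariant reduced virtual calculus on expanded relative targets, together with a rigorous dimension/parity bookkeeping of the Künneth gluing classes; once this is in place the remaining steps are essentially formal consequences of the reduced degeneration formula.
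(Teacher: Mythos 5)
Your setup — degeneration to the normal cone of $S_0$, compatibility of the reduced class because all geometries project to $S$, and specialization of $T_0(\mathcal{L})$ to the component relative to $S_0\cup S_\infty$ — matches the paper. The gap is in your crucial vanishing step. First, the parity count fails: the reduced cycle on the bubble factor has complex dimension $1$, i.e.\ even real dimension, and $H^2(S)$ supplies even-degree classes, so there is no degree or parity obstruction to a nonzero pairing. Worse, for vertical classes the stable pairs degeneration formula carries no cohomological gluing insertions at all: $\beta_1\cdot S_0=0$, so the boundary condition along the gluing divisor is the length-zero Hilbert scheme (a point) and the formula is a plain product; the K\"unneth-diagonal bookkeeping you invoke is imported from the Gromov--Witten gluing and is vacuous here, so it cannot produce the vanishing. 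Second, the ``intrinsic'' version — that the reduced cycle on $S\times\proj^1/S_0$ descends along the $\C^*$-quotient to the rubber space $R(n_1,\beta_1)$ of one lower dimension, so that ``integrating the trivial class'' gives zero — is not justified: the $\C^*$-action on this relative moduli space has fixed points (for instance pairs supported on $S_\infty$, or equivariantly thickened pairs), so the space is not a $\C^*$-bundle over rubber; moreover the dimension bookkeeping of the degeneration formula forces the bubble factor to carry the \emph{unreduced} class in any term that could contribute a number, so it is never the reduced bubble cycle against the trivial class that one must kill. Writing both factors as reduced in your splitting already misstates the formula — only one reduction is available to distribute.

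The mechanism that actually kills the unwanted terms is different. If $\beta_1$ and $\beta_2$ are both nonzero, then each factor $V(n_1,\beta_1/\infty)$ and $V(n_2,\beta_2/0,\infty)$ admits its own semiregularity cosection (both project to $S$), so the product admits a double reduction and the once-reduced class of that term vanishes; equivalently, whichever factor carries the standard virtual class has a nonzero vertical class and hence vanishing class. The insertion $T_0(\mathcal{L})$ then forces $\beta_2\neq 0$, since it evaluates to zero on the empty pair, so only $\beta_1=0$, $n_1=0$, $\beta_2=m\alpha$, $n_2=n$ survives, which is exactly the claimed identity. Your $\C^*$-equivariance observation could at best be developed into an alternative proof that the \emph{standard} class of the bubble factor vanishes, but as written it does not establish the vanishing your argument needs.
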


\begin{proof}
By the degeneration formula, the reduced virtual class
of the moduli space $V(n,m\alpha/\infty)$ distributes to the
products
\begin{equation}
\label{kxx77}
V(n_1,\beta_1/\infty) \times V(n_2,\beta_2/0,\infty)\, , \ \ \
n_1+n_2=n\, ,\  \beta_1+\beta_2=m\alpha\,,
\end{equation}
associated to the reducible target \eqref{rdd44}.
If $\beta_1$ and $\beta_2$ are {\em both} nonzero, 
then the product \eqref{kxx77} admits a double
reduction of the standard virtual class. Hence,
the (singly) reduced virtual class of \eqref{kxx77} vanishes
unless $\beta_1$ or $\beta_2$ is 0.

When the degeneration formula is applied to
$V^{\red}_{m\alpha}(S\times \proj^1/S_\infty)$,
the insertion $T_0(\mathcal{L})$
requires $\beta_2$ to be nonzero. Hence, 
only the $n_2=n$ and
$\beta_2=m\alpha$  term contributes to the
reduced degeneration formula.
\end{proof}

\subsection{Localization} \label{loc2}
The next step is to apply 
$\C^*$-equivariant localization to 
$V^{\red}_{m\alpha}(S\times \proj^1/S_\infty)$.

Let $\C^*$ act on $\proj^1$ with tangent weight $\mathfrak{t}$ at
$0$. We lift the  class $\mathcal{L}$ to $\C^*$-equivariant
cohomology by selecting the $\com^*$-fixed representative
$$L \times \{0\} \subset S \times \proj^1\,.$$
The virtual localization formula expresses 
$V^{\red}_{m\alpha}(S\times \proj^1/S_\infty)$
as a sum over products of residue contributions over 
$S_0$ and $S_\infty$ in $S\times \proj^1/S_\infty$.
The $\C^*$-fixed loci admit a double reduction of their
virtual class unless $m\alpha$ is distributed entirely to
$0$ or  $\infty$. Since the insertion $T_0(\mathcal{L})$
requires the distribution to be over $0$, we conclude
\begin{eqnarray}
V^{\red}_{m\alpha}(S\times \proj^1/S_\infty) & = &
\Big\langle T_0(\mathcal{L}) \Big\rangle_{Y,m\alpha}^{\red}(q) \label{gvv32}\\
& = & mt \langle L,\alpha \rangle\,  \nonumber
\Big \langle 1 \Big\rangle_{Y,m\alpha}^{\red}(q) \, ,
\end{eqnarray}
where we have followed the notation of Section \ref{lc22} except
for writing $\langle 1 \rangle_{Y,m\alpha}^{\red}$ as a series in
 $q$ instead of $y$.

\subsection{Multiple cover formula}
Sections \ref{rigi2}--\ref{loc2} together with Proposition \ref{mc11}
imply
\begin{eqnarray}\label{vxx99}
R^{\red}_{m\alpha}(S\times\mathcal R)& =&  t\Big \langle 1 \Big\rangle_{Y,m\alpha}^{\red}(q) \\
& = & \sum_{k|m} \frac{t}{k} I_{\frac{m^2}{k^2}(h-1)+1}(-(-q)^k) \ \nonumber
\end{eqnarray}
where $\langle \alpha,\alpha \rangle=2h-2$.
By the formulas of Section \ref{kye}, $tI(q)$ is
a rational function{\footnote{The variable $q$
here is the variable $y$ in Section \ref{kye}.}} of $q$.
Hence, $R^{\red}_{m\alpha}(S\times\mathcal R)$ is a rational function of $q$.

We define $R^{\red}_h$ to equal the generating series $R^{\red}_{\alpha}(S\times\mathcal R)$
associated to a primitive and positive class $\alpha$ with norm square $2h-2$.
By \eqref{vxx99} in the $m=1$ case,
$$R^{\red}_h= t I_h(q)\,,$$
so $R^{\red}_h=0$ for $h<0$ by Proposition \ref{zzzzz}.
Rewriting \eqref{vxx99}, we obtain the fundamental multiple
cover formula governing $R^{\red}_{m\alpha}$:
\begin{equation}\label{vzz22}
R^{\red}_{m\alpha}(q)=
\sum_{k|m} \frac{1}{k} R^{\red}_{\frac{m^2}{k^2}(h-1)+1}(-(-q)^k)\,.
\end{equation}
The terms on the right correspond to lower degree
primitive contributions to $m\alpha$.

Finally, we  write the multiple cover formula in terms of
the invariants $\widetilde{R}_{m\alpha}$. Following the
notation \eqref{psszz}, let
$\widetilde{R}_h$ equal the series $\widetilde{R}_{\alpha}(S)$
associated to a primitive and positive class $\alpha$ with norm square $2h-2$.
By Proposition \ref{kedd9}, we see
\begin{equation} \label{xjjx}
\widetilde{R}_{h}=0 \ \ \ \text{for} \ \ h<0\,.
\end{equation}
The multiple cover
formula \eqref{vzz22} implies the following result.

\begin{Theorem} \label{geget} The series $\widetilde{R}_{m\alpha}(q)$ is the
Laurent expansion of a rational function of $q$, and
\begin{equation*}
\widetilde{R}_{m\alpha}(q)=
\sum_{k|m} \frac{1}{k} \widetilde{R}_{\frac{m^2}{k^2}(h-1)+1}(-(-q)^k)\,.
\end{equation*}
\end{Theorem}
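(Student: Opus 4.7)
The statement is essentially a direct consequence of material already assembled in the section. The main idea is to transport the multiple cover formula \eqref{vxx99} for $R^{\red}_{m\alpha}$ across the identification $\widetilde R_{m\alpha}(S)=R^{\red}_{m\alpha}(S)$ of Proposition \ref{kedd9}, and combine it with the rationality statement for $tI_h(q)$ coming from the Kawai--Yoshioka evaluation in Section \ref{kye}.

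First I would fix a $K3$ surface $S$ carrying a primitive positive class $\alpha$ with $\langle\alpha,\alpha\rangle = 2h-2$, together with an $m$-rigid family $\epsilon\colon X\to(\Delta,0)$ as in Section \ref{c123}, so that $\widetilde R_{m\alpha}$ is defined and agrees with $R^{\red}_{m\alpha}(S)$ by Proposition \ref{kedd9}. Chasing the chain of reductions in Sections \ref{rigi2}--\ref{loc2} (rigidification of the rubber moduli, degeneration of $S\times\PP^1/S_\infty$ to the normal cone of $S_0$, and $\C^*$-localization), one obtains the identity
$$
R^{\red}_{m\alpha}(S)\;=\;t\,\bigl\langle 1\bigr\rangle^{\red}_{Y,m\alpha}(q)\,,
$$
which, combined with the localization calculation of Proposition \ref{mc11} and the definition $R^{\red}_h = tI_h(q)$ for primitive $\alpha$ of square $2h-2$, gives
$$
R^{\red}_{m\alpha}(q)\;=\;\sum_{k\mid m}\tfrac{1}{k}\,R^{\red}_{\frac{m^2}{k^2}(h-1)+1}\bigl(-(-q)^k\bigr)\,.
$$
This is exactly \eqref{vzz22}, and applying Proposition \ref{kedd9} both on the left (with divisibility $m$) and on the right (with divisibility $1$ and arithmetic genus $\tfrac{m^2}{k^2}(h-1)+1$) replaces every $R^{\red}$ by the corresponding $\widetilde R$, yielding the desired multiple cover identity.

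For the rationality claim, I would invoke the explicit formula from Section \ref{kye}: under the change of variable made there, $tI_h(y)$ is the Laurent expansion of the coefficient of $q^h$ in
$$
-\Bigl(\sqrt{-y}-\tfrac{1}{\sqrt{-y}}\Bigr)^{-2}\prod_{n\ge 1}\frac{1}{(1-q^n)^{20}(1+yq^n)^2(1+y^{-1}q^n)^2}\,,
$$
which is manifestly a rational function of $y$. Thus each summand $\widetilde R_{\frac{m^2}{k^2}(h-1)+1}(-(-q)^k)$ on the right is a rational function of $q$, hence so is their finite sum $\widetilde R_{m\alpha}(q)$.

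The only real subtlety in the plan is not the algebra, which is routine, but ensuring that the identification $\widetilde R_{m\alpha}=R^{\red}_{m\alpha}$ used to transport \eqref{vzz22} respects the arithmetic genera appearing inside the multiple-cover sum. The right-hand side involves primitive classes of square $\tfrac{m^2}{k^2}(2h-2)$, and Proposition \ref{kedd9} must be applied in that primitive regime via the normalization \eqref{gqgq}--\eqref{psszz}. Once this bookkeeping is checked, and the vanishing $\widetilde R_h = 0$ for $h<0$ (from \eqref{xjjx}) is used to truncate the sum so that only genuine effective contributions appear, the theorem follows with no further computation.
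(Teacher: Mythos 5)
Your proposal is correct and follows essentially the same route as the paper: it derives the identity \eqref{vzz22} for $R^{\red}_{m\alpha}$ from the rigidification/degeneration/localization chain and Proposition \ref{mc11}, obtains rationality from the Kawai--Yoshioka evaluation of $tI_h$, and then converts $R^{\red}$ to $\widetilde R$ on both sides via Proposition \ref{kedd9}. No gaps; this is the paper's argument.
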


\subsection{Stable pairs BPS counts}\label{spbps}
The stable pairs potential $\widetilde{F}_{{\alpha}}(q,v)$ 
for classes proportional
to ${\alpha}$
is 
\begin{equation*} 
\widetilde{F}_{{\alpha}}=
\sum_{n\in \mathbb{Z}}\   \sum_{m>0}\   \widetilde{R}_{n,m\alpha} \ 
q^{n} 
v^{m{\alpha}}\,.
\end{equation*}
The stable pairs BPS counts $\widetilde{r}_{g,m\alpha}$
are uniquely defined \cite{PT1} by:
$$\widetilde{F}_{{\alpha}}= 
\ \ \ \sum_{g\in \mathbb{Z}}  \ \sum_{m>0} \
 \widetilde{r}_{g,m\alpha} \  \sum_{d>0}
\frac{(-1)^{g-1}}{d} 
 \left((-q)^d-2+(-q)^{-d} \right)^{g-1}
 v^{dm\alpha}\,.$$
Because $\widetilde{R}_{m\alpha}$ is a Laurent
series in $q$, we see
$$\widetilde{r}_{g,m\alpha}=0$$
for sufficiently high $g$ and fixed $m$.
In the primitive case, 
\begin{equation}\label{lsszz}
\widetilde{r}_{g,\alpha}=0 \ \ \ \text{for} \ g<0
\end{equation}
by equation (2.10) of \cite{PT2}.

Since we know $\widetilde{R}_{m\alpha}$ only depends upon $m$
and the norm square $\langle m\alpha,m\alpha \rangle$, 
the same is true for the associated BPS counts. 
Following the notation \eqref{gqgq},
we define
$$\widetilde{r}_{g,m,m^2(h-1)+1} = \widetilde{r}_{g,m\alpha}\,.$$

\begin{Proposition} \label{fcc19}
The stable pairs BPS counts do
not depend upon the divisibility:
$$\widetilde{r}_{g,m,m^2(h-1)+1}= \widetilde{r}_{g,1,m^2(h-1)+1}\,.$$
\end{Proposition}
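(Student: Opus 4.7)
The plan is to prove the claim by equating two different expressions for the series $\widetilde{R}_{m,m^2(h-1)+1}(q)$ and inducting on $m$.

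First I would extract the coefficient of $v^{m\alpha}$ in the BPS potential $\widetilde{F}_\alpha$ directly from the defining equation in Section \ref{spbps}. Since the contribution to $v^{m\alpha}$ comes from pairs $(m',d)$ with $dm' = m$, i.e.\ $d = k$ and $m' = m/k$ for each divisor $k \mid m$, this yields
$$
\widetilde{R}_{m,m^2(h-1)+1}(q) = \sum_{k\mid m}\sum_{g\in\mathbb{Z}} \frac{(-1)^{g-1}}{k}\,\widetilde{r}_{g,\,m/k,\,(m/k)^2(h-1)+1}\bigl((-q)^k - 2 + (-q)^{-k}\bigr)^{g-1}.
$$

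Second, I would apply the multiple cover formula from Theorem \ref{geget}, which expresses the same series as a sum of the primitive invariants $\widetilde{R}_{1,(m/k)^2(h-1)+1}$ evaluated at $-(-q)^k$. Applying the BPS expansion of Section \ref{spbps} in the primitive case $m'=1$ (where only $d=1$ contributes) gives $\widetilde{R}_{1,h'}(q) = \sum_g (-1)^{g-1}\widetilde{r}_{g,1,h'}((-q)-2+(-q)^{-1})^{g-1}$, and the substitution $-q \mapsto (-q)^k$ produces
$$
\widetilde{R}_{m,m^2(h-1)+1}(q) = \sum_{k\mid m}\sum_{g\in\mathbb{Z}} \frac{(-1)^{g-1}}{k}\,\widetilde{r}_{g,\,1,\,(m/k)^2(h-1)+1}\bigl((-q)^k - 2 + (-q)^{-k}\bigr)^{g-1}.
$$

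Comparing the two expressions term by term, I would induct on the divisibility $m$. The base case $m=1$ is a tautology. For the inductive step, the induction hypothesis gives equality of all terms with $k > 1$ (since those involve $\widetilde{r}_{g,m/k,\cdot}$ with $m/k < m$), leaving the $k=1$ terms on each side to be equal. The remaining step is the standard uniqueness of BPS expansions: the functions $((-q)-2+(-q)^{-1})^{g-1}$ for varying $g\in\mathbb{Z}$ are linearly independent as Laurent series in $q$ (each determines a Laurent polynomial of distinct degree range), and $\widetilde{r}_{g,m,m^2(h-1)+1}$ vanishes for sufficiently large $g$ by Section \ref{spbps}, so coefficient-matching yields $\widetilde{r}_{g,m,m^2(h-1)+1} = \widetilde{r}_{g,1,m^2(h-1)+1}$ for every $g$, completing the induction.

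The main obstacle is bookkeeping rather than conceptual: one must verify that the substitution $-q\mapsto(-q)^k$ in the primitive BPS expansion matches the indexing of Theorem \ref{geget}, and that the linear independence argument is valid in the ring of formal Laurent series where these BPS expansions live. Both are routine once one notices that the $k=1$ term isolates the "new" invariant on each side at each inductive stage.
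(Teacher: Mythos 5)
Your proposal is correct and rests on exactly the same two ingredients as the paper's proof -- Theorem \ref{geget} together with the uniqueness of the stable pairs BPS expansion of \cite{PT1} -- the only difference being organizational: the paper performs a single global reindexing of $\widetilde{F}_\alpha$ after substituting the multiple cover formula and invokes uniqueness once, whereas you compare the $v^{m\alpha}$-coefficients and induct on the divisibility $m$. Your linear-independence step is precisely that uniqueness statement (note only that for $g\le 0$ the functions $\left((-q)-2+(-q)^{-1}\right)^{g-1}$ are Laurent series rather than Laurent polynomials, so the coefficient-matching must be run from the top degree downward, using the vanishing of $\widetilde{r}_{g,m,h}$ for large $g$, exactly as you indicate).
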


\begin{proof} By Theorem \ref{geget}, we can write 
$\widetilde{F}_\alpha$ as
\begin{eqnarray*}
\widetilde{F}_{{\alpha}} &=&
   \sum_{m>0}\   \widetilde{R}_{m\alpha} \ 
v^{m{\alpha}}\ \\
&=& \sum_{m>0} 
\sum_{k|m} \frac{1}{k} \widetilde{R}_{\frac{m^2}{k^2}(h-1)+1}(-(-q)^k)
v^{m\alpha}\,.
\end{eqnarray*}
Next, using the definition of BPS counts for primitive
classes, we find
\begin{equation*}
\widetilde{F}_{{\alpha}}  = 
\sum_{m>0} 
\sum_{k|m} 
\sum_{g\geq 0} \frac{(-1)^{g-1}}{k}
\widetilde{r}_{g,1,\frac{m^2}{k^2}(h-1)+1}
((-q)^k-2+(-q)^{-k})^{g-1} v^{m\alpha} \,.
\end{equation*}
After a reindexing of the summation on the right, we obtain
$$
\ \ \ \sum_{g\geq 0}  \ \sum_{m>0} \
 \widetilde{r}_{g,1,m^2(h-1)+1} \  \sum_{d>0}
\frac{(-1)^{g-1}}{d} 
 \left((-q)^d-2+(-q)^{-d} \right)^{g-1}
 v^{dm\alpha}\,.$$
By the definition of the BPS counts and the uniqueness
statement, we conclude  the
 $\widetilde{r}_{g,m,m^2(h-1)+1}$ does not
depend upon the divisibility.
\end{proof}

As a Corollary of Proposition \ref{fcc19}, we
obtain basic properties of $\widetilde{r}_{g,m,h}$
required in Section \ref{t22}.

\begin{Corollary} \label{ww22} We have $\widetilde{r}_{g,m,h\leq 0}=0$
except for the case
$$\widetilde{r}_{0,1,0}=1\,.$$
\end{Corollary}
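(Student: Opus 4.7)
The plan is to reduce the statement to primitive classes via Proposition \ref{fcc19} and then handle the two remaining regimes $h<0$ and $h=0$ using already established results. First I would invoke Proposition \ref{fcc19} to write $\widetilde{r}_{g,m,h}=\widetilde{r}_{g,1,h}$ whenever both are defined; combined with the observation that for $m\geq 2$ the index $h=m^{2}(h'-1)+1$ forces $h\leq 0$ to mean $h\leq -3$, this collapses every $m\geq 2$ case to some $h<0$ case of the primitive problem. I would also dispose of $g<0$ once and for all by citing \eqref{lsszz}, which says $\widetilde{r}_{g,1,h}=0$ for $g<0$.

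Next I would treat $h<0$ with $g\geq 0$. By \eqref{xjjx}, $\widetilde{R}_{h}=0$ as a series in $q$, so $\widetilde{R}_{n,\alpha}=0$ for every $n$ (for primitive $\alpha$ with $\langle\alpha,\alpha\rangle=2h-2$). The defining identity for $\widetilde{r}_{g,1,h}$ in Section \ref{spbps} presents $\widetilde{R}_{h}$ as a $\mathbb Z$-linear combination of the linearly independent functions
\[
((-q)-2+(-q)^{-1})^{g-1}, \qquad g\in\mathbb Z,
\]
so vanishing of the sum forces vanishing of every coefficient: $\widetilde{r}_{g,1,h}=0$ for all $g$.

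Finally I would treat $h=0$ by an explicit Kawai--Yoshioka computation as in Section \ref{kye}. Extracting the $q^{0}$ coefficient of the formula
\[
\sum_{h\geq 0} tI_{h}(q)\,q^{h} = -\bigl(\sqrt{-q}-\tfrac{1}{\sqrt{-q}}\bigr)^{-2}\prod_{n\geq 1}\frac{1}{(1-q^{n})^{20}(1+qq^{n})^{2}(1+q^{-1}q^{n})^{2}}
\]
gives $\widetilde{R}_{0}=tI_{0}(q)=q/(q+1)^{2}$, which rewrites as $-\bigl((-q)-2+(-q)^{-1}\bigr)^{-1}$. This is precisely the $g=0$ contribution in the primitive BPS expansion with leading coefficient $1$; by the same linear-independence argument, $\widetilde{r}_{0,1,0}=1$ and $\widetilde{r}_{g,1,0}=0$ for $g\geq 1$. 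The only real obstacle is keeping track of the bookkeeping in the reduction step: one must verify that the values of $h\leq 0$ achievable by some $m\alpha$ with $m\geq 2$ are all strictly negative, so that the exceptional case $(g,m,h)=(0,1,0)$ really only arises from primitive classes; everything else is an identification of series already at hand.
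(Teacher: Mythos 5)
Your proposal is correct and takes essentially the same route as the paper: reduction to $m=1$ via Proposition \ref{fcc19}, vanishing for $h<0$ via \eqref{xjjx} (your linear-independence step is just the uniqueness of the BPS expansion already built into the definition), and the $h=0$ case via the Kawai--Yoshioka evaluation, which is exactly the conifold computation the paper cites. One cosmetic caveat: in your displayed Kawai--Yoshioka formula the Euler-characteristic variable $y$ and the variable tracking $h$ have been conflated into a single $q$, so the formula as written is not literally an identity; the intended two-variable formula does give $tI_0=q/(1+q)^2$ after renaming $y$ to $q$, which matches the $g=0$ term $-\big((-q)-2+(-q)^{-1}\big)^{-1}$ as you state.
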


\begin{proof}  
By Proposition \ref{fcc19}, we need only
consider the $m=1$ case.
If $h<0$, the vanishing follows from \eqref{xjjx}.
If $h=0$, the result is the consequence of
the stable pair calculation of the conifold \cite{PT1}.
\end{proof}

\begin{Corollary}\label{ww3} We have $\widetilde{r}_{g<0,m,h}=0$.
\end{Corollary}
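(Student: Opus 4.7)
The plan is to obtain this as an immediate consequence of the divisibility-independence of stable pairs BPS counts (Proposition \ref{fcc19}) combined with the primitive-class vanishing \eqref{lsszz}.

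First, I would invoke Proposition \ref{fcc19} to reduce to the primitive case: for any $m\geq 1$ and $h \in \mathbb{Z}$,
\[
\widetilde{r}_{g,m,h} \;=\; \widetilde{r}_{g,1,h}\,.
\]
This already eliminates the divisibility parameter from consideration, so only the primitive BPS counts remain.

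Next, I would appeal to \eqref{lsszz}, which states $\widetilde{r}_{g,\alpha}=0$ for $g<0$ when $\alpha \in \mathrm{Pic}(S)$ is primitive (a consequence of equation (2.10) of \cite{PT2}). In our indexed notation this reads $\widetilde{r}_{g,1,h}=0$ for $g<0$ and any $h$. Combined with the previous step, this yields the desired vanishing
\[
\widetilde{r}_{g,m,h}\;=\;\widetilde{r}_{g,1,h}\;=\;0 \qquad \text{for all } g<0,\ m\geq 1,\ h\in\mathbb{Z}\,.
\]

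There is no real obstacle here: the substance of the result sits entirely in Proposition \ref{fcc19}, whose derivation via Theorem \ref{geget} is the genuine work, together with the primitive BPS bound from \cite{PT2}. The corollary itself is a one-line consequence, and no further geometric input is needed.
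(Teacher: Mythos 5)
Your proposal is correct and is essentially identical to the paper's argument: the paper's proof of Corollary \ref{ww3} also reduces to the primitive case via Proposition \ref{fcc19} and then cites the vanishing \eqref{lsszz} from \cite{PT2}. The only slight imprecision is that Proposition \ref{fcc19} is stated for norm squares of the form $m^2(h-1)+1$ (the only values where divisibility-$m$ counts are defined), but this does not affect the conclusion.
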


\begin{proof} After reducing to the $m=1$ case,
the result is \eqref{lsszz}.
\end{proof}



\section{P/NL correspondence}
\label{pnlc}

\subsection{Overview} Our goal here is to prove the Pairs/Noether-Lefschetz
correspondence of Theorem \ref{ffc2} in Section \ref{spnl}. The main tool needed is Theorem  \ref{t66t}.

Following the definitions of Sections \ref{fams} and  \ref{fccv}, let $$(\pi_3:\XX\rightarrow \proj^1,L_1,L_2,E)$$ 
be the 1-parameter family of $\Lambda$-polarized, 
$$\Lambda =  \left( \begin{array}{ccc}
2 & 3 & 0  \\
3 & 0 & 0\\
0 & 0 & -2  \end{array} \right),$$
$K3$ surfaces obtained
from a very general{\footnote{Very general here is the
complement of a countable set.}}
anticanonical Calabi-Yau hypersurface,
$$\XX\subset \widetilde{\proj^2 \times \proj^1} \times \proj^1\,.$$
For a
very general
fiber of the base $\xi \in \proj^1$, 
\begin{equation}\label{mmqq22}
\text{Pic}(\XX_\xi) \cong \Lambda\,.
\end{equation}
We also assume, for each nodal fiber{\footnote{The nodal fibers
have exactly 1 node.}} $\XX_\xi$, the $K3$ resolution $\widetilde{\XX}_\xi$ satisfies
\begin{equation}\label{mmqq33}
\text{Pic}(\widetilde{\XX}_\xi) \cong \Lambda \oplus \mathbb{Z}[\widetilde{E}]
\end{equation}
where $\widetilde{E}\subset \widetilde{\XX}_\xi$ is the exceptional $-2$ curve.
Both \eqref{mmqq22} and \eqref{mmqq33} can be satisfied
since $\Lambda$ is the Picard lattice of a very general{\footnote{We
leave these standard facts about $K3$ surfaces of type $\Lambda$
to the reader.
The upcoming text by Huybrechts is an excellent source for the study of
$K3$ surfaces.}} point of
$\mathcal{M}_\Lambda$ and $\Lambda \oplus \mathbb{Z}[\widetilde{E}]$ is the 
Picard lattice of a very general point of the nodal locus
in $\mathcal{M}_\Lambda$. 

The stable pairs potential $\widetilde{F}^{\XX}$ for nonzero vertical classes
is the series
\begin{eqnarray*}
\widetilde{F}^{{\XX}}& =& \log \left(1 + 
\sum_{0\neq \gamma\in H_2({\XX},\mathbb{Z})^{\pi_3}}  
 \ZZ_{\mathsf{P}}\Big(\XX;q\Big)_{\!\gamma}\ v^\gamma\right) \\
& = & \sum_{n\in \mathbb{Z}}\ \sum_{0 \neq \gamma\in H_2(\XX,\mathbb{Z})^{\pi_3}}
 \widetilde{N}_{n,\gamma}^\XX \ q^n v^\gamma\,.
\end{eqnarray*}
Here, $v$ is the curve class variable, and the second equality 
defines
the connected stable pairs invariants $\widetilde{N}_{n,\gamma}^\XX$.
The stable pairs
BPS counts $\widetilde{n}_{g,\gamma}^{{\XX}}$
are then defined 
by 
\begin{equation*}
\widetilde{F}^{{\XX}}  =    \sum_{g\in \mathbb{Z}} \ \sum_{0\neq \gamma\in
H_2({\XX},\mathbb{Z})^{\pi_3}} 
\widetilde{n}_{g,\gamma}^{{\XX}} \ \sum_{d>0}
\frac{(-1)^{g-1}}{d}  \left((-q)^d-2+(-q)^{-d} \right)^{g-1}
v^{d\gamma} \,. 
\end{equation*}

Let $\widetilde{n}_{g,(d_1,d_2,d_3)}^\XX$ denote the stable pairs BPS invariant of $\XX$
in genus $g$ for $\pi_3$-vertical curve classes of degrees $d_1,d_2,d_3$
with respect to the line bundles $L_1,L_2, E\in \Lambda$ respectively. 
Let $\widetilde{r}_{g,m,h}$ be the stable pairs BPS counts associated to $K3$
surfaces in Section \ref{k3intp}.
The  Pairs/Noether-Lefschetz correspondence is the following result.

\vspace{8pt}
\noindent{\bf Theorem 4.} 
{\em For degrees $(d_1,d_2,d_3)$ positive with respect to the
quasi-polarization,}
$$\widetilde{n}_{g,(d_1,d_2,d_3)}^\XX= \sum_{h=0}^\infty \sum_{m=1}^{\infty}
\widetilde{r}_{g,m,h}\cdot  NL_{m,h,(d_1,d_2,d_3)}^{\pi_3}\,.$$

\subsection{Strategy of proof} \label{stopr}
Since the formulas relating the BPS counts to stable pairs invariants
are the same for $\XX$ and the $K3$ surface,  Theorem \ref{ffc2} is 
equivalent to the analogous 
stable pairs statement:
\begin{equation}\label{cq29}
\widetilde{N}_{n,(d_1,d_2,d_3)}^\XX= \sum_{h} \sum_{m=1}^{\infty}
\widetilde{R}_{n,m,h}\cdot  NL_{m,h,(d_1,d_2, d_3)}^{\pi_3}
\end{equation}
for degrees $(d_1,d_2,d_3)$ positive with respect
to the quasi-polarization of $\XX$.

We denote the base of the fibration $\pi_3$ by $C\cong \proj^1$.
Let $C^\circ \subset C$ be the locus over which $$\pi_3:\XX \rightarrow C$$ 
is smooth.
By condition (i) of Section \ref{fams} for a
{1-parameter family of $\Lambda$-polarized
$K3$ surfaces}, the complement of $C^\circ$ consists of finitely
many points over which each fiber of $\pi_3$ has a single ordinary
node.
For each $\xi\in C^\circ$, let 
$$\mathcal{V}_\xi= H^2(\XX_\xi,\mathbb{Z})\,.$$

As $\xi\in C^\circ$ varies, the fibers $\mathcal{V}_\xi$ 
determine a local system 
$$\mathcal{V}^\circ \rightarrow C^\circ\,.$$ 
We denote the effective divisor classes on $\XX_\xi$ by
$$\text{Eff}_\xi = \{ \beta \in 
\mathcal{V}_\xi
 \ | \ \beta\in \text{Pic}(\XX_\xi) \ \text{and $\beta$ effective}
\}.$$

For 
$\xi \in C\setminus C^\circ$, we denote the
$K3$ resolution of singularities of the node of $\XX_\xi$ by
$$\rho:\widetilde{\XX}_\xi \rightarrow \XX_\xi\,,$$
and we define
 $$\mathcal{V}_\xi= H^2(\widetilde{\XX}_\xi,\mathbb{Z})\,.$$
As before, let
$$\text{Eff}_\xi= \{ \beta \in 
\mathcal{V}_\xi
 \ | \ \beta\in \text{Pic}(\widetilde{\XX}_\xi) \ \text{and $\beta$ effective}
\}\,.$$
The push-forward of a divisor class on $\widetilde{\XX}_\xi$
to $\XX_\xi$ can be considered in $H_2(\XX_\xi,\mathbb{Z})$
and, by Poincar\'e duality (for the quotient singularity), in
$$\frac{1}{2} H^2(\XX_\xi,\mathbb{Z}) \subset 
H^2(\XX_\xi,\mathbb{Q})\,.$$ 
We view the push-forward by $\rho$ of the effective divisors classes
as:
\begin{equation}
\label{rhop}
\rho_*: \text{Eff}_\xi \rightarrow \frac{1}{2}H^2(\XX_\xi,\mathbb{Z})\,.
\end{equation}

We will study the contributions of the 
classes $\text{Eff}_\xi$ to both sides of \eqref{cq29}.
Certainly, only effective curves contribute to the left side of
\eqref{cq29}. Suppose $\xi\in C$ lies on the Noether-Lefschetz divisor{\footnote{We follow the notation of Section \ref{nld}.}}
$$D_{m,h,(d_1,d_2,d_3)}\subset \mathcal{M}_{\Lambda}\,.$$
 Then there exists 
$\beta\in \text{Pic}(\XX_\xi)$ if $\xi\in C^\circ$ (or 
 $\beta\in \text{Pic}(\widetilde{\XX}_\xi)$
if $\xi \in C\setminus C^\circ$)
of divisibility $m$, 
$$2h-2=\langle \beta, \beta \rangle\,,$$
and degree $(d_1,d_2,d_3)$ positive with respect to the
quasi-polarization.
Let $\alpha= \frac{1}{m}\beta$ be the corresponding primitive class. If 
$\beta$ is not effective on $\XX_\xi$ (or
$\widetilde{\XX}_\xi$ if $\xi\in C\setminus C^\circ$), then $\alpha$ is 
also not
effective. Since $\alpha$ is positive with respect to the
quasi-polarization, Riemann-Roch implies
$h_\alpha<0$,  where
$$2h_\alpha-2=\langle \alpha,\alpha \rangle\,.$$
By Theorem \ref{geget} and the vanishing \eqref{xjjx},
$$\widetilde{R}_{n,m,h}=0\,.$$
Ineffective classes
$\beta$
 therefore
do not contribute to the right side of \eqref{cq29}.

\subsection{Isolated contributions} \label{iso1}
We consider first the simplest contributions.
Let $\xi\in C^\circ$.
A nonzero effective class $\beta\in \text{Pic}(\XX_\xi)$
is {\em completely isolated} on $\XX$
if the following property holds:
\begin{enumerate}
\item[$(\star\star)$] for 
every
 {\em effective} decomposition
$$\beta=\sum_{i=1}^l \gamma_i
\in \text{Pic}(\XX_\xi)\,,$$
the  local 
Noether-Lefschetz locus $\text{NL}(\gamma_i)\subset C$ corresponding to
each  class $\gamma_i \in \text{Pic}(\XX_\xi)$ contains
$\xi$ as an isolated{\footnote{Nonreduced structure is allowed at
$\xi$.}} point.
\end{enumerate}

Let $\gamma\in \text{Pic}(\XX_\xi)$ be an effective summand of
$\beta$ which occurs in condition $(\star\star)$.
The stable pairs with set-theoretic
support on $\XX_\xi$ form an open and closed
component,
$$P_n^\xi(\XX,\gamma)\subset P_n(\XX,\gamma)$$
 of the moduli space of stable pairs{\footnote{As usual,
we denote the push-forward of $\gamma$ to $H_2(\XX,\mathbb{Z})$
also by $\gamma$.}}  
for every $n$.

We consider now the contribution of $\XX_\xi$ for $\xi\in C^\circ$
to the
stable pairs series $\widetilde{F}^\XX$ of a nonzero effective
and completely isolated class $\beta\in H^2(\XX_\xi,\mathbb{Z})$
satisfying
$$\text{div}(\beta)=m\,, \ \ \ \langle \beta,\beta \rangle= 2h-2\ $$
and of degree $\dd$ with respect to $\Lambda$.
More precisely, let  
$\text{Cont}(\XX_\xi,\beta,\widetilde{N}^\XX_{n,\dd} )$ be
the contribution corresponding to $\beta$ of
all the moduli of stable pairs with set theoretic
support on $\XX_\xi$:
\begin{multline} \label{gttl2}
\text{Cont}(\XX_\xi,\beta,\widetilde{N}^\XX_{n,\dd} ) =
 \\
\text{Coeff}_{q^nv^{\beta}} \left[
\log\left(1 + \sum_{n\in \mathbb{Z}} \sum_{\gamma\in \text{Eff}({\beta})} 
q^n v^{\gamma}
\int_{[P_n^\xi(\XX,\gamma)]^{\vir}} 1\right)\right]\,,
\end{multline}
where 
$\text{Eff}_\xi(\beta) \subset \text{Pic}(\XX_\xi)$
is the subset of effective summands
of 
$\beta$. 
By condition $(\star\star)$, the contribution is {\em well-defined}.

Let $\ell_\beta$ be the length of the local Noether-Lefschetz
locus $\text{NL}(\beta) \subset C$ at $\xi\in C$.
We define 
$$\text{Cont}(\XX_\xi,\beta, NL^{\pi_3}_{m,h,\dd} ) =\ell_\beta\,,$$ 
the local intersection contribution to the Noether-Lefschetz number. 

\begin{Proposition} \label{crt} For a
completely isolated effective class $\beta\in \Pic(\XX_\xi)$ with
$\xi\in C^\circ$, 
we have
$$\text{\em Cont}(\XX_\xi,\beta,\widetilde{N}^\XX_{n,\dd} ) =
\widetilde{R}_{n,m,h} \cdot
\text{\em Cont}(\XX_\xi,\beta, NL^{\pi_3}_{m,h,\dd} )\,.$$
\end{Proposition}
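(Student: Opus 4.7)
The plan is to reduce the contribution at $\xi$ to the transverse situation in which $\widetilde R_{n,m,h}$ is defined, by a local deformation of the family $\pi_3$ inside the space of anticanonical Calabi--Yau hypersurfaces. The key geometric point is that the $v^\beta$-coefficient of the logarithm in \eqref{gttl2} is the \emph{connected} part of the stable pair series, so after deformation it splits as a sum of $\ell_\beta$ independent contributions, one at each of the reduced Noether--Lefschetz points into which the length-$\ell_\beta$ scheme at $\xi$ scatters.

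First I would use the generality of $\XX$ (hypotheses \eqref{mmqq22} and \eqref{mmqq33}) together with infinitesimal Torelli to produce a 1-parameter deformation $\pi_3^s\colon\XX^s\to C$, $s\in(\Delta,0)$, realized within the anticanonical family of $\widetilde{\proj^2\times\proj^1}\times\proj^1$, having the following properties for small $s\neq 0$: the locus $\text{NL}^{\pi_3^s}(\beta)$ near $\xi$ consists of $\ell_\beta$ distinct reduced points $\xi_1(s),\ldots,\xi_{\ell_\beta}(s)$; for every other effective summand $\gamma$ of $\beta$ in $\text{Pic}(\XX_\xi)$, the locus $\text{NL}^{\pi_3^s}(\gamma)$ is also reduced and its points are disjoint from the $\xi_i(s)$; and the germ of $\pi_3^s$ at each $\xi_i(s)$ satisfies condition $(\star)$ for $\beta$. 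Openness of $(\star\star)$ ensures that after shrinking $C$, the components $P_n^\xi(\XX,\gamma)$ extend to components $P_n^U(\XX^s,\gamma)$ of stable pairs supported over a fixed open neighborhood $U\subset C$ of $\xi$ which stay proper for all small $s$.

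By deformation invariance of the virtual integrals over these proper components, the $v^\beta$-coefficient of $\log\bigl(1+\sum_{n,\gamma}q^nv^\gamma\int_{[P_n^U(\XX^s,\gamma)]^{\vir}}1\bigr)$ is independent of $s$. At $s=0$ this coefficient is exactly $\text{Cont}(\XX_\xi,\beta,\widetilde N^{\XX}_{n,\dd})$ by \eqref{gttl2}. For $s\neq 0$, the support map $P_n^U(\XX^s,\gamma)\to U$ sends each stable pair to a single Noether--Lefschetz point of $\gamma$, so the moduli space splits as a disjoint union over the NL points in $U$ and the generating series factors accordingly as a product over these points. The logarithm is additive on such a product, and a class summing to $\beta$ can be fully algebraic only at an NL point of $\beta$ itself, so only the points $\xi_1(s),\ldots,\xi_{\ell_\beta}(s)$ contribute to the $v^\beta$-coefficient. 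At each $\xi_i(s)$ the germ satisfies $(\star)$, so by the definition \eqref{fbbb4} together with the dependence result of Section~\ref{depped} the local log-coefficient equals $\widetilde R_{n,m,h}$. Summing over the $\ell_\beta$ points yields $\ell_\beta\cdot\widetilde R_{n,m,h}$, which is the claimed right-hand side.

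The principal obstacle is justifying the factorization of the local series into a product indexed by the NL points in $U$, which underlies the additivity of the logarithm. This requires tracking how stable pairs of a given class degenerate or bifurcate as their support migrates between distinct NL points, and how contributions whose support is partitioned among NL points of different summands of $\beta$ assemble correctly into the cross-terms of the expansion of $\log$. A secondary difficulty is realizing the hypersurface deformation in the second paragraph while keeping $\XX^s$ globally smooth and the fibration $\pi_3^s$ satisfying the hypotheses of Section \ref{fccv}; both issues are controlled by the generality of $\XX$, but require explicit verification.
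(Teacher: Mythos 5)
Your overall architecture coincides with the paper's: perturb to a transverse situation, invoke deformation invariance, factor the local generating series over the scattered Noether--Lefschetz points, use additivity of the logarithm to kill cross terms, and evaluate each local piece as $\widetilde R_{n,m,h}$ (your route via the definition \eqref{fbbb4} plus the dependence result of Section \ref{depped} is logically equivalent to the paper's route via Corollary \ref{g4545} and Proposition \ref{kedd9}, since the dependence result rests on exactly those). The genuine gap is in how the perturbation is produced. You insist on deforming $\XX$ itself inside the anticanonical system of $\widetilde{\proj^2\times\proj^1}\times\proj^1$ and appeal to generality and ``infinitesimal Torelli,'' but you never show that deformations constrained to this hypersurface family can scatter the length-$\ell_\beta$ non-reduced intersection at $\xi$ into $\ell_\beta$ reduced points while keeping the NL loci of all other effective summands reduced and away from the $\xi_i(s)$; this amounts to a surjectivity statement for the induced map on deformations of the curve $\iota_{\pi_3}(C)\subset\mathcal M_\Lambda$ which is neither verified nor needed. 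The paper avoids it entirely: it perturbs the curve $C$ directly inside $\mathcal M_\Lambda$, and since such a perturbation need not be realized by any global algebraic family through $\XX$, it first approximates $C$ near $[\XX_\xi]$ to sufficiently high \emph{finite} order by a curve moving in a family sweeping the moduli space --- the order chosen high enough to obstruct all deformations of the relevant stable pairs away from $\XX_\xi$ --- and only then perturbs to transversality. That finite-order approximation is precisely what licenses the appeal to deformation invariance of the contribution \eqref{gttl2}; in your version the corresponding invocation of deformation invariance rests on a family whose existence has not been established. (A further, smaller omission: at $\xi_i(s)$ you need upper semicontinuity to rule out \emph{new} effective summands of $\beta$ not present at $\xi$, which the paper states explicitly.)

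Conversely, what you single out as the ``principal obstacle'' --- the factorization of the local series into a product over the NL points --- is routine once $s\neq 0$ is fixed: fibers over non-NL points carry no curves in the relevant classes, so each connected component of the support of a stable pair lies over a single NL point, the moduli space decomposes according to the distribution of $(n,\gamma)$ among those points, and the virtual classes multiply; the disconnected cross terms are then exactly what the logarithm removes, so only points where $\beta$ itself is algebraic survive in the $v^\beta$-coefficient. This is precisely how the paper assembles the perturbed contribution as $\log\prod_{\widehat\xi\in I}\exp\bigl(\sum_\gamma v^\gamma R^{\red}_\gamma(\XX''_{\widehat\xi})\bigr)$ and then collapses it to $\widetilde R_{n,m,h}\cdot|I_\beta|$. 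So your effort is directed at a non-issue, while the actual crux --- constructing and justifying the perturbation, which the paper handles with the high-order approximation device --- is deferred to ``generality of $\XX$'' and left unproved. One last comparative remark: the paper's argument, being phrased purely in terms of curves in the moduli of lattice-polarized $K3$ surfaces, applies verbatim to arbitrary families (as is needed later for $\pi'$ and $\pi''$), whereas a proof tied to the anticanonical geometry of $\XX$ would not.
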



\begin{proof}
We perturb the family $C$ locally near $\xi$ to be transverse
to {\em all} the local Noether-Lefschetz loci corresponding
to effective summands of $\beta$  on $\XX_\xi$.
In order to perturb in algebraic geometry,
we first approximate $C$ by $C'$ near $[\XX_\xi]$ to sufficiently 
high order{\footnote{The
order should be high enough to obstruct all
the deformations away from $\XX_\xi$ of the
stable pairs occurring on the right side of \eqref{gttl2}.}}
by a moving family of curves in the moduli space of $\Lambda$-polarized
$K3$ surfaces. Then, we perturb the resulting moving curve $C'$
to $C''$ to achieve the desired transversality. Since
transversality is a generic condition, we may take the
perturbation $C''$ to be as small as necessary.
Let
$$\pi: \XX'' \rightarrow C''$$
be the family of $\Lambda$-polarized $K3$ surfaces determined
by $C''$.

Near
$[\XX_\xi]$ in the moduli of $\Lambda$-polarized $K3$ surfaces, 
the local system of second cohomologies is trivial.
In a contractible neighborhood $U$ of $[\XX_\xi]$, we have a
canonical isomorphism 
\begin{equation}\label{gzza}
H^2(S,\mathbb{Z}) \cong H^2(\XX_\xi,\mathbb{Z})
\end{equation}
for all $[S]\in U$. We will use the identification
\eqref{gzza} when discussing
$H^2(\XX_\xi,\mathbb{Z})$.

By deformation invariance of the stable pairs theory,
the contribution \eqref{gttl2} can be calculated after perturbation.
We assume all the intersections of $C''$ with the above local Noether-Lefschetz
loci 
which have limits tending to $[\XX_\xi]$ (as $C''$ tends to $C$)
lie in $U$.

For every $\gamma\in \text{Eff}_\xi(\beta)$, the curve $C''$ intersects
the local Noether-Lefschetz loci associated to $\gamma$
transversely at a finite set of reduced points $$I_\gamma\subset C''$$
near $[\XX_\xi]$.
The local
intersection number at $\xi\in C$ of $C$ with the local
Noether-Lefschetz locus corresponding to $\gamma$ is $|I_\gamma|$.
Let $\xi_\gamma\in I_\gamma$
be one such  point of intersection. 
Let $m_\gamma$ be the divisibility of $\gamma$.

On the $K3$ surface $\XX''_{\xi_\gamma}$, the class
$\frac{1}{m_\gamma}\gamma$ is primitive and positive since $\gamma$ is positive on $\XX_\xi$. Moreover,
the family $C''$ is 
$m_\gamma$-rigid for $\frac{1}{m_\gamma}\gamma$ on $\XX''_{\xi_\gamma}$ since
the effective summands of $\gamma$ on
$\XX''_{\xi_\gamma}$ all lie in $\text{Eff}_\xi(\beta)$.
By upper semicontinuity, no more effective summands
of $\beta$ can appear near $[\XX_\xi]$ in the moduli
of $K3$ surfaces.

By Corollary \ref{g4545} to Theorem \ref{t66t}, we have a formula for the 
stable pairs contribution of $\gamma$ at $\XX''_{\xi_\gamma}$, 
\begin{equation*}
P^{\star,\xi_\gamma}_{\gamma}(\XX'') = \text{Coeff}_{v^{\gamma}} \left[
\exp\left(  \sum_{\widehat{\gamma} \in \text{Eff}_{\xi_\gamma}(\gamma)} 
v^{\widehat{\gamma}} R^{\red}_{\widehat{\gamma}}(\XX''_{\xi_\gamma}\times\mathcal R)
\right)\right]\,.
\end{equation*}
Here, $\text{Eff}_{\xi_\gamma}(\gamma)\subset \text{Pic}(\XX''_{\xi_\gamma})$ 
is the subset of effective summands of $\gamma$ (which is empty if
$\gamma$ is not effective).

Finally, consider the original contribution 
$\text{Cont}(\XX_\xi,\beta,\widetilde{N}^\XX_{n,\dd} )$.
Let $$I\subset C''$$ be the union of all the $I_\gamma$
for $\gamma\in \text{Eff}_\xi(\beta)$. For each $\widehat{\xi}\in I$,
let 
$$\text{Alg}_{\widehat{\xi},\xi}(\beta) \subset \text{Eff}_\xi(\beta)$$
be the subset of classes of $\text{Eff}_\xi(\beta)$
which are algebraic on $\XX''_{\widehat{\xi}}$. 
All elements of $\text{Alg}_{\widehat{\xi},\xi}(\beta)$ are positive.
By semicontinuity
$$\text{Eff}_{\widehat{\xi}}(\gamma) \subset 
\text{Alg}_{\widehat{\xi},\xi}(\beta)$$
for all $\gamma\in \text{Eff}_\xi(\beta)$.
After perturbation, our formula for \eqref{gttl2} is
$$
\text{Coeff}_{q^nv^{\beta}} \left[
\log\left(\prod_{\widehat{\xi}\in I} 
\exp\left(  \sum_{{\gamma} \in \text{Alg}_{\widehat{\xi},\xi}(\beta)} 
v^{{\gamma}} R^{\red}_{{\gamma}}(\XX''_{\widehat{\xi}}\times\mathcal R)
\right) \right)\right]\,.
$$
After taking the logarithm of the exponential, we have
$$\text{Cont}(\XX_\xi,\beta,\widetilde{N}^\XX_{n,\dd} ) =
\text{Coeff}_{q^nv^{\beta}} \left[
\sum_{\widehat{\xi}\in I} 
 \sum_{{\gamma} \in \text{Alg}_{\widehat{\xi},\xi}(\beta)} 
v^{{\gamma}} R^{\red}_{{\gamma}}(\XX''_{\widehat{\xi}}\times\mathcal R)
\right]\,.
$$
Hence, we see only the $\xi\in I$ for which $\beta \in 
\text{Alg}_{\widehat{\xi},\xi}(\beta)$
contribute. We conclude
\begin{eqnarray*}
\text{Cont}(\XX_\xi,\beta,\widetilde{N}^\XX_{n,\dd} ) & = &
\sum_{\widehat{\xi}\in I_\beta} 
\text{Coeff}_{q^n} \left[
R^{\red}_{\beta}(\XX''_{\widehat{\xi}}\times\mathcal R)\right]\\
& = &
\widetilde{R}_{n,m,h} \cdot |I_\beta|\,,
\end{eqnarray*} 
where the second equality uses Proposition \ref{kedd9}.

Since
 $\text{Cont}(\XX_\xi,\beta, NL^\pi_{m,h,\dd} )=|I_\beta|$
is the local contribution of $\beta$ to the Noether-Lefschetz
number, the Proposition is established.
\end{proof}

We have proven Proposition \ref{crt} for our original family $\pi_3$
of $\Lambda$-polarized $K3$ surfaces. 
Definition $(\star\star)$
of a {\em completely isolated} class is valid for 
any family 
\begin{equation}\label{anyf}
\pi:X \rightarrow C
\end{equation}
of
lattice polarized $K3$ surfaces.
By the proof given, Proposition \ref{crt} is valid for the
contributions of every
 completely isolated
class 
$$\beta\in H^2(X_\xi,\mathbb{Z})\,,\ \ \ \ \xi\in C^\circ$$
 for any family
\eqref{anyf}. For different lattices $\widehat{\Lambda}$,
the 
degree index $\dd$ in Proposition \ref{crt} is replaced by the degree with
respect to a basis of $\widehat{\Lambda}$.

\subsection{Sublattice $\widehat{\Lambda} \subset \Lambda$}
\label{lsel}
For $\ell>0$, we define
$$\text{Eff}_\XX(\lambda^\pi,\ell)\in H_2(\XX,\mathbb{Z})^\pi$$ to be the set of
classes of degree at most $\ell$ 
(with respect to the quasi-polarization $\lambda^\pi$)
which are
 represented by algebraic curves
on $\XX$. 
By the boundedness of the Chow variety of curves of $\XX$ of degree at most $\ell$,
$\text{Eff}_\XX(\lambda^\pi,\ell)$ is 
a {\em finite} set. 

For any quasi-polarization $\delta$ given by an
ample class  of $\XX$ in $\Lambda$, let the set
of effective classes of degree at most $\ell$ (with respect to $\delta$)
be
$$\text{Eff}_\XX(\delta,\ell)\in H_2(\XX,\mathbb{Z})^\pi\,.$$
Similarly, let{\footnote{We follow the notation of Section \ref{stopr}.}}
$$\text{Eff}_\xi(\delta,\ell)\subset \text{Eff}_\xi$$
be the effective curve classes of $\delta$-degree at most $\ell$
over $\xi\in C$.

We select a primitive sublattice  
$\widehat{\Lambda}=\mathbb{Z}\delta_1
\oplus \mathbb{Z}\delta_2 
\subset \Lambda$ satisfying
the following properties:
\begin{enumerate}
\item[(i)] $\delta_1$ is a quasi-polarization with 
$$L = \text{Max}\{\ \langle \delta_1,\gamma \rangle  \ |\ \gamma
\in \text{Eff}_{\XX}(\lambda^\pi,\ell)\ \}\,,$$
\item[(ii)] 
$\widehat{\Lambda}\ \cap\ \text{Eff}_{\xi}(\delta_1,L)=\emptyset$
for all $\xi\in C^\circ$,
\item[(iii)]
$\frac{1}{2}\widehat{\Lambda}\ \cap\ \rho_*\text{Eff}_{\xi}(\delta_1,L)=\emptyset$
for all $\xi\in C\setminus C^\circ$
\item[(iv)] the intersection maps
\begin{eqnarray*}
\text{Eff}_X(\delta_1,L) \rightarrow \text{Hom}(\Lambda,\mathbb{Z})\,, & &  
\gamma \mapsto \langle \star,\gamma\rangle\,, \\
\text{Eff}_X(\delta_1,L)\rightarrow \text{Hom}(\widehat{\Lambda},
\mathbb{Z})\,, & &
\gamma \mapsto \langle \star,\gamma\rangle
\end{eqnarray*}
have images of the same cardinality.
\end{enumerate}
We will find such $\delta_1,\delta_2 \in \Lambda$
by a method explained below: we will first select $\delta_1$
and then $\delta_2$.

Consider the quasi-polarization $\widehat{\delta}=k\lambda^{\pi_3}$ for
$k>2\ell$. Then $\frac{1}{2}\widehat{\delta}$ has $\widehat{\delta}$-degree
$\frac{k^2}{2}\langle \lambda^{\pi_3},\lambda^{\pi_3}\rangle$ on the $K3$
fibers
while 
$$\text{Max}\{\ \langle \widehat{\delta},\gamma \rangle  \ |\ \gamma
\in \text{Eff}_{X}(\lambda^\pi,\ell)\ \}\leq k\ell< \frac{k^2}{2}
\leq \frac{k^2}{2}\langle \lambda^{\pi_3},\lambda^{\pi_3}\rangle\,.$$
We choose $\delta_1$ to be a small shift of $k\lambda^{\pi_3}$ in the
lattice $\Lambda$ to ensure primitivity. 
Hence, we have met the conditions
\begin{equation}\label{fced2}
\mathbb{Z}\delta_1 \ \cap \ \text{Eff}_\xi(\delta_1,L)=\emptyset\,, \ \ \ \
\frac{1}{2}\mathbb{Z}\delta_1 \ \cap \ \rho_*\text{Eff}_\xi(\delta_1,L)=\emptyset
\end{equation}
for $\xi \in C^\circ$ and $\xi \in C\setminus C^\circ$
respectively.
Conditions \eqref{fced2} are required for (ii) and (iii).

Again by the boundedness
 of the  Chow variety of curves in $\XX$, the following subset of 
$\frac{1}{2}\Lambda$ is a finite set:
$$ \bigcup_{\xi\in C^\circ} 
\Lambda \cap \text{Eff}_\xi(\delta_1,L) \
\cup\  \bigcup_{\xi\in C\setminus C^\circ} 
\frac{1}{2}\Lambda\cap
\rho_*\text{Eff}_\xi(\delta_1,L)\  \ \subset\ \frac{1}{2}
\Lambda\,.$$
Since the rank of $\Lambda$ is 3, we can easily find $\delta_2\in \Lambda$
such that $$\mathbb{Z} \delta_1\oplus
\mathbb{Z}\delta_2 \subset \Lambda$$ is primitive and 
conditions (ii) and (iii) are satisfied.

Finally, since $\text{Eff}_X(\delta_1,L)$ is a finite set,
condition (iv) is easily satisfied when $\delta_2$ is
selected as above.
Since 
$$\text{Eff}_X(\delta_1,L) \rightarrow \text{Hom}(\Lambda,\mathbb{Z})
\rightarrow \text{Hom}(\widehat\Lambda,\mathbb{Z})\,,$$
condition (iv) states there is no loss of information
for a class in $\text{Eff}_X(\delta_1,L)$ in taking
degrees in $\widehat{\Lambda}$ instead of $\Lambda$.

\subsection{Non-isolated contributions: smoothing}\label{ssmo}
Let $\widehat{\Lambda}\subset \Lambda$ be the rank 2
lattice selected in Section \ref{lsel}.
We consider here the moduli space of 
$\widehat{\Lambda}$-polarized $K3$ surfaces which is
1 dimension larger than the moduli of $\Lambda$-polarized
$K3$ surfaces,
$$\mathcal{M}_\Lambda \subset \mathcal{M}_{\widehat{\Lambda}}\,.$$

To the curve $C \subset \mathcal{M}_\Lambda$, we can attach
a nonsingular complete curve $C' \subset \mathcal{M}_{\widehat{\Lambda}}$
satisfying the following properties:
\begin{enumerate}
\item[(1)] $C \cup C'$ is a connected nodal curve with nodes
occurring at very general points of $C$,
\item[(2)] $C'$ does not lie in any of the finitely many
Noether-Lefschetz
divisors of $\mathcal{M}_{\widehat{\Lambda}}$ 
determined by
 $$\Lambda \cap \text{Eff}_\xi(\delta_1,L) \ \ \text{for} \ \ \xi\in C\,,$$
and $C'$ is transverse to 
the Noether-Lefschetz divisor of nodal $K3$ surfaces, 
\item[(3)] $C\cup C'$ smooths in $\mathcal{M}_{\widehat{\Lambda}}$
to a nonsingular curve 
$$C''\subset \mathcal{M}_{\widehat{\Lambda}}$$
which also does not lie in any of the finitely many
Noether-Lefschetz divisors listed in (2) and is also
transverse to the nodal divisor.
\end{enumerate}

Let $\xi \in C$ and let $\beta\in \Lambda \cap \text{Eff}_\xi(\delta_1,L)$. By the construction of $\widehat{\Lambda}$ in Section \ref{lsel},
$\beta\notin \widehat{\Lambda}$.
Let 
$$e_1=\langle \delta_1\,,\beta\rangle\,,\ \  e_2=\langle \delta_2,\beta\rangle , 
\ \ 
2h-2=\langle \beta,\beta\rangle \,.$$
By the Hodge index theorem, the intersection
form on the lattice generated by $\widehat{\Lambda}$ and $\beta$ is of
signature
$(1,2)$. In particular, the discriminant  
$$\Delta(\widehat{\Lambda}\oplus \mathbb{Z}\beta) = (-1)^{2} \det
\begin{pmatrix}
\langle \delta_{1},\delta_{1}\rangle & \langle \delta_1,\delta_2\rangle
 & e_1 \\
\langle \delta_{2},\delta_{1}\rangle & \langle \delta_2,\delta_2\rangle
 & e_2 \\
 e_1 & e_2 & 
2h-2 
\end{pmatrix}$$
is positive. Hence, by the construction of Section \ref{nld}, 
the Noether-Lefschetz divisor
$$D_{m,h,(e_1,e_2)} \subset \mathcal{M}_{\widehat{\Lambda}}$$
is of pure codimension 1.
Conditions (2) and (3) require $C'$ and $C''$ respectively
not to lie in the Noether-Lefschetz divisors obtained from
 $$\Lambda \cap \text{Eff}_\xi(\delta_1,L)\,, \ \ \ \text{for} \ \xi\in C \ $$
and the nodal Noether-Lefschetz locus $D_{1,0,(0,0)}$.
Together, these are finitely many proper divisors in 
$\mathcal{M}_{\widehat{\Lambda}}$.

There is no difficulty in finding $C'$ and the smoothing $C''$
since the moduli space $\mathcal{M}_{\widehat{\Lambda}}$
has a Satake compactification as a projective variety of dimension
18 with boundary of dimension 1. We first find a nonsingular
projective surface 
\begin{equation}\label{rt2222}
Y\subset \mathcal{M}_{\widehat{\Lambda}}
\end{equation}
which contains
$C$ and does {\em not} lie in any of the Noether-Lefschetz
divisors listed in (2). Then $C'$ is chosen to be a very ample
section of $Y$ whose union with $C$ smooths to a very ample
section $C''$ of $Y$. As divisor classes
$$[C]+[C']=[C''] \in \text{Pic}(Y)\,, $$
and the smoothing occurs as a pencil of divisors in the
linear series on $Y$.

\subsection{Non-isolated contributions: degeneration} 
\label{nonano}
Let $X$, $X'$, and $X''$ denote the families of $\widehat{\Lambda}$-polarized
$K3$ surfaces
$$\pi: X \rightarrow C\,, \ \ \pi': X' \rightarrow C'\,, \ \ 
\pi'': X'' \rightarrow C''$$
obtained from the curves $C,C',C'' \subset \mathcal{M}_{\widehat{\Lambda}}$.
By the transversality conditions in (2) and (3) of Section
\ref{ssmo} with respect to the
nodal Noether-Lefschetz divisor, the families $X'$ and $X''$ are
nonsingular
and have only finitely many nodal fibers.
Of course, $\pi$ is just 
$$\pi_3:\XX\rightarrow C$$ viewed with a different lattice
polarization.
As $C''$ degenerates to the curve
$C\cup C'$, we obtain a degeneration of
3-folds
$$ X'' \rightsquigarrow X\cup X'$$
with nonsingular total space.

Consider the degeneration formula for stable pairs invariants
of $X''$ in fiber classes.
The degeneration formula expresses such stable pairs invariants
of $X''$ in terms of the relative stable pairs
invariants of
$$X\ /\ \pi^{-1}(C\cap C') \  \ \ \ \text{and} \ \ \
X'\ /\ (\pi')^{-1}(C\cap C')\ $$
in fiber classes.
Degeneration to the normal cone of $\pi^{-1}(C\cap C') \subset X$
(together with usual $K3$ vanishing via the reduced theory) 
shows the relative invariants of
$X/ \pi^{-1}(C\cap C')$ equal the standard stable pairs invariants
of $X$ in fiber classes. Similarly, the relative invariants
of $X'/(\pi')^{-1}(C\cap C')$ equal the standard stable pairs invariants
of $X'$.

We index the fiber class invariants of $X$, $X'$, and $X''$ by the
degrees measured against $\delta_1$ and $\delta_2$.
The stable pairs partition functions are:
\begin{eqnarray*}
\ZZ_{\mathsf{P}}(X;q)& =& 1 + \sum_{(e_1,e_2)\neq (0,0)}  
 \ZZ_{\mathsf{P}}\Big(X;q\Big)_{(e_1,e_2)} v_1^{e_1}v_2^{e_2}\,,\\
\ZZ_{\mathsf{P}}(X';q)& =& 1 + \sum_{(e_1,e_2)\neq (0,0)}  
 \ZZ_{\mathsf{P}}\Big(X';q\Big)_{(e_1,e_2)} v_1^{e_1}v_2^{e_2}\,,\\
\ZZ_{\mathsf{P}}(X'';q)& =& 1 + \sum_{(e_1,e_2)\neq (0,0)}  
 \ZZ_{\mathsf{P}}\Big(X'';q\Big)_{(e_1,e_2)} v_1^{e_1}v_2^{e_2} \,.
\end{eqnarray*}
Since $\delta_1$ is ample, only terms with $e_1>0$
can occur in the above sums.
The degeneration formula yields the following result.

\begin{Proposition} \label{party} We have
$$\ZZ_{\mathsf{P}}(X'';q) = \ZZ_{\mathsf{P}}(X;q)\cdot \ZZ_{\mathsf{P}}(X';q)\,.$$
\end{Proposition}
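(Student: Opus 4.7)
My plan is to apply the stable pairs degeneration formula of Li--Wu \cite{LiWu} to the one-parameter smoothing $X'' \rightsquigarrow X \cup_D X'$ with $D = \pi^{-1}(C \cap C')$, and then collapse the sum over relative incidence conditions using the $K3$ vanishing argument already invoked in the discussion preceding Proposition \ref{party}. First I would verify the formula applies: condition (3) of Section \ref{ssmo} was designed to ensure that $C''$ smooths $C \cup C'$ to a nodal family transverse to the nodal Noether-Lefschetz divisor, so the total space of the degeneration is smooth and $D$ is a disjoint union of smooth $K3$ fibers $S_1,\ldots,S_k$ over $C \cap C'$ (chosen at very general points). The Li--Wu formula then expresses $\ZZ_{\mathsf P}(X'';q)_{(e_1,e_2)}$ as a sum over splittings $\gamma_1 + \gamma_2 = (e_1,e_2)$ and compatible relative incidence $\eta$ on $D$ of products $\ZZ_{\mathsf P}(X/D;q)_{\gamma_1,\eta} \cdot \ZZ_{\mathsf P}(X'/D;q)_{\gamma_2,\eta}$.

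Next I would invoke the identification, already asserted in the paragraph preceding the Proposition, of relative fiber-class invariants of $X/D$ with the absolute fiber-class invariants of $X$, and similarly for $X'$. The argument is to further degenerate $X$ to the normal cone of $D \subset X$; since $D$ consists of fibers of $\pi$, the normal bundle is trivial and the rubber target is a disjoint union $\bigsqcup_i S_i \times \PP^1$ relative to $S_{i,0} \cup S_{i,\infty}$. Any nonzero fiber-class bubble contributes to the reduced stable pairs theory of such a rubber, which vanishes by the construction of Section \ref{rott}: the Kodaira--Spencer direction normal to $C \subset \mathcal{M}_{\widehat\Lambda}$ deforms each $S_i$ in a direction under which no effective fiber class $\beta$ remains of type $(1,1)$, so the semiregularity map of Proposition \ref{SR} produces a further reduction killing the virtual class. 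The crucial input here is precisely the content of conditions (ii)--(iii) of Section \ref{lsel} together with the transversality (2) of Section \ref{ssmo}: no effective fiber class on any $S_i$ lies in $\widehat\Lambda$, so every such $\beta$ is genuinely destabilized by the normal Kodaira--Spencer. Consequently $\ZZ_{\mathsf P}(X/D;q)_{\gamma,\eta}$ vanishes unless $\eta = \emptyset$, in which case it equals $\ZZ_{\mathsf P}(X;q)_\gamma$, and likewise for $X'$.

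Substituting these identities into the degeneration formula collapses the sum to
\[
\ZZ_{\mathsf P}(X'';q)_{(e_1,e_2)} \;=\; \sum_{\gamma_1 + \gamma_2 = (e_1,e_2)} \ZZ_{\mathsf P}(X;q)_{\gamma_1} \cdot \ZZ_{\mathsf P}(X';q)_{\gamma_2},
\]
which is precisely the coefficient of $v_1^{e_1} v_2^{e_2}$ in the product $\ZZ_{\mathsf P}(X;q) \cdot \ZZ_{\mathsf P}(X';q)$. The hardest step is the vanishing in the middle paragraph: one must ensure that the reduced-theory argument of Section \ref{rott}, which was written for a fixed $K3$-fibration satisfying condition $(\star)$, applies uniformly over all fibers $S_i$ of the \emph{larger} $\widehat\Lambda$-family on both the $X'' \rightsquigarrow X \cup X'$ degeneration and the auxiliary normal-cone degeneration. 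This should follow by applying the semiregularity construction \eqref{SRmap} fiberwise, with the nonvanishing \eqref{twitor} of $\int_{\beta} \kappa \!\cdot\! \sigma$ now supplied uniformly by the choice of $\widehat\Lambda$, but the bookkeeping of the two nested degenerations requires care.
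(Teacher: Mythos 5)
Your overall route is the one the paper takes: degenerate $X''$ to $X\cup X'$ along $D=\pi^{-1}(C\cap C')$, note that for fiber classes the gluing data along $D$ are trivial, and then identify the relative invariants of $X/D$ and $X'/D$ with the absolute ones by degenerating to the normal cone of $D$ and killing the bubble contributions by $K3$ vanishing. (For the triviality of the relative conditions there is a simpler reason than the one you give: a fiber class has intersection number zero with the relative divisor $D$, so only the empty incidence condition $\eta$ occurs.)

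The genuine problem is the justification you offer for the key vanishing. You assert that the \emph{reduced} stable pairs theory of the rubber $\bigsqcup_i S_i\times\PP^1$ vanishes because the Kodaira--Spencer direction normal to $C\subset\mathcal{M}_{\widehat\Lambda}$ supplies a \emph{further} reduction, with conditions (ii)--(iii) of Section \ref{lsel} as the crucial input. This is false as stated and also not what is needed. The reduced theory of such a rubber does not vanish in general -- it is precisely the invariant $R^{\red}_{n,\beta}$ that the rest of the paper computes; the semiregularity cosection \eqref{SRmap} is built from $\sigma$ and the Atiyah class alone (a destabilizing class $\kappa$ enters only in proving its surjectivity), so a second destabilizing direction does not produce a second, independent cosection and hence no ``further reduction.'' Moreover, the claim that the normal direction destabilizes every effective fiber class is not secured by what you cite: condition (2) of Section \ref{ssmo} only says $C'$ is not \emph{contained} in the relevant Noether--Lefschetz divisors, not that it meets them transversally at the points of $C\cap C'$, so $\int_\beta\kappa'\ip\sigma$ could a priori vanish there. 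Fortunately none of this is required. Since $\ZZ_{\mathsf{P}}(X)$, $\ZZ_{\mathsf{P}}(X')$, $\ZZ_{\mathsf{P}}(X'')$ are \emph{standard} (unreduced) invariants, the bubble factors in the normal-cone degeneration carry the standard virtual class of $P_n(S_i\times\PP^1/\mathrm{fibers},\beta)$ with $\beta\neq 0$ a fiber class, and this vanishes after a \emph{single} reduction: for any nonzero $(1,1)$-class $\beta$ on the fixed $K3$ surface $S_i$ there exists some first-order deformation $\kappa$ (e.g.\ the twistor direction) with $\int_\beta\kappa\ip\sigma\neq0$, so the semiregularity map is onto and $[\,\cdot\,]^{\vir}=c_1(\O)\cdot[\,\cdot\,]^{\red}=0$, exactly as in the Lemma at the end of Section \ref{defy}. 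Replacing your middle paragraph by this intrinsic ``usual $K3$ vanishing via the reduced theory'' -- which needs no reference to $\widehat\Lambda$ or to the ambient family -- repairs the argument and brings it in line with the paper's proof.
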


Let $\widetilde{F}^X$, $\widetilde{F}^{X'}$, and $\widetilde{F}^{X''}$
denote the logarithms of the partition functions of 
$X$, $X'$, and $X''$ respectively. Proposition \ref{party} yields
the relation:
\begin{equation}\label{xfft}
\widetilde{F}^{X''} = \widetilde{F}^{X} +\widetilde{F}^{X'\,}.
\end{equation}

We now restrict ourselves to fiber classes of $\delta_1$-degree
bounded by $L$ (as specified in the construction of 
$\widehat{\Lambda}$ in Section \ref{lsel}). For such classes
on $X'$, we will divide $\widetilde{F}^{X'}$ into two summands.

\begin{Lemma}\label{hkkh}
There are no curves of $X'$ in class
 $\beta \in H_2(X',\mathbb{Z})^{\pi'}$
of $\delta_1$-degree bounded by $L$ which move in
families
dominating $X'$.
\end{Lemma}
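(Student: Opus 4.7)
The plan is to argue by contradiction. Suppose $\beta \in H_2(X',\mathbb{Z})^{\pi'}$ has $\delta_1$-degree at most $L$ and is represented by a family of curves dominating $X'$. I will derive a contradiction by showing that $\beta$ must be effective on $\XX_\xi$ for some node $\xi$ of $C \cup C'$, and then invoking either condition (ii) of Section \ref{lsel} or condition (2) of Section \ref{ssmo}.

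First, I would make the ``dominating family'' hypothesis precise by introducing the relative Hilbert scheme $H$ parameterizing curves of class $\beta$ in fibers of $\pi'$, together with its universal curve $\mathcal{C}_H \subset H \times_{C'} X'$ and evaluation map $\mathcal{C}_H \to X'$. The hypothesis that curves of class $\beta$ cover $X'$ means this evaluation map is dominant. Composing with $X' \to C'$ shows $H \to C'$ is dominant, and since the relative Hilbert scheme of curves on the $K3$-fibers is proper over $C'$, the morphism $H \to C'$ is surjective. Hence every fiber $X'_{\xi'}$ carries a (possibly reducible) curve of class $\beta$; in particular $\beta$ is effective on every fiber of $\pi'$. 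Now select any node $\xi \in C \cap C'$. By condition (1) of Section \ref{ssmo} the node lies at a very general point of $C$, so $\xi \in C^\circ$, and $X'_\xi = \XX_\xi$ is a smooth $\Lambda$-polarized $K3$ surface with $\text{Pic}(\XX_\xi) = \Lambda$ by \eqref{mmqq22}. Effectiveness of $\beta$ on $\XX_\xi$ combined with the degree bound gives $\beta \in \Lambda \cap \text{Eff}_\xi(\delta_1,L)$.

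To conclude, I would split into two cases. If $\beta \in \widehat{\Lambda}$, then $\beta \in \widehat{\Lambda} \cap \text{Eff}_\xi(\delta_1,L)$, directly violating condition (ii) of Section \ref{lsel}. Otherwise $\beta \in \Lambda \setminus \widehat{\Lambda}$, and effectiveness on every $X'_{\xi'}$ forces $\beta \in \text{Pic}(X'_{\xi'})$ for all $\xi' \in C'$. Since the generic Picard group of a $\widehat{\Lambda}$-polarized family is $\widehat{\Lambda}$, this containment forces every $\xi' \in C'$ into the Noether-Lefschetz divisor $D_\beta \subset \mathcal{M}_{\widehat{\Lambda}}$ determined by the intersection numbers $\langle \delta_1,\beta\rangle$, $\langle \delta_2,\beta\rangle$, $\langle \beta,\beta\rangle$. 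Therefore $C' \subset D_\beta$. But $\beta \in \Lambda \cap \text{Eff}_\xi(\delta_1,L)$ is exactly one of the classes catalogued in condition (2) of Section \ref{ssmo}, so $C'$ was chosen to avoid $D_\beta$, contradiction. The main subtlety will be the specialization step: one must check carefully that specializing a flat family of curves of class $\beta$ to the fiber at a node gives an effective cycle whose total class is still $\beta$, and that the monodromy-invariant bookkeeping in the second case correctly matches the Noether-Lefschetz divisor from condition (2).
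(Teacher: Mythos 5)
Your argument is correct and is essentially the paper's own proof: specialize the hypothetical dominating family to the fibers over the nodes $C\cap C'$, use that these are very general points of $C$ so that $\text{Pic}(X'_{\xi'})\cong\Lambda$ and the resulting effective class lies in $\Lambda\cap\text{Eff}_{\xi'}(\delta_1,L)$, and contradict the choice of $C'$ to avoid the Noether--Lefschetz divisors listed in condition (2) of Section \ref{ssmo}. Your separate case $\beta\in\widehat{\Lambda}$ is already excluded in the paper by conditions (ii)--(iii) of Section \ref{lsel} (which is exactly why the relevant Noether--Lefschetz loci are proper divisors), so the two arguments coincide in substance.
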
 

\begin{proof}
If a such a family of curves were to dominate $X'$, then 
every fiber $X'_{\xi'}$ as $\xi'$ varies in  $C'$ would contain an 
effective curve class
of the family. In particular, the fibers over $\xi'\in C \cap C'$
would contain such effective curves.
By construction,
$\xi'\in C \cap C'$ is a very general point of $C$. Therefore,
$$\text{Pic}(X'_{\xi'})=\text{Pic}(X_{\xi'}) \cong \Lambda\,.$$
Since $C'$ was chosen {\em not} to lie in any Noether-Lefschetz
divisors associated to effective curves on $X_{\xi'}$ in $\Lambda$ of 
$\delta_1$-degree bounded by $L$, we have a contradiction.
\end{proof}

By Lemma \ref{hkkh}, we can separate the contributions of
the components of $P_n(X',\beta)$ by the points $\xi'\in C'$
over which they lie:
\begin{equation}\label{xfft2}
\widetilde{F}^{X',L} = \sum_{\xi'\in C\cap C'} \widetilde{F}^{X',L}_{\xi'}
\ +\  \widetilde{F}^{X',L}_{C'\setminus (C\cap C')}\,.
\end{equation}
Here, $\widetilde{F}^{X',L}$ is the $\delta_1$-degree $L$
truncation of $\widetilde{F}^{X'}$.
For each ${\xi'\in C\cap C'}$, 
$\widetilde{F}^{X',L}_{\xi'}$ is the $\delta_1$-degree $L$
truncation of
$\log(\ZZ_{P,\xi'}^L(X';q))$, the logarithm of the
truncated stable pairs partition functions of moduli component
contributions over
$\xi'$. Finally, $\widetilde{F}^{X',L}_{C'\setminus (C\cap C')}$
is the $\delta_1$-degree $L$
truncation of
$\log(\ZZ_{P,C'\setminus(C\cap C')}^L(X';q))$, the logarithm of the
truncated stable pairs partition functions of 
contributions over
$C'\setminus (C\cap\ C')$.

\begin{Lemma} For $\xi'\in C\cap C'$, we have \label{w990}
$$\widetilde{F}^{X',L}_{\xi'} =
\sum_{\beta\in \text{\em Eff}_{\xi'}(\delta_1,L)} 
\sum_{n\in \mathbb{Z}} 
q^n v_1^{e^\beta_1}v_2^{e^\beta_2}\ 
\widetilde{R}_{n, m_\beta,h_\beta}
\cdot \text{\em Cont}\big(X'_{\xi'},\beta, NL^{\pi'}_{m_\beta,h_\beta, (e^\beta_1,e^\beta_2)}\big)\,.$$
\end{Lemma}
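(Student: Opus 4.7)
The plan is to verify that every $\beta \in \text{Eff}_{\xi'}(\delta_1, L)$ is completely isolated with respect to the family $\pi': X' \to C'$ at $\xi'$, and then apply Proposition \ref{crt} term by term. Once complete isolation is in hand, the lemma is essentially a formal bookkeeping exercise.

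First, I would establish complete isolation. By construction of Section \ref{ssmo}, the nodes of $C \cup C'$ occur at very general points of $C$, so for $\xi' \in C \cap C'$ we have $\text{Pic}(X'_{\xi'}) \cong \text{Pic}(X_{\xi'}) \cong \Lambda$. Any effective decomposition $\beta = \sum_i \gamma_i$ in $\text{Pic}(X'_{\xi'})$ therefore has $\gamma_i \in \Lambda$, and the additivity of $\delta_1$-degree on effective classes together with the positivity of $\delta_1$ on effective curves forces $\gamma_i \in \Lambda \cap \text{Eff}_{\xi'}(\delta_1, L)$. Condition~(2) on $C'$ in Section \ref{ssmo} forbids $C'$ from lying in any Noether-Lefschetz divisor of $\mathcal{M}_{\widehat{\Lambda}}$ determined by such a class $\gamma_i$. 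Hence $\text{NL}(\gamma_i) \cap C'$ is $0$-dimensional near $\xi'$, so $\xi'$ is an isolated point of $\text{NL}(\gamma_i) \subset C'$, verifying condition~$(\star\star)$ of Section \ref{iso1}.

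Next I would identify the contributions class by class. By the definition in Section \ref{nonano}, $\widetilde{F}^{X',L}_{\xi'}$ is the $\delta_1$-degree $\leq L$ truncation of the logarithm of $\ZZ_{P,\xi'}^L(X'; q)$, whose argument is built from stable pair integrals $\int_{[P_n^{\xi'}(X', \gamma)]^{\vir}} 1$ over effective $\gamma \in \text{Pic}(X'_{\xi'})$ of $\delta_1$-degree at most $L$. Condition~(iv) of Section \ref{lsel}, applied to the Picard lattice $\Lambda$ at $\xi'$, shows that the pair $(e_1, e_2) = (\langle \delta_1, \gamma \rangle, \langle \delta_2, \gamma \rangle)$ determines $\gamma$ uniquely among classes in $\text{Eff}_{\xi'}(\delta_1, L)$, so distinct classes contribute to distinct monomials $v_1^{e_1} v_2^{e_2}$ and no mixing occurs. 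Extracting the $v_1^{e_1^\beta} v_2^{e_2^\beta}$ coefficient of the truncated log and comparing with definition \eqref{gttl2} identifies the $q^n$ coefficient with $\text{Cont}(X'_{\xi'}, \beta, \widetilde{N}^{X'}_{n, \dd})$.

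Finally, Proposition \ref{crt} applied to each completely isolated $\beta$ (now with the family $\pi'$ in place of $\pi_3$ and degrees indexed by $\widehat{\Lambda}$) yields
$$\text{Cont}(X'_{\xi'}, \beta, \widetilde{N}^{X'}_{n, \dd}) = \widetilde{R}_{n, m_\beta, h_\beta} \cdot \text{Cont}\big(X'_{\xi'}, \beta, NL^{\pi'}_{m_\beta, h_\beta, (e_1^\beta, e_2^\beta)}\big)\,,$$
and summing over $\beta \in \text{Eff}_{\xi'}(\delta_1, L)$ and $n \in \mathbb{Z}$ gives the stated formula. The only nontrivial input is the complete isolation check: this is precisely the property that the elaborate choice of the sublattice $\widehat{\Lambda}$ in Section \ref{lsel} and the smoothing construction of Section \ref{ssmo} were engineered to deliver.
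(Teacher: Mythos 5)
Your proposal is correct and takes essentially the same route as the paper: the paper deduces complete isolation of every $\beta\in\text{Eff}_{\xi'}(\delta_1,L)$ from Lemma \ref{hkkh}, whose proof is precisely your direct check (the node $\xi'$ is a very general point of $C$, so $\text{Pic}(X'_{\xi'})\cong\Lambda$, and condition (2) of Section \ref{ssmo} keeps $C'$ out of the relevant Noether--Lefschetz divisors), and then applies Proposition \ref{crt} to the $\widehat{\Lambda}$-polarized family $\pi'$ class by class. Your additional appeal to condition (iv) for ``no mixing'' of monomials is harmless but not actually needed, since the stated formula simply sums contributions sharing the same $(e_1^\beta,e_2^\beta)$.
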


\noindent Here, $m_\beta$ denotes the  divisibility of $\beta$, and
$$2h_\beta-2 =\langle \beta,\beta \rangle\,,\ \  e_1^\beta =
\langle \delta_1,\beta \rangle\,, \ \ 
e_2^\beta=\langle \delta_2,\beta \rangle\,.$$

\begin{proof}
By Lemma \ref{hkkh}, every class ${\beta\in \text{Eff}_{\xi'}(\delta_1,L)}$
is completely isolated with respect to the family $\pi'$.
Hence, we may apply Proposition \ref{crt} to the contributions of
$\beta$ to the $\widehat{\Lambda}$-polarized family $\pi'$.
\end{proof}

\subsection{Nonisolated contributions: analysis of $C''$}
\label{nonan}
By the construction of $C''$ in Section \ref{ssmo},
$$C,C',C'' \subset Y \subset \mathcal{M}_{\widehat{\Lambda}}$$
where $Y$ is the nonsingular projective surface \eqref{rt2222} not contained
in any of the Noether-Lefschetz divisors listed in 
condition (2). Both $C'$ and $C''$ were constructed as
 very ample divisors on $Y$
which also do not lie in the Noether-Lefschetz loci listed in (2).
Since $$[C]+[C']=[C'']\in\text{Pic}(Y)\,,$$
 we have
$$\langle [C], [C] \rangle_Y + \langle [C], [C']\rangle_Y = 
\langle [C],[C'']\rangle_Y >0\,.$$
Let $r=\langle [C],[C'']\rangle_Y$, and let $\zeta_1,\ldots,\zeta_r$ be the
$r$ distinct intersection points of $C$ with $C''$.
We can choose $C''$ so the $\zeta_i$ 
are very general points of $C$. In particular
\begin{equation}\label{zqq7}
\zeta_i \notin C\cap C'\,.
\end{equation}
The degeneration of $C''$ to $C\cup C'$ occurs in the pencil on $Y$
spanned by $C''$ and $C\cup C'$.

Let $D_{m,h,(e_1,e_2)}$ occur in the finite list of Noether-Lefschetz divisors
given in condition (2) of Section \ref{ssmo}.
Since $Y$ does {\em not} lie in $D_{m,h,(e_1,e_2)}$, the intersection
$$D_{m,h,(e_1,e_2)} \cap Y \subset Y$$
is a proper divisor. We write
$$D_{m,h,(e_1,e_2)}\cap Y = w[C] + \sum_{j} w_j [T_j]\,, \ \ \  w,w_j\geq 0$$
where $w$ is the multiplicity of $C$ and the $T_j\subset Y$
are curves not containing $C$.
By the genericity hypotheses,
\begin{equation}\label{rrff}
C \cap C' \cap T_j = C \cap C'' \cap T_j = \emptyset\,.
\end{equation}

\begin{Lemma}\label{hkkh2}
There are no curves of $X''$ in class
 $\beta \in H_2(X'',\mathbb{Z})^{\pi''}$
of $\delta_1$-degree bounded by $L$ which move in
families
dominating $C''$.
\end{Lemma} 

\begin{proof}
If a such a family of curves were to dominate $C''$, then 
every fiber $X''_{\xi''}$ as $\xi''$ varies in  $C''$ would contain an 
effective curve class
of the family. In particular, the fibers over $\zeta\in C \cap C''$
would contain such effective curves.
By construction,
$\zeta\in C \cap C''$ is a very general point of $C$. Therefore,
$$\text{Pic}(X''_{\zeta})=\text{Pic}(X_{\zeta}) \cong \Lambda\,.$$
Since $C''$ was chosen {\em not} to lie in any Noether-Lefschetz
divisors associated to effective curves on $X_{\zeta}$ in $\Lambda$ of 
$\delta_1$-degree bounded by $L$, we have a contradiction.
\end{proof}

We now consider the family of nonsingular curves $C''_t$ in the pencil as $C''$
degenerates to $C\cup C'$. Here $t$ varies in $\Delta$, the base of the
pencil. The  total space of the pencil is
$$\mathcal{C}'' \rightarrow \Delta$$
with special fiber 
$$C_0''=C\cup C'\ \ \text{over}\ \ \ 0\in \Delta\,.$$
For fixed $t\neq 0$,
the stable pairs theory in $\delta_1$-degree bounded by $L$ 
for each $C''_{t}$ can be separated, by Lemma \ref{hkkh2},
into contributions over isolated points of $C''_t$.
The union of these support points for the stable pairs theory
of $C''_t$ defines an algebraic curve 
$$\text{Supp}\subset \mathcal{C}''\setminus C''_0 \rightarrow \Delta_{t\neq 0}\ $$
in the total space of
the pencil. Precisely, $\text{Supp}$ equals the set
$$\{ (t,p) \ | \ t\neq 0,\  p\in C''_t,\ 
X''_{(t,p)} {\text{carries an effective curve of degree}} \leq L \} \,.$$

\begin{Lemma}\label{g999}
The closure $\overline{\text{\em Supp}}\subset \mathcal{C}''$  contains no
components which 
intersect the special fiber $C''_0=C\cup C'$ in $C\cap C'$.
\end{Lemma}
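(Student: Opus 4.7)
The plan is to argue locally near each point $p\in C\cap C'$ and show that $\text{Supp}\cap C''_t$ is empty in a neighborhood of $p$ for every sufficiently small $t\neq 0$. Since the components of $\overline{\text{Supp}}$ that dominate $\Delta$ accumulate on $C''_0$ only at limits of points of $\text{Supp}$, and components contained in a single fiber $C''_t$ with $t\neq 0$ automatically miss $C''_0$, this local vanishing will force no component of $\overline{\text{Supp}}$ to meet $C\cap C'$.

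First, I would observe that $\text{Supp}$ is set-theoretically contained in the union $\bigcup_{(m,h,(e_1,e_2))}(D_{m,h,(e_1,e_2)}\cap Y)\cap\mathcal{C}''$ over the finite collection of Noether-Lefschetz divisors appearing in condition (2) of Section \ref{ssmo} (finiteness being the boundedness of $\text{Eff}_X(\delta_1,L)$ used in Section \ref{lsel}). From the decomposition $D_{m,h,(e_1,e_2)}\cap Y = wC+\sum_j w_jT_j$ combined with the genericity identity \eqref{rrff}, none of the curves $T_j$ passes through any point of $C\cap C'$. Hence in a neighborhood of $p\in C\cap C'$ in $Y$, every such NL divisor reduces set-theoretically to a (multiple of a) branch of $C$, so $\text{Supp}\cap C''_t$ near $p$ is contained in the set-theoretic intersection $C\cap C''_t$.

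Next, I would exploit the local structure of the pencil. Because $C'$ is a general very ample section, Bertini guarantees that $C$ and $C'$ both are smooth at $p$ and meet transversely, so $C\cup C'$ has an ordinary node there. By \eqref{zqq7} one has $p\notin C\cap C''$, so choosing local coordinates $(x,y)$ on $Y$ near $p$ with $C=\{y=0\}$ and $C'=\{x=0\}$ and writing the pencil in the form $s_0 + t\,s_\infty = 0$, where $s_0$ locally cuts out $C\cup C'$ and $s_\infty$ locally cuts out $C''$, one has $s_0 = xy$ and $s_\infty = g(x,y)$ with $g(0,0)\neq 0$. Thus $C''_t$ is locally defined by $xy + t\,g(x,y) = 0$, and its set-theoretic intersection with $C=\{y=0\}$ is cut out by $t\,g(x,0)=0$, which is empty in a fixed neighborhood of $p$ for $0<|t|\ll 1$.

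Combining the two steps, $\text{Supp}\cap C''_t$ is empty in a fixed neighborhood of $p$ for all small $t\neq 0$, so no limit point of $\text{Supp}$ as $t\to 0$ lies at $p$. Since $p\in C\cap C'$ was arbitrary and $C\cap C'$ is finite, $\overline{\text{Supp}}$ contains no limit point in $C\cap C'$, and therefore no irreducible component of $\overline{\text{Supp}}$ meets $C\cap C'$. The only delicate issue will be justifying the local form $s_0=xy$, $s_\infty=g$ with $g(0,0)\neq 0$ — but $s_0=xy$ is immediate from $C\cup C'$ having an ordinary node at $p$, and the nonvanishing of $g(0,0)$ is exactly the content of $p\notin C''$ recorded in \eqref{zqq7}; every remaining check is routine.
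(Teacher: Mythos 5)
Your second and third steps are sound and essentially reproduce the paper's argument: once one knows that a component of $\overline{\text{Supp}}$ through a point $p\in C\cap C'$ must lie in one of the Noether--Lefschetz divisors listed in condition (2) of Section \ref{ssmo}, the decomposition $D_{m,h,(e_1,e_2)}\cap Y = w[C]+\sum_j w_j[T_j]$ together with \eqref{rrff} eliminates the $T_j$, and your local equation $xy+t\,g=0$ for the pencil (with $g(p)\ne 0$ by \eqref{zqq7}) is just an explicit version of the paper's remark that $C''_t\cap C$ is the constant base locus $\{\zeta_1,\dots,\zeta_r\}$, which avoids $p$.

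The gap is in your first step. The claim that $\text{Supp}$ is globally contained in the union of the Noether--Lefschetz divisors of condition (2) is unjustified, and as stated it is not true: a point $(t,p)\in\text{Supp}$ records an effective class $\gamma\in\text{Pic}(X''_{(t,p)})$, and for $t\ne 0$ these Picard lattices contain $\widehat{\Lambda}$ but have nothing to do with $\Lambda$, so the Noether--Lefschetz divisor of $\mathcal{M}_{\widehat{\Lambda}}$ through $(t,p)$ determined by $\gamma$ need not be among the finitely many divisors "determined by $\Lambda\cap\text{Eff}_\xi(\delta_1,L)$ for $\xi\in C$" of condition (2). (Indeed most of $\text{Supp}$ — e.g.\ the sections $B_j$ — lies on divisors not on that list.) What the argument actually needs, and what the paper supplies, is a specialization statement at the point $p$ itself: if a component $Z\subset\overline{\text{Supp}}$ meets $p\in C\cap C'$, then along $Z$ some effective class of $\delta_1$-degree at most $L$ stays algebraic, and its limit at $p$ is again effective of bounded degree (properness of the relevant Chow/Hilbert schemes) and lies in $\text{Pic}(X_p)\cong\Lambda$, because the points of $C\cap C'$ were chosen to be very general on $C$ so that \eqref{mmqq22} applies. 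Only after this step does one know the divisor containing $Z$ is on the list (2), so that your reduction to $w[C]+\sum_j w_j[T_j]$ near $p$ is legitimate; without it, an unlisted Noether--Lefschetz divisor through $p$ could a priori carry points of $\text{Supp}$ accumulating at $p$, and your local analysis never rules this out.
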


\begin{proof}
If a component $Z\subset \overline{\text{Supp}}$ 
meets $\xi'\in C\cap C'$, then there
must be an effective curve class $\beta\in \text{Pic}(X_{\xi'})$
which remains effective (and algebraic) on all of $Z$.
By construction,
$$\text{Pic}(X_{\xi'}) =\Lambda\,.$$
Hence, $Z$ must be contained in the Noether-Lefschetz
divisor at $\xi'$ corresponding to $\beta$.
The latter Noether-Lefschetz divisor is on the list 
specified in condition (2) of Section \ref{ssmo} and hence
takes the form
$$D_{m,h,(e_1,e_2)}=w[C] + \sum_{j} w_j [T_j]\,.$$
The intersection of $C''_{t\neq 0}$ with $C$ is
always $\{\zeta_1,\ldots, \zeta_r\}$ since $C''_t$ is
a pencil. By condition \eqref{zqq7}, $\xi'$ is not a limit.
The intersection of $C''_{t\neq 0}$ with $T_j$ can not 
have limit $\xi'$ by \eqref{rrff}.
\end{proof}

The subvariety $\overline{\text{Supp}} \subset \mathcal{C}''$
is proper (by Lemma \ref{g999}) and 
therefore consists of finitely many curves and points.
Let $$\text{Supp}_1 \subset \overline{\text{Supp}}$$
denote the union of the 1-dimensional components of 
$\overline{\text{Supp}}$.
By Lemmas \ref{hkkh2} and \ref{g999}
no such component lies in the fibers of the pencil
$$\mathcal{C}'' \rightarrow \Delta\,.$$
Hence, after a base change $\epsilon:\widetilde{\Delta} \rightarrow \Delta$
possibly ramified over $0\in \Delta$, the pull-back of 
$\widetilde{\text{Supp}}_1$ to the
pulled-back family
$$\widetilde{\mathcal C}'' \rightarrow \widetilde{\Delta}$$
is a {\em union of sections}.
We divide the sections into two types
$$\widetilde{\text{Supp}}_1 = \bigcup_i A_i \ \cup \ \bigcup_j B_j $$
by the values of the sections over $0\in \widetilde{\Delta}$:
$$A_i(0)\in C \subset C''_0\,,  \ \ \ B_j(0)\in C' \subset C''_0 \,.$$
By Lemma \ref{hkkh2}, no section meets
$C \cap C'$ over $0\in \widetilde{\Delta}$.

The stable pairs partition functions of every 3-fold
$$X''_t \rightarrow \widetilde{C}''_t\,, \ \ \ t\neq 0$$
factors into into contributions over the sections $A_i(t)$
and $B_j(t)$,
\begin{equation}\label{gree}
\mathsf{Z}^L_{P}(X''_t,q)=
\left[\prod_{i} \mathsf{Z}^L_{P,A_i(t)}(X''_t,q)\ \cdot \
\prod_{j} \mathsf{Z}^L_{P,B_j(t)}(X''_t,q)\right]_{\leq L}\,.
\end{equation}
As before, the partition functions are all $\delta_1$-degree $L$
truncations.
The finitely many points
of ${\text{Supp}}\setminus \text{Supp}_1$
are easily seen not to contribute to \eqref{gree} by deformation
invariance of the virtual class.
We can take the logarithm,
$$\widetilde{F}^{X''_t,L} =
\sum_{i} \widetilde{F}_{A_i(t)}^{X''_t,L} \ + \
\sum_{j} \widetilde{F}_{B_j(t)}^{X''_t,L}\ .$$
Since the sections $B_j$ all meet $C'\setminus C\cap C'$
over $0\in \widetilde{\Delta}$, the moduli space
of stable pairs supported over
$$\widetilde{\mathcal C}'' \setminus \left(
C\  \cup \ \bigcup_i A_i\right)$$
is proper. Again, by deformation invariance of the
virtual class,
$$\sum_{j} \widetilde{F}_{B_j(t)}^{X''_t,L} = 
\widetilde{F}^{X',L}_{C'\setminus C\cap C'}\,.$$ 
Then relations \eqref{xfft} and \eqref{xfft2} imply
\begin{equation}\label{lzzs}
\widetilde{F}^{X,L}=
-\sum_{\xi'\in C\cap C'} \widetilde{F}^{X',L}_{\xi'}\ + \
\sum_{i} \widetilde{F}_{A_i(t)}^{X''_t,L}\ 
\end{equation}
for every $0\neq t\in \widetilde{\Delta}$.

\begin{Lemma}\label{nnn17}
For general $t\in \widetilde{\Delta}$, the section
$A_i(t)$ does {\em not} 
lie in 
the nodal Noether-Lefschetz locus in $\widetilde{C}_t''$. 
\end{Lemma}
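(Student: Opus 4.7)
The plan is to argue by contradiction. Suppose $A_i(t)$ lies in the nodal Noether-Lefschetz locus of $\widetilde{C}''_t$ for all $t\in \widetilde{\Delta}$ in a Zariski dense subset. Since $A_i$ is an irreducible section of the base-changed pencil, this forces the inclusion $A_i\subset N\cap \widetilde{\mathcal{C}}''$, where $N\subset \mathcal{M}_{\widehat{\Lambda}}$ is the nodal Noether-Lefschetz divisor. By Lemma \ref{g999}, the limit satisfies $A_i(0)\in C\setminus(C\cap C')$. Since the classifying map $C\to \mathcal{M}_{\widehat{\Lambda}}$ has nodal fibers exactly over $C\setminus C^\circ$, the point $A_i(0)$ must be one of the finitely many nodal fibers of the original family $\pi_3$.

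I would then extract an effective class from the support of stable pairs and specialize it to $t=0$. The section $A_i$ is a component of $\widetilde{\text{Supp}}_1$, so for each $t\neq 0$ the nodal fiber $X''_{A_i(t)}$ carries a stable pair whose support, pulled back to the resolution $\widetilde{X}''_{A_i(t)}$, defines a nonzero effective class $\beta(t)\in \text{Pic}(\widetilde{X}''_{A_i(t)})$ of $\delta_1$-degree at most $L$. By choosing $C''$ generically in the linear pencil on the surface $Y$ of Section \ref{ssmo}, I arrange that $A_i$ is not contained in any Noether-Lefschetz locus strictly refining $N$, so that for general $t$ the Picard lattice $\text{Pic}(\widetilde{X}''_{A_i(t)})$ equals precisely $\widehat{\Lambda}\oplus\mathbb{Z}\widetilde{E}$, where $\widetilde{E}$ denotes the exceptional $(-2)$-curve. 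Thus $\beta(t)\in \widehat{\Lambda}\oplus\mathbb{Z}\widetilde{E}$, and a flat limit (after a further base change if necessary) yields an effective class $\beta(0)\in \widehat{\Lambda}\oplus\mathbb{Z}\widetilde{E}\subset \text{Pic}(\widetilde{X}_{A_i(0)})$ of $\delta_1$-degree at most $L$.

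Since $\rho_*\widetilde{E}=0$, the push-forward satisfies $\rho_*\beta(0)\in \widehat{\Lambda}\subset \tfrac{1}{2}\widehat{\Lambda}$, and by construction $\rho_*\beta(0)\in \rho_*\text{Eff}_{A_i(0)}(\delta_1,L)$. Therefore $\rho_*\beta(0)\in \tfrac{1}{2}\widehat{\Lambda}\cap \rho_*\text{Eff}_{A_i(0)}(\delta_1,L)$, contradicting condition (iii) of Section \ref{lsel}. Hence $A_i\not\subset N\cap \widetilde{\mathcal{C}}''$, the intersection $A_i\cap N$ is finite, and excluding these finitely many values of $t$ proves the lemma.

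The main obstacle is the flat-limit step: one must rigorously control the specialization of the effective classes $\beta(t)$ on the resolutions $\widetilde{X}''_{A_i(t)}$ as the fiber degenerates to the resolution $\widetilde{X}_{A_i(0)}$ of the nodal $K3$. A careful treatment requires simultaneous resolution of the family of nodes along $A_i$ after suitable base change and an analysis of the flat limit of the stable pair support cycles, with the lattice containment $\beta(t)\in \widehat{\Lambda}\oplus\mathbb{Z}\widetilde{E}$ preserved in the limit. A second, more routine technical point is to verify that the generic choice of $C''$ within the linear system $|C\cup C'|$ on $Y$ can indeed avoid all refined Noether-Lefschetz loci of codimension at least two that might otherwise enlarge $\text{Pic}(\widetilde{X}''_{A_i(t)})$.
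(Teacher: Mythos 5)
Your overall strategy coincides with the paper's: assume $A_i(t)$ stays in the nodal locus, use genericity of the construction to pin down the Picard lattice of the resolution as $\widehat{\Lambda}\oplus\mathbb{Z}[\widetilde{E}]$ for very general $t$, extract from the stable-pair support an effective class of $\delta_1$-degree at most $L$, specialize to $t=0$ (which is then a nodal fiber of $X\to C$), push down by $\rho_*$, and contradict condition (iii) of Section \ref{lsel}. The difference lies in how the specialization is carried out, and that is precisely where your argument has a hole: you route the limit through the resolutions, which requires a simultaneous resolution (after base change) of the $1$-parameter family of nodal $K3$ surfaces along $A_i$ together with a flat-limit analysis of effective classes on the resolved family, and you explicitly leave this as an unproved obstacle rather than a completed step.

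The paper's proof shows that this machinery can be avoided entirely. Keep the effective curve $Q$ of $\delta_1$-degree at most $L$ on the nodal surfaces $X''_{t,A_i(t)}$ themselves; although $Q$ need not be Cartier, $2Q$ is, and the identification $\Pic(\widetilde{X}''_{t,A_i(t)})\cong\widehat{\Lambda}\oplus\mathbb{Z}[\widetilde{E}]$ at very general $t$ forces $2Q\in\widehat{\Lambda}$, i.e. $Q\in\frac{1}{2}\widehat{\Lambda}$. Membership in the sub-local-system $\frac{1}{2}\widehat{\Lambda}$ is a closed condition on the class of an algebraically varying effective cycle of bounded degree, so it persists at $t=0$ without resolving the family: properness of the relative Chow/Hilbert scheme gives an effective limit cycle on the nodal fiber $X_{A_i(0)}$, whose class lies in $\frac{1}{2}\widehat{\Lambda}\cap\rho_*\text{Eff}_{A_i(0)}(\delta_1,L)$, contradicting (iii). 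So the cleanest repair of your proposal is not to fill in the simultaneous-resolution step you flag, but to push the class down to $\frac{1}{2}H^2$ of the nodal surface from the start, as the paper does. Your second technical worry --- that $C''$ can be chosen so the Picard lattice does not jump along $A_i$ --- is handled in the paper by the same genericity you invoke, phrased at the level of the surface $Y$ meeting the nodal Noether-Lefschetz divisor at very general points, together with the primitivity statement \eqref{mmqq33}; that part of your argument is sound.
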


\begin{proof}
Suppose $A_i(t)$ is always contained in the nodal
Noether-Lefschetz locus in $\widetilde{C}_t''$.
We can assume by construction that $Y$ meets the
nodal Noether-Lefschetz divisor $D_{1,0,(0,0)}$
at very general points of the latter divisor in 
$\mathcal{M}_{\widehat{\Lambda}}$. Hence, for very general $t$,
$X''_{t,A_i(t)}$ is a nodal $K3$ surface with $K3$ resolution
$$\rho:\widetilde{X}''_{t,A_i(t)} \To X''_{t,A_i(t)}$$
with Picard lattice{\footnote{The lattice
 $\widehat{\Lambda} \oplus \mathbb{Z}[\widetilde{E}]$
is primitive in $\text{Pic}(\widetilde{X}''_{t,A_i(t)})$
since $\widehat{\Lambda}\subset \Lambda$ is primitive and
\eqref{mmqq33} holds. The isomorphism \eqref{lqlq} is then immediate
at a very general point of the nodal Noether-Lefschetz divisor of
$\mathcal{M}_{\widehat{\Lambda}}$.}}
\begin{equation}
\label{lqlq}
\text{Pic}(\widetilde{X}''_{t,A_i(t)})\cong 
\widehat{\Lambda}\oplus \mathbb{Z}[\widetilde{E}]
\end{equation}
where $E$ is the exceptional $-2$-curve.

By the definition of $A_i(t)$, there must exist
an effective curve $$Q\subset {X}''_{t,A_i(t)}$$
of $\delta_1$-degree bounded by $L$ on $X''_{t,A_i(t)}$.
Since $X''_{t,A_i(t)}$ is nodal, $Q$ may not be a Cartier divisor.
However, $2Q$ is Cartier. By pulling-back via $\rho$, using the
identification of the Picard lattice \eqref{lqlq},
 and pushing-forward by $\rho$,
\begin{equation}\label{clld}
2Q\in \widehat{\Lambda}\,.
\end{equation}

The effective curve $Q$ moves with the $K3$ surfaces
$X''_{t,A_i(t)}$ as $t$ goes to $0$. The
condition \eqref{clld} holds for very general $t$ and hence 
for {\em all} $t$.
In particular the condition \eqref{clld} hold at $t=0$.
The $K3$ surface 
$X''_{0,A_i(0)}$ is a nodal fiber of $$X \rightarrow C\,.$$
The existence of an effective curve $Q\in\frac{1}{2}\widehat{\Lambda}$
directly contradicts condition (iii) of Section \ref{lsel} 
of $\widehat{\Lambda}$.
\end{proof}

Let $\widetilde{C}''_t$ be a general curve of the pencil $\widetilde{\Delta}$.
By Lemma \ref{nnn17}, the sections $A_i(t)\in \widetilde{C}_t''$ are
disjoint from the nodal Noether-Lefschetz divisor on $\widetilde{C}_t''$.
We would like to apply the contribution relation of 
Proposition \ref{crt} to the fiber of 
$$X''_t \rightarrow \widetilde{C}_t''\ $$
over $A_i(t)$.
We therefore need the following result.

\begin{Lemma}
For general $t\in \widetilde{\Delta}$, the curve
$\widetilde{C}_t''$
does {\em not} 
lie in 
the Noether-Lefschetz divisor associated
to any effective curve
$$\beta \in \text{\em Pic}(X''_{t,A_i(t)})$$
of $\delta_1$-degree bounded by $L$.
\end{Lemma}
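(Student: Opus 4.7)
The plan is to argue by contradiction, using condition~(2) of Section~\ref{ssmo} on $C'$ to rule out the supposed containment. Suppose that for $t$ in a dense subset of $\widetilde\Delta$ there is an effective $\beta \in \text{Pic}(X''_{t,A_i(t)})$ of $\delta_1$-degree $e_1\leq L$ such that $\widetilde C_t''\subset D_{m,h,(e_1,e_2)} \subset \mathcal{M}_{\widehat\Lambda}$, with $(m,h,e_1,e_2)$ the invariants of $\beta$. First I would bound these invariants: $\delta_1^2>0$ and the Hodge index theorem give $\langle\beta,\beta\rangle \leq e_1^2/\delta_1^2$, while effectiveness forces $\langle\beta,\beta\rangle\geq -2$; a similar Hodge index bound controls $|e_2|$, and $m$ divides $e_1$. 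So only finitely many such $D$ can arise. Since the curves $\widetilde C_t''$ form a pencil of very ample divisors sweeping out a dense open subset of the surface $Y\subset\mathcal{M}_{\widehat\Lambda}$ chosen in Section~\ref{ssmo}, the supposed containment for a dense set of $t$ forces $Y\subset D$, so in particular both $C\subset D$ and $C'\subset D$.

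Next I would extract the contradicting class from $\Lambda$. Pick a very general $\xi\in C$; by the choice of family at the start of Section~\ref{pnlc} we have $\text{Pic}(X_\xi) = \Lambda$. Since $\xi\in Y\subset D$, the lattice $\Lambda$ must contain some class $\gamma$ of divisibility $m$, norm $2h-2$, and intersection numbers $(e_1,e_2)$ with $(\delta_1,\delta_2)$. Riemann-Roch and Serre duality then show $\gamma$ is effective on $X_\xi$: when $e_1>0$ the class $-\gamma$ has negative $\delta_1$-degree and cannot be effective, so $h^0(\gamma)\geq \chi(\gamma) = h+1\geq 1$. Thus $\gamma\in \Lambda\cap \text{Eff}_\xi(\delta_1,L)$, which means $D$ appears on the finite list of condition~(2) of Section~\ref{ssmo}; but condition~(2) explicitly forbids $C'$ from lying in any divisor on that list, contradicting $C'\subset D$.

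The main obstacle will be the edge case $e_1 = 0$, which can only occur when $\delta_1$ fails to be strictly ample on some fibre, forcing $\beta^2\in\{-2,0\}$. The $\beta^2 = -2$ situation is already handled by the transversality of $C'$ to the nodal Noether-Lefschetz divisor $D_{1,0,(0,0)}$ built into condition~(2), while the $\beta^2 = 0$ case corresponds to an elliptic pencil class which again admits an effective representative in $\Lambda$ by Riemann-Roch, so the same argument applies. Alternatively, the class $\delta_1$ chosen in Section~\ref{lsel} as a small perturbation of $k\lambda^{\pi_3}$ with $k\gg 0$ can be arranged to be ample on every fibre of $\pi_3$, in which case $e_1>0$ automatically for every nonzero effective $\beta$ and the edge case disappears entirely.
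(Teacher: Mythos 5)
Your first step agrees with the paper: once a single Noether--Lefschetz divisor $D=D_{m,h,(e_1,e_2)}$ contains $\widetilde C''_t$ for a dense set of $t$ (countability of the possible lattice data already suffices for the pigeonhole, so the finiteness discussion is not the essential issue), then $Y\subset D$, hence $C,C'\subset D$, and a contradiction with condition (2) of Section \ref{ssmo} would follow \emph{provided} $D$ is actually one of the divisors on the list in (2), i.e.\ provided the data $(m,h,e_1,e_2)$ comes from a class of $\Lambda$ which is effective on some fiber over $C$ with $\delta_1$-degree at most $L$. The gap is in how you establish this last point. Your route is Riemann--Roch at a very general $\xi\in C$: you produce $\gamma\in\Lambda=\operatorname{Pic}(X_\xi)$ with the given invariants and argue $h^0(\gamma)\ge\chi(\gamma)=h+1\ge 1$. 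This needs $h\ge 0$, and your justification --- ``effectiveness forces $\langle\beta,\beta\rangle\ge-2$'' --- is false: effective classes on a $K3$ which are reducible or non-reduced can have arbitrarily negative square (for instance $\beta=2E$, or $\beta=E_1+E_2$ with $E_1,E_2$ disjoint $(-2)$-curves), and such classes of $\delta_1$-degree $\le L$ are precisely the kind of effective summands the lemma must cover, since it is used afterwards to verify the complete-isolation condition $(\star\star)$ for \emph{all} effective summands before applying Proposition \ref{crt}. When $h<0$, Riemann--Roch gives nothing, the transported class $\gamma\in\Lambda$ need not be effective, $D$ need not appear on the list in condition (2), and the contradiction disappears. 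Your closing paragraph only addresses the $e_1=0$ edge case with $\beta^2\in\{-2,0\}$, not $\beta^2<-2$.

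The paper sidesteps this by specializing the effective class itself instead of reconstructing one lattice-theoretically: the class $\beta$ on $X''_{t,A_i(t)}$ is carried along the section $A_i$ to $t=0$, where effectivity and the degree bound are preserved in the limit, yielding an effective class of $\delta_1$-degree $\le L$ on the fiber over $A_i(0)\in C$. Membership of $D$ in the list of condition (2) then follows with no positivity or Riemann--Roch input and with no constraint on the sign of $\beta^2$, and the contradiction is drawn from the construction of $C''$ and $C'$ (conditions (2) and (3) of Section \ref{ssmo}). To repair your argument you would need to replace the Riemann--Roch step by such a specialization (or otherwise prove effectivity of the transported class), since positivity of the $\delta_1$-degree alone does not imply effectivity when $h<0$.
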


\begin{proof}
If the assertion of the Lemma were false, there
would exist a moving family of effective curves
$$\beta \in \text{Pic}(X''_{t,A_i(t)})$$
such that $\widetilde{C}''_t$ always lies in
the Noether-Lefschetz divisor corresponding to $\beta$.
Then all of $Y$ would lie in the Noether-Lefschetz divisor
corresponding to $\beta$. The limit of $\beta$ is 
an effective class in the $K3$ fiber over 
$A_i(0)\in C$ of $\delta_1$-degree bounded by $L$.
Hence, the Noether-Lefschetz divisor corresponding to $\beta$
is listed in condition (2) of Section \ref{ssmo}. But
$C''\subset Y$ was constructed to not lie in any of the Noether-Lefschetz
divisors of the list (2), a contradiction.
\end{proof}

We may now apply
Proposition \ref{crt} to the fiber of 
$$X''_t \rightarrow \widetilde{C}_t''\ $$
over $A_i(t)$ for a
general curve $\widetilde{C}''_t$ of the pencil $\widetilde{\Delta}$.
Just as in Lemma \ref{w990}, we obtain 
\begin{multline} \label{w991}
\widetilde{F}^{X''_t,L}_{A_i(t)} = \\
\sum_{\beta\in \text{Eff}_{A_i(t)}(\delta_1,L)} 
\sum_{n\in \mathbb{Z}} 
q^n v_1^{e^\beta_1}v_2^{e^\beta_2}\ 
\widetilde{R}_{n, m_\beta,h_\beta}
\cdot \text{Cont}\Big(X''_{A_i(t)},\beta, 
NL^{\pi''}_{m_\beta,h_\beta, (e^\beta_1,e^\beta_2)}\Big).
\end{multline}

\subsection{Proof of Theorem \ref{ffc2}}

We now complete the proof of Theorem \ref{ffc2} by proving the relation
\begin{equation}\label{cq299}
\widetilde{N}_{n,(d_1,d_2,d_3)}^\XX= \sum_{h} \sum_{m=1}^{\infty}
\widetilde{R}_{n,m,h}\cdot  NL_{m,h,(d_1,d_2, d_3)}^{\pi_3}
\end{equation}
for degrees $(d_1,d_2,d_3)$ positive with respect
to the quasi-polarization in $\Lambda$.
The Noether-Lefschetz divisors lie in the moduli space
$\mathcal{M}_{\Lambda}$.

The degrees $(d_1,d_2,d_3)$
of a class in $H_2(\XX,\mathbb{Z})^{\pi_3}$ 
with respect to $\Lambda$ determine the degrees $(e_1,e_2)$
with respect to $\widehat{\Lambda}$.
We first show relation \eqref{cq299} is equivalent 
for classes of $\delta_1$-degree bounded by $L$
to the relation
\begin{equation}\label{cq2999}
\widetilde{N}_{n,(e_1,e_2)}^\XX= \sum_{h} \sum_{m=1}^{\infty}
\widetilde{R}_{n,m,h}\cdot  NL_{m,h,(e_1,e_2)}^{\pi_3} \,.
\end{equation}
The Noether-Lefschetz theory in \eqref{cq2999} occurs
 in the moduli space
$\mathcal{M}_{\widehat{\Lambda}}$.

The equivalence of \eqref{cq299} and \eqref{cq2999} for
classes of $\delta_1$-degree bounded by $L$ is a consequence of
 condition (4) in Section \ref{lsel} for $\widehat{\Lambda}\subset
\Lambda$.
For effective
classes of $\delta_1$-degree bounded by $L$, condition
(4) says the $(e_1,e_2)$ degrees
{\em determine} the $(d_1,d_2,d_3)$ degrees.
The left sides of  \eqref{cq299} and \eqref{cq2999}
then match since the stable pairs invariants
only involve effective classes. As shown 
in Section \ref{stopr}, only effective classes
contribute to the right sides as well. So the right sides of
\eqref{cq299} and \eqref{cq2999} also match.

We prove \eqref{cq2999} by the result obtains in Sections \ref{nonano}
and \ref{nonan}.
By equations \eqref{lzzs} and \eqref{w991} and Lemma \ref{w990},
we have
\begin{multline}\label{masman}
\widetilde{F}^{X,L}=\\
-\sum_{\xi'\in C\cap C'}
\sum_{\beta\in \text{Eff}_{\xi'}(\delta_1,L)} 
\sum_{n\in \mathbb{Z}} 
q^n v_1^{e^\beta_1}v_2^{e^\beta_2}\ 
\widetilde{R}_{n, m_\beta,h_\beta}
\cdot \text{Cont}\big(X'_{\xi'},\beta, 
NL^{\pi'}_{m_\beta,h_\beta, (e^\beta_1,e^\beta_2)}\big) \\
+\sum_i
\sum_{\beta\in \text{Eff}_{\!A_i(t)\!}(\delta_1,L)} 
\sum_{n\in \mathbb{Z}} 
q^n v_1^{e^\beta_1}v_2^{e^\beta_2}\ 
\widetilde{R}_{n, m_\beta,h_\beta}
\cdot \text{Cont}\big(X''_{A_i(t)},\beta, 
NL^{\pi''}_{m_\beta,h_\beta, (e^\beta_1,e^\beta_2)}\big)\ 
\end{multline}
for a general $t\in \widetilde{\Delta}$.

By definition, the $q^nv_1^{e_1}v_2^{e_2}$ coefficient of the left 
side of \eqref{masman}
is $\widetilde{N}_{n,(e_1,e_2)}^\XX$. 
The $q^nv_1^{e_1}v_2^{e_2}$
coefficients of the right side of \eqref{masman} correspond to 
intersections with the Noether-Lefschetz divisors
$D_{m,h,(e_1,e_2)}$. As in Section \ref{nonan}, we write
\begin{equation} \label{yy23}
D_{m,h,(e_1,e_2)}\cap Y = w[C] + \sum_{j} w_j [T_j]\,, \ \ \  w,w_j\geq 0\,,
\end{equation}
where $w$ is the multiplicity of $C$ and the $T_j\subset Y$
are curves not containing $C$.

The first sum on the right side of \eqref{masman} concerns
$C\cap C'$. The 
contribution of $D_{m,h,(e_1,e_2)}$ to the 
$q^nv_1^{e_1}v_2^{e_2}$ coefficient
of the first sum is 
\begin{equation}\label{pppzzz}
-w \ \langle [C],[C'] \rangle_Y \ \widetilde{R}_{n, m,h}
\end{equation}
if all the instances of $\beta\in \text{Pic}(X'_{\xi'})$
associated to the Noether-Lefschetz divisor $D_{m,h,(e_1,e_2)}$
are effective. As we have seen in Section \ref{stopr}, if {\em any} 
such instance of 
 $\beta$ is not effective, then $\widetilde{R}_{n, m,h}=0$
and the entire Noether-Lefschetz divisor $D_{m,h,(e_1,e_2)}$
contributes 0 to
the right sides of both \eqref{cq2999} and \eqref{masman}.

Next, we study the intersection of $D_{m,h,(e_1,e_2)}$ with
$\widetilde{C}''_t$. The intersection with $C$,
$$C\cap \widetilde{C}''_t=\{\zeta_1,\ldots,\zeta_r\}\,, \ \ \
r=\langle C, C'' \rangle_Y\,, $$
is independent of $t$ by the construction of the
pencil. The contribution to 
the $q^nv_1^{e_1}v_2^{e_2}$ coefficient
of the full intersection $C\cap \widetilde{C}''_t$ is
\begin{equation}\label{pqqzz}
w \ \langle [C],[C''] \rangle_Y \ \widetilde{R}_{n, m,h}
\end{equation}
if all the instances of 
$$\beta\in \text{Pic}(X''_{t,\zeta_k})=\text{Pic}(X_{\zeta_k})$$
associated to the Noether-Lefschetz divisor $D_{m,h,(e_1,e_2)}$
are effective. Such an effective $\beta$ implies
the point $\zeta_k\in \widetilde{C}''_s$ is always in
$\widetilde{\text{Supp}}$ and hence corresponds to a
section $A_i$. In the effective case, the contributions
\eqref{pqqzz} all occur in the second sum of \eqref{masman}.

On the other hand, if {\em any} 
class $\beta\in \text{Pic}(X''_{t,\zeta_k})$, for any 
$\zeta_k\in C\cap C'$,
associated to the Noether-Lefschetz divisor $D_{m,h,(e_1,e_2)}$
is not effective, then $\widetilde{R}_{n, m,h}=0$
and the entire Noether-Lefschetz divisor $D_{m,h,(e_1,e_2)}$
contributes 0 to
the right sides of \eqref{cq2999} and \eqref{masman}.

Finally, we consider the
intersection of $T_j$ with $\widetilde{C}''_t$.
By construction,
 $T_j \cap \widetilde{C}''_t \subset \widetilde{C}''_t$
is a finite collection of points. 
We divide the intersection
\begin{equation}\label{tt99}
T_j \cap \widetilde{C}''_t = I_{j,t} \cup I'_{j,t} 
\end{equation}
into disjoint subset with the following properties:
\begin{enumerate}
\item[$\bullet$] as $t\rightarrow 0$, the points of $I_{j,t}$
                 have limit in $C$,
\item[$\bullet$] as $t\rightarrow 0$, the points of $I'_{j,t}$
                    have limit in $C'$.
\end{enumerate}
Since $T_j$ does not intersect $C\cap C'$, the disjoint 
union is well-defined and unique \eqref{tt99} 
for $t$ sufficiently near 0. Moreover, the
sum of the local intersection numbers of $T_j\cap \widetilde{C}''_t$
over $I_{j,t}$ is $\langle C, T_j \rangle_Y$.
The points of $I'_{j,t}$, related to the sections $B(t)$ in the
analysis of 
Section \ref{nonan}, do not play a role{\footnote{The contributions
of the sections $B(t)$ is cancelled in \eqref{lzzs}.}} 
in 
the analysis of
\eqref{masman}. 

If a single instance of a class
$$\beta\in \text{Pic}(X''_{t,\xi''}) \ \text{for} \ \xi''\in I_{j,t}$$
associated to the Noether-Lefschetz divisor $D_{m,h,(e_1,e_2)}$
is ineffective for $t$ sufficiently near 0, then $\widetilde{R}_{n, m,h}=0$
and the entire Noether-Lefschetz divisor $D_{m,h,(e_1,e_2)}$
contributes 0 to
the right sides of \eqref{cq2999} and \eqref{masman}.
Otherwise, every instance of such a $\beta$ is effective
for all $t$ sufficiently near 0. Then
$$I_{j,t} \subset \bigcup_i A_i(t)\,,$$
and  the contribution to 
the $q^nv_1^{e_1}v_2^{e_2}$ coefficient of the right side of \eqref{masman}
of the full intersection $I_{j,t}$ is
\begin{equation}\label{pqqzzz}
w_j \ \langle [C],T_j \rangle_Y \ \widetilde{R}_{n, m,h} \,.
\end{equation}

Summing all the contributions \eqref{pppzzz}, \eqref{pqqzz}, and \eqref{pqqzzz}
 to the right side of \eqref{masman}
associated to $D_{m,h,(e_1,e_2)}$ yields
\begin{equation} \label{ff44} 
\Big(-w  \langle [C],[C'] \rangle_Y  +w \ \langle [C],[C''] \rangle_Y  
 +\sum_j w_j \ \langle [C],T_j \rangle_Y\Big) \cdot \widetilde{R}_{n, m,h}\  \ 
\end{equation}
Using the relation $-[C']+[C'']=[C]$ and \eqref{yy23}, 
the sum \eqref{ff44} exactly matches contribution 
$$ \widetilde{R}_{n,m,h} \cdot NL_{m,h,(e_1,e_2)}^{\pi_3} =
\widetilde{R}_{n,m,h} \cdot
\int_C [D_{m,h,(e_1,e_2)}]\ $$
of $D_{m,h,(e_1,e_2)}$ to 
the right side of \eqref{cq2999}.
The proofs of \eqref{cq2999} and of 
Theorem \ref{ffc2} are complete.

\section{Katz-Klemm-Vafa conjecture}\label{kkvcon}
The proof of the P/NL correspondence of Theorem \ref{ffc2} was
the last step in the proof of Proposition \ref{gg12}:
\begin{equation*} 
r_{g,m,h} = \widetilde{r}_{g,m,h}\  \ \text{\em for all} \ \
 g\in \mathbb{Z}\,, \ \ m> 0\,,
\ \ h\in \mathbb{Z}\,.
\end{equation*}
The proof of Theorem \ref{vvee} is also now complete.

In Section \ref{spbps}, several properties of
the stable pairs invariants $\widetilde{r}_{g,m,h}$
were established (and in fact were used in the proofs of Theorem \ref{ffc2}
and Proposition \ref{gg12}).
The most important property of $\widetilde{r}_{g,m,h}$ is 
independence of divisibility established in
 Proposition \ref{fcc19},
$$\widetilde{r}_{g,\beta} \ {\text{\em depends only upon $g$
and $\langle\beta,\beta \rangle$}}\,.$$
Also proven in Section \ref{spbps} were the basic vanishing results
$$\widetilde{r}_{g<0,m,h}=0\,, \ \ \widetilde{r}_{g,m,h<0}=0\,.$$ 
The independence of $\widetilde{r}_{g,\beta}$ upon the
divisibility of $\beta$ reduces the Katz-Klemm-Vafa
conjecture to the primitive case.

The stable pairs BPS counts in the primitive case
are determined by Proposition \ref{kedd9}, relation \eqref{vxx99},
and the interpretation of the Kawai-Yoshioka results presented
in Section \ref{kye}. Taken together, we prove the Katz-Klemm-Vafa
conjecture in the primitive case and hence in all cases.
The proof of Theorem 1 is complete. 

Following the notation of Section \ref{gwnlcc}, let
$$\pi: X \rightarrow C$$
be a 1-parameter family of $\Lambda$-polarized $K3$
surfaces with respect to a rank $r$ lattice $\Lambda$.
Using the independence of divisibility, Theorem \ref{ffc} in Gromov-Witten 
theory 
takes a much simpler form.
\begin{Theorem} \label{ffc91}
 For degrees $(d_1,\dots,d_r )$ positive with respect to the
quasi-polariza\-tion $\lambda^\pi$,
$$n_{g,(d_1,\dots,d_r)}^X= \sum_{h=0}^\infty 
r_{g,h}\cdot  NL_{h,(d_1,\dots,d_r)\,}^\pi.$$
\end{Theorem}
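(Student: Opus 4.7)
The plan is to recognize Theorem \ref{ffc91} as an immediate corollary of Theorem \ref{ffc} together with the divisibility-independence established by Theorem \ref{kkbb}. First I would invoke Theorem \ref{ffc} for the family $\pi:X\to C$ to write
$$n_{g,(d_1,\dots,d_r)}^X= \sum_{h=0}^\infty \sum_{m=1}^{\infty} r_{g,m,h}\cdot NL_{m,h,(d_1,\dots,d_r)}^\pi.$$
Since the KKV theorem just proved (Theorem \ref{kkbb}) asserts that $r_{g,\beta}$ depends on $\beta$ only through $\langle\beta,\beta\rangle=2h-2$, the factor $r_{g,m,h}$ is in fact independent of $m$ and may be written $r_{g,h}$. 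It can therefore be pulled out of the sum over divisibilities.

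Second I would use the relationship between the refined and unrefined Noether-Lefschetz divisors from Section \ref{reff12}: by construction the divisors $D_{m,h,(d_1,\dots,d_r)}$ partition $D_{h,(d_1,\dots,d_r)}$ according to the divisibility of the lattice element $\beta$ satisfying \eqref{34f}. Consequently
$$\sum_{m=1}^{\infty} NL_{m,h,(d_1,\dots,d_r)}^\pi \ =\ NL_{h,(d_1,\dots,d_r)}^\pi,$$
where the sum is finite because $m$ must divide each $d_i$. Both the positive-discriminant and degenerate ($\Delta=0$) cases of Section \ref{reff12} fit the same pattern, and the $\Delta<0$ case contributes zero on both sides. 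Substituting yields the claimed formula.

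The only content beyond the cited results is this bookkeeping step collapsing the double sum into a single sum, so there is no substantive obstacle. The conceptual weight of the statement lies entirely in the removal of divisibility dependence, which is supplied by Theorem \ref{kkbb}; the rest of the argument is a one-line rearrangement.
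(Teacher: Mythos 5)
Your proposal is correct and coincides with the paper's own (essentially one-line) justification: in Section \ref{kkvcon} the theorem is obtained by combining Theorem \ref{ffc} with the divisibility-independence $r_{g,m,h}=r_{g,h}$ supplied by Theorem \ref{kkbb}, the collapse $\sum_{m\geq 1} NL^{\pi}_{m,h,(d_1,\dots,d_r)}=NL^{\pi}_{h,(d_1,\dots,d_r)}$ being exactly the partition-by-divisibility bookkeeping from Section \ref{reff12} that you spell out. Your handling of the $\Delta>0$, $\Delta=0$, and $\Delta<0$ cases matches the definitions in the paper, so there is nothing missing.
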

Theorems 1 and \ref{ffc91} together give 
closed form solutions for
the BPS states in fiber classes in term
of the Noether-Lefschetz numbers (which are
expressed in terms of modular forms by Borcherds'
results). A classical example
is given in the next Section.

\section{Quartic $K3$ surfaces}\label{qk3}
We provide a complete calculation of
the Noether-Lefschetz numbers 
and BPS counts in fiber classes for
the family of
$K3$ surfaces  determined by a Lefschetz pencil of
quartics in $\proj^3$:
$$\pi: X \rightarrow \proj^1,\ \  \ \ \ X \subset \proj^3 \times \proj^1
 \ \ \text{of type $(4,1)$} . $$

Let $A$ and $B$ be modular forms of weight $1/2$ and level 8,
$$A= \sum_{n\in \mathbb{Z}} q^{\frac{n^2}{8}}\,, \ \ \
B= \sum_{n\in \mathbb{Z}} (-1)^n  q^{\frac{n^2}{8}}\,.$$
Let $\Theta$ be the modular form of weight $21/2$ and level 8 
defined by
\begin{eqnarray*}
2^{22} \Theta &=&\ \ 3A^{21}-81 A^{19}B^2 -627 A^{18}B^3 -14436 A^{17}B^4 \\
& & -20007 A^{16}B^5  -169092 A^{15}B^6 -120636 A^{14}B^7\\
& &   -621558 A^{13}B^8
-292796 A^{12}B^9 -1038366 A^{11}B^{10}\\
& &  -346122 A^{10}B^{11}
-878388 A^{9} B^{12} -207186 A^8 B^{13}\\
& &  -361908 A^7 B^{14} -56364 A^6 B^{15} -60021 A^5 B^{16}\\
& &  -4812 A^4 B^{17}
-1881 A^3 B^{18} -27 A^2 B^{19}+ B^{21}\,.
\end{eqnarray*}
We can expand $\Theta$ as a series in $q^{\frac{1}{8}}$,
$$ \Theta = -1 +108 q +320 q^{\frac{9}{8}}+50016 q^{\frac{3}{2}}+ 76950q^2 \ldots .$$
Let $\Theta[m]$ denote the coefficient of $q^m$ in $\Theta$.

The modular form $\Theta$  first appeared in calculations of \cite{germans}.
The following result was proven in \cite{gwnl}:
{\em the Noether-Lefschetz numbers of the quartic pencil
$\pi$ are coefficients of $\Theta$,
$$NL^{\pi}_{h,d} = \Theta\left[ \frac{\bigtriangleup_4(h,d)}{8}\right]\,,$$
where the discriminant is defined by
$$\bigtriangleup_4(h,d)= -\det
 \left( \begin{array}{cc}
4 & d  \\
d & 2h-2  \end{array} \right) =d^2- 8h+8\,.$$
}
By Theorem \ref{ffc91}, we obtain
$$n_{g,d}^X = \sum_{h=0}^\infty r_{g,h}\cdot \Theta\left[ \frac{\bigtriangleup_4(h,d)}{8}\right]\,, $$
as predicted in \cite{germans}.
Similar closed form solutions can be found for all the
classical families of $K3$-fibrations, see \cite{gwnl}.

\appendix
\section{Invariants}
We include here a short table of the various invariants
associated to a $K3$ surface $S$ and a class $\beta \in \text{Pic}(S)$.

\vspace{18pt}
{\footnotesize
\begin{table}[ht]
\begin{tabular}{|c|c|c|}
        \hline & & \\
$R_{g,\beta}(S)$ &   {\text Reduced GW invariants of $S$} & Section 0.1  \\
& & \\
\hline & & \\
$r_{g,\beta}$ & {\text BPS counts for $K3$ surfaces in GW theory} 
& Section 0.3 \\ & & \\
\hline & & \\
$\widetilde{R}_{n,\beta}(S)$ &   {\text Stable pair invariants of $S$
parallel to $R_{g,\beta}$} & Sections 0.6, 6.2 \\ & & \\ \hline & & \\
$\widetilde{r}_{n,\beta}$ & {\text BPS counts for $K3$ surfaces via stable
pairs} 
& Sections 3.4, 7.6 \\ & & \\ 
\hline
 & & \\
$R_{n,\beta}^{\red}(S\times\mathcal R)$ & {\text Reduced stable pair invariants 
of the rubber} 
& Section 6.6 \\ & & \\ \hline
 & & \\

$\Big\langle 1 \Big\rangle_{Y,\beta}^{\red}$ & {\text Reduced stable
pair residues of $Y=S\times \C$} 
& Section 5.6 \\ & & \\ \hline
 & & \\

$I_h$ & {\text {} $I_h=\Big\langle 1 \Big\rangle_{Y,\alpha}^{\red}$
for $\alpha$ primitive with 
$\langle \alpha,\alpha \rangle=2h-2$} 
& Section 5.6 \\ & & \\ \hline
\end{tabular}
\end{table}
}
\vspace{18pt}

Associated to $K3$ fibrations over a curve
$$X \To C\,,$$
there are several more invariants. Here, $\beta\in H_2(X,\mathbb{Z})$
is a fiber class.

{\footnotesize
\begin{table}[ht]
\begin{tabular}{|c|c|c|}
        \hline & & \\
$N_{g,\beta}^X$ & {\text Connected GW invariants of the
$K3$-fibration $X$} 
& Section 2.2 \\ & & \\ \hline
 & & \\
${n}_{g,\beta}^X$ & {\text BPS counts for $X$ in  GW theory} 
& Section 2.2 \\ & & \\ \hline
& & \\
$\widetilde{N}_{n,\beta}^X$ & {\text Connected stable pairs invariants of the
$K3$-fibration $X$} 
& Section 8.1 \\ & & \\ \hline
 & & \\
$\widetilde{n}_{g,\beta}^X$ & {\text BPS counts for $X$ via  stable pairs} 
& Sections 3.5, 8.1 \\ & & \\ \hline

\end{tabular}
\end{table}}
\vspace{18pt}

When $S$ is a nonsingular $K3$ fiber of $X\rightarrow C$ and
$\beta \in \text{Pic}(S)$ is a class for which no effective
summand on $S$ deforms over $C$, we have two invariants.

\vspace{18pt}
\begin{table}[ht]
\begin{tabular}{|c|c|c|}
        \hline & & \\
$P^\star_{n,\beta}(X)$ & {\text Contribution of stable pairs
supported on $S$} 
& Section 6.2 \\ & {\text to the stable pairs invariant of $X$} 
& \\ \hline
 & & \\
$P^\star_{n,\beta}(X/S)$ & {
Contribution of stable pairs
over $S$} 
& Section 6.5 \\ & 
{\text to the relative stable pairs invariant of $X/S$}
& \\ \hline

\end{tabular}
\end{table}

\section{Degenerations}

Let $\widetilde{\proj^2 \times \proj^1}$ be the blow-up of
$\proj^2\times \proj^1$ at a point. Consider the toric 4-fold
 $$\mathsf{Y}=\widetilde{\proj^2 \times \proj^1} \times \proj^1 $$
of Picard group of rank 4,
$$\text{Pic}(\mathsf{Y})\cong
\mathbb{Z} L_1 \oplus \mathbb{Z} L_2 \oplus \mathbb{Z}E
\oplus \mathbb{Z}L_3
\,.$$
Here, $L_1,L_2,E$ are the pull-backs of divisors{\footnote{We follow the
notation of Section \ref{tttt}.}} from 
 $\widetilde{\proj^2 \times \proj^1}$
 and $L_3$ is the pull-back of $\O(1)$ from the
last $\proj^1$.
The divisors $L_1$, $L_2$, and $L_3$ are certainly base point
free on $\mathsf{Y}$. Since
$L_1+L_2-E$ arises on $\widetilde{\proj^2 \times \proj^1}$
via the projection from a point of the $(1,1)$-Segre
embedding
$$\proj^2 \times \proj^1 \hookrightarrow \proj^5\,,$$
the divisor $L_1+L_2-E$ determines
a map to the quadric $Q\subset \proj^4$,
$$\widetilde{\proj^2 \times \proj^1} \rightarrow Q \subset \proj^4\,.$$
Hence, $L_1+L_2-E$ is base point free on both 
 $\widetilde{\proj^2 \times \proj^1}$ and $\mathsf{Y}$.

The anticanonical series $3L_1+2L_2-2E+2L_3$
is base point free on $\mathsf{Y}$ since $L_1$, $L_2$,
$L_3$, and $L_1+L_2-E$
are all base point free.
Let
$$\XX \subset \mathsf{Y}$$
be a general anticanonical divisor (nonsingular by Bertini).
In \cite{PP}, the Gromov-Witten/Pairs correspondence is
proven for Calabi-Yau 3-fold which admit appropriate degenerations.
To find such degenerations for $\XX$, we simply factor equations.

Let $\XX_{a,b,c,d}\subset \mathsf{Y}$ denote a general divisor
of class $aL_1+bL_2+cE+dL_3$. We first degenerate
$\XX=\XX_{3,2,-2,2}$ via the product
$$\XX_{2,1,-1,1} \cdot \XX_{1,1,-1,1}\,.$$
For such a degeneration to be used in the scheme of \cite{PP},
all of the following varieties must be nonsingular:
$$\XX_{3,2,-2,2}\,, \ \ \XX_{2,1,-1,1}\,, \ \ \XX_{1,1,-1,1}\,,$$
$$\XX_{2,1,-1,1}\cap \XX_{1,1,-1,1}\,, \ \ 
\XX_{3,2,-2,2}\cap
\XX_{2,1,-1,1}\cap \XX_{1,1,-1,1}\,.$$
Since all three divisor classes 
$\XX_{3,2,-2,2}$, $\XX_{2,1,-1,1}$, $\XX_{1,1,-1,1}$
are base point free, the required nonsingularity follows from Bertini.
Next, we degenerate $\XX_{2,1,-1,1}$ via the product
$$\XX_{1,1,-1,1} \cdot \XX_{1,0,0,0}\,.$$
The nonsingularity of the various intersections is again
immediate by Bertini. Since $\XX_{1,0,0,0}$ is a toric 3-fold,
no further action must be taken for $\XX_{1,0,0,0}$.

We are left with the divisor 
$\XX_{1,1,-1,1}$ which we degenerate via 
the product
$$\XX_{1,0,0,1} \cdot \XX_{0,1,-1,0}\,.$$
While the divisor classes of $X_{1,1,-1,1}$ and $\XX_{1,0,0,1}$ 
are base point free, the class $\XX_{0,1,-1,0}$ is not.
There is unique effective divisor
$$\XX_{0,1,-1,0}\subset \mathsf{Y}\,.$$
Fortunately $\XX_{0,1,-1,0}$ is a nonsingular toric 3-fold
isomorphic to $\widetilde{\proj}^2 \times \proj^1$ where
$\widetilde{\proj}^2$ is the blow-up of $\proj^2$ in a point.
The nonsingularity of $\XX_{0,1,-1,0}$ is sufficient
to guarantee the nonsingularity of 
$$
\XX_{1,0,0,1}\cap \XX_{0,1,-1,0}\,, \ \ 
\XX_{1,1,-1,1}\cap
\XX_{1,0,0,1}\cap \XX_{0,1,-1,0}\ $$
since $\XX_{1,1,-1,1}$ and $\XX_{1,0,0,1}$ 
are both base point free.

The result of \cite{PP} reduces the GW/P correspondence
for $\XX$ to the toric cases
$$\XX_{1,0,0,0}\,,\ \XX_{0,1,-1,0}\,, \ \XX_{0,0,0,1}$$
and the geometries of the various 
$K3$ and rational surfaces and higher genus curves
which occur as intersections in the degenerations.
The GW/P correspondences for all these end states have been
established in
\cite{PP} and earlier work.
Hence, the GW/P correspondence holds for $\XX$.

\section{Cones and virtual classes}

\subsection{Fulton Chern class}\label{ap11}
Let $X$ be a scheme{\footnote{The constructions are
also valid for a Deligne-Mumford stack which admits embeddings
into nonsingular Deligne-Mumford stacks.}} of dimension $d$. Let
$$X \subset M$$
be a closed embedding in a nonsingular ambient $M$ of dimension $m\geq d$.
Of course, we also have an embedding
$$X \subset M \times \C=\widetilde{M}$$
where $X$ lies over $0\in \C$.
The normal cones $C_{X}M$ and
$C_X\widetilde{M}$ of $X$ in $M$ and $\widetilde{M}$
are of pure dimensions $m$ and $m+1$ respectively. Moreover,
$$C_{X}\widetilde{M} = C_{X}M \oplus 1\,,$$
following the notation of \cite{Fu}.
Let $q$ be the structure morphism of the projective cone
$$q:\PP(C_{X}\widetilde{M}) \rightarrow X\,,$$
and let $[\PP(C_{X}\widetilde{M})]$ be the fundamental class of pure dimension $m$.
The Segre class $s(X,M)$ is defined by
\begin{eqnarray*}
s(X,M) &=& q_*\left( \sum_{i=0}^\infty c_1(\mathcal{O}(1))^i \cap 
[\PP(C_{X}M \oplus 
1)]\right) \\
& =& q_*\left( \sum_{i=0}^\infty c_1(\mathcal{O}(1))^i \cap 
[\PP(C_{X}\widetilde{M})]\right).
\end{eqnarray*}
The Fulton total Chern class, 
\begin{eqnarray}
c_F(X) &=& c(T_{M|X}) \cap\ s(X,M) \label{ghat}\\
& = & c(T_{\widetilde{M}|X}) \nonumber
\cap\ q_*\left( \sum_{i=0}^\infty c_1(\mathcal{O}(1))^i \cap 
[\PP(C_{X}\widetilde{M})]\right),
\end{eqnarray}
is independent of the embedding $M$, see  \cite[4.2.6]{Fu}.

Let $E\udot=[E^{-1}\rightarrow E^0]$, together with a morphism to 
the cotangent complex $L\udot$,
be a perfect obstruction theory
on $X$. The virtual class associated to $E\udot$ can be expressed
in terms of Chern classes of $E\udot$ and the Fulton total Chern class
of $X$:
\begin{eqnarray*}
[X]^{\vir} &=&\left[s\big((E\udot)^\vee\big) \cap c_F(X)\right]_{\text{virdim}} \\
& = & \left[ \frac{c(E_1)}{c(E_0)} \cap c_F(X)\right]_{\text{virdim}}\,,
\end{eqnarray*}
where $E_1=(E^{-1})^*$ and $E_0=(E^0)^*$.
The above formula occurs in \cite{Si} and earlier in the excess
intersection theory of \cite{Fu}.
As a consequence, the virtual class depends only upon the
$K$-theory class of $E\udot$.

\subsection{The curvilinear condition}\label{ap22}
Let $Y\subset X$ be a subscheme satisfying the curvilinear lifting
property:
$${\text{\em every map $\,\,  \Spec\C[x]/(x^k)\to X$ factors through $Y$.}}$$
By the $k=1$ case, the curvilinear lifting property implies 
$Y\subset X$ is a bijection on closed points.

We view the embedding $X\subset M\times \C=\widetilde{M}$ also as an embedding
of
$$Y\subset X \subset \widetilde{M} .$$
Let $I_X\subset I_Y$ be the ideal sheaves of $X$ and $Y$ in 
$\widetilde{M}$.
There is a canonical rational map over $\widetilde{M}$,
$$f: \Proj\left(\oplus_{i=0}^\infty I_Y^i\right) \  - - \rightarrow\
\Proj\left(\oplus_{i=0}^\infty I_X\right)$$
associated to the morphism of graded algebras
$$\oplus_{i=0}^\infty I_X^i \rightarrow \oplus_{i=0}^\infty I_Y^i\,, \ \ \
I_X^0=I_Y^0=\mathcal{O}_{\widetilde{M}}\,.$$
By definition, the source and target of $f$ are the
blow-ups of $\widetilde{M}$ along $Y$ and $X$ respectively,
$$\Bl_Y(\widetilde{M})= \Proj\left(\oplus_{i=0}^\infty I_Y^i\right)
\stackrel{\pi_Y}{\longrightarrow} 
\widetilde{M}\,, $$
$$
\Bl_X(\widetilde{M}) =\Proj\left(\oplus_{i=0}^\infty I_X^i\right)
\stackrel{\pi_Y}{\longrightarrow} \widetilde{M}\,,$$
with exceptional divisors
$$\pi_Y^{-1}(Y)= \PP(C_Y \widetilde{M})\,, \ \ \
\pi_X^{-1}(X)= \PP(C_X \widetilde{M})\,. $$
\begin{Proposition} \label{bababa}
The rational map has empty base locus and thus
yields a projective morphism
$$f: \Bl_Y(\widetilde{M}) \To \Bl_X(\widetilde{M})\,.$$
Moreover, as Cartier divisors on $\Bl_Y(\widetilde{M})$ ,
$$f^*( \PP(C_X \widetilde{M})) = \PP(C_Y \widetilde{M})\,.$$
\end{Proposition}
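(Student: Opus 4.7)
The claim is local on $\Bl_Y(\widetilde M)$, so I would work at a closed point $q \in E_Y = \PP(C_Y\widetilde M)$ lying over $y \in Y$, and aim to prove the stronger assertion
$$I_X \cdot \mathcal{O}_{\Bl_Y,q} \;=\; I_Y \cdot \mathcal{O}_{\Bl_Y,q}$$
in the local ring. The right-hand side is already the principal ideal $(g)$, where $g$ is a local equation for the exceptional divisor, and since $I_X \subset I_Y$ the containment $I_X\cdot \mathcal{O}_{\Bl_Y,q}\subset (g)$ is automatic. Once the reverse containment $(g) \subset I_X \cdot \mathcal{O}_{\Bl_Y,q}$ is established, the invertibility of $I_X \cdot \mathcal{O}_{\Bl_Y}$ gives the morphism $f$ by the universal property of $\Bl_X(\widetilde M)$, and the equality of ideals identifies $f^*(\PP(C_X\widetilde M))$ with $\PP(C_Y\widetilde M)$ as Cartier divisors on $\Bl_Y(\widetilde M)$.

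To prove the nontrivial containment, I would use the curvilinear lifting property via first-order arcs. At a closed point $q$ with $g\notin \m_q^2$, the class of $g$ spans a line in $\m_q/\m_q^2$, and one can build a $\C$-algebra homomorphism $\phi\colon \mathcal{O}_{\Bl_Y,q} \to \C[t]/t^2$ with $\phi(g)=t$. Composing with $\pi_Y$ produces an arc $\Spec\C[t]/t^2 \to \widetilde M$ based at $y$, and since $\phi(I_Y \cdot \mathcal{O}_{\Bl_Y,q})=(t)\neq 0$, this arc does not factor through $Y$. The contrapositive of Corollary \ref{jeqq} then prevents the arc from factoring through $X$ either, so $\phi(I_X \cdot \mathcal{O}_{\Bl_Y,q})\neq 0$ in $\C[t]/t^2$. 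Writing each generator of $I_X\cdot\mathcal{O}_{\Bl_Y,q}$ as $f_i = g\, h_i$ (permitted because $f_i \in (g)$), the image $\phi(f_i)=t\,\phi(h_i)$ is nonzero for some $i$, which forces $\phi(h_i)$ to have nonzero constant term, hence $h_i$ is a unit in the local ring. Then $(f_i)=(g\cdot\text{unit})=(g)$, giving the required reverse containment.

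The hard part is extending this argument to points $q\in E_Y$ where $g\in \m_q^2$, which can genuinely occur at singularities of $\Bl_Y(\widetilde M)$: first-order arcs no longer detect $g$. At such points my plan is to invoke the extension form of curvilinear lifting, Corollary \ref{cor5}, applied to the pair
$$A=\Spec\bigl(\mathcal{O}_{\Bl_Y,q}/(g)\bigr) \;\hookrightarrow\; B=\Spec\bigl(\mathcal{O}_{\Bl_Y,q}/(I_X\cdot\mathcal{O}_{\Bl_Y,q}+g^2)\bigr),$$
whose ideal $J=(g)/(I_X\cdot\mathcal{O}_{\Bl_Y,q}+g^2)$ satisfies $J^2=0$; the natural maps $A \to Y$ and $B \to X$ are both induced by $\pi_Y$ and by the containment $I_X \cdot \mathcal{O}_{\Bl_Y,q} \subset \ker(\mathcal{O}_{\Bl_Y,q} \to \mathcal{O}_B)$. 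The main technical obstacle is verifying the injectivity of $d\colon J \to \Omega_B|_A$, which boils down to showing that $dg \notin g\,\Omega_{\mathcal{O}_{\Bl_Y,q}} + d(I_X\cdot\mathcal{O}_{\Bl_Y,q})$; in the regular case it is immediate, but at singular points it requires a careful analysis on the standard affine charts of the Rees algebra, exploiting that $\widetilde M$ is smooth and that $(g)$ is the extension of a genuine non-principal ideal $I_Y$. Once that injectivity is in place, Corollary \ref{cor5} yields a factorization $B \to Y$, hence $(g)\subset I_X\cdot\mathcal{O}_{\Bl_Y,q}+g^2$, and Nakayama's lemma applied to the finitely generated module $(g)/I_X\cdot\mathcal{O}_{\Bl_Y,q}$ (which equals $g$ times itself modulo $I_X\cdot\mathcal{O}_{\Bl_Y,q}$, since $g\in\m_q$) forces this quotient to vanish, completing the proof.
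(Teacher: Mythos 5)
Your reduction of the statement to the equality $I_X\cdot\mathcal{O}_{\Bl_Y(\widetilde M)}=I_Y\cdot\mathcal{O}_{\Bl_Y(\widetilde M)}$ of inverse image ideal sheaves is fine (the morphism then comes from the universal property of $\Bl_X$, and the Cartier divisor identity is immediate), and your first-order arc argument correctly handles the points $q$ of the exceptional divisor where the local equation $g$ does not lie in $\m_q^2$. But the remaining case is not a technicality you can defer: it is exactly where the content of the proposition sits. For instance, for the blow-up of $(x^2,y)\subset\C^2$ the exceptional equation in one chart is $x^2\in\m_q^2$, and such behaviour is typical whenever $Y$ is non-reduced, which is the situation the appendix is written for. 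For those points your plan rests on two things you have not established. First, the injectivity of $d\colon J\to\Omega_B|_A$ for $J=(g)/(I_X\cdot\mathcal{O}+g^2)$ is asserted as "requiring careful analysis" but never proved, and nothing in the curvilinear hypothesis obviously forces it (note $dg$ itself lies in $\m_q\Omega$ at precisely these points). Second, Corollary \ref{cor5} is not available in the setting of Proposition \ref{bababa}: it is a statement about the particular pair $R\subset P^\star$ and is deduced from the cotangent-sheaf isomorphism of Lemma \ref{P=RR}, which comes from the comparison of obstruction theories, not from the curvilinear lifting property that is the sole hypothesis of the appendix. So the hard half of the proof is missing, and the tool you reach for to supply it exceeds the stated hypotheses.

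The fix is to use what the hypothesis actually gives you: liftability of arcs $\Spec\C[x]/(x^k)\to X$ for \emph{all} $k$, not just $k=2$. This is how the paper argues: any closed point $q$ of $\PP(C_Y\widetilde M)$ is the limit point of the strict transform of a germ of a smooth curve $(\Delta,p)\to\widetilde M$ meeting $Y$ only at $p$; pulling back local generators $a_i$ of $I_X$ and $a_i,b_j$ of $I_Y$ to the curve, one compares the minimal vanishing orders $\ell_X$ and $\ell_Y$. The truncated arc $\Spec(\mathcal{O}_\Delta/t^{\ell_X})\to\widetilde M$ factors through $X$, hence through $Y$ by curvilinear lifting, forcing $\ell_Y=\ell_X$; consequently the limiting point in $\PP^{r+s-1}$ has a nonzero coordinate among the first $r$ entries, so the rational map is defined at $q$, and the divisor identity follows because $f$ respects $\mathcal{O}(-1)$. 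In other words, where you use only first-order tangent information plus an unverified square-zero extension argument, the correct argument replaces this by curvilinear lifting at the (possibly large) order $\ell_X$ dictated by the tangency of the test curve — and with that replacement your ideal-theoretic framing goes through.
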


\begin{proof}
Away from the exception divisor $\pi_Y^{-1}(Y)$, $f$ is certainly a morphism. 
We need only study the base locus on the exceptional divisor
$$ \PP(C_Y\widetilde{M})\subset \Bl_Y \widetilde{M} \,.$$
We can reach any closed
point $q\in \PP(C_Y\widetilde{M})$
                          by the strict transform to $\Bl_Y\widetilde{M}$ of map of a nonsingular
quasi-projective curve
     $$g:(\Delta,p) \rightarrow \widetilde{M}$$
with $g^{-1}(Y)$ supported at p.
In other words,  the strict transform  
$$g_Y: (\Delta,p) \rightarrow  \Bl_Y\widetilde{M}$$
satisfies $g_Y(p)=q$. 
Since $Y\subset X$ is a bijection on closed points, $g^{-1}(X)$ is also supported at $p$.

We work locally on an open affine $U=\text{Spec}(A) \subset \widetilde{M}$ containing
$g(p)\in \widetilde{M}$.
Let $$a_1,...,a_r\in I_X\,, \ \ \ a_1,...,a_r,b_1,...,b_s\in I_Y$$
be generators of the ideals $I_X\subset I_Y\subset A$. 
By definition of the blow-up,
$$\pi_Y^{-1}(U) \subset U\times \proj^{r+s-1}, \ \ \ \pi_X^{-1}(U)\subset U \times \proj^{r-1}\,.$$
Let $t$ be the local parameter of $\Delta$ at $p$ with $t(p)=0$.
From the map $g$, we obtain functions
$$a_i(t)=a_i(g(t))\,,\ \ \ b_j(t)=b_j(g(t))$$
in the local parameter $t$ which are
regular at $0$. 
Since $g(p)\in Y \subset X$, we have $a_i(0)=0$ and $b_j(0)=0$ for all $i$ and $j$.
Let $\ell_X$ be  the {\em lowest} valuation  of  $t$ among
   all the functions $a_i(t)$. Since the $a_i(t)$ can not all vanish
identically,  $\ell_x>0$.
The limit
$$v=\lim_{t\rightarrow 0} \left(\frac{a_1(t)}{ t^{\ell_X}} , \ldots , \frac{a_r(t)}{t^{\ell_X}}\right)$$
is a well-defined {\em nonzero} vector $v$. By definition of the blow-up,
$$g_X(p)= (g(p),[v]) \in U \times \proj^{r-1}\,.$$

Similarly, let $\ell_Y$ be the {lowest} valuation  of  $t$ among
   all the functions $a_i(t)$ {\em and} $b_j(t)$.
Then, the limit
$$w=\lim_{t\rightarrow 0} \left(\frac{a_1(t)}{ t^{\ell_Y}} , \ldots , \frac{a_r(t)}{t^{\ell_Y}},
\frac{b_1(t)}{ t^{\ell_Y}} , \ldots , \frac{b_s(t)}{t^{\ell_Y}}\right)
$$
is a well-defined nonzero vector $w$, and
$$g_Y(p)= (g(p),[w]) \in U \times  \proj^{r+s-1}\,.$$

Certainly, $\ell_Y \leq \ell_X$ since $\ell_Y$ is a minimum over a larger set.
If $\ell_Y < \ell_X$, then there is
a $b_j(t)$ with lower valuation than all the $a_i(t)$.
Such a situation directly contradicts the curvilinear lifting property for
the map
$$\text{Spec}(\mathcal{O}_D/t^{\ell_X})\subset D \stackrel{g}{\longrightarrow} X \,.$$
Hence, $\ell_Y=\ell_X$.

The equality of $\ell_Y$ and $\ell_X$ has the following consequence:
{\em the first $r$ coordinates of $w$ are not all 0}.
As a result, the rational map 
$$f:\Bl_Y\widetilde{M} \xymatrix{\ar@{-->}[r]&} \Bl_X\widetilde{M}$$  
defined on $U\times \proj^{r+s-1}$ by projection
$$f(q) = f((g(p),w))=(g(p),(w_1,\ldots,w_r))= (g(p),v)$$
has no base locus at $q$.
Since $q$ was arbitrary, $f$ has no base locus on $\Bl_Y(\widetilde{M})$.

The exceptional divisors $\PP(C_Y\widetilde{M})$ and 
$\PP(C_X\widetilde{M})$
are $\mathcal{O}(-1)$ on $\Bl_Y(\widetilde{M})$
and $\Bl_X(\widetilde{M})$ respectively. Since the morphism
$f$ respects $\mathcal{O}(-1)$, the relation
$$f^*( \PP(C_X \widetilde{M})) = \PP(C_Y \widetilde{M})\ $$
holds as Cartier divisors.               
\end{proof}

By Proposition \ref{bababa} and the push-pull formula for
the degree 1 morphism $f$, we find
\begin{eqnarray}
\label{gggg4444}
f_* [\PP(C_Y\widetilde{M})] & = & 
f_* [f^*\PP(C_X\widetilde{M})]\\ &=& \text{deg}(f) \cdot\nonumber 
[\PP(C_X\widetilde{M})] \\ \nonumber
& = & [\PP(C_X\widetilde{M})]\,,
\end{eqnarray}
where $[D]$ denotes the fundamental cycle of a Cartier divisor $D$.
The restriction of $f$ to the exceptional divisors yields
a morphism
$$f: \PP(C_Y\widetilde{M}) \rightarrow \PP(C_X\widetilde{M})$$
which covers $\iota:Y \rightarrow X$.
Since the morphism on projective cones respects $\mathcal{O}(1)$,
relation \eqref{gggg4444}  and definition \eqref{ghat}
together 
 imply
$$\iota_* c_F(Y)= c_F(X) \,. $$

In other words, the Fulton total Chern class is the same
for embeddings satisfying the curvilinear lifting property.

\subsection{The divisor $D_{n_1,\beta_1}$}\label{ap33}
Following the notation of Section \ref{rigi}, we have
$$D_{n_1,\beta_1} \subset W(n,\beta)\,, $$
and we would like to compare the Fulton total Chern classes
of these two moduli spaces.
The subspace $D_{n_1,\beta_1}$ is the pull-back to $W(n,\beta)$
of a nonsingular divisor in the Artin stack $\B^p_{n,\beta}$.
Hence, $D_{n_1,\beta_1}$ is locally defined by a single equation.

There is no obstruction to smoothing the crease for 
any stable pair parameterized by $D_{n_1,\beta_1}$.
An elementary argument via vector fields moving points{\footnote{Consider
the versal deformation of the degeneration of
$\proj^1$ to the chain $\proj^1 \cup \proj^1$. Given any finite
collection of nonsingular points of the special
fiber $\proj^1\cup \proj^1$, an open set $U$ of the special fiber can be found
containing the points
together with a vector field which translates $U$ over the
base of the deformation. In the case the degeneration is a longer
chain, such a vector field can be found for each node.  
}}  
in $\proj^1$ shows
$W(n,\beta)$ to be \'etale locally a trivial product of
$D_{n_1,\beta_1}$ with the smoothing parameter in $\com$.
Hence, the equation of $D_{n_1,\beta_1}$ is nowhere
a zero divisor. Given an embedding $W(n,\beta)\subset M$
in a nonsingular ambient space, we consider
$$W(n,\beta) \subset M\times \C = \widetilde{M}$$
with $W(n,\beta)$ lying over $0\in \C$.
There is an exact sequence of cones on $D_{n_1,\beta_1}$,
$$0 \To N \To C_{D_{n_1,\beta_1}}\widetilde{M} 
\To C_{W(n,\beta)}\widetilde{M}|_{D_{n_1,\beta_1}}
\To 0\,,  $$
where $N= \mathcal{O}_{D_{n_1,\beta_1}}(D_{n_1,\beta_1})$.
As a consequence,
$$s(W(n,\beta),M)|_{D_{n_1,\beta_1}} = s(D_{n_1,\beta_1},M)
c(\mathcal{O}_{D_{n_1,\beta_1}}(D_{n_1,\beta_1}))\,.$$
By the definition of the Fulton total Chern class
\begin{eqnarray*}
c_F(W(n,\beta))|_{D_{n_1,\beta_1}} & = & 
c(T_M |_{D_{n_1,\beta_1}}) \cap \
s(W(n,\beta),M)|_{D_{n_1,\beta_1}} \\
& = & c(T_M |_{D_{n_1,\beta_1}}) \cap \
s(D_{n_1,\beta_1},M)
c(\mathcal{O}_{D_{n_1,\beta_1}}(D_{n_1,\beta_1}))
\\
& = & c_F(D_{n_1,\beta_1})
c(\mathcal{O}_{D_{n_1,\beta_1}}(D_{n_1,\beta_1}))\,,
\end{eqnarray*}
which is \eqref{ggg} of Section \ref{rigi}.

\vspace{+14 pt}

\begin{minipage}[position]{65mm}
\noindent
Departement Mathematik\\
ETH Z\"urich\\
{\tt rahul@math.ethz.ch}
\end{minipage}
\begin{minipage}[position]{10cm}
\noindent Department of Mathematics\\
Imperial College\\
{\tt rpwt@imperial.ac.uk}
\end{minipage}

\end{document}